% SIAM Article Template
\documentclass[onefignum,onetabnum]{siamart190516}

% Information that is shared between the article and the supplement
% (title and author information, macros, packages, etc.) goes into
% ex_shared.tex. If there is no supplement, this file can be included
% directly.

% SIAM Shared Information Template
% This is information that is shared between the main document and any
% supplement. If no supplement is required, then this information can
% be included directly in the main document.

% Packages and macros go here
\usepackage{lipsum}
\usepackage{amsfonts}
\usepackage{graphicx}
\usepackage{epstopdf}
\usepackage{algorithm}
\usepackage{algorithmic}
\usepackage{amsopn}
\usepackage{amssymb}
\usepackage{booktabs}
\usepackage{tikz}
\usepackage{pgfplots}
	\pgfplotsset{width=10cm,compat=1.9}
%	\usepgfplotslibrary{external}
%	\tikzexternalize
\usepackage{csvsimple}
\usepackage{siunitx}
\sisetup{scientific-notation = true}
\usepackage{array}
\ifpdf
  \DeclareGraphicsExtensions{.eps,.pdf,.png,.jpg}
\else
  \DeclareGraphicsExtensions{.eps}
\fi
\usepackage{url}
\usepackage{chngcntr}
\usepackage{longtable}
\usepackage{arydshln}

% Add a serial/Oxford comma by default.

% Used for creating new theorem and remark environments
\newsiamremark{remark}{Remark}
\newsiamremark{criteria}{Criteria}
\newsiamthm{assumption}{Assumption}
%\crefname{remark}{Remark}{Remarks}
\newsiamremark{example}{Example}
\crefname{example}{Example}{Examples}

% Sets running headers as well as PDF title and authors
\headers{Randomized Block Adaptive Solvers}{Patel, Jahangoshahi, \& Maldonado}

% Title. If the supplement option is on, then "Supplementary Material"
% is automatically inserted before the title.
\title{Randomized Block Adaptive Linear System Solvers\thanks{Authors are funded by UW-Madison WARF Award AAD5914, and DOE Contract DE- AC02-06CH11347.}}

% Authors: full names plus addresses.
\author{Vivak Patel\thanks{University of Wisconsin, Madison, WI 
  (\email{vivak.patel@wisc.edu}).}
\and Mohammad Jahangoshahi\thanks{Susquehanna International Group, Bala Cynwyd, PA 
  (\email{mjahangoshahi@uchicago.edu}).}
\and Daniel Adrian Maldonado\thanks{Argonne National Laboratories, Lemont, IL (\email{maldonadod@anl.gov}).}}

%Packages
% nomenclature
\usepackage{nomencl}
\usepackage{etoolbox}
\usepackage{xstring}
\usepackage{xpatch}

\patchcmd{\thenomenclature}{\section*}{\section}{}{}

\patchcmd{\thenomenclature}
  {\leftmargin\labelwidth}
  {\leftmargin\labelwidth\itemindent 0em }
  {}{}

\newcommand{\nomenclheader}[1]{%
  \item[\hspace*{-1\itemindent}\normalfont\itshape#1]}
\nomlabelwidth=30mm 

\renewcommand\nomgroup[1]{%
  \bigskip
  \IfStrEqCase{#1}{%
   {A}{\nomenclheader{Linear Algebra}}%      
   {B}{\nomenclheader{Probability}}% 
   {C}{\nomenclheader{Iterative Methods}}%
  }%
}

\DeclareMathOperator{\row}{row}
\DeclareMathOperator{\col}{col}

\DeclareMathOperator*{\argmax}{argmax}
\DeclareMathOperator*{\argmin}{argmin}

\newcommand{\innorm}[1]{\Vert #1 \Vert}
\newcommand{\norm}[1]{\left\Vert #1 \right\Vert}
\newcommand{\1}[1]{\textbf{1}\left[ #1 \right]}
\newcommand{\Prb}[1]{\mathbb{P}\left[ #1 \right]}
\newcommand{\inPrb}[1]{\mathbb{P}[ #1 ]}
\newcommand{\E}[1]{\mathbb{E}\left[ #1 \right]}

\newcommand{\cond}[2]{\mathbb{E}\left[\left. #1 \right\vert #2 \right]}
\newcommand{\incond}[2]{\mathbb{E}[ #1 \vert #2 ]}
\newcommand{\condPrb}[2]{\mathbb{P}\left[\left. #1 \right\vert #2 \right]}
\newcommand{\incondPrb}[2]{\mathbb{P}[ #1 \vert #2 ]}

\newcommand{\inlinspan}[1]{\mathrm{span}[ #1 ]}
\newcommand{\linspan}[1]{\mathrm{span}\left[ #1 \right]}
\newcommand{\rnk}[1]{\mathrm{rank}\left( #1 \right)}

\newcommand{\T}{^\intercal}
\newcommand{\Prj}{\mathcal{P}}

%%% Local Variables: 
%%% mode:latex
%%% TeX-master: "ex_article"
%%% End: 

% Optional PDF information
\ifpdf
\hypersetup{
  pdftitle={Randomized Block Adaptive Solvers},
  pdfauthor={Patel, Jahangoshahi, \& Maldonado}
}
\fi

% The next statement enables references to information in the
% supplement. See the xr-hyperref package for details.

\externaldocument{block-adaptive_supplement}

% FundRef data to be entered by SIAM
%<funding-group specific-use="FundRef">
%<award-group>
%<funding-source>
%<named-content content-type="funder-name"> 
%</named-content> 
%<named-content content-type="funder-identifier"> 
%</named-content>
%</funding-source>
%<award-id> </award-id>
%</award-group>
%</funding-group>
\makenomenclature

\raggedbottom
\begin{document}

\maketitle

% REQUIRED
\begin{abstract}
Randomized linear solvers randomly compress and solve a linear system with compelling theoretical convergence rates and computational complexities. 
However, such solvers suffer a substantial disconnect between their theoretical rates and actual efficiency in practice. Fortunately, these solvers are quite flexible and can be adapted to specific problems and computing environments to ensure high efficiency in practice, even at the cost of lower effectiveness (i.e., having a slower theoretical rate of convergence). While highly efficient adapted solvers can be readily designed by application experts, will such solvers still converge and at what rate? To answer this, we distill three general criteria for randomized adaptive solvers, which, as we show, will guarantee a worst-case exponential rate of convergence of the solver applied to consistent and inconsistent linear systems irrespective of whether such systems are over-determined, under-determined or rank-deficient. As a result, we enable application experts to design randomized adaptive solvers that achieve efficiency and can be verified for effectiveness using our theory. We demonstrate our theory on twenty-six solvers, nine of which are novel or novel block extensions of existing methods to the best of our knowledge.
\end{abstract}

% REQUIRED
\begin{keywords}
Block Solvers, Adaptive Solvers, Randomized Solvers, Linear Systems, Sketching
\end{keywords}

% REQUIRED
\begin{AMS}
15A06, 15B52, 65F10, 65F25, 65N75, 65Y05, 68W20, 68W40
\end{AMS}

\section{Introduction}
Solving linear systems and least squares problems remain critical operations in scientific and engineering applications. As the size of systems or the sheer number of systems that need to be solved grow, faster and approximate linear solvers have become essential to scalability. Recently, randomized linear solvers have become of interest as they can compress the information in the original system in a problem-blind fashion, which can then be used to inexpensively and approximately solve the original system \cite{woodruff2014}. Moreover, by iterating on this procedure, randomized linear solvers will converge exponentially fast to the solution of the original system \cite{pilanci2016}. In fact, a rather simple randomized linear system solver was recently shown to achieve a \textit{universal} exponential rate of convergence for any consistent linear system with high probability \cite{steinerberger2022}. 

Despite such an incredible result, as we show through a salient example (see \Cref{section-ce}), randomized linear solvers suffer a substantial disconnect between their convergence rate theory and actual efficiency in practice because they often violate simple computing principles (e.g., the locality principle \cite{denning2005}). Briefly, in the example in \Cref{section-ce}, an ``oracle'' linear solver inspired by \cite{steinerberger2022} is applied to a specific $10^7 \times 100$ system such that it only requires $100$ arithmetic operations to find a solution with absolute error of $10^{-16}$, yet is \textit{slower} than block Kaczmarz---which, in theory, requires over $10^{10}$ arithmetic operations to find such a solution---because of access patterns that violate data locality.
Unfortunately, nearly all variations of such linear solvers that exist  \cite{agmon1954,motzkin1954,strohmer2009,bai2013,zouzias2013,gower2015,needell2014,needell2015,nutini2016,bai2018,haddock2019,necoara2019,gower2021,
rebrova2021,steinerberger2021} can be shown to suffer from this disconnect between their theoretical convergence rates and actual efficiency by specific choices of the linear system, software environment or hardware.

A pessimistic view of these solvers would imply that they should be wholly abandoned. An alternative perspective would suggest a better prognosis: because of the adaptability of such solvers, they can be highly tailored to specific linear systems, software environments, and hardware to achieve high efficiency even at the expense of worse theoretical convergence rates. This latter view is the one adopted in this work.

A bevy of adapted methods can be designed and deployed by atomizing, composing and customizing key components of randomized linear solvers.\footnote{We are implementing a software package to enable this approach. See \url{https://github.com/numlinalg/RLinearAlgebra.jl}.} Owing to the freedom of creating such solvers, understanding whether the efficient highly-adapted method will still converge and at what cost to the rate (e.g., will the solver now converge sub-exponentially?) becomes integral to a practitioner's decision to implement the method.

To address this consideration, a handful of adaptive solvers were shown to retain exponential convergence by \cite{gower2021}, but in a limited context: the set of projections must be finite; and the exactness assumption \cite[Assumption 2]{richtarik2020} must be satisfied, which is generally difficult to verify in practice.\footnote{See \Cref{subsection-adaptive-sketch-and-project} on how we can eliminate this assumption for an important class of methods.} In our previous work \cite{PAJAMA2021implicit,PAJAMA2021adaptive}, adaptive solvers relying on vector operations were shown to retain exponential convergence. While our previous work accounted for a number of existing solvers (e.g., \cite{strohmer2009,zouzias2013,gower2015,steinerberger2021,agmon1954,motzkin1954,bai2018,haddock2019}), adaptive solvers using high-efficiency block operations did not fall within our results. As block operations have been critical to achieving high-efficiency in traditional factorizations (e.g., QR \cite[Ch. 5]{golub2012}), classical iterative methods (e.g., Krylov Iterations \cite[Ch. 6]{saad2003}), randomized factorization methods \cite[\S 16.2]{martinsson2020}, and on GPUs \cite{baker2006}, adaptive solvers using block operations must be shown to retain exponential convergence.

Therefore, in this work, we provide generic sufficient conditions, that if satisfied by an a randomized block adaptive solver (RBAS), will guarantee a worst-case (i.e., with probability one) exponential rate of convergence.\footnote{Other worst-case rates can be provided using similar ideas that we present herein, but we do not know of a context where such rates are useful.} In particular, we provide these generic sufficient conditions and consequent worst-case exponential convergence rates in two contexts:
\begin{remunerate}
\item for row-action RBASs on consistent linear systems, which may be over-determined, under-determined or rank deficient (see \cref{theorem-convergence-row-finite,theorem-convergence-row-infinite}); and
\item for column-action RBASs for linear least squares problems, which may be over-determined, under-determined or rank deficient (see \cref{theorem-convergence-col-finite,theorem-convergence-col-infinite}).
\end{remunerate}

We then show how to apply these results to twenty-six different solvers, nine of which---to the best of our knowledge---are either novel or novel block-operation extensions of existing methods. Thus, in this work, we give end-users the tools to design effective solvers for their specific problems and environments.

The remainder of this work is organized as follows. In \cref{section-ce}, we demonstrate the disconnect between rates of convergence and efficiency. In \cref{section-rbas}, we present the two archetype RBASs, provide examples for each, state and discuss the refined properties that such solvers satisfy, and state our convergence results for each type. In \cref{section-convergence}, we provide a common formulation for the two types of RBASs, prove the convergence of these methods using this common formulation, and interlace numerical experiments that demonstrate key parts of the theory. In \cref{section-examples}, we show how to apply our convergence theory to a variety of existing and novel RBASs, and provide numerical experiments where appropriate. In \cref{section-conclusion}, we conclude.

\section{Counter Example} \label{section-ce}
Here, we demonstrate that the theoretical convergence rates of randomized solvers can be quite disconnected from their actual efficiency in practice. 
Consider a consistent, linear system with $n=10^7$ equations and $d=100$ unknowns represented with double precision. Owing to the size of the system relative to the 4 Gigabytes of memory available on an Intel i5 8th Generation CPU computer, the system is split into $0.5$ Gigabyte chunks, which contain at most 66,666 equations each. 

Consider an ``oracle" solver inspired by \cite{steinerberger2022}, which can randomly replace $d$ equations in the original system in such a way that the coefficients of the resulting replaced $d$ equations correspond to the rows of the $d \times d$ identity matrix and the system is still consistent. Then, with knowledge of the index of these $d$ equations, the solver applies Kaczmarz to these rows to solve the system. As a result, the oracle solver requires $d$ iterations and $\mathcal{O}(d)$ arithmetic operations. For this specific example, the oracle solver requires about $100$ arithmetic operations.

Consider an alternative solver, the random block Kaczmarz solver, which will randomly choose a chunk from the system and perform a block updated to its iterate. In our example, a single block Kaczmarz update requires approximately $10^7$ arithmetic operations, and, with an expected squared error rate of convergence of at least $0.993$ \cite[Theorem 1.2]{needell2014}, will required over 5,500 iterations and, correspondingly, over $5 \times 10^{10}$ operations to achieve an expected absolute \textbf{squared} error of $10^{-16}$. 

Clearly, from a theoretical perspective, the ``oracle'' solver is substantially faster than the random block Kaczmarz solver as the former requires $10$ fold fewer iterations and $10^8$ fewer operations. However, when applied to the system, the ``oracle'' solver is trounced by random block Kacmzarz (see \cref{figure-optimal-randblkkacz}). To understand this, the ``oracle'' solver needs to read in a new chunk (in expectation and in reality) to access the equations that it has embedded, which is highly expensive as it violates data locality. On the other hand, the block Kaczmarz solver simply does what it can with the information that is given in a single chunk, which turns out to contain sufficient information for finding a high quality solution in one iteration. To summarize, these solvers behave very differently in their theoretical convergence rates and in practice as this example shows. 

This observation is motivation to adapt such solvers to ensure that they are efficient for specific problems and computing environments. However, as the next example will demonstrate, such efficient solvers can lose effectiveness (i.e., suffer from arbitrarily slow convergence behavior). As a result, in this work, we provide sufficient conditions that, if satisfied by an adapted solver, will be effective---that is, the solver will have a worst-case exponential rate of convergence.

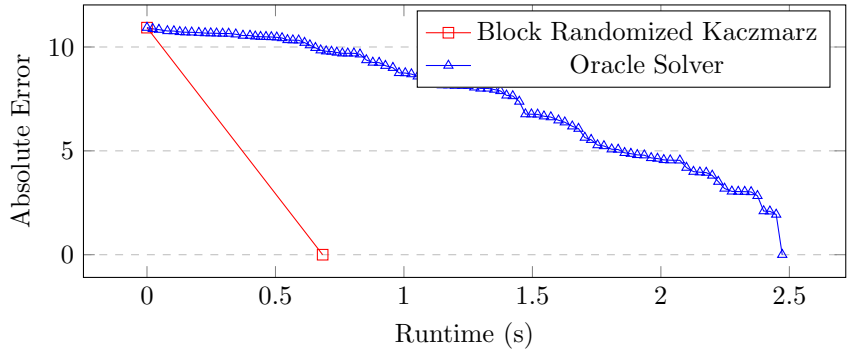
\begin{figure}{ht}
\centering
\begin{tikzpicture}
\begin{axis}[
	width=0.9\textwidth,
	height=0.4\textwidth,
    title={A Comparison of an Optimal Algorithm against Randomized Block Kaczmarz},
    xlabel={Runtime (s)},
    ylabel={Absolute Error},
    %ymode=log,
    %xmin=0, xmax=1200,
    %ymin=1e-15, ymax=1000,
    %xtick={0,200,400,600,800,1000,1200},
    %ytick={1e-15,1e-12,1e-9,1e-6,1e-3,1e0, 1e3},
    ymajorgrids=true,
    grid style=dashed,
]

\addplot[
    color=red,
    mark=square,
    ]
    coordinates {
(0.0,10.93145780259175)
(0.683665263,6.2264926340403e-15)
}; \addlegendentry{Block Randomized Kaczmarz}

\addplot[
	color=blue,
	mark=triangle,
	]
	coordinates {
(0.0,10.93145780259175)
(0.022315075,10.876403368686105)
(0.046519127,10.835090212025673)
(0.079532246,10.771471322165468)
(0.104694081,10.767037399712052)
(0.127604155,10.726112142734697)
(0.149542484,10.710453187725468)
(0.17397205100000002,10.706115771482123)
(0.201824734,10.689142973118772)
(0.226120585,10.669883375872791)
(0.248206377,10.665675292090325)
(0.270764163,10.658012895118631)
(0.294310388,10.648484702628767)
(0.317800569,10.648412162782169)
(0.34559338399999995,10.61244143524098)
(0.37360098599999997,10.549725585065078)
(0.40093215099999996,10.548642678987513)
(0.42144303699999996,10.512310699583866)
(0.44197764599999995,10.495241457582022)
(0.46317474399999997,10.490054961124988)
(0.483922078,10.480790559291213)
(0.50512859,10.465216458137293)
(0.525667202,10.432380225932208)
(0.546802315,10.339455141588715)
(0.5677540759999999,10.326961298199796)
(0.5902841029999999,10.322503625759197)
(0.6131673319999998,10.209402010092015)
(0.6339654459999998,10.089912105375454)
(0.6555636419999998,9.961356569583552)
(0.6760122939999998,9.84490076135242)
(0.6973620639999999,9.795387654885952)
(0.7180466419999999,9.78062368451608)
(0.7387459929999999,9.735150477368238)
(0.7603369839999998,9.696359771511869)
(0.7813899519999998,9.696297473965863)
(0.8052917529999999,9.695343956582063)
(0.8290688969999999,9.648100770717376)
(0.8537232959999999,9.373256804204903)
(0.8788717079999998,9.253493711117827)
(0.9034927559999998,9.246159156880818)
(0.9279020669999998,9.099502722214888)
(0.9571231869999998,9.00100765860374)
(0.9809784289999998,8.7530190483633)
(1.0051705699999998,8.74535112787664)
(1.0291035959999997,8.695741786139557)
(1.0537008279999998,8.588258661277983)
(1.0777405629999997,8.588082137335347)
(1.1017507519999996,8.312462261733014)
(1.1482986809999995,8.162456911246373)
(1.1735741499999996,8.159492400003808)
(1.1983503999999996,8.141109112221768)
(1.2269192559999995,8.136430369153011)
(1.2506244869999996,8.13523856201498)
(1.2737980009999996,8.057945442792699)
(1.2983119209999996,8.005502792253479)
(1.3257328409999996,8.005067812098419)
(1.3515176039999997,7.951737563964889)
(1.3754149499999997,7.8845337242092794)
(1.3992558059999998,7.67156704553693)
(1.4235409509999997,7.640624691066642)
(1.4489333089999996,7.37542879465068)
(1.4728056919999997,6.769341420493522)
(1.4961438229999997,6.763917665896411)
(1.5202655489999997,6.755227164923572)
(1.5448687889999997,6.666329075591486)
(1.5717271209999997,6.624893862122083)
(1.6015008349999997,6.472895088958492)
(1.6260112349999998,6.377460201441408)
(1.6551557859999997,6.182859432666703)
(1.6793680399999997,6.070462027693682)
(1.7040348349999996,5.6402681543623014)
(1.7285786759999997,5.534097523167512)
(1.7533819499999996,5.278849656060666)
(1.7783515849999996,5.233978565418249)
(1.8085300249999996,5.077560553490748)
(1.8336940679999996,5.06320647115571)
(1.8588950649999996,4.904909332918346)
(1.8832931169999996,4.864282559374986)
(1.9081139039999995,4.805563743150632)
(1.9354578899999995,4.790228233578456)
(1.9622719889999996,4.659528681909759)
(1.9867675469999995,4.628928758850661)
(2.0113829899999995,4.564585893879709)
(2.0364233079999994,4.5497780189231545)
(2.0739109899999995,4.543163036769238)
(2.0995512649999997,4.193390792855714)
(2.1268475359999996,3.999279835575316)
(2.1517400229999994,3.970181353266921)
(2.1764603609999993,3.952959287699454)
(2.1998190229999994,3.8141382586668375)
(2.2234156849999995,3.51842918573277)
(2.2477797119999994,3.1884679099052864)
(2.2761822959999995,3.048772375338734)
(2.3010806099999996,3.038032618166135)
(2.3262755999999998,3.0347933032154795)
(2.3511924609999997,3.0200298076967305)
(2.3758806549999996,2.83404169685307)
(2.4000134969999998,2.10152709591045)
(2.4242497349999996,2.0886421487918456)
(2.4487154549999994,1.9357000729466936)
(2.472261531999999,0.0)
}; \addlegendentry{Oracle Solver}
\end{axis}
\end{tikzpicture}
\caption{A comparison runtime of the ``Oracle'' Algorithm against Block Kaczmarz for the described system. The optimal algorithm achieves an absolute error of $0$ in $100$ iterations requiring $2.47$ seconds. Block Kaczmarz achieves an absolute error of $10^{-15}$ in $1$ iteration requiring $0.68$ seconds.}
\label{figure-optimal-randblkkacz}
\end{figure}

%\section{Notation}
\label{section-notation}

\renewcommand{\nompreamble}{Below is a list of notation that is used throughout the text. The notation is organized into several groups: notation related to linear algebra; notation related to probability; and notation related to the iterative methods considered in this work.}

\nomenclature[A, 1]{$A$}{A coefficient matrix in $\mathbb{R}^{n \times d}$.}
\nomenclature[A, 2]{$b$}{A constant vector in $\mathbb{R}^{n}$}
\nomenclature[A, 3]{$\mathcal{P}$}{A projection operator onto the solution space of the linear system, $Ax=b$.}
\nomenclature[A, 5]{$Q_{\cdot}$}{A set of vectors that are orthonormal.}
\nomenclature[A, 6]{$q_{i,\cdot}$}{The $i^\mathrm{th}$ element of $Q_{\cdot}$.}
\nomenclature[A, 7]{$\col(\cdot)$}{The column space of a given matrix.}
\nomenclature[A, 8]{$\row(\cdot)$}{The row space of a given matrix.}

\nomenclature[B, 1]{$\mathbb{P}$}{The probability operator for a probability space.}
\nomenclature[B, 2]{$\mathbb{E}$}{The expectation operator for a probability space.}
\nomenclature[B, 3]{$\mathcal{F}_{\cdot}^{\cdot}$}{$\sigma$-algebras generated by a given set of random variables.}
\nomenclature[B, 4]{$\xi$}{A generic stopping time.}

\nomenclature[C, 1]{$x_k$}{Vectors in $\mathbb{R}^d$ generated by an iterative procedure.}
\nomenclature[C, 2]{$W_k$}{Matrix-valued or vector-valued random quantities generated by a possibly adaptive sampling procedure.}
\nomenclature[C, 3]{$\zeta_k$}{A random quantity that holds summary information about a possibly adaptive sampling procedure.}
\nomenclature[C, 4]{$\varphi$}{A possibly random, adaptive sampling, sketching or selection procedure.}
\nomenclature[C, 5]{$\chi_k$}{An indicator to whether the residual has a nonzero projection with respect to a given subspace.}
\printnomenclature

\section{Randomized Block Adaptive Solvers} \label{section-rbas}
Consider solving the consistent linear system
\begin{equation} \label{eqn-system}
Ax = b,
\end{equation}
or consider finding the least squares solution for a (possibly) inconsistent system by solving
\begin{equation} \label{eqn-ols}
\min_{x} \norm{ A x - b}_2,
\end{equation}
where $A \in \mathbb{R}^{n \times d}$; $x \in \mathbb{R}^d$; and $b \in \mathbb{R}^n$. We emphasize \textbf{we have not} required that $n < d$, $n > d$ or that $A$ has full rank; in other words, we allow for underdetermined systems, over determined systems and rank deficient linear systems.
To solve these systems, we will consider two archetypes of RBAS methods: row-action RBAS methods for \cref{eqn-system} or column-action RBAS methods for \cref{eqn-system,eqn-ols}. We will define each variation below, provide examples, state the assumptions, and present the main convergence results. 

\subsection{Row-action RBASs}
For row-action methods, we will need to assume
\begin{assumption} \label{assumption-consistency}
The system, \cref{eqn-system}, is consistent. That is, the set $\mathcal{H}:= \lbrace x \in \mathbb{R}^d : Ax=b \rbrace$ is nonempty.
\end{assumption}

With this assumption, we begin with an iterate $x_0 \in \mathbb{R}^d$ and some prior information, encapsulated by $\zeta_{-1} \in \mathfrak{Z}$, where $\mathfrak{Z}$ is finite in some sense (e.g., the product of a finite set and a finite dimensional linear space). We then generate a sequence of iterates, $\lbrace x_k : k \in \mathbb{N} \rbrace$, according to
\begin{equation} \label{eqn-base-row-update}
x_{k+1} = x_k - A^\intercal W_k (W_k^\intercal A A^\intercal W_k)^\dagger W_k^\intercal (A x_k - b),
\end{equation}
where $\cdot^\dagger$ represents a pseudo-inverse; and $\lbrace W_k \in \mathbb{R}^{n \times n_k} \rbrace$ are possibly random quantities (i.e., vectors or matrices) generated according to a possibly random, adaptive procedure, $\varphi_R$, which supplies
\begin{equation} \label{eqn-row-adaptive}
W_k, \zeta_k = \varphi_R(A, b, \lbrace x_j : j \leq k \rbrace, \lbrace W_j : j < k \rbrace, \lbrace \zeta_j : j < k \rbrace) \in \mathbb{R}^{n\times n_k} \times \mathfrak{Z}.
\end{equation}
We make several comments about this procedure. First, $n_k$ can be selected adaptively so long as it is known given the arguments of $\varphi_R$. Second, $\zeta_k$ contains information generated from previous iterations that may be essential to the operation of the adaptive procedure (see examples below). Third, we can change the inner product space as is done is \cite{gower2015} without issue (see \Cref{subsection-adaptive-sketch-and-project}). The next examples illustrate this formulation of row-action RBASs.

\begin{example}[Cyclic Vector Kaczmarz] \label{example-cyclic-vector-kaczmarz}
The cyclic vector Kaczmarz method cycles through the equations of $Ax=b$ (without reordering) and updates the the current iterate by projecting it onto the hyperplane that solves the selected equation. To rephrase the cyclic vector Kaczmarz method in our framework, let $\lbrace e_i : i =1,\ldots,n \rbrace$ denote the standard basis elements of $\mathbb{R}^n$. Moreover, let $\mathfrak{Z} = \lbrace 0 \rbrace \cup \mathbb{N}$, and $\zeta_{-1} = 0$. We then define $\varphi_R$ to be
\begin{equation}
\varphi_R( A, b, \lbrace x_j : j \leq k \rbrace, \lbrace W_j : j < k \rbrace, \lbrace \zeta_j : j < k \rbrace) = (e_{\mathrm{rem}(\zeta_{k-1}, n) + 1}, \zeta_{k-1}+1).
\end{equation}
With this choice of $(W_k, \zeta_k)$, we readily see that the described cyclic vector Kaczmarz method is equivalent to
\begin{equation}
x_{k+1} = x_k - A^\intercal e_{\mathrm{rem}(\zeta_{k-1},n) + 1} \frac{e_{\mathrm{rem}(\zeta_{k-1},n)+1}^\intercal(Ax_k - b)}{\norm{ A^\intercal e_{\mathrm{rem}(\zeta_{k-1},n) + 1} }_2^2},
\end{equation}
which is exactly \cref{eqn-base-row-update}. We highlight that $\varphi_R$ only depends on $\zeta_{k-1}$ and the number of equations in the linear system, which will be important in our discussion below. \hfill $\blacksquare$
\end{example}

\begin{example}[Random Permutation Block Kaczmarz] \label{example-random-permute-kaczmarz}
The random permutation block Kaczmarz method partitions the equations of $Ax = b$ (not necessarily equal partitions) into blocks of equations, generates a random permutation of the blocks, selects a block by cycling through the permutation, updates the current iterate by projecting it onto the hyperplane that solves all of the equations in the block, and, if the random permutation is exhausted, generates a new random permutation of the blocks.

To rephrase this method in our framework, let $\lbrace E_i \rbrace$ be matrices whose columns are generated by some partitioning of the identity matrix in $\mathbb{R}^{n \times n}$, and let $\epsilon = | \lbrace E_i \rbrace|$. Moreover, let $\mathfrak{Z}$ be product of the set of all permutations of $\lbrace 1,\ldots, \epsilon \rbrace$ with the empty set, and $\lbrace 0 \rbrace \cup \mathbb{N}$. Let $\lbrace Z_k : k+1 \in \mathbb{N} \rbrace$ be an independent random permutations of $\lbrace 1,\ldots, \epsilon \rbrace$. Let $\zeta_{-1} = (Z_0, 0)$. Then, we can define $\varphi_R$ to be
\begin{equation}
\begin{aligned}
&\varphi_R( A, b, \lbrace x_j : j \leq k \rbrace, \lbrace W_j : j < k \rbrace, \lbrace \zeta_j : j < k \rbrace) \\
&= \begin{cases}
(E_{\zeta_{k-1}[1][\mathrm{rem}(\zeta_{k-1}[2], \epsilon) + 1]}, (\zeta_{k-1}[1], \zeta_{k-1}[2] + 1)) & \mathrm{rem}(\zeta_{k-1}[2], \epsilon) < \epsilon -1 \\
(E_{\zeta_{k-1}[1][\epsilon]}, (Z_{\mathrm{div}(\zeta_{k-1}[2]+1, \epsilon)}, \zeta_{k-1}[2] + 1)) & \mathrm{rem}(\zeta_{k-1}[2], \epsilon) = \epsilon -1,
\end{cases}
\end{aligned}
\end{equation}
where $\zeta_{k}[1]$ is the permutation component of $\zeta_k$; $\zeta_{k}[1][j]$ is the $j^\mathrm{th}$ element of the permutation; and $\zeta_{k}[2]$ is the iteration counter. With this choice of $(W_k, \zeta_k)$, it is easy to see that the random permutation block Kaczmarz method can be equivalently written as \cref{eqn-base-row-update}. We highlight that $\varphi_R$ only depends on $\zeta_{k-1}$, the partitioning of the identity matrix, and the size of the partition. \hfill $\blacksquare$
\end{example}

\begin{example}[Greedy Block Selection Kaczmarz] \label{example-greedy-block-kaczmarz}
This method partitions the equations of $Ax=b$, computes the residual norm of each block at the given iteration, selects the block with the largest residual norm, and updates the current iterate by projecting it onto the hyperplane that solves all of the equations in the block.

To rephrase this method in our framework, let $\lbrace E_i \rbrace$ be matrices whose columns are generated by some partitioning of the $n \times n$ identity matrix, and let $\epsilon$ be the size of this set. Moreover, let $\mathfrak{Z} = \lbrace \emptyset \rbrace$, $\zeta_{-1} = \emptyset$, and let
\begin{equation}
\pi(k) = \argmax_{i = 1,\ldots,\epsilon} \norm{ E_i^\intercal (A x_k - b) }_2.
\end{equation}
Then, we can define $\varphi_R$ to be
\begin{equation}
\varphi_R(A, b, \lbrace x_j : j \leq k \rbrace, \lbrace W_j : j < k \rbrace, \lbrace \zeta_j : j < k \rbrace) = ( E_{\pi(k)}, \emptyset ).
\end{equation}
With this choice of $(W_k, \zeta_k)$, it is easy to see that this method is of the form \cref{eqn-base-row-update}. We emphasize that $\varphi_R$ only depends on $A$, $b$, $x_{k-1}$, and the partitioning of the identity matrix. \hfill $\blacksquare$
\end{example}

One of the key properties that is apparent in the examples above is that they are \textit{forgetful}. In other words, the choice of $(W_k, \zeta_{k})$ only depends on some finite number of previous iterations. To state this formally, for all $j+1 \in \mathbb{N}$ and $k \in [1,j+1] \cap \mathbb{N}$, let
\begin{equation} \label{eqn-sigma-algebra}
\mathcal{F}_{k}^j = \sigma(\zeta_{j-k}, x_{j-k+1}, W_{j-k+1},\ldots, W_{j-1}, \zeta_{j-1}, x_j),
\end{equation}
that is, the $\sigma$-algebra generated by the random variables indicated. Note, we take $\mathcal{F}_1^j = \sigma(\zeta_{j-1}, x_j)$ and $\mathcal{F}_0^j$ to be the trivial $\sigma$-algebra. Then, we can formalize this forgetfulness property as follows.

\begin{definition}[Markovian] \label{def-markovian-row}
A row-action RBAS is Markovian if there exists a finite $M \in \mathbb{N}$ such that for any measurable sets $\mathcal{W} \subset \mathbb{R}^{n \times n_k}$ and $\mathcal{Z} \subset \mathfrak{Z}$,
\begin{equation}
\condPrb{ W_k \in \mathcal{W}, \zeta_k \in \mathcal{Z}}{\mathcal{F}_{k+1}^{k}} = \condPrb{ W_k \in \mathcal{W}, \zeta_k \in \mathcal{Z}}{\mathcal{F}_{\min\lbrace M,\, k+1 \rbrace }^k}.
\end{equation}
\end{definition}
\begin{remark} \label{remark-markovian}
As discussed in \cite[Ch. 3]{meyn2012}, a Markov process that depends on some extended period of information can be rewritten into a Markov process that only depends on the most recent information only, which can be achieved by expanding the state of the Markov process. For a Markovian RBAS, we can do the same by adding this information in $\zeta_k$, so long as we ensure that $\mathfrak{Z}$ is finite. Thus, the value of $M$ in the preceding definition can always be taken as $1$. We also note that if $\mathfrak{Z}$ is finite, then it cannot be used to store all previous iterates.
\end{remark}

Another key property of the above examples is that either the iterate will be updated within some reasonable amount of time or the current iterate is the solution. For instance, in the random permutation block Kaczmarz method, if $x_0$ is not a solution then within $\epsilon$ iterations from $k=0$, we will find an $E_i^\intercal (Ax_0 - b) \neq 0$. As a result, $x_{0}$ will eventually be updated. We can generalize this property as follows.

\begin{definition}[N,$\pi$-Exploratory] \label{def-exploratory-row}
A row-action RBAS is $N,\pi$-Exploratory for some $N \in \mathbb{N}$ and $\pi \in (0,1]$ if
\begin{equation}
\sup_{\substack{ x_0 \in \mathbb{R}^d:\, x_0 \neq \Prj_{\mathcal{H}}x_0  \\ \zeta_{-1} \in \mathfrak{Z} }} \condPrb{ \bigcap_{j=0}^{N-1} \left\lbrace \col(A\T W_j) \perp x_0 - \Prj_{\mathcal{H}} x_0 \right\rbrace }{\mathcal{F}_1^0} \leq 1 - \pi.
\end{equation}
\end{definition}

Here, we come to a bifurcation point in the theory of RBAS methods based on whether $\lbrace \col(A^\intercal W_k) \rbrace$ is a finite set or if it is an infinite set. In all of the examples above, $\lbrace \col(A^\intercal W_k) \rbrace$ belong to a finite set. In this case, we have the following result.

\begin{corollary} \label{theorem-convergence-row-finite}
Let $A \in \mathbb{R}^{n \times d}$ and $b \in \mathbb{R}^n$, satisfying \cref{assumption-consistency}. Let $x_0 \in \mathbb{R}^d$ and $\zeta_{-1} \in \mathfrak{Z}$. Let $\lbrace x_k : k \in \mathbb{N} \rbrace$ be a sequence generated by \cref{eqn-base-row-update,eqn-row-adaptive} satisfying \cref{def-markovian-row} and \cref{def-exploratory-row} for some $N \in \mathbb{N}$ and $\pi \in (0,1]$. If the elements of $\lbrace \col(A^\intercal W_k) : k+1 \in \mathbb{N} \rbrace$ take value in a finite set, then either
\begin{remunerate}
\item there exists a stopping time $\tau$ with finite expectation such that $x_{\tau} = \Prj_{\mathcal{H}} x_0$; or
\item there exists a sequence of non-negative stopping times $\lbrace \tau_{j} : j +1 \in \mathbb{N} \rbrace$ for which $\E{ \tau_{j} } \leq j [ (\rnk{A} - 1) (N /\pi) + 1]$, and there exist $\gamma \in (0,1)$ and a sequence of random variables $\lbrace \gamma_j : j+1 \in \mathbb{N}\rbrace \subset (0,\gamma]$, such that
\begin{equation}
\Prb{ \bigcap_{j=0}^\infty \left\lbrace \norm{ x_{\tau_j} - \Prj_{\mathcal{H}}x_0 }_2^2 \leq \left( \prod_{\ell=0}^{j-1} \gamma_{\ell} \right) \norm{ x_0 - \Prj_{\mathcal{H}} x_0 }_2^2 \right\rbrace} = 1.
\end{equation}
\end{remunerate}
\end{corollary}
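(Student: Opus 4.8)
The plan is to recast \cref{eqn-base-row-update} entirely in terms of the error $e_k := x_k - \Prj_{\mathcal{H}} x_0$ and reduce the analysis to the geometry of orthogonal projections onto the finitely many subspaces $V_k := \col(A\T W_k)$. First I would record two structural facts. Every increment $x_{k+1}-x_k$ lies in $\col(A\T W_k) \subseteq \row(A)$, so the component of the iterate in $\nullsp{A}$ is never touched; hence $\Prj_{\mathcal{H}} x_k = \Prj_{\mathcal{H}} x_0$ for all $k$ and therefore $e_k \in \row(A)$. Writing $B_k := A\T W_k$, the matrix $B_k(B_k\T B_k)^\dagger B_k\T$ in \cref{eqn-base-row-update} is exactly the orthogonal projector $\Pi_k$ onto $V_k$, and since $\Prj_{\mathcal{H}} x_0 \in \mathcal{H}$ we have $W_k\T(Ax_k-b) = W_k\T A e_k$; thus $e_{k+1} = (I-\Pi_k)e_k$, and Pythagoras gives
\begin{equation}
\norm{e_{k+1}}_2^2 = \norm{e_k}_2^2 - \norm{\Pi_k e_k}_2^2 \leq \norm{e_k}_2^2 ,
\end{equation}
with equality exactly when $\col(A\T W_k) \perp e_k$. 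This ties the strict-decrease event to the orthogonality event of \cref{def-exploratory-row} and shows the error norm is monotone non-increasing.

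Next I would extract a uniform contraction constant $\gamma \in (0,1)$ from the finiteness hypothesis. Let $\mathcal{V} = \lbrace V_1,\dots,V_L\rbrace$ be the finite range of $\col(A\T W_k)$, each contained in $\row(A)$. It suffices to restrict attention to the \emph{successful} projections (those with $\Pi_k e_k \neq 0$), since between them the error is constant. Along a run of successful projections from $e := e_{\tau_j}$, writing $e^{(t)}$ for the error after $t$ of them onto subspaces $V_{i_1},\dots,V_{i_m}$, the telescoping identity $e^{(m)} = e - \sum_{t} \Pi_{i_t} e^{(t-1)}$ together with $\sum_t \norm{\Pi_{i_t} e^{(t-1)}}_2^2 = \norm{e}_2^2 - \norm{e^{(m)}}_2^2$ shows that if such a run contracts little, then $e$ must be nearly orthogonal to every $V_{i_t}$ used. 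Because $\mathcal{V}$ is finite, no nonzero vector of $\row(A)$ can be orthogonal to a sub-collection that spans $\row(A)$; a compactness argument over the unit sphere of $\row(A)$ and the finitely many spanning sub-collections then turns this into a fixed $\gamma<1$. The count of successful projections needed to reach a spanning collection is controlled by \cref{def-exploratory-row}: whenever the (persistent) error still has mass outside the span of the subspaces used so far, a further span-enlarging successful projection occurs within $N$ steps with probability at least $\pi$, and the span can be enlarged at most $\rnk{A}-1$ times.

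With $\gamma$ in hand I would define $\tau_0 = 0$ and, given $\tau_j$ with $e_{\tau_j}\neq 0$, let $\tau_{j+1}$ be the first $k>\tau_j$ with $\norm{e_k}_2^2 \leq \gamma\norm{e_{\tau_j}}_2^2$, setting $\gamma_j := \norm{e_{\tau_{j+1}}}_2^2/\norm{e_{\tau_j}}_2^2 \in (0,\gamma]$. To bound the gaps I would invoke \cref{def-markovian-row} and \cref{remark-markovian} to restart the process at $\tau_j$ with its state summarized in the finite $\mathfrak{Z}$, so that \cref{def-exploratory-row} applies afresh: each span-enlarging exploration costs at most $N/\pi$ iterations in expectation (a geometric number of length-$N$ windows, each succeeding with probability at least $\pi$), at most $\rnk{A}-1$ are required, and one final step realizes the contraction, giving $\cond{\tau_{j+1}-\tau_j}{\mathcal{F}_1^{\tau_j}} \leq (\rnk{A}-1)(N/\pi)+1$ and hence $\E{\tau_j} \leq j[(\rnk{A}-1)(N/\pi)+1]$ by the tower property. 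For the dichotomy: if $e_k=0$ at some finite stopping time we are in the first alternative, and the same gap bound shows the hitting time has finite expectation; otherwise $e_{\tau_j}\neq 0$ for every $j$, every $\tau_j$ is finite almost surely (finite expectation), and on the probability-one event $\bigcap_j\lbrace\tau_j<\infty\rbrace$ the telescoping definition of the $\gamma_\ell$ yields $\norm{e_{\tau_j}}_2^2 = (\prod_{\ell=0}^{j-1}\gamma_\ell)\norm{x_0-\Prj_{\mathcal{H}}x_0}_2^2$, which is the stated second alternative.

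The main obstacle is the middle step: converting the purely existential guarantee of \cref{def-exploratory-row} into a quantitative, uniform contraction with the correct constant. The delicate points are that orthogonality gained from one projection is not preserved by later ones, so the ``directions handled so far'' must be tracked through the accumulation estimate rather than a naive dimension count; that projections within a fixed non-spanning subspace can be successful indefinitely yet only contract the in-span component, so one must argue that genuine overall progress requires enlarging the span and separate the within-span contraction rate (a fixed constant of the finite configuration) from the exploration cost; and that disentangling the exploration count and the enlargement count so as to land exactly on $(\rnk{A}-1)(N/\pi)+1$, rather than a looser product, is where the careful bookkeeping concentrates.
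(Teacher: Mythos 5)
Your overall architecture parallels the paper's (error form $e_{k+1}=(I-\Pi_k)e_k$, a uniform contraction constant extracted from the finiteness hypothesis, and a Markov/exploratory accounting of stopping-time gaps), but the two steps you yourself flag as ``delicate'' are genuine gaps rather than routine bookkeeping, and each is exactly where the paper deploys a dedicated result.

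First, the uniform contraction constant. Your accumulation estimate gives, for a run of $m$ successful projections with $\norm{e^{(m)}}_2^2 \geq (1-\epsilon)\norm{e}_2^2$, only $\norm{\Pi_{i_t}e}_2 \leq \sqrt{\epsilon}\,(1+\sqrt{m})\norm{e}_2$; the bound degrades with the run length $m$. Since a given subspace can be used successfully arbitrarily many times (later projections onto other subspaces restore components that an earlier projection annihilated), $m$ is unbounded, so the family of projection products over which you must take a supremum is infinite; compactness of the unit sphere together with the finitely many spanning sub-collections therefore does not produce a single $\gamma<1$. What is needed is a contraction bound that depends only on the set of distinct subspaces appearing (through a Gram determinant of their bases) and not on the length or order of the run. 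That is precisely the generalized block Meany inequality, \cref{theorem-block-meany}, which the paper proves by reduction to \cite[Theorem 4.1]{PAJAMA2021implicit}; it is an induction on the factors, not a compactness argument. Only after this does finiteness finish the job, as in \cref{theorem-convergence-common-finite}, where Hadamard's inequality bounds each element of the finite set $\Gamma$ of attainable constants away from $1$.

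Second, the counting. Your gap bound rests on the claim that while the error has mass outside the span of the subspaces used so far, a \emph{span-enlarging} successful projection occurs within $N$ steps with probability at least $\pi$. \Cref{def-exploratory-row} does not give this: it guarantees a successful projection, which may perfectly well be onto a subspace already used (hence already inside that span), so your enlargement count of $\rnk{A}-1$ does not control the number of steps. The paper avoids this by tracking the span of the \emph{iterates} $y_\xi,y_{\xi+1},\ldots$ rather than of the used subspaces: every successful step either adds a new dimension to $\linspan{y_\xi,\ldots,y_{\xi+k}}$ (possible at most $\rnk{A}-1$ times, since the errors lie in $\row(A)$) or lands the new iterate inside that span, and \cref{lemma-nu-upper-bound} (a Gram--Schmidt argument) shows that the latter event forces $y_\xi \in \mathcal{C}_k^{\xi}$, which is exactly the hypothesis under which Meany's inequality applies. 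This dichotomy, absent from your proposal, is what makes the bound $(\rnk{A}-1)(N/\pi)+1$ go through in \cref{theorem-nu-finite,corollary-convergence}.
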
 

Comparing \cref{theorem-convergence-row-finite} to classical results about the convergence of cyclic Kaczmarz-type methods (see \cite[Theorem 1]{dai2015}), we see that our result is a probabilistic analogue: rather than guaranteeing a certain amount of convergence within a fixed number of iterations, we offer a certain amount of convergence within a random number of iterations whose expectation is controlled by a regularly increasing value (i.e., $\E{ \tau_{j} } \leq j [ (\rnk{A} - 1) (N /\pi) + 1]$). Moreover, \cref{theorem-convergence-row-finite} includes the important possibility of the procedure terminating in a finite amount of time. Finally, we have a guaranteed worst case rate (i.e., with probability one) of convergence for all such methods. Of course, this rate is pessimistic, but, given the generality of the methods (e.g., adaptive, deterministic, random, etc.) that fall within the scope of our result, it is quite surprising that such a bound can be found under such few, very general assumptions.

Now, the alternative case to $\lbrace \col(A^\intercal W_k) \rbrace$ belonging to a finite set is that it belongs to an infinite set, for which the canonical example is the row-action analogue to \cref{example-gaussian-column}.\footnote{If $n_k > \rnk{A}$, then $\col( A^\intercal W_k) = \row(A)$ with probability one, which is covered by \cref{theorem-convergence-row-finite}.} Unfortunately, our strategy for proving \cref{theorem-convergence-row-finite} will break down for the infinite set case: in the proof of \cref{theorem-convergence-row-finite}, we set $\gamma$ to be the maximum over a finite set of elements that are all strictly less than one; however, if we attempt to use the same strategy for the infinite set case, we can find systems and methods such that the supremum over the same set produces a $\gamma = 1$ (an explicit example is constructed in \cref{subsection-infiniteset}).
Thus, rather than looking at the supremum, we can attempt to control the distribution of $\lbrace \gamma_{\ell}: \ell+1 \in \mathbb{N} \rbrace$. Surprisingly, we will only need to control the mean behavior of these random quantities rather than the entire distribution. 

To state this notion of control, we will need some notation. First, for each $\ell + 1 \in \mathbb{N}$, let
\begin{equation} \label{eqn-progress-indicator-row}
\chi_{\ell} = \begin{cases}
1 & x_{\ell+1} \neq x_{\ell}, \\
0 & \mathrm{otherwise},
\end{cases}
\end{equation}
be an indicator of whether we make progress in a given iteration. Moreover, for each $\ell+1 \in \mathbb{N}$, let $\mathfrak{Q}_{\ell}$ denote the collection of sets of vectors that are orthonormal and are a basis of $\col(A^\intercal W_\ell \chi_{\ell})$, and define $\mathcal{G}(Q_0,\ldots,Q_{\ell})$ to be the set of matrices whose columns are maximal linearly independent subsets of $\cup_{s=0}^{\ell} Q_{s}$ where $Q_s \in \mathfrak{Q}_s$.
With this notation, we have the following definition to control the distribution of $\lbrace 1 - \gamma_{\ell} : \ell+1 \rbrace$.
\begin{definition}[Uniformly Nontrivial] \label{def-unif-control-expectation-row}
A row-action RBAS is uniformly nontrivial if for any $\lbrace \mathcal{A}_k : \mathbb{R}^d \times \mathfrak{Z} \to \mathcal{F}_{k+1}^k \rbrace_{k+1 \in \mathbb{N}}$ such that
$\lim_{k \to \infty} \inf_{ x_0: x_0 \neq \Prj_{\mathcal{H}} x_0, \zeta_{-1} \in \mathfrak{Z} }$ $\incondPrb{ \mathcal{A}_{k}(x_0, \zeta_{-1})}{\mathcal{F}_{1}^0} = 1,$
there exists a $g_{\mathcal{A}} \in (0,1]$ such that
\begin{equation} \label{eqn-unif-control-expectation-row}
\inf_{ \substack{x_0: x_0 \neq \Prj_{\mathcal{H}} x_0 \\ \zeta_{-1} \in \mathfrak{Z}} } \sup_{k \in \mathbb{N} \cup \lbrace 0 \rbrace } \cond{ \sup_{ \substack{ Q_s \in \mathfrak{Q}_s \\ s \in \lbrace 0,\ldots,k \rbrace}} \min_{ G \in \mathcal{G}(Q_0,\ldots,Q_k)} \det(G^\intercal G) \1{ \mathcal{A}_k(x_0, \zeta_{-1})}}{\mathcal{F}^0_1} \geq g_{\mathcal{A}}.
\end{equation}
\end{definition}

Before stating the result, we point out some important connections and features of \cref{def-unif-control-expectation-row}. First, so long as $G \in \mathcal{G}(Q_0, \ldots,Q_k)$ is nontrivial, $\det(G^\intercal G) > 0$ with probability one. Thus, for each $x_0$ such that $x_0 \neq \Prj_{\mathcal{H}} x_0$ and $\zeta_{-1} \in \mathfrak{Z}$, there exists a $k \in \mathbb{N} \cup \lbrace 0 \rbrace$ such that
\begin{equation}
\cond{ \sup_{ \substack{ Q_s \in \mathfrak{Q}_s \\ s \in \lbrace 0,\ldots,k \rbrace}} \min_{ G \in \mathcal{G}(Q_0,\ldots,Q_k)} \det(G^\intercal G) \1{ \mathcal{A}_k(x_0, \zeta_{-1})}}{\mathcal{F}^0_1} > 0.
\end{equation}
Unfortunately, when we take the infimum over all allowed values of $x_0$ and $\zeta_{-1}$, we can no longer guarantee that the lower bound is zero, as supplied by \cref{def-unif-control-expectation-row}. 

Second, \cref{def-unif-control-expectation-row} is closely related, yet complementary to the foundational notion of \textit{uniformly integrable random variables}. To be specific, when a family of random variables is uniformly integrable, then the expected absolute value of the random variables in the family are uniformly bounded from above. Analogously and quite roughly, when we satisfy \cref{def-unif-control-expectation-row}, then the expected value of the random variables in the family are uniformly bounded from below.\footnote{We say this roughly as we ignore the supremum over $k$ to demonstrate the parallels between uniformly integrable families and a uniformly nontrivial RBAS.} Thus, we believe \cref{def-unif-control-expectation-row} to be quite a foundational property and will need to be validated on a case-by-case basis (possibly with the help of tools such as analogues to the theorems of \cite{poussin1915,dunford1939}).

Finally, we are only controlling the expected behavior in \cref{def-unif-control-expectation-row}, and we do not need to make any statements about higher moments, which is surprising as $\lbrace \gamma_{\ell} : \ell + 1 \rbrace$ is a dependent sequence, and usually dependencies require more complex moment statements (e.g., covariance relationships as in stationary processes). With these observations, we are ready for the next statement. 

\begin{corollary} \label{theorem-convergence-row-infinite}
Let $A \in \mathbb{R}^{n \times d}$ and $b \in \mathbb{R}^n$ satisfy \cref{assumption-consistency}. Let $x_0 \in \mathbb{R}^d$ and $\zeta_{-1} \in \mathfrak{Z}$. Let $\lbrace x_k : k \in \mathbb{N} \rbrace$ be a sequence generated by \cref{eqn-base-row-update,eqn-row-adaptive} satisfying \cref{def-markovian-row}, \cref{def-exploratory-row} for some $N \in \mathbb{N}$ and $\pi \in (0,1]$, and \cref{def-unif-control-expectation-row}. One of the following is true.
\begin{remunerate}
\item There exists a stopping time $\tau$ with finite expectation such that $x_{\tau} = \Prj_{\mathcal{H}} x_0$.
\item There exists a sequence of non-negative stopping times $\lbrace \tau_{j} : j +1 \in \mathbb{N} \rbrace$ for which $\E{ \tau_{j} } \leq j [ (\rnk{A} - 1) (N /\pi) + 1]$, there exists $\bar \gamma \in (0,1)$, and there exists a sequence of random variables $\lbrace \gamma_j : j+1 \in \mathbb{N}\rbrace \subset (0,1)$, such that
\begin{equation}
\Prb{ \bigcap_{j=0}^\infty \left\lbrace \norm{ x_{\tau_j} - \Prj_{\mathcal{H}}x_0 }_2^2 \leq \left( \prod_{\ell=0}^{j-1} \gamma_{\ell} \right) \norm{ x_0 - \Prj_{\mathcal{H}} x_0 }_2^2 \right\rbrace} = 1,
\end{equation}
where for any $\gamma \in (\bar \gamma,1)$,  $\inPrb{\cup_{L=0}^\infty \cap_{j=L}^\infty \lbrace \prod_{\ell=0}^{j-1} \gamma_{\ell} \leq \gamma^j \rbrace} = 1$.
\end{remunerate}
\end{corollary}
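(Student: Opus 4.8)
The plan is to follow the architecture of the finite-set case in \cref{theorem-convergence-row-finite}, reusing verbatim the portion that produces the dichotomy and the stopping-time bound, and to replace only the mechanism that controls the per-block contraction factors. First I would recall the geometry of the update: because the system is consistent, $r_k := x_k - \Prj_{\mathcal{H}} x_0$ lies in $\row(A)$ for every $k$, and \cref{eqn-base-row-update} replaces $r_k$ by its orthogonal projection onto the complement of $\col(A^\intercal W_k)$, so $\norm{r_{k+1}}_2 \le \norm{r_k}_2$ with equality exactly when $\col(A^\intercal W_k)\perp r_k$. Using \cref{def-markovian-row} and \cref{def-exploratory-row} exactly as in the finite case, I would construct the stopping times $\lbrace \tau_j \rbrace$ at which a genuine reduction has been forced, obtain $\E{\tau_j}\le j[(\rnk{A}-1)(N/\pi)+1]$, and settle the dichotomy: if $r_{\tau_j}=0$ for some $j$ we land in the finite-termination alternative, and otherwise every $\gamma_\ell := \norm{r_{\tau_{\ell+1}}}_2^2/\norm{r_{\tau_\ell}}_2^2$ lies in $(0,1)$ and the stated product bound holds by construction. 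None of this uses \cref{def-unif-control-expectation-row}.

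The new content is the almost-sure eventual rate $\prod_{\ell<j}\gamma_\ell\le\gamma^j$. The cleanest route avoids controlling any single $\gamma_\ell$ and instead controls the family in conditional mean. I would first establish the linear-algebraic lemma that, over the block $[\tau_\ell,\tau_{\ell+1})$, the relative reduction satisfies $1-\gamma_\ell \ge \min_{G\in\mathcal{G}(Q_{\tau_\ell},\ldots,Q_{\tau_{\ell+1}-1})}\det(G^\intercal G)\cdot\|\Prj_{V}\hat r_\ell\|_2^2$, where $\hat r_\ell = r_{\tau_\ell}/\norm{r_{\tau_\ell}}_2$, $V$ is the span of the accumulated orthonormal directions, and the Gram determinant accounts for the loss of contraction when these directions are ill-conditioned (the single-direction case reduces to $1-\gamma_\ell=\langle \hat r_\ell,q\rangle^2$). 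Combining this with \cref{def-exploratory-row}, which forces the projection onto $V$ to be nontrivial on a good event of probability tending to one, and then invoking \cref{def-unif-control-expectation-row} with $\mathcal{A}_k$ taken to be precisely this good event, yields a constant $g:=g_{\mathcal{A}}\in(0,1]$ with $\cond{1-\gamma_\ell}{\mathcal{F}_{\tau_\ell}}\ge g$, hence $\cond{\gamma_\ell}{\mathcal{F}_{\tau_\ell}}\le 1-g=:\bar\gamma<1$. The Markovian property lets me reset the clock at each $\tau_\ell$ so that the $\mathcal{F}_1^0$-conditioned lower bound in \cref{def-unif-control-expectation-row} applies uniformly across blocks.

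With the uniform conditional bound in hand, the product estimate is immediate by the tower property: since $\prod_{\ell<j-1}\gamma_\ell$ is nonnegative and measurable before $\tau_{j-1}$, iterating gives $\E{\prod_{\ell<j}\gamma_\ell}\le\bar\gamma\,\E{\prod_{\ell<j-1}\gamma_\ell}\le\cdots\le\bar\gamma^{\,j}$. Then for any fixed $\gamma\in(\bar\gamma,1)$, Markov's inequality yields $\Prb{\prod_{\ell<j}\gamma_\ell>\gamma^j}\le(\bar\gamma/\gamma)^j$, whose sum over $j$ converges, so the first Borel--Cantelli lemma gives $\Prb{\prod_{\ell<j}\gamma_\ell>\gamma^j \text{ infinitely often}}=0$, which is exactly $\inPrb{\cup_{L}\cap_{j\ge L}\lbrace\prod_{\ell<j}\gamma_\ell\le\gamma^j\rbrace}=1$. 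Notably only first moments enter, which is why no higher-moment or covariance control on the dependent sequence $\lbrace\gamma_\ell\rbrace$ is ever required.

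I expect the main obstacle to be the middle step: converting the abstract lower bound of \cref{def-unif-control-expectation-row}, with its supremum over $k$ and its indicator $\1{\mathcal{A}_k}$, into the clean uniform inequality $\cond{\gamma_\ell}{\mathcal{F}_{\tau_\ell}}\le\bar\gamma$. The difficulties are that the block length $\tau_{\ell+1}-\tau_\ell$ is itself random, so the number of accumulated directions (the $k$ in the definition) is not fixed; that the contraction over a block is a composition of noncommuting projections rather than a single projection, so the Gram-determinant lemma must be proved with care; and that the conditioning is on a stopping-time $\sigma$-algebra, so the reset to $\mathcal{F}_1^0$ must be justified through the Markovian structure. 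Once these are handled, everything downstream is a short Borel--Cantelli argument.
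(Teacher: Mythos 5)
Your proposal is correct and takes essentially the same route as the paper's proof: the dichotomy and the stopping-time bound are inherited from the common-formulation machinery (\cref{corollary-convergence}), the uniformly nontrivial property is invoked with $\mathcal{A}_k$ equal to the event that the current block completes by time $k$ (whose conditional probability tends to one uniformly by \cref{theorem-nu-finite} and Markov's inequality), giving the uniform conditional-mean bound $\cond{\gamma_\ell}{\cdot} \leq \bar\gamma < 1$, after which the tower property, Markov's inequality, and Borel--Cantelli yield the almost-sure eventual geometric rate exactly as in \cref{theorem-convergence-infinite}. The only cosmetic difference is that your Gram-determinant lemma carries the extra factor $\innorm{\Prj_{V}\hat r_\ell}_2^2$, which equals one at the stopping times by construction, so it collapses to the paper's generalized block Meany inequality (\cref{theorem-block-meany}).
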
 

\begin{remark} \label{remark-io-interpretation}
$\inPrb{\cup_{L=0}^\infty \cap_{j=L}^\infty \lbrace \prod_{\ell=0}^{j-1} \gamma_{\ell} \leq \gamma^j \rbrace} = 1$ is equivalent to: there exists a finite random variable, $L$, such that, for any $j \geq L$, $\prod_{\ell=0}^{j-1} \gamma_{\ell} \leq \gamma^j$ with probability one.
\end{remark}
\subsection{Column-action RBASs}
In contrast to row-action RBASs, column-action RBASs \textit{do not need} to assume that the system is consistent. Thus, we simply begin with an iterate $x_0 \in \mathbb{R}^d$ and some prior information, encapsulated by $\zeta_{-1} \in \mathfrak{Z}$, where $\mathfrak{Z}$ is finite in some sense. We then generate a sequence of iterates, $\lbrace x_k : k \in \mathbb{N} \rbrace$, according to
\begin{equation} \label{eqn-base-col-update}
x_{k+1} = x_k - W_k (W_k^\intercal A^\intercal A W_k)^\dagger W_k^\intercal A^\intercal (A x_k - b),
\end{equation}
where $\cdot^\dagger$ represents a pseudo-inverse; and $\lbrace W_k \in \mathbb{R}^{d \times n_k} \rbrace$ are possibly random quantities (i.e., vectors or matrices) generated according to a possibly random, adaptive procedure, $\varphi_C$, which supplies
\begin{equation} \label{eqn-col-adaptive}
W_k, \zeta_k = \varphi_C(A, b, \lbrace x_j : j \leq k \rbrace, \lbrace W_j : j < k \rbrace, \lbrace \zeta_j : j < k \rbrace) \in \mathbb{R}^{d \times n_k} \times \mathfrak{Z}.
\end{equation}
Note, our remarks about row-action RBASs apply here as well. We now present several examples.

\begin{example}[Cyclic Vector Coordinate Descent] \label{example-cyclic-cd}
Let $\lbrace e_i :i=1,\ldots d \rbrace$ denote the standard basis elements of $\mathbb{R}^d$. In this method, we update the iterate $x_k$ to $x_{k+1}$ by one coordinate at a time according to $x_{k+1} = x_k + e_i \alpha_k$ where $\alpha_k$ solves
\begin{equation}
\min_{\alpha \in \mathbb{R}} \norm{ (b - Ax_k) - A e_i \alpha }_2,
\end{equation}
which produces
\begin{equation}
x_{k+1} = x_k + e_i \frac{e_i^\intercal A^\intercal (b - A x_k)}{\norm{Ae_i}_2^2}.
\end{equation}
The choice of $e_i$ is determined by simply cycling through the basis elements in order. To rephrase this method within our formulation, we define $\mathfrak{Z} = \lbrace 0 \rbrace \cup \mathbb{N}$, $\zeta_{-1} = 0$, and
\begin{equation}
\varphi_C(A, b, \lbrace x_j : j \leq k \rbrace, \lbrace W_j : j < k \rbrace, \lbrace \zeta_j : j < k \rbrace) = ( e_{\mathrm{rem}(\zeta_{k-1},d) + 1}, \zeta_{k-1} + 1).
\end{equation}
With this choice of $(W_k, \zeta_k)$, we see that the cyclic vector coordinate descent method is equivalent to \cref{eqn-base-col-update}. We underscore that $\varphi_C$ only depends on $\zeta_{k-1}$ and the standard basis elements. \hfill $\blacksquare$
\end{example}

\begin{example}[Random Permutation Block Coordinate Descent] \label{example-cyclic-block-cd}
Let $\lbrace E_i : i = 1,\ldots, \epsilon \rbrace$ be matrices whose columns are generated by some partitioning of the $d \times d$ identity matrix. In this method, we have the update $x_{k+1} = x_k + E_i v_k$, where $v_k$ solves
\begin{equation}
\min_{v} \norm{ (b - Ax_k) - A E_i v }_2,
\end{equation}
which produces the update
\begin{equation}
x_{k+1} = x_k + E_i (E_i^\intercal A^\intercal A E_i)^\dagger E_i^\intercal A^\intercal (b - Ax_k).
\end{equation}
To choose $E_i$, we begin by randomly permuting $\lbrace E_i : i = 1,\ldots, \epsilon \rbrace$, pass through this permutation until it is exhausted, select a new random permutation, pass through this permutation until it is exhausted, and repeat. By following the column-action analogue of \cref{example-random-permute-kaczmarz}, we can rephrase this method within our formulation. \hfill $\blacksquare$
\end{example}

\begin{example}[Block Gaussian Column Space Descent] \label{example-gaussian-column}
Let $\lbrace W_k : k + 1 \in \mathbb{N} \rbrace$ be matrices with independent, identically distributed standard Gaussian components. In this method, we use the update $x_{k+1} = x_k + W_k v_k$, where $v_k$ solves
\begin{equation}
\min_{v} \norm{ (b - Ax_k) - A W_k v }_2,
\end{equation}
which produces the update
\begin{equation}
x_{k+1} = x_k + W_k ( W_k^\intercal A^\intercal A W_k)^\dagger W_k^\intercal A^\intercal ( b - A x_k).
\end{equation}
It is clear that this update is exactly in the form of \cref{eqn-base-col-update}. Moreover, we can choose $\mathfrak{Z} = \lbrace \emptyset \rbrace$, $\zeta_{-1} = \emptyset$, and we can define
\begin{equation}
\varphi_C( A, b, \lbrace x_j : j \leq k \rbrace, \lbrace W_j : j < k \rbrace, \lbrace \zeta_j : j < k \rbrace) = (W_k, \emptyset).
\end{equation}
Thus, this method fits within our formulation. \hfill $\blacksquare$
\end{example}

As these example demonstrate, column-action RBAS methods are also forgetful---that is, they satisfy the following analogue of \cref{def-markovian-row}.
\begin{definition}[Markovian] \label{def-markovian-col}
A column-action RBAS is Markovian if there exists a finite $M \in \mathbb{N}$ such that for any measurable sets $\mathcal{W} \subset \mathbb{R}^{d \times n_k}$ and $\mathcal{Z} \subset \mathfrak{Z}$,
\begin{equation}
\condPrb{ W_k \in \mathcal{W}, \zeta_k \in \mathcal{Z}}{\mathcal{F}_{k+1}^{k}} = \condPrb{ W_k \in \mathcal{W}, \zeta_k \in \mathcal{Z}}{\mathcal{F}_{\min\lbrace M,\, k+1 \rbrace }^k}.
\end{equation}
\end{definition}
\begin{remark}
See \cref{remark-markovian}.
\end{remark}

Similarly, just as with row-action methods, column-action RBASs are also $N,\pi$-Exploratory. To state this definition, define $r^* = -\Prj_{\ker(A^\intercal)} b$.
\begin{definition}[$N,\pi$-Exploratory] \label{def-exploratory-col}
A column-action RBAS is $N,\pi$-\\ Exploratory for some $N \in \mathbb{N}$ and $\pi \in (0,1]$ if
\begin{equation}
\sup_{\substack{ x_0 \in \mathbb{R}^d:\, Ax_0 - b \neq r^*  \\ \zeta_{-1} \in \mathfrak{Z} }} \condPrb{ \bigcap_{j=0}^{N-1} \left\lbrace \col(A W_j) \perp Ax_0-b \right\rbrace }{\mathcal{F}_1^0} \leq 1 - \pi.
\end{equation}
\end{definition}
Note, the Block Gaussian Column Space Descent method, \cref{example-gaussian-column}, is $1,1$-Exploratory.

Just as for row-action methods, we will have a bifurcation of the theory for the convergence of column-action methods based on whether the elements of $\lbrace \col(A W_k) \rbrace$ take value in a finite set. In the case that they do, we have the following analogue of \cref{theorem-convergence-row-finite}.
\begin{corollary} \label{theorem-convergence-col-finite}
Let $A \in \mathbb{R}^{n \times d}$, $b \in \mathbb{R}^n$, and $r^* = -\Prj_{\ker(A^\intercal)} b$. Let $x_0 \in \mathbb{R}^d$ and $\zeta_{-1} \in \mathfrak{Z}$. Let $\lbrace x_k : k \in \mathbb{N} \rbrace$ be a sequence generated by \cref{eqn-base-col-update,eqn-col-adaptive} satisfying \cref{def-markovian-col} and \cref{def-exploratory-col} for some $N \in \mathbb{N}$ and $\pi \in (0,1]$. If the elements of $\lbrace \col(AW_k) : k+1 \in \mathbb{N} \rbrace$ take value in a finite set, then either
\begin{remunerate}
\item there exists a stopping time $\tau$ with finite expectation such that $Ax_{\tau} - b = r^*$; or
\item there exists a sequence of non-negative stopping times $\lbrace \tau_{j} : j +1 \in \mathbb{N} \rbrace$ for which $\E{ \tau_{j} } \leq j [ (\rnk{A} - 1) (N /\pi) + 1]$,  and there exist $\gamma \in (0,1)$ and a sequence of random variables $\lbrace \gamma_j : j+1 \in \mathbb{N}\rbrace \subset (0,\gamma]$, such that
\begin{equation}
\Prb{ \bigcap_{j=0}^\infty \left\lbrace \norm{ Ax_{\tau_j} - b - r^* }_2^2 \leq \left( \prod_{\ell=0}^{j-1} \gamma_{\ell} \right) \norm{ Ax_0 - b - r^* }_2^2 \right\rbrace} = 1.
\end{equation}
\end{remunerate}
\end{corollary}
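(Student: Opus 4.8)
My plan is to reduce \cref{theorem-convergence-col-finite} to the already-established row-action result \cref{theorem-convergence-row-finite} by a change of variables that turns the column-action update on $x_k$ into a \emph{consistent} row-action (Kaczmarz-type) projection update, carried out on $\range(A)$.

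First I would record the algebraic identity that drives the reduction. Writing $y_k = A x_k$ and left-multiplying \cref{eqn-base-col-update} by $A$, the operator $A W_k (W_k\T A\T A W_k)^\dagger W_k\T A\T$ is exactly the orthogonal projector $\Prj_{\col(A W_k)}$, so $y_{k+1} - b = (I - \Prj_{\col(A W_k)})(y_k - b)$. Since $\col(A W_k) \subseteq \range(A)$ while $r^* = -\Prj_{\ker(A\T)} b \in \ker(A\T) = \range(A)^\perp$, every projector $\Prj_{\col(A W_k)}$ annihilates $r^*$; hence, setting $\hat y_k := y_k - \Prj_{\range(A)} b = A x_k - b - r^*$, the $r^*$-terms cancel and the dynamics collapse to the pure projection recursion $\hat y_{k+1} = (I - \Prj_{\col(A W_k)}) \hat y_k$, where $\|\hat y_k\|_2 = \|A x_k - b - r^*\|_2$ is precisely the quantity the corollary tracks.

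Next I would pass to coordinates on $\range(A)$. Let $U \in \mathbb{R}^{n \times \rnk{A}}$ have orthonormal columns spanning $\range(A)$, and set $w_k = U\T y_k$, $\tilde W_k = U\T A W_k$, and $c = U\T b$. Because all iterates and all $\col(A W_k)$ lie in $\range(A)$, applying $U\T$ to the recursion shows that $w_k$ follows the row-action update \cref{eqn-base-row-update} for the consistent system $I_{\rnk{A}}\, w = c$, with sketches $\tilde W_k$ and solution set the single point $c = \Prj_{\range(A)} b$. This auxiliary system has rank $\rnk{A}$ (supplying the correct constant $(\rnk{A}-1)(N/\pi)+1$ rather than one involving $n$), its solution projection equals $c$ regardless of $w_0$, and the isometry $U\T|_{\range(A)}$ gives $\|w_k - c\|_2 = \|A x_k - b - r^*\|_2$ together with the equivalences $w_\tau = c \Leftrightarrow A x_\tau - b = r^*$ and $\col(\tilde W_j) \perp (w_0 - c) \Leftrightarrow \col(A W_j) \perp (A x_0 - b)$.

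Finally I would verify that the three hypotheses transfer and invoke \cref{theorem-convergence-row-finite}. Finiteness of $\lbrace \col(A W_k) \rbrace$ is equivalent to finiteness of $\lbrace \col(\tilde W_k) \rbrace$, since $U\T$ restricts to an isomorphism on $\range(A)$. Using the orthogonality equivalence above together with $\lbrace x_0 : A x_0 - b \neq r^* \rbrace = \lbrace w_0 : w_0 \neq c \rbrace$, \cref{def-exploratory-col} becomes \cref{def-exploratory-row} for the auxiliary process, and likewise \cref{def-markovian-col} becomes \cref{def-markovian-row}; translating the conclusion back through $\|w_k - c\|_2 = \|A x_k - b - r^*\|_2$ then yields the stated dichotomy. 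The delicate step I expect to dominate the work is the transfer of the Markovian and Exploratory hypotheses: both are phrased via the $\sigma$-algebras $\mathcal{F}_k^j$ generated by the \emph{full} column-action history $(\zeta_\bullet, x_\bullet, W_\bullet)$, whereas the auxiliary process only retains $(\zeta_\bullet, w_\bullet, \tilde W_\bullet)$, which discards the $\ker(A)$-component of $x_k$. The reduction must therefore keep the original filtration throughout — legitimate because the convergence analysis depends on the history only through the measurable quantities $A x_k$ and $A W_k$ — and this is exactly the bookkeeping that the common formulation of \cref{section-convergence} is designed to make rigorous; executing it carefully is where the effort lies.
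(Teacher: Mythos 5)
Your algebraic reduction is sound as far as it goes, and its core is in fact identical to what the paper does: left-multiplying \cref{eqn-base-col-update} by $A$ and observing that $\Prj_{\col(AW_k)}$ annihilates $r^*$ gives $\hat y_{k+1}=(I-\Prj_{\col(AW_k)})\hat y_k$ for $\hat y_k = Ax_k-b-r^*$, which is precisely the common formulation of \cref{subsection-common} (see \cref{eqn-common-y-def,eqn-base-common-update,eqn-base-proj-def}). The paper, however, does not route through the row-action corollary: it proves the abstract result \cref{theorem-convergence-common-finite}, whose hypotheses \cref{def-markovian,def-exploratory} are deliberately phrased relative to the \emph{original} $\sigma$-algebras \cref{eqn-sigma-algebra} generated by $(\zeta_k,x_k,W_k)$, and then obtains \cref{theorem-convergence-col-finite} by direct substitution. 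Your further compression by $U\T$ onto $\range(A)$, the consistent system $I_{\rnk{A}}\,w=c$, the isometry of norms, and the rank bookkeeping all check out as linear algebra, but they are not where the difficulty lies.

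The genuine gap is the final step, ``invoke \cref{theorem-convergence-row-finite}.'' That corollary's hypotheses, \cref{def-markovian-row,def-exploratory-row}, are stated with respect to the $\sigma$-algebras \cref{eqn-sigma-algebra} of the process they govern, so for your auxiliary process they must hold relative to the filtration generated by $(\zeta_k, w_k, \tilde W_k)$ --- and this can fail even when \cref{def-markovian-col} holds, because a measurable image (a ``lumping'') of a Markov process is generally not Markov. Concretely, take $d=3$, $n=2$, let $A$ have rows $e_1\T$ and $e_2\T$, and let $\varphi_C$ choose $W_k=e_1+e_3$ or $W_k=e_2+e_3$ with probabilities $0.9/0.1$ or $0.1/0.9$ according to the sign of the hidden coordinate $e_3\T x_k$. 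The column-action process is Markovian with $M=1$, since the law of $(W_k,\zeta_k)$ is a function of $x_k$ alone; but $e_3\T x_k$ is a function of the \emph{entire} auxiliary history and of no bounded portion of it, so the conditional law of $\tilde W_k = AW_k$ given $\mathcal{F}^k_{\min\lbrace M,k+1\rbrace}$ (auxiliary version) differs from that given the full auxiliary history for every finite $M$: \cref{def-markovian-row} fails outright. Your proposed escape --- ``keep the original filtration throughout'' --- is not available if \cref{theorem-convergence-row-finite} is to be cited as a black box, since its hypotheses are not stated for an arbitrary ambient filtration; restating and reproving it so that they are is exactly \cref{theorem-convergence-common-finite}, at which point the reduction is circular. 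A genuine repair within your strategy is to augment the auxiliary state, e.g.\ $\zeta_k'=(\zeta_k,x_{k+1})\in\mathfrak{Z}\times\mathbb{R}^d$ (permitted, since $\mathfrak{Z}\times\mathbb{R}^d$ remains ``finite in some sense''), so that $\sigma(\zeta'_{k-1},w_k)\supseteq\sigma(\zeta_{k-1},x_k)$ and the Markov property transfers by the tower property; one must then also recheck the supremum over the enlarged state space in \cref{def-exploratory-row}. That bookkeeping, which you defer, is the actual content missing from your proof.
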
 

The same comments for \cref{theorem-convergence-row-finite} apply to \cref{theorem-convergence-col-finite}. Also, just as for \cref{theorem-convergence-row-finite}, \cref{theorem-convergence-col-finite} does not cover \cref{example-gaussian-column} if $n_k < \rnk{A}$. For the infinite set case, we will make use of the same notation as before with the following modifications. First,
\begin{equation} \label{eqn-progress-indicator-col}
\chi_{\ell} = \begin{cases}
1 & Ax_{\ell+1} - b \neq Ax_{\ell} - b, \\
0 & Ax_{\ell+1} - b = Ax_{\ell} - b.
\end{cases}
\end{equation}
Second, let $\mathfrak{Q}_{\ell}$ denote the collection of sets of vectors that are orthonormal and are a basis of $\col(A W_\ell \chi_{\ell})$. We now state the analogues of \cref{def-unif-control-expectation-row,theorem-convergence-row-infinite}.
\begin{definition}[Uniformly Nontrivial] \label{def-unif-control-expectation-col}
A column-action RBAS is uniformly nontrivial if for any $\lbrace \mathcal{A}_k : \mathbb{R}^d \times \mathfrak{Z} \to \mathcal{F}_{k+1}^k \rbrace_{ k+1 \in \mathbb{N}}$ such that 
$\lim_{k \to \infty} \inf_{x_0: Ax_0 \neq b, \zeta_{-1} \in \mathfrak{Z} }$ $\incondPrb{ \mathcal{A}_{k}(x_0, \zeta_{-1})}{\mathcal{F}_{1}^0} = 1,
$
there exists a $g_{\mathcal{A}} \in (0,1]$ such that
\begin{equation} \label{eqn-unif-control-expectation-col}
\inf_{ \substack{x_0: Ax_0 - b \neq r^* \\ \zeta_{-1} \in \mathfrak{Z}} } \sup_{k \in \mathbb{N} \cup \lbrace 0 \rbrace } \cond{ \sup_{ \substack{ Q_s \in \mathfrak{Q}_s \\ s \in \lbrace 0,\ldots,k \rbrace}} \min_{ G \in \mathcal{G}(Q_0,\ldots,Q_k)} \det(G^\intercal G) \1{ \mathcal{A}_k(x_0, \zeta_{-1})}}{\mathcal{F}^0_1} \geq g_{\mathcal{A}}.
\end{equation}
\end{definition}

\begin{corollary} \label{theorem-convergence-col-infinite}
Let $A \in \mathbb{R}^{n \times d}$, $b \in \mathbb{R}^n$, and $r^* = -\Prj_{\ker(A^\intercal)} b$. Let $x_0 \in \mathbb{R}^d$ and $\zeta_{-1} \in \mathfrak{Z}$. Let $\lbrace x_k : k \in \mathbb{N} \rbrace$ be a sequence generated by \cref{eqn-base-col-update,eqn-col-adaptive} satisfying \cref{def-markovian-col}, \cref{def-exploratory-col} for some $N \in \mathbb{N}$ and $\pi \in (0,1]$, and \cref{def-unif-control-expectation-col}. One of the following is true.
\begin{remunerate}
\item There exists a stopping time $\tau$ with finite expectation such that $A x_{\tau} - b= r^*$.
\item There exists a sequence of non-negative stopping times $\lbrace \tau_{j} : j +1 \in \mathbb{N} \rbrace$ for which $\E{ \tau_{j} } \leq j [ (\rnk{A} - 1) (N /\pi) + 1]$, there exists $\bar \gamma \in (0,1)$, and there exists a sequence of random variables $\lbrace \gamma_j : j+1 \in \mathbb{N}\rbrace \subset (0,1)$, such that
\begin{equation}
\Prb{ \bigcap_{j=0}^\infty \left\lbrace \norm{ Ax_{\tau_j} - b - r^* }_2^2 \leq \left( \prod_{\ell=0}^{j-1} \gamma_{\ell} \right) \norm{ Ax_0 -b - r^*}_2^2 \right\rbrace} = 1,
\end{equation}
where for any $\gamma \in (\bar \gamma,1)$,  $\inPrb{\cup_{L=0}^\infty \cap_{j=L}^\infty \lbrace \prod_{\ell=0}^{j-1} \gamma_{\ell} \leq \gamma^j \rbrace} = 1$.
\end{remunerate}
\end{corollary}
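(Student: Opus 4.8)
The plan is to reduce \cref{theorem-convergence-col-infinite} to the common vector-projection recursion that already powers the row-action result \cref{theorem-convergence-row-infinite}. Concretely, rather than tracking the iterate $x_k$, I would track the shifted residual $v_k := Ax_k - b - r^*$, show that $\lbrace v_k \rbrace$ obeys exactly the orthogonal-projection dynamics of the row-action error $x_k - \Prj_{\mathcal{H}} x_0$ with $\col(A^\intercal W_k)$ replaced by $\col(AW_k)$, and then observe that the three hypotheses---\cref{def-markovian-col}, \cref{def-exploratory-col}, and \cref{def-unif-control-expectation-col}---are, verbatim, the abstract hypotheses driving the row-action argument. The substance of the proof is therefore to verify that the column-action least-squares setting genuinely collapses onto that recursion; once it does, the epoch construction and the probabilistic decay estimate transfer without change.

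First I would establish the residual recursion. Writing $V_k = AW_k$, the update \cref{eqn-base-col-update} gives
\[
Ax_{k+1} - b = (Ax_k - b) - V_k(V_k^\intercal V_k)^\dagger V_k^\intercal(Ax_k - b) = (I - \Prj_{\col(AW_k)})(Ax_k - b),
\]
since $V_k(V_k^\intercal V_k)^\dagger V_k^\intercal$ is the orthogonal projector onto $\col(AW_k)$. Because $r^* = -\Prj_{\ker(A^\intercal)} b \in \ker(A^\intercal) = \col(A)^\perp$ and $\col(AW_k) \subseteq \col(A)$, we have $\Prj_{\col(AW_k)} r^* = 0$, so subtracting $r^*$ yields the clean recursion $v_{k+1} = (I - \Prj_{\col(AW_k)}) v_k$. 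Two consequences follow at once: every $v_k$ lies in $\col(A)$, a subspace of dimension $\rnk{A}$, which supplies the $\rnk{A}$ appearing in the bound $\E{\tau_j} \le j[(\rnk{A}-1)(N/\pi)+1]$; and $\norm{v_{k+1}}_2^2 = \norm{v_k}_2^2 - \norm{\Prj_{\col(AW_k)} v_k}_2^2$, so $\chi_\ell$ from \cref{eqn-progress-indicator-col} indicates exactly whether a nonzero component of $v_\ell$ was removed. This is formally identical to the row-action recursion for $x_k - \Prj_{\mathcal{H}} x_0$, with $\col(A^\intercal W_k)$ and $\row(A)$ replaced by $\col(AW_k)$ and $\col(A)$.

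With the recursion in hand, the remaining steps mirror \cref{theorem-convergence-row-infinite}. One partitions the iterations into epochs by defining stopping times $\tau_j$ at which a maximal linearly independent set of the progress-making directions $\col(AW_\ell \chi_\ell)$ has been accumulated; \cref{def-exploratory-col} guarantees each new independent direction appears within expected $N/\pi$ iterations and at most $\rnk{A}-1$ are needed per epoch, yielding the stated expectation exactly as in the finite case \cref{theorem-convergence-col-finite}. The per-epoch contraction factor $\gamma_j$ is controlled by the Gram determinant $\det(G^\intercal G)$ of the accumulated orthonormal directions, for which \cref{def-unif-control-expectation-col} supplies a uniform-in-$(x_0,\zeta_{-1})$ lower bound on its conditional expectation. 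The main obstacle---the genuinely probabilistic heart of the argument, inherited from the row-action proof---is converting this lower bound on conditional means into the almost-sure geometric statement $\prod_{\ell=0}^{j-1}\gamma_\ell \le \gamma^j$ eventually; since $\lbrace \gamma_\ell \rbrace$ is a dependent sequence for which only the first conditional moment is controlled, this needs a Doob decomposition of $\sum_\ell (-\log\gamma_\ell)$ together with a martingale/Borel--Cantelli concentration estimate rather than an i.i.d.\ law of large numbers. The one feature specific to the column-action case that must be checked with care is that inconsistency is fully absorbed by the shift $r^*$: the target is not a solution set but the least-squares residual, and the reduction only goes through because $r^* \in \col(A)^\perp$ is annihilated by every $\col(AW_k)$, which is precisely the computation in step one.
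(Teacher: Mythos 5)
Your reduction is exactly the paper's: the shifted residual $v_k = Ax_k - b - r^*$ is precisely the common-form variable $y_k$ of \cref{eqn-common-y-def}, your verification that $r^* \in \ker(A^\intercal) = \col(A)^\perp$ is annihilated by $\Prj_{\col(AW_k)}$ is the content of \cref{subsection-common}, and the epoch construction (new independent direction within expected $N/\pi$ iterations, at most $\rnk{A}-1$ per epoch, contraction controlled by the Gram determinant via \cref{theorem-block-meany}) is \cref{theorem-nu-finite} and \cref{corollary-convergence} verbatim. The statement then follows from \cref{theorem-convergence-infinite} by substitution, as the paper does.

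The one place you diverge is the final probabilistic step, and there your diagnosis is off. The paper does \emph{not} need a Doob decomposition of $\sum_\ell(-\log\gamma_\ell)$ or any martingale concentration estimate. It proves by induction, using the Markovian property and the tower property, that $\cond{\prod_{\ell=0}^{j-1}\gamma_\ell}{\mathcal{F}_1^0} \leq \bar\gamma^{\,j}$ (each conditional factor is bounded by $\bar\gamma = 1 - g$ from \cref{def-unif-control}), and then applies Markov's inequality directly to the product, $\incondPrb{\prod_{\ell=0}^{j-1}\gamma_\ell > \gamma^j}{\mathcal{F}_1^0} \leq (\bar\gamma/\gamma)^j$, followed by Borel--Cantelli. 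This is why the paper can emphasize that only first conditional moments are ever controlled. Your martingale route, as stated, would actually stall: the increments $-\log\gamma_\ell$ are unbounded above (nothing prevents $\gamma_\ell$ from being arbitrarily close to $0$), and the hypotheses supply no variance or higher-moment control, so a martingale strong law or Azuma-type bound is not available. The repair is immediate and collapses back onto the paper's argument---the bound $\cond{\gamma_\ell}{\mathcal{F}_{\tau_\ell+1}^{\tau_\ell}} \leq \bar\gamma$ is exactly a negative exponential-moment bound on $-\log\gamma_\ell$, so a Chernoff/Markov inequality on the product (rather than a decomposition of the log-sum) gives the summable tail without any further assumptions. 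So: same proof in substance, but replace your proposed martingale machinery with the tower-property induction plus Markov plus Borel--Cantelli.
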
 

\begin{remark}
See \cref{remark-io-interpretation}.
\end{remark}

\section{Convergence Theory} \label{section-convergence}
We now prove \cref{theorem-convergence-row-finite,theorem-convergence-row-infinite,theorem-convergence-col-finite,theorem-convergence-col-infinite} by the following steps.
\begin{remunerate}
\item In \cref{subsection-common}, we will write row-action and column-action methods using a common form, which reveals that the iterates (in the common form) are generated by products of orthogonal projections, which raises the questions: when will this sequence of products of orthogonal projections produce a reduction in the norms of the iterates and how big will this reduction be?
\item In \cref{subsection-meany}, we will answer this question by proving a generalized block Meany inequality, which states that when the iterate is in a space generated by the a sequence of projection matrices, we are guaranteed a certain amount of reduction in the norms of the iterates. Of course, this raises the question: when will the iterate be in this space?
\item In \cref{subsection-stopping}, we define a stopping time for each iterate that, when finite, implies that the iterate will be in the aforementioned space. We show that when a RBAS is Markovian and $N,\pi$-exploratory, then, starting at any iterate, this stopping time is finite in expectation and we derive an explicit bound on this expectation. 
\item Once we have established the finiteness of this stopping time, we can then apply our generalized block Meany's inequality to guarantee a reduction in the norm of the iterates. However, owing to the possible randomness of the procedure and the stopping times, we will need to find a deterministic control over the reduction constant provided by our generalized block Meany's inequality. In \cref{subsection-finiteset}, we will find this deterministic value by using the worst case over a finite set, which will prove \cref{theorem-convergence-row-finite,theorem-convergence-col-finite}. In \cref{subsection-infiniteset}, we will find this deterministic value by using the uniformly nontrivial property, which will prove \cref{theorem-convergence-row-infinite,theorem-convergence-col-infinite}.
\end{remunerate}
\subsection{Common Formulation} \label{subsection-common} 
Our first step will be to rewrite row-action and column-action RBASs, and the corresponding definitions using a common formulation. To this end, we define
\begin{equation} \label{eqn-common-y-def}
y_k = \begin{cases}
x_k - \Prj_{\mathcal{H}} x_0 & \text{if \cref{eqn-base-row-update} and \cref{assumption-consistency}}, \\
Ax_k - b - r^* & \text{if \cref{eqn-base-col-update}},
\end{cases}
\end{equation}
where $r^* = - \Prj_{\ker(A^\intercal)} b$.
Owing to this definition, the update $y_k$ to $y_{k+1}$ is
\begin{equation} \label{eqn-base-common-update}
y_{k+1} = (I - \Prj_k ) y_k,
\end{equation}
where  $\Prj_k$ are orthogonal projection matrices defined by
\begin{equation} \label{eqn-base-proj-def}
\Prj_k = \begin{cases}
A^\intercal W_k ( W_k^\intercal A A^\intercal W_k)^\dagger W_k^\intercal A & \text{if \cref{eqn-base-row-update}} \\
AW_k (W_k^\intercal A^\intercal A W_k)^\dagger W_k^\intercal A^\intercal & \text{if \cref{eqn-base-col-update}}.
\end{cases}
\end{equation}
Thus, with these definitions, it is enough to prove convergence and rate of convergence results about $\lbrace y_k \rbrace$.
\begin{remark}
We can change the inner product space as done in \cite{gower2019}, and we would still recover \cref{eqn-base-common-update} with a simple change of variables. See \cite{pritchard2022}.
\end{remark}

To focus on $\lbrace y_k \rbrace$, we can update some of our definitions in terms of $\lbrace y_k \rbrace$ and $\lbrace \Prj_k \rbrace$.
\begin{definition}[Markovian] \label{def-markovian}
An RBAS (see \cref{eqn-base-common-update}) is Markovian if there exists a finite $M \in \mathbb{N}$ such that for any measurable sets $\mathcal{W}$ and $\mathcal{Z} \subset \mathfrak{Z}$,
\begin{equation}
\condPrb{ W_k \in \mathcal{W}, \zeta_k \in \mathcal{Z}}{\mathcal{F}_{k+1}^{k}} = \condPrb{ W_k \in \mathcal{W}, \zeta_k \in \mathcal{Z}}{\mathcal{F}_{\min\lbrace M,\, k+1 \rbrace }^k}.
\end{equation}
\end{definition}
\begin{definition}[$N,\pi$-Exploratory] \label{def-exploratory}
An RBAS (see \cref{eqn-base-common-update}) is $N,\pi$-Exploratory for some $N \in \mathbb{N}$ and $ \pi \in (0,1]$ if
\begin{equation}
\sup_{\substack{ y_0: y_0 \neq 0  \\ \zeta_{-1} \in \mathfrak{Z}}} \condPrb{ \bigcap_{j=0}^{N-1} \left\lbrace \col(\Prj_j) \perp y_0 \right\rbrace }{\mathcal{F}_1^0} \leq 1 - \pi.
\end{equation}
\end{definition}
\begin{definition}[Uniformly Nontrivial] \label{def-unif-control}
An RBAS (see \cref{eqn-base-common-update}) is uniformly nontrivial if for any $\lbrace \mathcal{A}_k : \mathbb{R}^d \times \mathfrak{Z} \to \mathcal{F}_{k+1}^k : k+1 \in \mathbb{N} \rbrace$ such that
$\lim_{k \to \infty} \inf_{ y_0: y_0 \neq 0, \zeta_{-1} \in \mathfrak{Z}}$ $ \incondPrb{ \mathcal{A}_{k}(x_0, \zeta_{-1})}{\mathcal{F}_{1}^0} = 1,$
there exists a $g_{\mathcal{A}} \in (0,1]$ such that
\begin{equation} \label{eqn-unif-control}
\inf_{ \substack{y_0: y_0 \neq 0 \\ \zeta_{-1} \in \mathfrak{Z}} } \sup_{k \in \mathbb{N} \cup \lbrace 0 \rbrace } \cond{ \sup_{ \substack{ Q_s \in \mathfrak{Q}_s \\ s \in \lbrace 0,\ldots,k \rbrace}} \min_{ G \in \mathcal{G}(Q_0,\ldots,Q_k)} \det(G^\intercal G) \1{ \mathcal{A}_k(x_0, \zeta_{-1})}}{\mathcal{F}^0_1} \geq g_{\mathcal{A}}.
\end{equation}
\end{definition}

\subsection{Generalized Block Meany Inequality} \label{subsection-meany}
From \cref{eqn-base-common-update}, we see that $\lbrace y_k \rbrace$ are updated by applying a sequence of orthogonal projection matrices. We can now ask whether this application of projection matrices will drive $\lbrace \innorm{y_k}_2 \rbrace$ to zero and at what rate. This question was first answered for products of projections of the form $I - q q^\intercal$ in \cite{meany1969}, where the $q$'s in the product are linearly independent---a result known as Meany's inequality. Meany's inequality has been generalized in two ways. First, Meany's inequality was extended to products of the form $I - QQ^\intercal$ in \cite[Theorem 4.1]{bai2013}, where each $Q$ has orthonormal columns and the concatenation of all $Q$'s in the product form a nonsingular matrix. Second, in \cite[Theorem 4.1]{PAJAMA2021implicit}, Meany's inequality was generalized to the case in which there is a loss of independence between the $q$'s. Here, we generalize all of these results. 

To state our result, we will need to update some notation. 
For any $k$, let $\chi_k$ be $1$ if $\Prj_k y_k \neq 0$ and zero otherwise, 
which we see is equivalent to \cref{eqn-progress-indicator-row,eqn-progress-indicator-col} for the two different RBAS types.
Let $\mathfrak{Q}_j$ be the set of orthogonal bases of $\col(\Prj_j)$ for all $j+1 \in \mathbb{N}$. Finally, let $\mathcal{C}_k^j = \col(\Prj_j \chi_j) + \cdots + \col(\Prj_{j+k} \chi_{j+k})$.

\begin{theorem}[Generalized Block Meany's Inequality] \label{theorem-block-meany}
Let $j+1, k +1 \in \mathbb{N}$. Then, for any $y \in \mathcal{C}_k^j$ with $\innorm{y}_2 = 1$, 
$\innorm{ (I - \Prj_{j+k}\chi_{j+k})\cdots ( I - \Prj_j\chi_j) y }_2^2$ is no greater than
$1 - \sup_{Q_i \in \mathfrak{Q}_i, i \in \lbrace j,\ldots,j+k \rbrace}\min_{ G \in \mathcal{G}(Q_j,\ldots,Q_{j+k})} \det(G^\intercal G)$.

\end{theorem}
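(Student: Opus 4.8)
The plan is to reduce the operator statement to a purely deterministic volume inequality via a Pythagorean telescoping, and then to settle that inequality by exhibiting a single well-chosen basis drawn from the $Q_i$'s. First I would exploit that each factor $I - \Prj_{j+i}\chi_{j+i}$ is an orthogonal projection. Writing $y^{(0)} = y$ and $y^{(i+1)} = (I - \Prj_{j+i}\chi_{j+i})y^{(i)}$, the orthogonality of the range and kernel of a projection gives $\norm{y^{(i+1)}}_2^2 = \norm{y^{(i)}}_2^2 - \norm{\Prj_{j+i}\chi_{j+i}y^{(i)}}_2^2$. Telescoping over $i = 0,\ldots,k$ and using $\norm{y}_2 = 1$ yields
\[
\norm{(I - \Prj_{j+k}\chi_{j+k})\cdots(I - \Prj_j\chi_j)y}_2^2 = 1 - \sum_{i=0}^k \norm{\Prj_{j+i}\chi_{j+i}y^{(i)}}_2^2 ,
\]
so the claim is equivalent to the lower bound $\sum_{i=0}^k \norm{\Prj_{j+i}\chi_{j+i}y^{(i)}}_2^2 \geq \sup_{Q_i \in \mathfrak{Q}_i}\min_{G} \det(G\T G)$.

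The key structural observation is that the left-hand side is a fixed scalar once $y$ is fixed: each term $\norm{\Prj_{j+i}\chi_{j+i}y^{(i)}}_2$ depends only on the subspace $\col(\Prj_{j+i}\chi_{j+i})$, not on the chosen orthonormal basis $Q_i$. Hence it suffices to show that for \emph{every} admissible family $\lbrace Q_i \rbrace$ there is a single $G \in \mathcal{G}(Q_j,\ldots,Q_{j+k})$ --- equivalently, a basis of $\mathcal{C}_k^j$ whose columns are drawn from $\cup_s Q_s$ --- with $\det(G\T G) \leq \sum_{i=0}^k \norm{\Prj_{j+i}\chi_{j+i}y^{(i)}}_2^2$, because producing one such $G$ bounds the minimum from above. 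Indices with $\chi_{j+i} = 0$ contribute nothing to the sum, to $\mathcal{C}_k^j$, or to $\cup_s Q_s$, so I would discard them and relabel the active blocks $0,\ldots,m$.

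To build $G$ I would select unit columns greedily, block by block, extending the running span, so that the Gram--Schmidt factorization $\det(G\T G) = \prod_{\ell} \mathrm{dist}(g_\ell, \mathrm{span}(g_1,\ldots,g_{\ell-1}))^2$ holds; a whole orthonormal block contributes a factor of one except through the directions it shares with earlier blocks. The bound would then follow by induction on $m$, pairing the ``new volume'' contributed when block $i$ is appended with the removed piece $p_i := \Prj_{j+i}\chi_{j+i}y^{(i)}$ and using $y^{(i+1)} \perp \col(\Prj_{j+i}\chi_{j+i})$; this simultaneously generalizes the rank-one argument of \cite[Thm~4.1]{PAJAMA2021implicit} and the full-rank block argument of \cite[Thm~4.1]{bai2013}. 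As a cleaner route that isolates the same core, one can note that $T := (I - \Prj_{j+k}\chi_{j+k})\cdots(I - \Prj_j\chi_j)$ maps $V := \mathcal{C}_k^j$ into itself and is a product of projections, so $B := (I - T\T T)|_V$ is symmetric with spectrum in $[0,1]$; moreover $B$ is positive definite on $V$, since $\norm{Ty}_2 = \norm{y}_2$ for $y \in V$ would force every block to fix $y$ and hence $y \perp V$. With eigenvalues in $(0,1]$ one gets $\lambda_{\min}(B) \geq \det(B)$, reducing everything to the determinant comparison $\det(B) \geq \min_G \det(G\T G)$.

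The hard part will be the \emph{loss of independence}: each residual $y^{(i)}$ is orthogonal only to the single most recent subspace $\col(\Prj_{j+i-1}\chi_{j+i-1})$ and not to all earlier ones, so the removed pieces $\lbrace p_i \rbrace$ form a dependent, non-orthonormal family that need not individually span their blocks. This is precisely why the statement must pass to maximal linearly independent subsets of $\cup_s Q_s$ and take the worst such basis through $\min_G$. Managing this bookkeeping --- guaranteeing that the greedily selected $G$ is genuinely a basis of $\mathcal{C}_k^j$ while keeping its Gram determinant below the telescoped sum --- is the delicate step where the bulk of the work lies.
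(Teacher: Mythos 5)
Your reductions are sound but they stop exactly where the theorem begins. The telescoping identity $\innorm{(I-\Prj_{j+k}\chi_{j+k})\cdots(I-\Prj_j\chi_j)y}_2^2 = 1 - \sum_{i=0}^k \innorm{\Prj_{j+i}\chi_{j+i}y^{(i)}}_2^2$ is correct, as is the logic that it suffices to exhibit, for each admissible family $\lbrace Q_i \rbrace$, one $G \in \mathcal{G}(Q_j,\ldots,Q_{j+k})$ with $\det(G\T G) \leq \sum_i \innorm{p_i}_2^2$; the spectral variant (invariance of $\mathcal{C}_k^j$ under $T$ and $T\T$, positive definiteness of $B$ on $\mathcal{C}_k^j$, and $\lambda_{\min}(B) \geq \det(B)$) is also correct. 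But the resulting target --- the greedy/inductive construction of $G$, or equivalently $\det(B) \geq \min_G \det(G\T G)$ --- is never argued; it is described only as ``pairing the new volume with the removed piece,'' and you yourself flag it as where the bulk of the work lies. That target is not bookkeeping: already for two rank-one blocks $q_1, q_2$ at angle $\theta$ it is equivalent to the full Meany inequality $\sum_i \innorm{p_i}_2^2 \geq \sin^2\theta$ for every unit $y \in \linspan{q_1,q_2}$, and in general it carries all of the difficulty of the dependent-vector case. So as written the proposal is a correct reformulation of the theorem, not a proof of it.

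The gap is avoidable, and the paper's proof shows how: since the columns of each $Q_i$ are orthonormal, each block factor splits exactly into rank-one factors, $I - Q_iQ_i\T = \prod_{\ell}(I - q_{i,\ell}q_{i,\ell}\T)$, so for any fixed choice of bases the whole block product is literally a product of rank-one projections. One then invokes the vector-case generalized Meany inequality of \cite{PAJAMA2021implicit}, which already handles loss of independence, to get $\innorm{\prod_i (I-\Prj_i\chi_i)y}_2^2 \leq \bigl(1 - \min_{G \in \mathcal{G}(Q_j,\ldots,Q_{j+k})}\det(G\T G)\bigr)\innorm{y}_2^2$ for that choice, and the supremum over choices comes for free because the left-hand side does not depend on the bases. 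If you intend a self-contained proof that simultaneously generalizes \cite{PAJAMA2021implicit} and \cite{bai2013}, you must actually carry out the induction establishing the determinant bound; otherwise, insert the factorization above and cite the vector result, which closes your argument in two lines.
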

\begin{proof}
Let $n_i = \dim( \col{\Prj_i})$. Begin by fixing $Q_i \in \mathfrak{Q}_i$ for $i=j,\ldots,j+k$, and let $\lbrace q_{i,\ell} : \ell=1,\ldots,n_k\rbrace$ denote the elements of $Q_i$. We can now follow the strategy of \cite{bai2013}.
Letting the product notation indicate terms with increasing index are being multiplied from the left, note that
$
\prod_{i=j}^{j+k} (I - \Prj_i\chi_{i}) = \prod_{i=j}^{k+j} [ \prod_{\ell=1}^{n_{i}} (I - q_{i,\ell}q_{i,\ell}^\intercal\chi_{i}) ].
$
Therefore, \cite[Theorem 4.1]{PAJAMA2021implicit} provides
\begin{equation}
\begin{aligned}
\norm{ (I - \Prj_{j+k}\chi_{j+k})\cdots ( I - \Prj_j\chi_j) y }_2^2 \leq \left( 1 - \min_{ G \in \mathcal{G}(Q_j,\ldots,Q_{j+k})} \det(G^\intercal G)\right) \norm{y}_2^2.
\end{aligned}
\end{equation}
This statement holds for every choice of $Q_i \in \mathfrak{Q}_i$. Therefore, the result follows.
\end{proof}

We pause for a moment to explain the importance of the supremum term in \cref{theorem-block-meany}. We can first ask whether the choice of $Q_i \in \mathfrak{Q}_k$ will make any tangible difference. Consider the very simple situation of applying a block row selection method of a $4 \times 3$ matrix such that $A^\intercal W_1$ and $A^\intercal W_2$ generate
\begin{equation} \label{eqn-meany-constant-example}
\begin{bmatrix}
2 & 1 & 0 \\
-1 & 2 & 3
\end{bmatrix}
\quad\mathrm{and}\quad
\begin{bmatrix}
1 & -3 & 6 \\
0 & 1 & -5
\end{bmatrix},
\end{equation}
respectively. We can compute $\min_{G \in \mathcal{G}(Q_1,Q_{2})} \det(G^\intercal G )$, which we will refer to as Meany's constant, from $10,000$ uniformly sampled bases for the row spaces of these two matrices. The average value of Meany's constant is $0.12$ with a standard deviation of $0.11$, and the quantiles from this experiment are shown in \cref{table-meany-constant-example}. The supremum of Meany's constant is also included in \cref{table-meany-constant-example}. We see that the supremum is at least $8$ fold larger than the average, and over $10^6$ fold larger than the $0.001$ quantile. Thus, the supremum term is extremely important in finding better bounds on the rate of convergence. 

\begin{table}[htb]
\caption{Supremum and quantiles for randomly sampled Meany's Constant for the example in \cref{eqn-meany-constant-example}.}
\label{table-meany-constant-example}
\centering
\begin{tabular}{@{}lccccccc|c@{}} \toprule
\textbf{Quantile} & $0.001$ & $0.05$ & $0.25$ & $0.5$ & $0.75$ & $0.95$ & $0.999$ & \textbf{Sup.} \\
\textbf{Value} & $1.9 \times 10^{-7}$ & $6.5 \times 10^{-4}$ & $0.02$ & $0.09$ & $0.20$ & $0.34$ & $0.47$ & $0.9995$ \\\bottomrule
\end{tabular}
\end{table}

Moreover, the supremum term also underscores the importance of block methods over vector methods (see \cite{PAJAMA2021adaptive}) from a theoretical perspective. For vector methods, there are only two choices in the set $\mathfrak{Q}_i$, and both produce the same value of Meany's constant. Thus, for vector methods, Meany's constant will only differ based on which vectors are seen. To demonstrate this, we run cyclic Kaczmarz and block cyclic Kacmzarz on the coefficient matrix in \cref{eqn-meany-constant-example} until an absolute error of $10^{-4}$ is achieved. We plot one minus the ratio in norm error squared for each method, and the corresponding Meany's constants in \cref{figure-vector-vs-block}. Clearly, we see that the block method is substantially superior over the corresponding vector method both in practice and in theory.

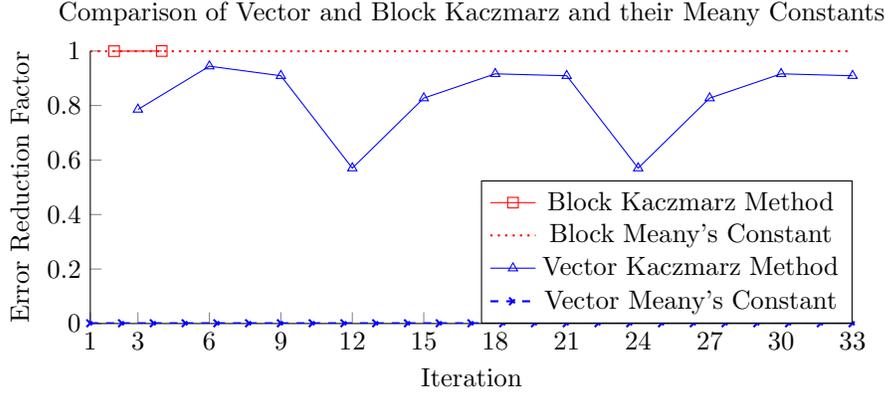
\begin{figure}
\centering
\begin{tikzpicture}
\begin{axis}[
	axis y line*=left,
	axis x line*=none,
	width=0.9\textwidth,
	height=0.4\textwidth,
    title={Comparison of Vector and Block Kaczmarz and their Meany Constants},
    xlabel={Iteration},
    ylabel={Error Reduction Factor},
    xmin=1, xmax=33,
    ymin=0, ymax=1,
    xtick={1,3,6,9,12,15,18,21,24,27,30,33},
    ytick={0,0.2,0.4,0.6,0.8,1.0},
    ymajorgrids=false,
    grid style=dashed,
    legend style={at={(1,0)},anchor=south east}
]

\addplot[
	color=red,
	mark=square,
	]
	coordinates {
(2, 0.9996256691164543)
(4, 0.9999997147955173)	
}; \addlegendentry{Block Kaczmarz Method}

\addplot[domain = 1:33,
	thick,
	dotted,
	red,
	]{0.9995}; \addlegendentry{Block Meany's Constant}

\addplot[
    color=blue,
    mark=triangle,
    ]
    coordinates {
(3, 0.7856332703213611)
(6, 0.9443511540544351)
(9, 0.9093111952955963)
(12, 0.5703308926931956)
(15, 0.8271971450463169)
(18, 0.9163110729664143)
(21, 0.909311195295603)
(24, 0.5703308926931752)
(27, 0.8271971450463463)
(30, 0.9163110729665436)
(33, 0.9093111952954628)
}; \addlegendentry{Vector Kaczmarz Method}

\addplot[domain = 1:33,
	thick,
	dashed,
	mark=x,
	blue,
    ]{0.0009555661729575714}; \addlegendentry{Vector Meany's Constant}
%\addplot[domain = 1:33,
%	thick,
%	dashed,
%	blue,
%    ]{0.08846153846153855};
%\addplot[domain = 1:33,
%	thick,
%	dashed,
%	blue,
%    ]{0.5280219780219781};
%\addplot[domain = 1:33,
%	thick,
%	dashed,
%	blue,
%    ]{0.8077639751552794}; 

\end{axis}

\end{tikzpicture}
\caption{A plot of one less the ratio of norm errors squared between every three iterates for the vector method, and every two iterates for the block method. The horizontal lines correspond to the values of Meany's constant in \cref{theorem-block-meany}.}
\label{figure-vector-vs-block}
\end{figure}

\subsection{Stopping Times} \label{subsection-stopping}
To apply \cref{theorem-block-meany}, we need to determine for which $k$, $y_j \in \mathcal{C}_k^j$. 
Given that $\mathcal{C}_{k}^j$ is random, we will have to allow the time at which this occurs to be random, as follows.\footnote{It is understood that if the condition fails to occur then the stopping time is infinite.} For $j+1 \in \mathbb{N}$, let
\begin{equation} \label{eqn-nu-stop}
\nu(j) = \min \left\lbrace k \geq 0: y_{j} \in \mathcal C_k^j  \right\rbrace.
\end{equation}
Thus, when $\nu(j)$ is finite, \cref{theorem-block-meany} implies $\innorm{y_{j + \nu(j) +1}}_2^2 / \innorm{y_j}_2^2$ is no greater than 
$1 - \sup_{Q_i \in \mathfrak{Q}_i , i \in \lbrace j,\ldots,j+\nu(j) \rbrace }\min_{ G \in \mathcal{G}(Q_j,\ldots,Q_{j+\nu(j)})} \det(G^\intercal G)$.
Hence, we need to determine whether $\nu(j)$ is finite for all $j$, and, ideally, we want to bound it, at the very least, in expectation.
To this end, we will study another stopping time that is an upper bound on $\nu(j)$, and will find a bound on this new stopping time's expectation. We will begin by specifying this stopping time and showing that it is an upper bound on $\nu(j)$.
\begin{lemma} \label{lemma-nu-upper-bound}
For any $j+1 \in \mathbb{N}$, let $\nu(j)$ be defined as in \cref{eqn-nu-stop}. Then, $\nu(j) \leq \min \lbrace k \geq 0: y_{j+k+1} \in \linspan{y_{j},\ldots,y_{j+k}}, ~\chi_{j+k} \neq 0 \rbrace$.
\end{lemma}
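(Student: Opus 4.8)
The plan is to reduce the claim to a single membership statement and then exploit the telescoping structure of \cref{eqn-base-common-update}. First I would record the elementary but crucial identity that drives everything. Since $\Prj_i$ is the orthogonal projection onto $\col(\Prj_i)$ and $\Prj_i y_i = y_i - y_{i+1}$, we have $y_i - y_{i+1} \in \col(\Prj_i \chi_i)$ in every case: when $\chi_i = 1$ this column space is $\col(\Prj_i)$, and when $\chi_i = 0$ the difference $\Prj_i y_i$ vanishes. Summing over $i = j, \ldots, j+k$ yields
\begin{equation}
y_j - y_{j+k+1} = \sum_{i=j}^{j+k} \Prj_i y_i \in \mathcal{C}_k^j
\end{equation}
for every $k$. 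Two consequences organize the rest of the argument: membership in $\mathcal{C}_k^j$ is insensitive to the shift, i.e. $y_j \in \mathcal{C}_k^j$ if and only if $y_{j+k+1} \in \mathcal{C}_k^j$; and, because $y_{j+m} - y_j \in \mathcal{C}_{m-1}^j$, the iterate span satisfies $\linspan{y_j, \ldots, y_{j+k}} \subseteq \linspan{y_j} + \mathcal{C}_{k-1}^j$.

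Next, writing $K$ for the stopping time on the right-hand side, I would reduce to the only nontrivial situation. If $y_j = 0$ then $\nu(j) = 0 \le K$, and if $y_j \in \mathcal{C}_{K-1}^j$ (with the convention $\mathcal{C}_{-1}^j = \lbrace 0 \rbrace$) then $\nu(j) \le K-1 < K$; so assume $y_j \neq 0$ and $y_j \notin \mathcal{C}_{K-1}^j$, which makes the $y_j$-coefficient of any element of $\linspan{y_j} + \mathcal{C}_{K-1}^j$ well defined. The defining property of $K$ gives $y_{j+K+1} \in \linspan{y_j, \ldots, y_{j+K}} \subseteq \linspan{y_j} + \mathcal{C}_{K-1}^j$, so $y_{j+K+1} = \alpha y_j + c$ for a scalar $\alpha$ and some $c \in \mathcal{C}_{K-1}^j \subseteq \mathcal{C}_K^j$. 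Comparing this with the telescoping representation $y_{j+K+1} = y_j - \sum_{i=j}^{j+K} \Prj_i y_i$, whose sum lies in $\mathcal{C}_K^j$, gives $(1 - \alpha) y_j \in \mathcal{C}_K^j$. In the generic case $\alpha \neq 1$ this already yields $y_j \in \mathcal{C}_K^j$, hence $\nu(j) \le K$ by \cref{eqn-nu-stop}, which is the desired bound.

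The remaining degenerate case $\alpha = 1$ is where I expect the real difficulty, and it is exactly where the hypothesis $\chi_{j+K} \neq 0$ must be used. When $\alpha = 1$, the comparison forces $\sum_{i=j}^{j+K} \Prj_i y_i \in \mathcal{C}_{K-1}^j$, and since $\sum_{i=j}^{j+K-1}\Prj_i y_i \in \mathcal{C}_{K-1}^j$ already, this is equivalent to $\Prj_{j+K} y_{j+K} \in \mathcal{C}_{K-1}^j$, while $\chi_{j+K} \neq 0$ guarantees $\Prj_{j+K} y_{j+K} \neq 0$; that is, the step-$K$ progress direction would have to lie in the column space accumulated through step $K-1$. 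I would organize the exclusion of this case as an induction on $K$: the base case $K = 0$ is immediate because $\mathcal{C}_{-1}^j = \lbrace 0 \rbrace$ forces $\Prj_j y_j = 0$, contradicting $\chi_j \neq 0$. For the inductive step, the plan is to invoke the minimality of $K$, namely that for every $k < K$ with $\chi_{j+k} \neq 0$ the stopping condition fails, so $y_{j+k+1} \notin \linspan{y_j, \ldots, y_{j+k}}$, whence each earlier progress step strictly enlarges the iterate span and, in tandem, the accumulated column space. Carefully showing that a nonzero step-$K$ progress direction inside $\mathcal{C}_{K-1}^j$ is incompatible with this strict growth—tracking how $\col(\Prj_{j+k})$ enlarges $\mathcal{C}_{k-1}^j$ precisely when a new independent iterate appears—is the crux of the argument; once it is in hand, the degenerate case cannot occur and $\nu(j) \le K$ holds in all cases.
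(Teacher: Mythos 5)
Your telescoping identity, the reduction to the single membership $y_j \in \mathcal{C}_K^j$, and the resolution of the case $\alpha \neq 1$ are all correct, and up to that point your argument is essentially a cleaner rearrangement of the paper's own proof: the paper expands $y_{j+K+1}$ against the \emph{last} iterate $y_{j+K}$ together with an orthogonal family inside $\mathcal{C}_K^j$, you expand against the \emph{first} iterate $y_j$ together with $\mathcal{C}_{K-1}^j$, and both routes funnel into the same residual case, namely $\chi_{j+K} \neq 0$ with $\Prj_{j+K} y_{j+K} \in \mathcal{C}_{K-1}^j$. The genuine gap is exactly where you flag it: for that case you offer only a plan (induction on $K$ exploiting minimality), not a proof.

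That plan cannot be carried out, because the degenerate case really does occur under minimality of $K$, and when it occurs the asserted inequality can fail. Work in $\mathbb{R}^3$ with $j = 0$; this is realizable in the framework of \cref{eqn-base-common-update,eqn-base-proj-def} with $A = I_3$, $b = 0$, $W_0 = W_2 = e_1$, $W_1 = (1,1,1)^\intercal$, so that each $\Prj_k$ is the orthogonal projection onto $\inlinspan{W_k}$ (re-selecting an earlier block is allowed and indeed typical). Take $y_0 = e_1 + e_2$. Then $y_1 = e_2$, $y_2 = \tfrac{1}{3}(-1,2,-1)^\intercal$, $\Prj_2 y_2 = -\tfrac{1}{3} e_1 \neq 0$, $y_3 = \tfrac{1}{3}(0,2,-1)^\intercal$, and $\chi_0 = \chi_1 = \chi_2 = 1$. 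Minimality holds, since $y_1 \notin \linspan{y_0}$ and $y_2 \notin \linspan{y_0,y_1} = \linspan{e_1,e_2}$; at $k=2$ we have $\linspan{y_0,y_1,y_2} = \mathbb{R}^3 \ni y_3$, so $K = 2$. Your coefficient is $\alpha = 1$ here, with $\Prj_2 y_2 \in \mathcal{C}_1^0$, so this is precisely the degenerate case. But $\mathcal{C}_2^0 = \linspan{e_1, (1,1,1)^\intercal}$ does not contain $y_0 = e_1 + e_2$ (membership forces equal second and third coordinates), so $\nu(0) \geq 3 > 2 = K$: the conclusion of the lemma fails, not merely your proof of it. The failure mechanism is that the stopping rule on the right-hand side can trigger spuriously: once the iterate span saturates (here it is all of $\mathbb{R}^3$ after two independent progress steps), any further progress step lands $y_{j+K+1}$ in $\linspan{y_j,\ldots,y_{j+K}}$ even though $\col(\Prj_{j+K})$ adds nothing new to $\mathcal{C}_{K-1}^j$. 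Notably, the paper's own proof breaks at this same point: it invokes Gram--Schmidt to produce mutually orthogonal $\phi_1,\ldots,\phi_{r-1} \in \mathcal{C}_{K}^j$ that are also orthogonal to $y_{j+K}$, but in the example above $\mathcal{C}_2^0 \cap y_2^{\perp} = \linspan{(1,1,1)^\intercal}$ is one-dimensional while two such vectors would be needed, so no such family exists. In short, your instinct that the $\alpha = 1$ case is "the real difficulty" is exactly right; it is not an obstacle to be overcome by a better induction, but a counterexample to the statement as written, which would need a stronger stopping rule (one that excludes progress steps whose column space is already contained in $\mathcal{C}_{K-1}^j$) before any proof could go through.
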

\begin{proof}
We begin with a key fact. By \cref{eqn-base-common-update}, $y_{k+1} - y_k \in \col(\Prj_k \chi_k)$ for any $k+1 \in \mathbb{N}$. It follows that $y_i \in \inlinspan{ \mathcal{C}_{k}^j \cup \lbrace y_{\ell} \rbrace }$ for any $\ell, i \in [j,\ldots,j+k+1] \cap \mathbb{N}$.

Now, let $\nu(j)' = \min \lbrace k \geq 0: y_{j+k+1} \in \linspan{y_{j},\ldots,y_{j+k}}, ~\chi_{j+k} \neq 0 \rbrace$. Then, by the preceding fact, if 
$y_{j+\nu(j)'} \in \mathcal{C}_{\nu(j)'}^j$, then $y_j \in \mathcal{C}_{\nu(j)'}^j$. Thus, $\nu(j) \leq \nu(j)'$ by the minimality of $\nu(j)$. So it is enough to show $y_{j+\nu(j)'} \in \mathcal{C}_{\nu(j)'}^j$.

Let $r$ denote the dimension of $\inlinspan{ \mathcal{C}_{\nu(j)'}^j \cup \lbrace y_{j + \nu(j)'} \rbrace }$. Then, by the Gram-Schmidt procedure, there exist $\phi_1,\ldots,\phi_{r-1} \in \mathcal{C}_{\nu(j)'}^j$ such that the set of vectors $\lbrace y_{j+\nu(j)'},\phi_1,\ldots,\phi_{r-1} \rbrace$ are an orthogonal basis for $\inlinspan{ \mathcal{C}_{\nu(j)'}^j \cup \lbrace y_{j + \nu(j)'} \rbrace }$. Now, by the definition of $\nu(j)'$, there exist scalars $c_0,\ldots,c_{r-1}$ such that
$y_{j+\nu(j)'+1} = c_0 y_{j+\nu(j)'} + c_1 \phi_1 + \cdots c_{r-1} \phi_{r-1}.$
Plugging this into \cref{eqn-base-common-update},
\begin{equation} \label{eqn-proof-nu-prime}
\begin{aligned}
c_0 y_{j+\nu(j)'} + c_1 \phi_1 + \cdots c_{r-1} \phi_{r-1} 
 = y_{j + \nu(j)'} - \Prj_{j + \nu(j)'} y_{j + \nu(j)'} \chi_{j + \nu(j)'},
\end{aligned}
\end{equation}
which gives rise to two cases. In the first case, we assume that $c_0 \neq 1$. Then, rearranging \cref{eqn-proof-nu-prime}, we conclude $y_{j + \nu(j)'} \in \inlinspan{\phi_1,\ldots,\phi_{r-1}} + \col( \Prj_{j + \nu(j)'} \chi_{j + \nu(j)'}) =\mathcal{C}_{\nu(j)'}^j$. 
In the second case, $c_0 =1$. Then, multiplying both sides of \cref{eqn-proof-nu-prime} by $y_{j + \nu(j)'}^\intercal$,
$\sum_{i=1}^{r-1} c_i y_{j + \nu(j)'}^\intercal \phi_i = - \innorm{ \Prj_{j + \nu(j)'} y_{j + \nu(j)'}}_2^2 \chi_{j + \nu(j)'}$. By the orthogonality of $\phi_i$ and $y_{j+\nu(j)'}$, the left hand side is zero. The right hand side can only be zero if $\chi_{j + \nu(j)'} = 0$, which contradicts the definition of $\nu(j)'$. To summarize these two cases, we showed that $y_{j+\nu(j)'} \in \mathcal{C}_{\nu(j)'}^j$. The result follows.
\end{proof}

\begin{theorem} \label{theorem-nu-finite}
Let $\xi$ be an arbitrary, finite stopping time with respect to $\lbrace \mathcal{F}_{k+1}^k : k + 1 \in \mathbb{N} \rbrace$, and let $\mathcal{F}_{\xi+1}^\xi$ denote the stopped $\sigma$-algebra. Given that $\lbrace y_k : k+1 \rbrace$ are well-defined (see \cref{eqn-common-y-def}), let $y_\xi$ be generated by an $N,\pi$-Exploratory, Markovian RBAS. If $y_\xi \neq 0$, then $\nu(\xi)$ is finite, and $\incond{\nu(\xi)}{\mathcal{F}_{\xi+1}^\xi} \leq (\rnk A - 1) ( N/\pi)$. 
% for any $t < 0$,
%\begin{equation}
%\cond{ \exp(t \nu(\xi) ) }{\mathcal{F}_{\xi+1}^\xi} \geq \left( \frac{\pi \exp(tN)}{1 - (1- \pi) \exp(tN)} \right)^{\rnk{A} - 1}.
%\end{equation}
\end{theorem}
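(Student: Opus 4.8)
\emph{Approach.} The plan is to reduce $\nu(\xi)$ to the linear-dependence stopping time supplied by \cref{lemma-nu-upper-bound} and then to bound the latter by coupling a deterministic dimension count with the exploratory lower bound through the strong Markov property. Writing $\mu(\xi) := \min\lbrace k \geq 0 : y_{\xi+k+1} \in \linspan{y_\xi,\ldots,y_{\xi+k}},\ \chi_{\xi+k} \neq 0 \rbrace$, \cref{lemma-nu-upper-bound} gives $\nu(\xi) \leq \mu(\xi)$, so it suffices to prove $\incond{\mu(\xi)}{\mathcal{F}_{\xi+1}^\xi} \leq (\rnk{A} - 1)(N/\pi)$; this also yields finiteness, since a finite conditional expectation forces the nonnegative integer $\mu(\xi)$, and hence $\nu(\xi)$, to be finite almost surely.

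\emph{Dimension count.} First I would isolate the ambient subspace: for the row update every increment $x_{k+1}-x_k$ lies in $\col(A\T)$ and $x_0 - \Prj_{\mathcal{H}}x_0 \perp \ker(A)$, so $y_k \in \row(A)$ for all $k$; the column case is identical with $\col(A)$ in place of $\row(A)$. Hence the nondecreasing integers $d_k := \dim \linspan{y_\xi,\ldots,y_{\xi+k}}$ obey $d_0 = 1$ (because $y_\xi \neq 0$) and $d_k \leq \rnk{A}$. The governing dichotomy is that at a progress step ($\chi_{\xi+k}\neq 0$) either $y_{\xi+k+1}\notin\linspan{y_\xi,\ldots,y_{\xi+k}}$, forcing $d_{k+1}=d_k+1$, or $y_{\xi+k+1}\in\linspan{y_\xi,\ldots,y_{\xi+k}}$, which is exactly the event defining $\mu(\xi)$; at a non-progress step $y_{\xi+k+1}=y_{\xi+k}$, so $d$ is unchanged and $\mu(\xi)$ cannot trigger. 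Since $d$ can strictly increase at most $\rnk{A}-1$ times, at most $\rnk{A}-1$ progress steps can occur before $\mu(\xi)$ is attained; equivalently, $\mu(\xi)$ is controlled by the time needed to realize $\rnk{A}-1$ exploratory successes.

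\emph{Timing and the obstacle.} Next I would convert this count into an expected-iteration bound. The content of \cref{def-exploratory} is precisely that, starting from any nonzero iterate, the event that no progress occurs over a window of $N$ steps has conditional probability at most $1-\pi$; indeed $\bigcap_{j=0}^{N-1}\lbrace \col(\Prj_j)\perp y_0\rbrace$ holds iff $y_0=y_1=\cdots=y_N$, i.e.\ no progress is made. Invoking the Markovian property---which by \cref{remark-markovian} may be taken with $M=1$, so that the process restarts from its current state---at the successive (finite) stopping times $\sigma_i$ marking progress, and applying this window estimate on disjoint blocks, produces a geometric tail for each inter-progress wait; an optional-stopping/tower-property computation conditioning on the stopped $\sigma$-algebra $\mathcal{F}_{\sigma_i+1}^{\sigma_i}$ then bounds each such conditional wait by $N/\pi$. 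Summing the at most $\rnk{A}-1$ waits gives $\incond{\mu(\xi)}{\mathcal{F}_{\xi+1}^\xi} \leq (\rnk{A}-1)(N/\pi)$, as required. The hard part will be the legitimate chaining of the exploratory estimate across the random progress times: one must verify that the Markovian hypothesis truly permits restarting the per-window bound conditionally on $\mathcal{F}_{\sigma_i+1}^{\sigma_i}$ at each $\sigma_i$, and that the dimension bookkeeping aligns with these windows so that the free first dimension contributed by $y_\xi\neq 0$ leaves exactly $\rnk{A}-1$ independent directions---hence $\rnk{A}-1$ waits---to be produced by exploration before the dependence condition defining $\mu(\xi)$ is forced.
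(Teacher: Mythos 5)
Your proposal is correct and follows the paper's own proof essentially step for step: it reduces $\nu(\xi)$ to the linear-dependence stopping time of \cref{lemma-nu-upper-bound}, runs the same three-case dimension count inside $\row(A)$ (resp.\ $\col(A)$) to cap the number of dimension-increasing progress steps at $\rnk{A}-1$, and bounds each inter-progress wait by $N/\pi$ via a geometric tail obtained by chaining the $N,\pi$-exploratory estimate through the Markovian property at successive stopping times, summing to $(\rnk{A}-1)(N/\pi)$. The bookkeeping concern you flag at the end---aligning the free first dimension from $y_\xi \neq 0$ with exactly $\rnk{A}-1$ waits---is handled in the paper by precisely the accounting you describe, so your plan and the paper's argument stand or fall together on that point.
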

\begin{proof}
We need only bound the upper bound in  \cref{lemma-nu-upper-bound}.
At any given $k \geq 0$, there are three possible cases, either (Case 1) $\chi_{\xi+k} = 0$; (Case 2) $\chi_{\xi+k} = 1$ and $y_{\xi+k+1} \not\in \linspan{ y_{\xi},\ldots,y_{\xi+k}}$; or (Case 3) $\chi_{\xi+k} = 1$ and $y_{\xi+k+1} \in \linspan{ y_{\xi},\ldots,y_{\xi+k}}$. We will show that Cases 1 and 2 cannot hold for all $k \geq 0$ with probability one.

To this end, define $s(j) = \min\lbrace k\geq 0: \chi_{j+k} \neq 0 \rbrace$ and let $s_1 = s(\xi)$ and $s_{j+1} = s(\xi + s_1 + \cdots + s_j)$ for all $j \in \mathbb{N}$. With this notation, the Markovian property and the $N,\pi$-exploratory property,
\begin{align}
&\condPrb{ s_1 \geq N }{\mathcal{F}_{\xi+1}^\xi} \nonumber \\
&= \condPrb{ \bigcap_{j=0}^{N-1} \lbrace \chi_{\xi+j} = 0 \rbrace }{\mathcal{F}_{\xi+1}^\xi }
= \condPrb{ \bigcap_{j=0}^{N-1} \lbrace \col (\Prj_{\xi+j}) \perp y_{\xi} \rbrace }{\mathcal{F}_{\xi+1}^\xi} \\
&= \condPrb{ \bigcap_{j=0}^{N-1} \lbrace \col (\Prj_{\xi+j}) \perp y_{\xi} \rbrace }{\mathcal{F}_{1}^\xi} 
\leq 1 - \pi,
\end{align}
where the last line is a consequence of \cref{remark-markovian}.
Now, using induction and the Markovian property, $\incondPrb{ s_1 \geq N \ell }{\mathcal{F}_{\xi+1}^{\xi}} \leq (1 - \pi)^\ell$ for all $\ell \in \mathbb{N}$. Therefore, $s_1$ is finite with probability one and $\incond{s_1}{\mathcal{F}_{\xi+1}^\xi} \leq N/\pi$. Moreover, since $\xi$ is an arbitrary stopping time, it follows that $\lbrace s_j \rbrace$ are finite with probability one and $\incond{ s_j }{\mathcal{F}_{\xi+1}^\xi} \leq N/\pi$. Thus, Case 1 cannot occur for all $k \geq 0$, and Cases 2 or 3 must occur infinitely often.

Now, the dimension of $\linspan{ y_{\xi},\ldots, y_{\xi+s_1+\cdots+s_j}}$ is $j + 1$. Since $\lbrace y_k \rbrace$ are either in $\row(A)$ or $\col(A)$, $j+1 \leq \rnk A$. Thus, Case 2 cannot be the only situation to occur when $\chi_{\xi+k} \neq 0$. In conclusion, the largest value of $j$ is $\rnk A-1$, which implies $\nu(\xi) \leq s_1 + \cdots + s_{\rnk A - 1}$, which are the sum of exponentially distributed random variables. The result follows by using $\incond{s_j}{\mathcal{F}_{\xi+1}^\xi} \leq N/ \pi$.
\end{proof}

Now, putting together \cref{theorem-block-meany,theorem-nu-finite} supplies the following result.
\begin{corollary} \label{corollary-convergence}
Suppose $A \in \mathbb{R}^{n \times d}$ and $b \in \mathbb{R}^n$. Given that $\lbrace y_k : k+1 \rbrace$ are well-defined (see \cref{eqn-common-y-def}), suppose that they are generated by a Markovian, $N,\pi$-exploratory RBAS. Then, one of the following to cases occurs.
\begin{remunerate}
\item There exist a stopping time $\tau$ with finite expectation such that $y_{\tau} = 0$.
\item There exist stopping times $\lbrace \tau_j : j+1 \in \mathbb{N} \rbrace$ such that $\E{ \tau_j} \leq j [ (\rnk{A} - 1) (N/\pi) + 1 ]$ for all $j+1 \in \mathbb{N}$, and 
$
\inPrb{ \cap_{j=1}^\infty \lbrace \innorm{y_{\tau_j}}_2^2 \leq ( \prod_{\ell=0}^{j-1} \gamma_{\ell}  ) \innorm{ y_0}_2^2 \rbrace } = 1,
$
where $
\gamma_{\ell} = 1 - \sup_{Q_i \in \mathfrak{Q}_i, i \in \lbrace \tau_\ell,\ldots,\tau_{\ell} + \nu(\tau_{\ell}) \rbrace}\min_{ G \in \mathcal{G}(Q_{\tau_{\ell}},\ldots,Q_{\tau_{\ell}+\nu(\tau_{\ell})})} \det(G^\intercal G) \in (0,1).
$
\end{remunerate}
\end{corollary}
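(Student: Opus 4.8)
The plan is to assemble the result from the single-round contraction guaranteed by \cref{theorem-block-meany} and the single-round hitting-time control of \cref{theorem-nu-finite}, chained together by a recursively defined sequence of stopping times. Specifically, I would set $\tau_0 = 0$ and, as long as $y_{\tau_j} \neq 0$, define $\tau_{j+1} = \tau_j + \nu(\tau_j) + 1$ with $\nu(\cdot)$ as in \cref{eqn-nu-stop}. The first task is to confirm by induction that each $\tau_j$ is a finite stopping time with respect to $\lbrace \mathcal{F}_{k+1}^k \rbrace$: the base case is trivial, and for the inductive step I would invoke \cref{theorem-nu-finite} with the arbitrary finite stopping time $\xi = \tau_j$, which, since $y_{\tau_j} \neq 0$, shows $\nu(\tau_j)$ is almost surely finite and that $\tau_{j+1}$ is again a finite stopping time.

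For the expectation bound I would write $\tau_j = \sum_{\ell=0}^{j-1} (\nu(\tau_\ell) + 1)$ and take expectations. The conditional estimate $\incond{\nu(\tau_\ell)}{\mathcal{F}_{\tau_\ell+1}^{\tau_\ell}} \leq (\rnk{A} - 1)(N/\pi)$ from \cref{theorem-nu-finite}, combined with the tower property, yields $\E{\nu(\tau_\ell)} \leq (\rnk{A}-1)(N/\pi)$ for each $\ell$, hence $\E{\tau_j} \leq j[(\rnk{A}-1)(N/\pi) + 1]$, which is precisely the claimed bound.

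The core of the argument is the per-round contraction. By the minimality defining $\nu$ in \cref{eqn-nu-stop}, we have $y_{\tau_\ell} \in \mathcal{C}_{\nu(\tau_\ell)}^{\tau_\ell}$; and from \cref{eqn-base-common-update} one verifies the pointwise identity $(I - \Prj_k \chi_k) y_k = y_{k+1}$ for every $k$, separating the cases $\chi_k = 1$ and $\chi_k = 0$. Composing this identity across $k = \tau_\ell, \ldots, \tau_\ell + \nu(\tau_\ell)$ identifies the product of projections in \cref{theorem-block-meany} with $y_{\tau_{\ell+1}}$, so that theorem (applied to $y_{\tau_\ell}/\innorm{y_{\tau_\ell}}_2$ and rescaled) gives $\innorm{ y_{\tau_{\ell+1}} }_2^2 \leq \gamma_\ell \innorm{ y_{\tau_\ell} }_2^2$ with $\gamma_\ell$ the Meany factor in the statement. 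Telescoping over $\ell = 0, \ldots, j-1$ produces the bound against $(\prod_{\ell=0}^{j-1} \gamma_\ell) \innorm{ y_0 }_2^2$, and since this inequality holds simultaneously for all $j$ the asserted intersection has probability one. That $\gamma_\ell \in (0,1)$ then follows from two observations: the columns of any $G$ form a maximal linearly independent set so $\det(G^\intercal G) > 0$ and $\gamma_\ell < 1$; while $\gamma_\ell = 0$ would force $y_{\tau_{\ell+1}} = 0$, i.e., the terminating alternative.

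This last remark also supplies the dichotomy. If $y_{\tau_J} = 0$ for some finite round $J$, then $\tau := \tau_J$ gives $y_\tau = 0$ and has finite expectation by the per-round estimate, placing us in Case 1; otherwise $y_{\tau_j} \neq 0$ for every $j$, so $\gamma_\ell \in (0,1)$ throughout and Case 2 holds. I expect the main difficulty to be the bookkeeping around the nested random times rather than the contraction itself: one must check that each $\tau_\ell$ is genuinely a stopping time so that \cref{theorem-nu-finite} legitimately applies at $\xi = \tau_\ell$, that $y_{\tau_\ell}$ is $\mathcal{F}_{\tau_\ell+1}^{\tau_\ell}$-measurable, and that the tower property correctly compounds the per-round conditional bounds into the additive bound on $\E{\tau_j}$; reconciling the finite-termination and geometric-decay alternatives into a clean ``one of the following'' statement is the other delicate point.
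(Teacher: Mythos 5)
Your proposal is correct and takes essentially the same approach as the paper's proof: the same recursively defined stopping times $\tau_j = \tau_{j-1} + \nu(\tau_{j-1}) + 1$, with \cref{theorem-nu-finite} supplying finiteness and the expectation bound, \cref{theorem-block-meany} supplying the per-round contraction, and the dichotomy resolved by whether some $y_{\tau_j}$ vanishes. The only cosmetic difference is that you separate the expectation bound (summing and applying the tower property) and the telescoped contraction into distinct steps, whereas the paper folds both into a single induction.
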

\begin{proof}
The proof proceeds by induction. For $j=0$, recall $\tau_0 = 0$. Now, either $y_{\tau_0} = 0$ or $y_{\tau_0} \neq 0$. In the former case, the statement of the result is true. In the latter case, define $\tau_1 = \nu(\tau_0) + 1$. Then, $\tau_1$ is finite with probability one and $\E{ \tau_1} \leq (\rnk{A} - 1) N/\pi + 1$ by \cref{theorem-nu-finite}. Moreover, by \cref{theorem-block-meany}, $\innorm{ y_{\tau_1}}_2^2 \leq \gamma_0 \innorm{ y_{\tau_0}}_2^2$. Thus, we have established the base case.

For the induction hypothesis, suppose that for $j \in \mathbb{N}$, $A x_{\tau_{j-1}} \neq b$, $\E{ \tau_{k}} \leq k [ (\rnk{A} - 1) N/\pi + 1]$ for $k \in [0,j-1]\cap\mathbb{N}$, and $\innorm{ y_{\tau_k}}_2^2 \leq \innorm{y_0}_2^2\prod_{\ell=0}^{k-1} \gamma_\ell$ for $k \in [1,j-1] \cap \mathbb{N}$. 

To conclude, define $\tau_j = \tau_{j-1} + \nu(\tau_{j-1}) + 1$. By \cref{theorem-nu-finite}, $\tau_{j}$ is finite and $\E{\tau_{j}} \leq (j-1) [ (\rnk{A} - 1) N/\pi + 1] + (\rnk{A} - 1) N/\pi + 1 = j[ (\rnk{A} - 1) N/\pi + 1]$. Finally, either $y_{\tau_{j}} = 0$ or $y_{\tau_{j}} \neq 0$. In the latter case, \cref{theorem-block-meany} implies $\innorm{ y_{\tau_{j}}}_2^2 \leq \gamma_{j-1} \innorm{ y_{\tau_{j-1}}}_2^2$. The result follows.
\end{proof}

Our final task is to control the joint behavior of $\lbrace \gamma_{\ell}: \ell +1 \in \mathbb{N} \rbrace \subset (0,1)$ in the latter case of \cref{corollary-convergence}. Depending on our goal, we could require two different types of control. For instance, to ensure convergence of $\lbrace y_k \rbrace$ to $0$, we need to ensure that $\liminf_{\ell \to \infty} \gamma_{\ell} < 1$. However, for a rate of convergence, we need to ensure that $\limsup_{\ell \to \infty} \gamma_{\ell} < 1$. As the latter case is more desirable in practice, we will focus on ensuring that $\limsup_{\ell \to \infty} \gamma_{\ell} < 1$. This will give rise to two separate cases in our theory of convergence of RBAS methods, which we now address one at a time.

\subsection{Convergence for a Finite Set} \label{subsection-finiteset}

In the first case, we have that $\lbrace \col( \Prj_k ) \rbrace$ take value in finite sets, as in \cref{example-cyclic-vector-kaczmarz,example-random-permute-kaczmarz,example-greedy-block-kaczmarz,example-cyclic-cd,example-cyclic-block-cd}.

\begin{theorem} \label{theorem-convergence-common-finite}
Let $A \in \mathbb{R}^{n \times d}$ and $b \in \mathbb{R}^n$. Given that $\lbrace y_k : k+1 \rbrace$ are well-defined (see \cref{eqn-common-y-def}), suppose $\lbrace y_{k} : k + 1 \in \mathbb{N} \rbrace$ are generated by a Markovian, $N,\pi$-exploratory RBAS. If the elements of $\lbrace \col(\Prj_k) : k+1 \in \mathbb{N} \rbrace$ take value in a finite set, then either
\begin{remunerate}
\item There exist a stopping time $\tau$ with finite expectation such that $y_{\tau} = 0$.
\item There exist stopping times $\lbrace \tau_j : j+1 \in \mathbb{N} \rbrace$ such that $\E{ \tau_j} \leq j [ (\rnk{A} - 1) (N/\pi) + 1 ]$ for all $j+1 \in \mathbb{N}$, and there exist $\gamma \in (0,1)$ and a sequence of random variables $\lbrace \gamma_j : j+1 \in \mathbb{N}\rbrace \subset (0,\gamma]$, such that
$
\inPrb{ \cap_{j=1}^\infty \lbrace \innorm{y_{\tau_j}}_2^2 \leq ( \prod_{\ell=0}^{j-1} \gamma_{\ell}  ) \innorm{ y_0}_2^2 \rbrace } = 1.
$
\end{remunerate}
\end{theorem}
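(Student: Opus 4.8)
The plan is to invoke \cref{corollary-convergence} to obtain the dichotomy and then exploit the finite-set hypothesis to replace the random reduction factors $\gamma_\ell$ by a single deterministic constant $\gamma \in (0,1)$. First I would apply \cref{corollary-convergence} directly. If its first case holds, then there is a stopping time $\tau$ with finite expectation such that $y_\tau = 0$, which is exactly the first alternative of the present statement, and there is nothing more to prove. So I would assume its second case holds, yielding stopping times $\lbrace \tau_j \rbrace$ with $\E{\tau_j} \leq j[(\rnk{A} - 1)(N/\pi) + 1]$ and the almost-sure product bound with $\gamma_\ell = 1 - \sup_{Q_i \in \mathfrak{Q}_i}\min_{G \in \mathcal{G}} \det(G^\intercal G) \in (0,1)$, where the supremum and minimum range over the window $i \in \lbrace \tau_\ell, \ldots, \tau_\ell + \nu(\tau_\ell) \rbrace$.

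The key observation is that each reduction factor $\gamma_\ell$ is a function only of the \emph{set} of distinct subspaces $\lbrace \col(\Prj_i \chi_i) : i \in \lbrace \tau_\ell, \ldots, \tau_\ell + \nu(\tau_\ell) \rbrace \rbrace$, and not of the particular random realization of the $W_i$: the optimization $\sup_{Q_i}\min_{G}\det(G^\intercal G)$ depends on the $\Prj_i \chi_i$ only through their column spaces, and since $\mathcal{G}$ selects maximal linearly independent subsets, repeated subspaces contribute no new independent directions. By hypothesis $\col(\Prj_k)$ takes values in some finite set $\mathcal{S}$, so the collection of subspaces occurring in any window is a subset of $\mathcal{S}$, i.e.\ an element of the finite power set of $\mathcal{S}$. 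Consequently $1 - \gamma_\ell$ takes values in a finite set $\lbrace c_T : T \subseteq \mathcal{S} \rbrace$ of Meany constants, each of which lies in $[0,1]$ by Hadamard's inequality (the columns of $G$ are unit vectors drawn from orthonormal bases $Q_i$).

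Finally I would set $\gamma = 1 - \min\lbrace c_T : T \subseteq \mathcal{S},\ c_T > 0 \rbrace$. This minimum is taken over a finite collection of strictly positive numbers, so $\gamma < 1$; and because the second case of \cref{corollary-convergence} guarantees $\gamma_\ell \in (0,1)$—equivalently, every realized Meany constant satisfies $0 < c_T < 1$—the finite minimum is itself strictly less than $1$, whence $\gamma > 0$ and $\gamma \in (0,1)$. Moreover, each realized factor obeys $\gamma_\ell = 1 - c_{T_\ell} \leq 1 - \min_{c_T>0} c_T = \gamma$ by construction, so $\lbrace \gamma_j \rbrace \subset (0,\gamma]$, and substituting this uniform bound into the product estimate of \cref{corollary-convergence} immediately yields the claimed conclusion.

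The main obstacle I anticipate lies in the second paragraph: one must argue carefully that the Meany constant genuinely depends only on the finite datum of which subspaces appear—despite the window length $\nu(\tau_\ell)$ being random and unbounded—and that the supremum over orthonormal bases $Q_i$ neither introduces a continuum of distinct values nor drives the constant to zero. Guaranteeing that each occurring $c_T$ is strictly positive, so that the finite minimum stays away from $0$ and keeps $\gamma < 1$, rests on the finiteness of $\nu(\tau_\ell)$ established in \cref{theorem-nu-finite}: this ensures $y_{\tau_\ell} \in \mathcal{C}_{\nu(\tau_\ell)}^{\tau_\ell}$, so the subspaces in the window span enough to contain $y_{\tau_\ell}$ and the relevant Gram determinant is nonzero.
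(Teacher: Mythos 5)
Your proposal is correct and takes essentially the same route as the paper's own proof: reduce to the second case via \cref{corollary-convergence}, observe that each $1-\gamma_\ell$ is a Meany constant determined only by the set of distinct subspaces appearing in the window---hence takes values in a finite set indexed by the power set of the finite collection of column spaces---and take the worst case over that finite set to produce a deterministic $\gamma \in (0,1)$. The only cosmetic difference is that the paper defines $\gamma$ as the maximum of the full finite set $\Gamma$ of values $1 - c_T$ and argues each element lies in $[0,1)$, whereas you take $\gamma = 1 - \min\lbrace c_T : c_T > 0\rbrace$ and use the case-two guarantee $\gamma_\ell \in (0,1)$ for strictness; these give the same constant, since for any fixed choice of bases the Gram determinants of maximal linearly independent subsets are positive and finitely many, so every $c_T$ is in fact positive.
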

\begin{proof}
By \cref{corollary-convergence}, we can focus on the second case and we need only show that there exists a $\gamma \in (0,1)$ such that $\gamma_{\ell} \leq \gamma$. To this end, let $\lbrace \mathcal{U}_i : i = 1,\ldots,r \rbrace$ denote the set of linear spaces in which $\lbrace \col( \Prj_k) \rbrace$ takes value. For each $\mathcal{U}_i$, we can define the set of all orthonormal bases of $\mathcal{U}_i$, denoted $\mathfrak{U}_i$. Let $\mathfrak{P}$ denote the power set of $\lbrace \mathfrak{U}_i : i = 1,\ldots, r \rbrace$. For a given element $\lbrace \mathfrak{U}_{i_1},\ldots,\mathfrak{U}_{i_s} \rbrace \in \mathfrak{P}$, we can choose a set $\lbrace \cup_{j=1}^s U_j : U_j \in \mathfrak{U}_{i_j} \rbrace$, and let $\mathcal{H}$ denote the set of all matrices whose columns are maximal linearly independent subsets of $\lbrace \cup_{j=1}^s U_j : U_j \in \mathfrak{U}_{i_j} \rbrace$. Finally, define
\begin{equation} \label{eqn-Gamma}
\Gamma = \left\lbrace 1 - \sup_{ \lbrace \cup_{j=1}^s U_j : U_j \in \mathfrak{U}_{i_j} \rbrace } \min_{ H \in \mathcal{H} } \det(H^\intercal H) :  \lbrace \mathfrak{U}_{i_1},\ldots,\mathfrak{U}_{i_s} \rbrace \in \mathfrak{P},~s=1,\ldots,r\right\rbrace.
\end{equation}

Now, since $\lbrace \mathfrak{Q}_k \rbrace$ takes value in $\lbrace \mathfrak{U}_i : i = 1,\ldots, r \rbrace$,
$\lbrace \mathfrak{Q}_{i} : i =\tau_{\ell},\ldots, \tau_{\ell} + \nu(\tau_{\ell}) \rbrace \in \mathfrak{P}$ for all $\ell + 1 \in \mathbb{N}$. Therefore, $\gamma_{\ell} \in \Gamma$ for all $\ell + 1 \in \mathbb{N}$. Thus, $\gamma_{\ell} \leq \max \lbrace \Gamma \rbrace =: \gamma$. By Hadamard's inequality, each element of $\Gamma$ is in $[0,1)$, which implies that $\gamma \in [0,1)$. As we are only proving the second case, $\gamma \neq 0$ (else we would have converged finitely and would be in the first case), which implies $\gamma \in (0,1)$.
\end{proof}

We make two remarks. First, by substituting in the appropriate definitions of $\lbrace y_k : k+1 \in \mathbb{N} \rbrace$ and $\lbrace \Prj_k : k + 1 \in \mathbb{N} \rbrace$ into \cref{theorem-convergence-common-finite}, then we have proven \cref{theorem-convergence-row-finite,theorem-convergence-col-finite}. Second, the value of $\gamma$ can be vary depending on how the set to which $\lbrace \col( \Prj_k ) \rbrace$ belongs is designed, which was a central point of discussion in \cite{needell2014}. For instance, consider the three unique partitions of the rows of the coefficient matrix 
\begin{equation} \label{eqn-gamma-partitioning-example}
\begin{bmatrix}
1 & -1 & 1\\
1 & -1 & 1+10^{-5}\\
3 & -1 & 3 \\
0 & 1 & 6
\end{bmatrix}
\end{equation}
such that each partition contains two rows. Now, consider a sampling scheme that selects a partition and cycles through the blocks in this partition. For such a method, we compute can $\gamma$. The results for each of the three partitions are in presented in \cref{table-gamma-partitioning-example}.

\begin{table}[htb]
\caption{Estimates of the values of $\gamma$ in \cref{theorem-convergence-common-finite} for the three unique equally-sized partitions of \cref{eqn-gamma-partitioning-example}.}
\label{table-gamma-partitioning-example}
\centering
\begin{tabular}{@{}ccc@{}} \toprule
Partition I & Partition II & Partition III \\
$0.880$ & $0.372$ & $0.372$ \\\bottomrule
\end{tabular}
\end{table}

From \cref{table-gamma-partitioning-example}, we see that to get the same guaranteed relative reduction in error from Partition I in comparison to Partition II or III requires over seven fold more iterations. Indeed, as shown in \cref{figure-partition-comparison}, we observe exactly this behavior when we implement cyclic block Kaczmarz on \cref{eqn-gamma-partitioning-example} for the three different partitions up to an absolute error of $10^{-4}$.

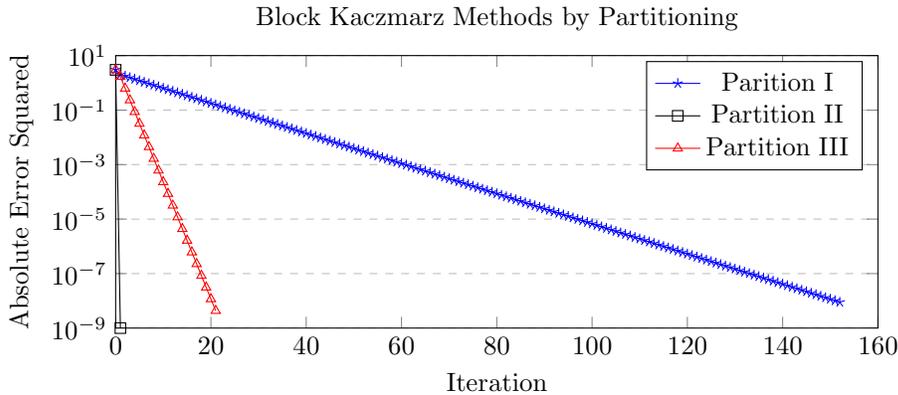
\begin{figure}[htb]
\centering
\begin{tikzpicture}
\begin{axis}[
	width=0.9\textwidth,
	height=0.4\textwidth,
    title={Block Kaczmarz Methods by Partitioning},
    xlabel={Iteration},
    ylabel={Absolute Error Squared},
    ymode=log,
    xmin=0, xmax=160,
    ymin=1e-9, ymax=10,
    xtick={0,20,40,60,80,100,120,140,160},
    ytick={1e-9,1e-7,1e-5,1e-3,1e-1,10.0},
    ymajorgrids=true,
    grid style=dashed,
]

\addplot[
    color=blue,
    mark=star,
    ]
    coordinates {
(0, 2.9999999999999996)
(1, 2.0000000000077796)
(2, 1.7608700108021251)
(3, 1.5503312051628155)
(4, 1.364965555966869)
(5, 1.2017631525361052)
(6, 1.0580741096985553)
(7, 0.9315652487563815)
(8, 0.8201824703388301)
(9, 0.7221171749722655)
(10, 0.6357770958391792)
(11, 0.5597602691627822)
(12, 0.49283242474989425)
(13, 0.4339068087472103)
(14, 0.3820266575751276)
(15, 0.33634955721290044)
(16, 0.29613385761028194)
(17, 0.26072654854819555)
(18, 0.2295527285453575)
(19, 0.2021062066540745)
(20, 0.1779413391226859)
(21, 0.15666574422758972)
(22, 0.13793397433926438)
(23, 0.12144186871174951)
(24, 0.10692164828589709)
(25, 0.09413753816476177)
(26, 0.08288196527961905)
(27, 0.07297216508314633)
(28, 0.06424723410749723)
(29, 0.056565499594647045)
(30, 0.04980223473927609)
(31, 0.04384761971610389)
(32, 0.03860497061720917)
(33, 0.033989158912979274)
(34, 0.029925238579838167)
(35, 0.026347220923554435)
(36, 0.02319701037830528)
(37, 0.02042345478959584)
(38, 0.01798152048355056)
(39, 0.015831556077909394)
(40, 0.013938653026237683)
(41, 0.01227207494701419)
(42, 0.01080476194189404)
(43, 0.009512888231450628)
(44, 0.008375477917474086)
(45, 0.00737406207951557)
(46, 0.006492380926356895)
(47, 0.005716117989510153)
(48, 0.005032669240944132)
(49, 0.004430937049092336)
(50, 0.0039011512070496173)
(51, 0.003434709214902548)
(52, 0.003024037546068682)
(53, 0.0026624678394736433)
(54, 0.002344129359232517)
(55, 0.002063853022802642)
(56, 0.0018170880385307456)
(57, 0.0015998275121847563)
(58, 0.001408543827667019)
(59, 0.0012401309787068934)
(60, 0.0010918544792910317)
(61, 0.0009613066611149737)
(62, 0.0008463678449750614)
(63, 0.0007451716895976415)
(64, 0.0006560750930215202)
(65, 0.0005776313318994075)
(66, 0.0005085667304287221)
(67, 0.000447759838747049)
(68, 0.00039422334733882586)
(69, 0.0003470879471135485)
(70, 0.00030558830986863653)
(71, 0.0002690505771669657)
(72, 0.00023688149309681776)
(73, 0.00020855870587873146)
(74, 0.0001836223440420081)
(75, 0.00016166749855873292)
(76, 0.00014233769296336502)
(77, 0.0001253190557612298)
(78, 0.00011033525867823994)
(79, 9.71429994884553e-5)
(80, 8.552807806738423e-5)
(81, 7.530189482020075e-5)
(82, 6.629840923220554e-5)
(83, 5.837142551966245e-5)
(84, 5.139223479621074e-5)
(85, 4.5247511070579095e-5)
(86, 3.983748368453293e-5)
(87, 3.507430628747079e-5)
(88, 3.0880640086445906e-5)
(89, 2.7188389641328735e-5)
(90, 2.393760457347447e-5)
(91, 2.1075499678820826e-5)
(92, 1.855560348106343e-5)
(93, 1.6336998717024097e-5)
(94, 1.4383662308406407e-5)
(95, 1.2663876597619337e-5)
(96, 1.114971775739137e-5)
(97, 9.81659932987761e-6)
(98, 8.64287573388194e-6)
(99, 7.609488417874733e-6)
(100, 6.699658464908077e-6)
(101, 5.898612344104124e-6)
(102, 5.193343626464993e-6)
(103, 4.572400366779152e-6)
(104, 4.025700438167085e-6)
(105, 3.544366690125553e-6)
(106, 3.1205838014543246e-6)
(107, 2.747470520845825e-6)
(108, 2.418968679590948e-6)
(109, 2.1297441635530087e-6)
(110, 1.875100890166139e-6)
(111, 1.6509040446028859e-6)
(112, 1.4535133859802474e-6)
(113, 1.2797237420044712e-6)
(114, 1.1267133252491635e-6)
(115, 9.919976015780499e-7)
(116, 8.733892177948333e-7)
(117, 7.689622461019122e-7)
(118, 6.770211264957941e-7)
(119, 5.960729483278953e-7)
(120, 5.248033722467808e-7)
(121, 4.6205514295651164e-7)
(122, 4.0680943164637775e-7)
(123, 3.5816917351472334e-7)
(124, 3.1534460764049286e-7)
(125, 2.776403610748286e-7)
(126, 2.444442371606273e-7)
(127, 2.1521720880451968e-7)
(128, 1.8948472166950808e-7)
(129, 1.6682893973075717e-7)
(130, 1.4688200589654064e-7)
(131, 1.293200269306496e-7)
(132, 1.1385785257191149e-7)
(133, 1.0024441367742569e-7)
(134, 8.825867124809074e-8)
(135, 7.770600403364399e-8)
(136, 6.841506965095561e-8)
(137, 6.023500697526978e-8)
(138, 5.303299696339795e-8)
(139, 4.669209515255224e-8)
(140, 4.110934608455725e-8)
(141, 3.6194098183148215e-8)
(142, 3.186654396593288e-8)
(143, 2.805641370916217e-8)
(144, 2.470184384671972e-8)
(145, 2.1748362517219027e-8)
(146, 1.914801555685685e-8)
(147, 1.685857891418374e-8)
(148, 1.4842879665714554e-8)
(149, 1.3068187531762439e-8)
(150, 1.1505687165772206e-8)
(151, 1.013000717856401e-8)
(152, 8.91881101842811e-9)
}; \addlegendentry{Parition I}

\addplot[
    color=black,
    mark=square,
    ]
    coordinates {
 (0, 2.9999999999999996)
 (1, 1e-9)
    };\addlegendentry{Partition II}
    
\addplot[
    color=red,
    mark=triangle,
    ]
    coordinates {
(0, 2.9999999999999996)
(1, 1.674418604651163)
(2, 0.6230231260065191)
(3, 0.2318164731690852)
(4, 0.08625502808701699)
(5, 0.032094051680552585)
(6, 0.011941659241416746)
(7, 0.004443291450313339)
(8, 0.0016532743493428794)
(9, 0.0006151557026497492)
(10, 0.00022888913667167278)
(11, 8.516581519221363e-5)
(12, 3.168877380038467e-5)
(13, 1.1790862128256764e-5)
(14, 4.3871823694817136e-6)
(15, 1.6323970998661435e-6)
(16, 6.073876276920899e-7)
(17, 2.2599876605023497e-7)
(18, 8.409035668086818e-8)
(19, 3.128861369596562e-8)
(20, 1.1641969253773942e-8)
(21, 4.3317818239264195e-9)
    };\addlegendentry{Partition III}
\end{axis}
\end{tikzpicture}
\caption{A comparison of the aboslute errors of cyclic block Kaczmarz methods for the coefficient matrix in \cref{eqn-gamma-partitioning-example}. Corresponding to the theory, Partition I produces the worst convergence rate.}
\label{figure-partition-comparison}
\end{figure}
\subsection{Convergence for an Infinite Set} \label{subsection-infiniteset}

In the second case, $\lbrace \col(\Prj_k) \rbrace$ can take value over an infinite set, as in \cref{example-gaussian-column} with $n_k < \rnk A$. Suppose we attempt to prove the convergence result as we did in \cref{subsection-finiteset}. Then, we would need to prove that $\sup \lbrace \Gamma \rbrace < 1$. However, when $\Gamma$ is infinite, we could potentially have $\sup \lbrace \Gamma \rbrace = 1$. For instance, consider a $3 \times 2$ coefficient matrix whose first two rows are the first standard basis element of $\mathbb{R}^2$ and the last row is the second standard basis element, 
and a procedure that alternates between choosing either the first row of the matrix, or taking a linear combination of the second row and a product of a standard Guassian random variable with the third row. If we let $\mathcal{N}(0,1)$ denote a standard Gaussian distribution, then $\Gamma$ is made up of all possible values of $(Z^2 + 1)^{-1}$ with $Z \sim \mathcal{N}(0,1)$. 
Since $Z$ has nonzero density about $0$, the supremum of $\Gamma$ would be $1$ in this case.

As this example suggests, it is possible to have arbitrarily poor values for $\lbrace \gamma_{\ell} : \ell +1 \in \mathbb{N} \rbrace$. However, this example also shows that shows that bulk of values of $\lbrace \gamma_{\ell} : \ell + 1 \in \mathbb{N} \rbrace$ are well-behaved (i.e., the mean and standard deviation of $(Z^2+1)^{-1}$ is approximately $0.66$ and $0.26$, respectively), which partially motivates \cref{def-unif-control}. 
Under \cref{def-unif-control}, we have the following result, from which \cref{theorem-convergence-row-infinite,theorem-convergence-col-infinite} follow immediately.
\begin{theorem} \label{theorem-convergence-infinite}
Let $A \in \mathbb{R}^{n \times d}$ and $b \in \mathbb{R}^n$. Given that $\lbrace y_k : k+1 \rbrace$ are well-defined (see \cref{eqn-common-y-def}), suppose $\lbrace y_{k} : k + 1 \in \mathbb{N} \rbrace$ are generated by a Markovian, $N,\pi$-exploratory, and uniformly nontrivial RBAS. One of the following is true.
\begin{remunerate}
\item There exist a stopping time $\tau$ with finite expectation such that $y_{\tau} = 0$.
\item There exists a sequence of non-negative stopping times $\lbrace \tau_{j} : j +1 \in \mathbb{N} \rbrace$ for which $\E{ \tau_{j} } \leq j [ (\rnk{A} - 1) (N /\pi) + 1]$, there exists $\bar \gamma \in (0,1)$, and there exists a sequence of random variables $\lbrace \gamma_j : j+1 \in \mathbb{N}\rbrace \subset (0,1)$, such that
$
\inPrb{ \cap_{j=0}^\infty \lbrace \innorm{ y_{\tau_j} }_2^2 \leq ( \prod_{\ell=0}^{j-1} \gamma_{\ell} ) \innorm{ y_0 }_2^2 \rbrace} = 1,
$
where for any $\gamma \in (\bar \gamma,1)$,  $\inPrb{\cup_{L=0}^\infty \cap_{j=L}^\infty \lbrace \prod_{\ell=0}^{j-1} \gamma_{\ell} \leq \gamma^j \rbrace} = 1$.
\end{remunerate}
\end{theorem}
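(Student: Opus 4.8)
The plan is to build directly on \cref{corollary-convergence}, which already reduces the problem to its second case and supplies stopping times $\lbrace \tau_j \rbrace$ with $\E{\tau_j} \leq j[(\rnk{A}-1)(N/\pi)+1]$ together with the almost-sure bound $\innorm{y_{\tau_j}}_2^2 \leq (\prod_{\ell=0}^{j-1}\gamma_\ell)\innorm{y_0}_2^2$, where $\gamma_\ell \in (0,1)$ is one minus the Meany constant of the $\ell$-th block. Writing $\beta_\ell := 1-\gamma_\ell \in (0,1)$ for that Meany constant (the inner $\sup_{Q_i}\min_{G}\det(G^\intercal G)$ appearing in \cref{corollary-convergence}), the only thing left to establish is the tail statement on $\prod_{\ell=0}^{j-1}\gamma_\ell$. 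Since $\log\gamma_\ell \leq \gamma_\ell - 1 = -\beta_\ell$, we have $\prod_{\ell=0}^{j-1}\gamma_\ell \leq \exp(-\sum_{\ell=0}^{j-1}\beta_\ell)$, so it suffices to show that $\liminf_{j\to\infty}\frac{1}{j}\sum_{\ell=0}^{j-1}\beta_\ell \geq g$ almost surely for some deterministic $g>0$; then $\bar\gamma := e^{-g} \in (0,1)$ will do the job.

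The heart of the argument is to extract a uniform, deterministic lower bound $\incond{\beta_\ell}{\mathcal{F}_{\tau_\ell+1}^{\tau_\ell}} \geq g$ from \cref{def-unif-control}. I would apply the uniformly nontrivial property with a family $\lbrace \mathcal{A}_k \rbrace$ chosen to capture the event that the $\ell$-th block has filled up, i.e. essentially $\lbrace \nu(\tau_\ell) \leq k \rbrace$ restarted at $\tau_\ell$. On this event the collection $\mathcal{G}(Q_{\tau_\ell},\ldots,Q_{\tau_\ell+k})$ contains the columns defining $\beta_\ell$, so the inner $\sup_{Q}\min_G\det(G^\intercal G)\1{\mathcal{A}_k}$ dominates $\beta_\ell$; taking $\sup_k$ and invoking \cref{def-unif-control} yields the lower bound $g := g_{\mathcal{A}} > 0$. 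The hypothesis of \cref{def-unif-control}, namely that $\inf_{y_0,\zeta_{-1}}\incondPrb{\mathcal{A}_k}{\mathcal{F}_1^0}\to 1$ as $k\to\infty$, is exactly what \cref{theorem-nu-finite} provides, since $\incond{\nu(\tau_\ell)}{\mathcal{F}_{\tau_\ell+1}^{\tau_\ell}}\leq (\rnk{A}-1)(N/\pi)$ forces $\incondPrb{\nu(\tau_\ell)>k}{\mathcal{F}_{\tau_\ell+1}^{\tau_\ell}}$ to vanish uniformly in $(y_0,\zeta_{-1})$ as $k\to\infty$. The strong Markov property, via the stopped $\sigma$-algebra $\mathcal{F}_{\tau_\ell+1}^{\tau_\ell}$ used throughout \cref{theorem-nu-finite}, then lets us restart the clock at each $\tau_\ell$ and apply the bound for every $\ell$.

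With the conditional lower bound in hand, I would close with a martingale law of large numbers. Set $\mathcal{G}_\ell := \mathcal{F}_{\tau_\ell+1}^{\tau_\ell}$ and $D_\ell := \beta_\ell - \incond{\beta_\ell}{\mathcal{G}_\ell}$; this is a martingale difference sequence with $|D_\ell|\leq 1$, so $\sum_\ell D_\ell/(\ell+1)$ converges almost surely by Kolmogorov's criterion, and Kronecker's lemma gives $\frac{1}{j}\sum_{\ell=0}^{j-1}D_\ell \to 0$ almost surely. Combined with $\incond{\beta_\ell}{\mathcal{G}_\ell}\geq g$, this yields $\liminf_j\frac{1}{j}\sum_{\ell=0}^{j-1}\beta_\ell \geq g$ almost surely. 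Finally, for any $\gamma \in (\bar\gamma,1)$ we have $-\log\gamma < g$, so almost surely there is a finite $L$ with $\frac{1}{j}\sum_{\ell=0}^{j-1}\beta_\ell > -\log\gamma$ for all $j\geq L$, whence $\prod_{\ell=0}^{j-1}\gamma_\ell \leq \exp(-\sum_{\ell=0}^{j-1}\beta_\ell) \leq \gamma^j$; this is precisely $\inPrb{\cup_{L=0}^\infty\cap_{j=L}^\infty\lbrace \prod_{\ell=0}^{j-1}\gamma_\ell \leq \gamma^j\rbrace}=1$, and substituting the two definitions of $\lbrace y_k\rbrace$ recovers \cref{theorem-convergence-row-infinite,theorem-convergence-col-infinite}.

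I expect the main obstacle to be the second step: correctly aligning the random block length $\nu(\tau_\ell)$ with the $\sup_k$ and indicator structure of \cref{def-unif-control}, so that $\beta_\ell$ is genuinely dominated by the quantity appearing there, and verifying that $\lbrace \mathcal{A}_k\rbrace$ both satisfies the limiting-probability hypothesis \emph{uniformly} in $(y_0,\zeta_{-1})$ and is adapted to the required $\sigma$-algebras. The monotonicity of $\sup_Q\min_G\det(G^\intercal G)$ under enlarging the collection of projection ranges needs particular care, as does using the strong Markov property to promote the $(y_0,\zeta_{-1})$-infimum of \cref{def-unif-control} into a bound that holds uniformly across all blocks $\ell$.
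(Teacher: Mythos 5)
Your proposal is correct in substance, and its core step is the same as the paper's: you invoke \cref{def-unif-control} with the family $\mathcal{A}_k$ being the restarted events $\lbrace \nu(\tau_\ell) \leq k \rbrace$, verify its hypothesis uniformly via \cref{theorem-nu-finite} plus Markov's inequality, and use the monotonicity of $\min_G \det(G^\intercal G)$ under enlarging the collection of blocks to transfer the bound $g_{\mathcal{A}}$ to the block Meany constant, yielding $\incond{1-\gamma_\ell}{\mathcal{F}_{\tau_\ell+1}^{\tau_\ell}} \geq g$ for every block. This is exactly the paper's base-case-plus-Markovian-restart argument. Where you genuinely diverge is the closing step. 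The paper stays with expectations: by the tower property it propagates the conditional bound into $\incond{\prod_{\ell=0}^{j-1}\gamma_\ell}{\mathcal{F}_1^0} \leq \bar\gamma^j$ with $\bar\gamma = 1-g$, then applies Markov's inequality and Borel--Cantelli to get the almost-sure eventual bound $\prod_{\ell=0}^{j-1}\gamma_\ell \leq \gamma^j$ for $\gamma \in (\bar\gamma,1)$. You instead run a martingale strong law (Kolmogorov's criterion plus Kronecker's lemma) on $\beta_\ell = 1-\gamma_\ell$ and use $\prod_{\ell=0}^{j-1}\gamma_\ell \leq \exp(-\sum_{\ell=0}^{j-1}\beta_\ell)$, arriving at $\bar\gamma = e^{-g}$. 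Both closings are valid; the paper's is more elementary (no martingale convergence theorem) and, as it remarks, directly yields decay of moments of $\innorm{y_{\tau_j}}_2^2$, and its threshold is slightly sharper since $1-g \leq e^{-g}$, whereas your route gives the almost-sure Ces\`aro statement $\liminf_j \frac{1}{j}\sum_{\ell<j}\beta_\ell \geq g$, which is a marginally stronger qualitative conclusion about the sequence $\lbrace \beta_\ell \rbrace$ itself.

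One directional slip to fix: you write that on $\mathcal{A}_k$ ``the inner $\sup_Q \min_G \det(G^\intercal G)\1{\mathcal{A}_k}$ dominates $\beta_\ell$.'' It is the reverse, and the reverse is what your argument needs. Since adding blocks can only shrink the minimum (for any extra $Q$, $\min_{G \in \mathcal{G}(Q_{\tau_\ell},\ldots,Q_{\tau_\ell+k})}\det(G^\intercal G) \geq \min_{G \in \mathcal{G}(Q_{\tau_\ell},\ldots,Q_{\tau_\ell+k},Q)}\det(G^\intercal G)$), on the event $\lbrace \nu(\tau_\ell) \leq k\rbrace$ the Meany constant of the actual (shorter) block satisfies $\beta_\ell \geq \sup_Q \min_{G \in \mathcal{G}(Q_{\tau_\ell},\ldots,Q_{\tau_\ell+k})}\det(G^\intercal G)$; only with this direction does $\incond{\beta_\ell}{\mathcal{F}_{\tau_\ell+1}^{\tau_\ell}} \geq \incond{\sup_Q \min_G \det(G^\intercal G)\1{\mathcal{A}_k}}{\mathcal{F}_{\tau_\ell+1}^{\tau_\ell}} \geq g_{\mathcal{A}}$ follow from \cref{def-unif-control}. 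You flagged this monotonicity as the delicate point, correctly, but the stated inequality is backwards.
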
 
\begin{proof}
By \cref{corollary-convergence}, we can focus on the second case and we need only prove that there exists $\bar \gamma \in (0,1)$ such that for any $\gamma \in (\bar \gamma,1)$, $\inPrb{\cup_{L=0}^\infty \cap_{j=L}^\infty \lbrace \prod_{\ell=0}^{j-1} \gamma_{\ell} \leq \gamma^j \rbrace} = 1$. To show this, we need to prove $\incond{ \prod_{\ell=0}^{j-1} \gamma_{\ell} }{\mathcal{F}_1^0} \leq \bar \gamma^j$ for each $j$, which we will do by induction. For the base case, $j=0$,
\begin{align}
&\cond{ 1- \gamma_0 }{\mathcal{F}_1^0} \nonumber\\
&= \cond{ \sup_{Q_i \in \mathfrak{Q}_i, i \in \lbrace \tau_0,\ldots,\tau_0+ \nu(\tau_{0}) \rbrace}\min_{ G \in \mathcal{G}(Q_{\tau_{0}},\ldots,Q_{\tau_{0}+\nu(\tau_{0})})} \det(G^\intercal G) }{\mathcal{F}_1^0} \\
&= \cond{ \sum_{k=0}^\infty \1{ \nu(\tau_0) = k} \sup_{Q_i \in \mathfrak{Q}_i, i \in \lbrace \tau_0,\ldots,\tau_0+ k \rbrace}\min_{ G \in \mathcal{G}(Q_{\tau_{0}},\ldots,Q_{\tau_{0}+k})} \det(G^\intercal G) }{\mathcal{F}_1^0}.
\end{align}
Since $\min_{ G \in \mathcal{G}(Q_{\tau_0},\ldots, Q_{\tau_0+k})} \det(G^\intercal G) \geq \min_{ G \in \mathcal{G}(Q_{\tau_0},\ldots, Q_{\tau_0 + k},Q)} \det(G^\intercal G)$ for any $Q \in \mathfrak{Q}_{\tau_0 + k + 1}$ and any $k + 1 \in \mathbb{N}$, then, for every $k+1 \in \mathbb{N}$,
$
\incond{ 1- \gamma_0}{\mathcal{F}_1^0}$ is bounded below by 
\begin{equation}
\cond{  \1{ \nu(\tau_0) \leq k} \sup_{Q_i \in \mathfrak{Q}_i, i \in \lbrace \tau_0,\ldots,\tau_0+ k \rbrace}\min_{ G \in \mathcal{G}(Q_{\tau_{0}},\ldots,Q_{\tau_{0}+k})} \det(G^\intercal G) }{\mathcal{F}_1^0}.
\end{equation}
Now, by \cref{theorem-nu-finite} and Markov's inequality, for any $y_0 \neq 0$ and any $\zeta_{-1} \in \mathfrak{Z}$,
$
\incondPrb{ \nu(\tau_0) \leq k }{\mathcal{F}_{1}^0} \geq 1 - N(\rnk{A} - 1)/(k \pi).
$
Hence, we can apply \cref{def-unif-control} to conclude that there exists a $g \in (0,1]$ such that $\incond{ 1 - \gamma_0}{\mathcal{F}_1^0} \geq g$. If we let $\bar \gamma = 1 - g$, then $\incond{ \gamma_0}{\mathcal{F}_1^0} \leq \bar \gamma$. Now, for the induction hypothesis, suppose that $\incond{ \prod_{\ell=0}^{j-2} \gamma_{\ell}}{\mathcal{F}_1^0} \leq \bar \gamma^{j-1}$. To conclude, we note that by the Markovian property and the base case, $\incond{ \gamma_{j-1}}{\mathcal{F}_{\tau_j+1}^{\tau_j}} \leq \bar{\gamma}$. Therefore, 
$
\incond{ \prod_{\ell=0}^{j-1} \gamma_{\ell}}{\mathcal{F}_1^0} = \incond{ \incond{ \gamma_{j-1}}{\mathcal{F}_{\tau_j+1}^{\tau_j}} \prod_{\ell=0}^{j-1} \gamma_{\ell}}{\mathcal{F}_1^0} \leq \bar \gamma^{j}
$.

Now, for any $\gamma \in (\bar \gamma, 1)$, the preceding proof and Markov's inequality provide
\begin{equation}
\sum_{j=1}^\infty \condPrb{ \prod_{\ell=0}^{j-1} \gamma_{\ell} > \gamma^{j} }{\mathcal{F}_1^0} \leq \sum_{j=1}^\infty \left( \frac{\bar \gamma}{\gamma} \right)^j < \infty.
\end{equation}
By the Borel-Cantelli lemma, $\inPrb{\cup_{L=0}^\infty \cap_{j=L}^\infty \lbrace \prod_{\ell=0}^{j-1} \gamma_{\ell} \leq \gamma^j \rbrace} = 1$.
\end{proof}
\begin{remark}
Our proof readily allows us to bound the convergence rates of the moments of $\lbrace y_{k}: k+1 \rbrace$.
\end{remark}

\section{Examples} \label{section-examples}
We provide a series of examples to demonstrate how we can apply our theory to a variety of methods. 
Of particular practical value, we will show how to verify the relevant properties (e.g., Exploratory). We summarize these examples, references, and reference the convergence result for the given method based on our theory in \cref{table-examples}.

\begin{small}
\begin{longtable}{@{}>{\centering}m{1.5in}ccc@{}}
\caption{Summary of worked examples using our theory.} \label{table-examples} \\ 
\toprule
\textbf{Method} & \textbf{References} & \textbf{Details} & \textbf{Convergence Result} \\ \midrule
Cyclic Vector Kaczmarz  & \cite{karczmarz1937,kaczmarz1993,dai2015} & \Cref{subsection-cyclic-vector-kaczmarz} & \cref{theorem-cyclic-vector-kaczmarz}\\ 
\hdashline
Gaussian Vector Kaczmarz & \cite{gower2015,richtarik2020} & \Cref{subsection-gaussian-vector-kaczmarz} & \cref{theorem-gaussian-vector-kaczmarz}\\
\hdashline
Strohmer-Vershynin Vector Kaczmarz & \cite{strohmer2009} & \Cref{subsection-strohmer-vershynin-vector-kaczmarz} & \cref{theorem-strohmer-verhsynin-vector-kaczmarz} \\
\hdashline
Steinerberger Vector Kaczmarz & \cite{steinerberger2021} & \Cref{subsection-steinerberger-vector-kaczmarz} & \cref{theorem-steinerberger-vector-kaczmarz} \\
\hdashline
Motzkin's Method & \cite{motzkin1954,agmon1954}  & \Cref{subsection-motzkin-vector} & \cref{theorem-motzkin-vector}\\
\hdashline
Agmon's Method & \cite{agmon1954} & \Cref{subsection-agmon-vector} & \cref{theorem-agmon-vector} \\
\hdashline
Greedy Randomized Vector Kaczmarz & \cite{bai2018} & \Cref{subsection-greedy-randomized-vector-kaczmarz} & \cref{theorem-greedy-randomized-vector-kaczmarz}\\
\hdashline
Sampling Kaczmarz-Motzkin Method & \cite{haddock2019} & \Cref{subsection-sampling-kaczmarz-motzkin-vector} & \cref{theorem-sampling-kaczmarz-motzkin-vector}\\
\hdashline
Streaming Vector Kaczmarz & \cite{PAJAMA2021adaptive} & \Cref{subsection-streaming-vector-kaczmarz} & \cref{theorem-streaming-vector-kaczmarz} \\
\hdashline
Cyclic Vector Coordinate Descent & \cite{warga1963} & \Cref{subsection-cyclic-vector-cd} &  \cref{theorem-cyclic-vector-cd}\\
\hdashline
Gaussian Vector Column Space Descent & & \Cref{subsection-gaussian-vector-cs} & \cref{theorem-gaussian-vector-cs} \\
\hdashline
Zouzias-Freris Vector Coordinate Descent & \cite{zouzias2013} & \Cref{subsection-zouzias-freris-vector-cd} & \cref{theorem-zouzias-freris-vector-cd}\\
\hdashline
Max Residual Coordinate Descent & & \Cref{subsection-max-residual-vector-cd} & \cref{theorem-max-residual-vector-cd} \\
\hdashline
Max Distance Coordinate Descent & & \Cref{subsection-max-distance-vector-cd} & \cref{theorem-max-distance-vector-cd}\\
\hdashline
Random Permutation Block Kaczmarz & \cite{needell2014,necoara2019} &  \Cref{subsection-random-permutation-block-kaczmarz} & \cref{theorem-random-permutation-block-kaczmarz} \\
\hdashline
Steinerberger Block Kaczmarz &  \cite{richtarik2020,gower2021} & \Cref{subsection-steinerberger-block-kaczmarz} & \cref{theorem-steinerberger-block-kaczmarz} \\
\hdashline
Motzkin's Block Method &  & \Cref{subsection-motzkin-block} & \cref{theorem-motzkin-block}\\
\hdashline
Agmon's Block Method & & \Cref{subsection-agmon-block} & \cref{theorem-agmon-block} \\
\hdashline
Adaptive Sketch-and-Project & \cite{gower2021} & \Cref{subsection-adaptive-sketch-and-project} & \cref{theorem-adaptive-sketch-and-project} \\
\hdashline
Greedy Randomized Block Kaczmarz & & \Cref{subsection-greedy-randomized-block-kaczmarz} & \cref{theorem-greedy-randomized-block-kaczmarz} \\
\hdashline
Streaming Block Kaczmarz & \cite{gower2015,rebrova2021} & \Cref{subsection-streaming-block-kaczmarz} & \cref{theorem-streaming-block-kaczmarz} \\
\hdashline
Random Permutation Block Coordinate Descent & \cite{necoara2016,wright2020} & \Cref{subsection-random-permutation-block-cd} & \cref{theorem-random-permutation-block-cd} \\
\hdashline
Gaussian Block Column Space Descent & & \Cref{subsection-gaussian-block-cs} &\cref{theorem-gaussian-block-cs} \\
\hdashline
Zouzias-Freris Block Coordinate Descent & & \Cref{subsection-zouzias-freris-block-cd} & \cref{theorem-zouzias-freris-block-cd} \\
\hdashline
Max Residual Block Coordinate Descent & & \Cref{subsection-max-residual-block-cd} & \cref{theorem-max-residual-block-cd} \\
\hdashline
Max Distance Block Coordinate Descent & & \Cref{subsection-max-distance-block-cd} & \cref{theorem-max-distance-block-cd}\\
\bottomrule
\end{longtable}
\end{small}

\section{Conclusion} \label{section-conclusion}
In order to enable broader use of highly tailored randomized methods for solvign linear systems, we began with the challenge of providing a unifying theory for randomized block adaptive solvers (RBASs) for linear systems---regardless of whether the linear systems are underdetermined, overdetermined, or rank deficient. To this end, we studied two archetypes of RBAS solvers---row-action methods for consistent linear systems and column-action methods for arbitrary linear systems---, and showed that under very general conditions both archetypes will converge exponentially fast to a solution. Specifically, we had two results.
\begin{remunerate}
\item When a RBAS is Markovian, $N,\pi$-exploratory, and projects either the absolute error (for row-action methods) or residual (for column-action methods) onto only a finite number of spaces, then the RBAS will converge exponentially fast to a solution of the linear system.
\item When a RBAS is Markovian, $N,\pi$-exploratory, and uniformly nontrivial, then, after some finite number of iterations, the RBAS will converge exponentially fast to a solution of the linear system.
\end{remunerate} 

We further provided numerical evidence to elucidate key aspects of theory at key points. In particular, we demonstrated the value of the supremum in our generalization of Meany's inequality (see \cref{theorem-block-meany,figure-vector-vs-block}), and we discussed the importance of finding appropriate partitions when using block cyclic solvers (see \cref{figure-partition-comparison}), which was quite carefully studied in \cite{needell2014}. Finally, we provided a host of examples of how to apply our theory to existing methods and some novel methods, which we complemented with appropriate numerical experiments.

In completing the above tasks, we have provided practitioners with a powerful theory and demonstrations of how to use the theory to rigorously analyze a wide variety of RBASs. Thus, we hope that practitioners will be empowered to use this theory and create novel RBASs that are optimized to their specific applications and computing environments.

\pagebreak 
\appendix
\section{Worked Examples}
\numberwithin{equation}{subsection}
\numberwithin{theorem}{subsection}
\subsection{Cyclic Vector Kaczmarz} \label{subsection-cyclic-vector-kaczmarz}

For a description of this method, see \cref{example-cyclic-vector-kaczmarz}. We assume that $A$ and $b$ are arbitrary so long as they form a consistent system. Recall, $\zeta_{-1} = 0$,
\begin{equation}
\varphi_R( A, b, \lbrace x_j : j \leq k \rbrace, \lbrace W_j : j < k \rbrace, \lbrace \zeta_j : j < k \rbrace) = (e_{\mathrm{rem}(\zeta_{k-1}, n) + 1}, \zeta_{k-1}+1).
\end{equation}
and
\begin{equation}
x_{k+1} = x_k - A^\intercal e_{\mathrm{rem}(\zeta_{k-1},n) + 1} \frac{e_{\mathrm{rem}(\zeta_{k-1},n)+1}^\intercal(Ax_k - b)}{\norm{ A^\intercal e_{\mathrm{rem}(\zeta_{k-1},n) + 1} }_2^2},
\end{equation}
where $\lbrace e_{1},\ldots,e_{n} \rbrace$ are the standard basis elements of $\mathbb{R}^n$.

\begin{lemma}
Cyclic Vector Kaczmarz is Markovian.
\end{lemma}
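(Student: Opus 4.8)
The plan is to verify \cref{def-markovian-row} directly with the minimal choice $M = 1$. Recall that for this method the procedure $\varphi_R$ returns $(W_k, \zeta_k) = (e_{\mathrm{rem}(\zeta_{k-1}, n) + 1},\, \zeta_{k-1} + 1)$, so that both the sketch $W_k$ and the summary state $\zeta_k$ are \emph{deterministic} functions of the single previous state $\zeta_{k-1}$ (together with the fixed constant $n$). The goal is therefore to show that for every pair of measurable sets $\mathcal{W}$ and $\mathcal{Z}$,
\[
\condPrb{W_k \in \mathcal{W}, \zeta_k \in \mathcal{Z}}{\mathcal{F}_{k+1}^k} = \condPrb{W_k \in \mathcal{W}, \zeta_k \in \mathcal{Z}}{\mathcal{F}_1^k},
\]
which, since $\min\lbrace 1, k+1 \rbrace = 1$, is precisely the Markovian property with $M = 1$.

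First I would record the relevant measurability structure. By the convention following \cref{eqn-sigma-algebra}, $\mathcal{F}_1^k = \sigma(\zeta_{k-1}, x_k)$, whereas $\mathcal{F}_{k+1}^k = \sigma(\zeta_{-1}, x_0, W_0, \ldots, W_{k-1}, \zeta_{k-1}, x_k)$ is the entire history. In particular $\mathcal{F}_1^k \subseteq \mathcal{F}_{k+1}^k$, and the state $\zeta_{k-1}$ is $\mathcal{F}_1^k$-measurable.

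The key step is then to observe that, because $(W_k, \zeta_k) = (e_{\mathrm{rem}(\zeta_{k-1}, n) + 1},\, \zeta_{k-1} + 1)$ is a fixed Borel function of $\zeta_{k-1}$, the indicator $\1{W_k \in \mathcal{W}, \zeta_k \in \mathcal{Z}}$ is itself $\mathcal{F}_1^k$-measurable. Consequently, conditioning this indicator on either $\sigma$-algebra leaves it unchanged: both conditional probabilities equal the common value $\1{e_{\mathrm{rem}(\zeta_{k-1}, n) + 1} \in \mathcal{W},\, \zeta_{k-1} + 1 \in \mathcal{Z}}$. This establishes the displayed equality and hence the claim.

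The only point requiring care — and it is a mild obstacle rather than a substantive one — is the bookkeeping of which variables generate each $\sigma$-algebra, so that one correctly identifies $\zeta_{k-1}$ as $\mathcal{F}_1^k$-measurable and the resulting indicator as measurable with respect to the coarser $\sigma$-algebra. Once that is in place, the determinism of $\varphi_R$ collapses both conditional probabilities to the same $\mathcal{F}_1^k$-measurable indicator, and no further computation is needed.
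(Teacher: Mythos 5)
Your proposal is correct and follows essentially the same route as the paper's proof: both rest on the observation that $(W_k,\zeta_k)$ is a deterministic function of $\zeta_{k-1}$, which is measurable with respect to $\mathcal{F}_1^k$, so the conditional probabilities given the full history and given $\mathcal{F}_1^k$ coincide. Your version merely makes the measurability bookkeeping (that the indicator is $\mathcal{F}_1^k$-measurable and hence fixed by both conditionings) explicit, which the paper leaves implicit.
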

\begin{proof}
Note, $W_k = e_{\mathrm{rem}(\zeta_{k-1},n),n) + 1}$ and $\zeta_{k} = \zeta_{k-1} + 1$. That is, $(W_k,\zeta_{k})$ are fully determined by $\zeta_{k-1}$. Therefore, $\incondPrb{ W_k \in \mathcal{W}, \zeta_{k} \in \mathcal{Z}}{\mathcal{F}_{k+1}^k} = \incondPrb{ W_k \in \mathcal{W}, \zeta_{k}}{\mathcal{F}_{1}^k}$.
\end{proof}

\begin{lemma}
Cyclic Vector Kaczmarz is $n,1$-Exploratory.
\end{lemma}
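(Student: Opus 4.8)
The plan is to exploit the deterministic, cyclic structure of the selection rule together with the geometric fact that the error vector always lies in the row space of $A$. First I would unwind the recursion for $(W_k, \zeta_k)$: since $\zeta_k = \zeta_{k-1} + 1$ and $W_k = e_{\mathrm{rem}(\zeta_{k-1}, n) + 1}$, an easy induction gives $W_j = e_{\mathrm{rem}(\zeta_{-1} + j,\, n) + 1}$ for every $j + 1 \in \mathbb{N}$. As $j$ ranges over $\lbrace 0, 1, \ldots, n-1 \rbrace$, the residues $\mathrm{rem}(\zeta_{-1} + j, n)$ take each value in $\lbrace 0, \ldots, n-1 \rbrace$ exactly once, so $\lbrace W_0, \ldots, W_{n-1} \rbrace = \lbrace e_1, \ldots, e_n \rbrace$ \emph{regardless} of the value of $\zeta_{-1}$. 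In particular these selections are deterministic given $\zeta_{-1}$, so the conditional probability appearing in \cref{def-exploratory-row} takes only the values $0$ or $1$.

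Second, I would rewrite the event in the definition. Since $\col(A\T W_j) = \linspan{A\T e_{i_j}}$ for the appropriate index $i_j$, and since the $W_j$ exhaust $e_1, \ldots, e_n$ as $j$ runs from $0$ to $n-1$, the event $\bigcap_{j=0}^{n-1} \lbrace \col(A\T W_j) \perp (x_0 - \Prj_{\mathcal{H}} x_0) \rbrace$ holds if and only if $x_0 - \Prj_{\mathcal{H}} x_0$ is orthogonal to $A\T e_i$ for every $i = 1, \ldots, n$, which is equivalent to $A(x_0 - \Prj_{\mathcal{H}} x_0) = 0$, i.e. $x_0 - \Prj_{\mathcal{H}} x_0 \in \nullsp{A}$.

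Third comes the key structural observation. Under \cref{assumption-consistency}, $\mathcal{H} = \lbrace x : Ax = b \rbrace$ is a nonempty affine set of the form $x^\star + \nullsp{A}$, so the residual of the orthogonal projection onto $\mathcal{H}$ satisfies $x_0 - \Prj_{\mathcal{H}} x_0 \in \nullsp{A}^\perp = \col(A\T)$. Combining this with the previous step, whenever the event holds the vector $x_0 - \Prj_{\mathcal{H}} x_0$ lies simultaneously in $\nullsp{A}$ and in $\nullsp{A}^\perp$, which forces $x_0 - \Prj_{\mathcal{H}} x_0 = 0$, i.e. $x_0 = \Prj_{\mathcal{H}} x_0$.

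Finally, since the supremum in \cref{def-exploratory-row} ranges only over $x_0 \neq \Prj_{\mathcal{H}} x_0$, the event never occurs for admissible $x_0$; being deterministic, its conditional probability equals $0$ for every such $x_0$ and every $\zeta_{-1} \in \mathfrak{Z}$. Hence the supremum is $0 = 1 - 1$, which is precisely the $n, 1$-Exploratory condition with $N = n$ and $\pi = 1$. The only mildly delicate point is the third step---verifying that the projection residual lands in $\col(A\T)$---but this is immediate from the affine structure of $\mathcal{H}$; everything else is bookkeeping on the cyclic index.
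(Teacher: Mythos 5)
Your proof is correct. It shares with the paper the essential bookkeeping step---that the cyclic rule visits every row of $A$ in any window of $n$ consecutive iterations, regardless of $\zeta_{-1}$, and that the selection is deterministic given $\mathcal{F}_1^0$ so the conditional probability is $\lbrace 0,1\rbrace$-valued---but your finishing argument is genuinely different from the paper's. The paper never intersects all $n$ orthogonality events: it picks a \emph{single} violated equation, i.e.\ a row $A_{x_0}$ with $A_{x_0}\T x_0 \neq b_{x_0}$, notes that the exploratory event is contained in $\lbrace A_{x_0} \perp x_0 - \Prj_{\mathcal{H}} x_0 \rbrace$, and observes that this event equals $\lbrace A_{x_0}\T x_0 = b_{x_0} \rbrace$ because $\Prj_{\mathcal{H}} x_0$ satisfies that equation; the event is then empty by the choice of row. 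You instead use all $n$ events to conclude $x_0 - \Prj_{\mathcal{H}} x_0 \in \nullsp{A}$, and combine this with the structural fact that the residual of the orthogonal projection onto the affine set $\mathcal{H} = x^\star + \nullsp{A}$ lies in $\nullsp{A}^\perp = \col(A\T)$, forcing the residual to vanish. The trade-off: the paper's one-violated-row argument is more elementary---it needs only $\Prj_{\mathcal{H}} x_0 \in \mathcal{H}$, not orthogonality of the projection, and it is the template reused throughout the appendix (Strohmer--Vershynin, Motzkin, the block variants) where a single violated row or block yields the exploratory bound, possibly with $\pi < 1$. Your argument leans on the row-space fact (which the paper itself invokes elsewhere, e.g.\ in the proof of \cref{theorem-nu-finite}), and in exchange it makes transparent \emph{why} cyclic selection achieves $\pi = 1$: it generalizes verbatim to any deterministic rule that exhausts a spanning set of $\row(A)$ within $N$ steps. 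Both routes are sound.
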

\begin{proof}
If $x_0$ is not a solution to $Ax=b$, then there exists a row of $A$, denoted by $A_{x_0} \in \mathbb{R}^d$, and corresponding constant, $b_{x_0}$, such that $A_{x_0}^\intercal x_0 \neq b_{x_0}$. Hence, 
\begin{equation}
\bigcap_{j=0}^{n-1} \left\lbrace e_{j+1}^\intercal A \perp x_0 - \Prj_{\mathcal{H}} x_0 \right\rbrace 
\subset \left\lbrace A_{x_0} \perp x_0 - \Prj_{\mathcal{H}} x_0 \right\rbrace 
= \left\lbrace A_{x_0}^\intercal x_0 - b_{x_0} = 0 \right\rbrace.
\end{equation}
Clearly, the last event must be empty. The conclusion follows.
\end{proof}

Since the set of $|\lbrace \col(A^\intercal e_{j}) : j=1,\ldots,n \rbrace| \leq n$, we can apply \cref{theorem-convergence-row-finite} to conclude as follows.

\begin{theorem} \label{theorem-cyclic-vector-kaczmarz}
Let $A \in \mathbb{R}^{n \times d}$ and $b \in \mathbb{R}^n$ such that the linear system's solution set, $\mathcal{H}$, is nonempty. Let $x_0 \in \mathbb{R}^d$. Let $\lbrace x_k : k \in \mathbb{N} \rbrace$ be a sequence generated by the Cyclic Vector Kaczmarz method. Then, there exists a stopping time $\tau$ with finite expectation such that $x_{\tau} \in \mathcal{H}$; or there exists a sequence of non-negative stopping times $\lbrace \tau_{j} : j +1 \in \mathbb{N} \rbrace$ for which $\E{ \tau_{j} } \leq j [ (\rnk{A} - 1) n + 1]$,  and there exist $\gamma \in (0,1)$ and a sequence of random variables $\lbrace \gamma_j : j+1 \in \mathbb{N}\rbrace \subset (0,\gamma]$, such that
\begin{equation}
\Prb{ \bigcap_{j=0}^\infty \left\lbrace \norm{ x_{\tau_j} - \Prj_{\mathcal{H}} x_0 }_2^2 \leq \left( \prod_{\ell=0}^{j-1} \gamma_{\ell} \right) \norm{ x_0 - \Prj_{\mathcal{H}}x_0 }_2^2 \right\rbrace} = 1.
\end{equation}
\end{theorem}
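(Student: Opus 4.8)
The plan is to verify that Cyclic Vector Kaczmarz satisfies every hypothesis of \cref{theorem-convergence-row-finite} and then read off the stated dichotomy directly, since all of the analytical work has already been carried out in establishing that general result. The hypotheses required are: consistency (\cref{assumption-consistency}); the Markovian property (\cref{def-markovian-row}); the $N,\pi$-Exploratory property (\cref{def-exploratory-row}) for some $N \in \mathbb{N}$ and $\pi \in (0,1]$; and that the spaces $\lbrace \col(A^\intercal W_k) \rbrace$ take value in a finite set.

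First I would observe that consistency holds by hypothesis, as we assume the solution set $\mathcal{H}$ is nonempty. The Markovian property (with $M = 1$) and the $n,1$-Exploratory property are precisely the two lemmas proved immediately above, so I would simply cite them, fixing $N = n$ and $\pi = 1$.

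Next I would check the finiteness condition. Since $W_k$ is always one of the standard basis vectors $e_1,\ldots,e_n$, the space $\col(A^\intercal W_k)$ equals $\col(A^\intercal e_i)$ for the selected index $i$, which is the one-dimensional space spanned by the $i^{\mathrm{th}}$ row of $A$ viewed as a column vector. There are at most $n$ such spaces, so $\lbrace \col(A^\intercal W_k) : k + 1 \in \mathbb{N} \rbrace$ lies in a finite set.

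With all four hypotheses in hand, \cref{theorem-convergence-row-finite} applies verbatim and yields exactly the two alternatives in the statement. The final step is bookkeeping: substituting $N = n$ and $\pi = 1$ into the expected-stopping-time bound $j[(\rnk{A} - 1)(N/\pi) + 1]$ produces $j[(\rnk{A} - 1) n + 1]$, matching the claim. I expect essentially no obstacle here, since this proof is a pure application of the general theory; the only point deserving care is confirming that the Exploratory parameter is genuinely $\pi = 1$---that is, that a full $n$-step cyclic sweep is guaranteed to encounter an equation violated by the current iterate whenever $x_0 \neq \Prj_{\mathcal{H}} x_0$---which is exactly what the cited lemma supplies by noting that a nonzero $x_0 - \Prj_{\mathcal{H}} x_0$ cannot be orthogonal to every row of $A$.
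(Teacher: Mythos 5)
Your proposal is correct and takes essentially the same route as the paper: cite the Markovian and $n,1$-Exploratory lemmas, note that $\lbrace \col(A^\intercal e_j) : j=1,\ldots,n \rbrace$ contains at most $n$ distinct spaces so the finite-set hypothesis holds, and apply \cref{theorem-convergence-row-finite} with $N = n$ and $\pi = 1$ to obtain the bound $j\left[ (\rnk{A}-1)n + 1 \right]$. The paper's proof is exactly this specialization, including the same justification of the Exploratory property via the observation that a nonzero $x_0 - \Prj_{\mathcal{H}} x_0$ cannot be orthogonal to every row of $A$.
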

\subsection{Gaussian Vector Kaczmarz} \label{subsection-gaussian-vector-kaczmarz}

Suppose $(A,b)$ form a consistent linear system. In Gaussian Vector Kaczmarz, we generate independent standard normal vectors $\lbrace w_k: k+1 \in \mathbb{N} \rbrace \subset \mathbb{R}^n$ at each iteration and then apply
\begin{equation}
x_{k+1} = x_k - A^\intercal w_k \frac{w_k^\intercal (Ax_k - b)}{\norm{A^\intercal w_k}_2^2}.
\end{equation}
Therefore, we can define
\begin{equation}
\varphi_R( A, b, \lbrace x_j : j \leq k \rbrace, \lbrace W_j : j < k \rbrace, \lbrace \zeta_j : j < k \rbrace) = (w_k, \emptyset).
\end{equation}

\begin{lemma}
Gaussian Vector Kaczmarz is Markovian.
\end{lemma}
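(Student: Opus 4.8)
The plan is to verify \cref{def-markovian-row} directly, taking $M = 1$, by exploiting that Gaussian Vector Kaczmarz generates $(W_k, \zeta_k)$ in a way that carries no dependence on the history. Concretely, here $W_k = w_k$ is a standard normal vector drawn independently of all previously generated quantities, while $\zeta_k = \emptyset$ is deterministic (recall $\mathfrak{Z} = \lbrace \emptyset \rbrace$). The situation is thus even simpler than the Cyclic Vector Kaczmarz case, where $(W_k, \zeta_k)$ was a deterministic function of $\zeta_{k-1}$; here $W_k$ is independent of the entire past rather than a function of part of it.

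First I would identify the generators of the two $\sigma$-algebras appearing in the definition. By \cref{eqn-sigma-algebra}, $\mathcal{F}_{k+1}^k = \sigma(\zeta_{-1}, x_0, W_0, \ldots, W_{k-1}, \zeta_{k-1}, x_k)$, whereas $\mathcal{F}_1^k = \sigma(\zeta_{k-1}, x_k)$. The crucial observation is that every generator of $\mathcal{F}_{k+1}^k$ is a measurable function of the deterministic initialization $(x_0, \zeta_{-1})$ together with the earlier draws $w_0, \ldots, w_{k-1}$: indeed $W_j = w_j$ for $j < k$, and each iterate is obtained from $x_0$ by iterating \cref{eqn-base-row-update} using only $w_0, \ldots, w_{k-1}$. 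Hence $\mathcal{F}_{k+1}^k \subset \sigma(x_0, \zeta_{-1}, w_0, \ldots, w_{k-1})$, and since $w_k$ is drawn independently of these quantities, $w_k$ is independent of $\mathcal{F}_{k+1}^k$.

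Then I would conclude by direct computation: because $\zeta_k = \emptyset$ is constant and $w_k$ is independent of $\mathcal{F}_{k+1}^k$, for any measurable $\mathcal{W}$ and $\mathcal{Z}$ we have $\condPrb{W_k \in \mathcal{W}, \zeta_k \in \mathcal{Z}}{\mathcal{F}_{k+1}^k} = \Prb{w_k \in \mathcal{W}} \1{\emptyset \in \mathcal{Z}}$, and the identical computation with $\mathcal{F}_{k+1}^k$ replaced by $\mathcal{F}_1^k$ yields the same right-hand side. This verifies the definition with $M = 1$. There is essentially no obstacle here—the statement is immediate from the independence of the Gaussian draws—and the only point deserving care is confirming that no generator of $\mathcal{F}_{k+1}^k$ secretly involves $w_k$, which is exactly the content of the containment $\mathcal{F}_{k+1}^k \subset \sigma(x_0, \zeta_{-1}, w_0, \ldots, w_{k-1})$ noted above.
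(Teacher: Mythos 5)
Your proof is correct and takes essentially the same route as the paper: both arguments reduce to the observation that $w_k$ is drawn independently of everything generated so far, so the conditional probability given $\mathcal{F}_{k+1}^k$ equals the unconditional law of $w_k$ and hence coincides with the conditional probability given $\mathcal{F}_1^k$. Your version merely spells out the containment $\mathcal{F}_{k+1}^k \subset \sigma(x_0, \zeta_{-1}, w_0,\ldots,w_{k-1})$ and the indicator $\1{\emptyset \in \mathcal{Z}}$, details the paper leaves implicit.
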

\begin{proof}
Since $w_k$ is independently generated at each iteration, $\incondPrb{ W_k \in \mathcal{W}, \zeta_{-1} \in \mathcal{Z}}{ \mathcal{F}_{k+1}^k} = \inPrb{ w_k \in \mathcal{W} } = \incondPrb{ W_k \in \mathcal{W}, \zeta_{-1} \in \mathcal{Z}}{\mathcal{F}_1^k}$.
\end{proof}

\begin{lemma}
Gaussian Vector Kaczmarz is $1,1$-Exploratory.
\end{lemma}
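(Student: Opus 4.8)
The plan is to verify \Cref{def-exploratory-row} directly with $N=1$ and $\pi=1$. Unwinding that definition, it suffices to show that for every $x_0 \in \mathbb{R}^d$ with $x_0 \neq \Prj_{\mathcal{H}} x_0$ and every $\zeta_{-1} \in \mathfrak{Z}$,
\[
\condPrb{ \col(A\T w_0) \perp \left( x_0 - \Prj_{\mathcal{H}} x_0 \right) }{\mathcal{F}_1^0} = 0,
\]
since then the supremum over all admissible $x_0,\zeta_{-1}$ is $0 = 1 - 1$, yielding $\pi = 1$.

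First I would rewrite the orthogonality event. Because $\col(A\T w_0)$ is the line spanned by the single vector $A\T w_0$, the event $\lbrace \col(A\T w_0) \perp (x_0 - \Prj_{\mathcal{H}} x_0)\rbrace$ coincides with $\lbrace w_0\T A (x_0 - \Prj_{\mathcal{H}} x_0) = 0\rbrace$, where the degenerate case $A\T w_0 = 0$ is automatically subsumed. Setting $v := A(x_0 - \Prj_{\mathcal{H}} x_0)$, the event is simply $\lbrace w_0\T v = 0\rbrace$.

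The key step is to establish $v \neq 0$. Since $\mathcal{H} = x^* + \nullsp{A}$ for any solution $x^*$, the orthogonal projection onto the affine set $\mathcal{H}$ gives $x_0 - \Prj_{\mathcal{H}} x_0 \in \nullsp{A}^\perp = \row(A)$, and this vector is nonzero by the hypothesis $x_0 \neq \Prj_{\mathcal{H}} x_0$. As $A$ is injective on $\row(A)$ (because $\row(A) \cap \nullsp{A} = \lbrace 0 \rbrace$), it follows that $v = A(x_0 - \Prj_{\mathcal{H}} x_0) \neq 0$. I expect this to be the only substantive step: the geometric content is precisely that a nonzero error $x_0 - \Prj_{\mathcal{H}} x_0$ necessarily lies in $\row(A)$ and is therefore not annihilated by $A$, which is what rules out the degenerate situation in which the orthogonality event could have positive probability.

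Finally, I would invoke the independence structure. Since $w_0$ is a standard Gaussian vector generated independently of $\mathcal{F}_1^0 = \sigma(\zeta_{-1}, x_0)$ while $v$ is $\mathcal{F}_1^0$-measurable, conditioning on $\mathcal{F}_1^0$ fixes the nonzero vector $v$ and leaves $w_0$ Gaussian. The event $\lbrace w_0\T v = 0\rbrace$ is then the event that $w_0$ lands in the hyperplane $v^\perp$, which has Lebesgue measure zero; absolute continuity of the Gaussian law forces this conditional probability to be $0$. Taking the supremum preserves the bound, so the method is $1,1$-Exploratory. The remainder after the $v \neq 0$ argument is the routine fact that a nondegenerate Gaussian assigns zero mass to any hyperplane.
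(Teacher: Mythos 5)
Your proof is correct and follows essentially the same route as the paper's: identify the orthogonality event with $\lbrace w_0\T A(x_0 - \Prj_{\mathcal{H}}x_0) = 0\rbrace$ and conclude it has zero conditional probability because a continuous Gaussian vector lies in any fixed hyperplane with probability zero. The only difference is that you spell out explicitly why $A(x_0 - \Prj_{\mathcal{H}}x_0) \neq 0$ (via $x_0 - \Prj_{\mathcal{H}}x_0 \in \row(A)$ and injectivity of $A$ on $\row(A)$), a step the paper leaves implicit.
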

\begin{proof}
For any vector $v \neq 0$, $\inPrb{ w_0^\intercal v = 0 } = 0$ since $w_0$ is a continuous random variable. In particular, when $x_0 \neq \Prj_{\mathcal{H}}x_0$,
$\incondPrb{ w_0^\intercal A(x_0 - \Prj_{\mathcal{H}} x_0) = 0 }{\mathcal{F}_1^0} = 0.$
\end{proof}

\begin{lemma}
Gaussian Vector Kaczmarz is Uniformly Nontrivial.
\end{lemma}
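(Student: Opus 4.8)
The plan is to reduce the whole expression to a single scalar random variable and then exploit that, for the Gaussian scheme, its law does not depend on the initialization. Writing $u_s = A\T w_s / \norm{A\T w_s}_2$, each $\mathfrak{Q}_s$ consists of the two unit vectors $\pm u_s$ (a.s.\ $A\T w_s \neq 0$), so the outer supremum over orthonormal bases in \cref{def-unif-control-expectation-row} is vacuous and the integrand there equals $D_k \1{\mathcal{A}_k}$, where $D_k := \min_{G \in \mathcal{G}(Q_0,\ldots,Q_k)}\det(G\T G)$ and $G$ ranges over matrices whose columns form a maximal linearly independent subset of $\lbrace u_0,\ldots,u_k\rbrace$. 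Recall also that here $\mathfrak{Z} = \lbrace \emptyset \rbrace$, so there is no $\zeta_{-1}$ dependence to track.

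Next I would establish the crucial structural fact: conditionally on $\mathcal{F}_1^0$, and for every admissible $x_0$, the law of $D_k$ is the same, i.e.\ free of the initialization. Indeed $u_s$ depends only on the fresh Gaussian $w_s$, which is independent of $\mathcal{F}_1^0$ and of the past iterates, so $\lbrace u_s \rbrace$ are i.i.d.\ directions distributed according to a fixed law with a positive density on the unit sphere of $\row(A)$. The only possible coupling to $x_0$ is through $\chi_s$; but whenever $\rnk A \geq 2$ (the case $\rnk A = 1$ has $\col(\Prj_k) = \row(A)$ for all $k$ and hence falls under the finite-set \cref{theorem-convergence-common-finite}), an easy induction shows that if $y_0 \neq 0$ then $y_s \neq 0$ for all $s$ with probability one, since $(I - u_s u_s\T) y_s = 0$ forces $y_s \in \mathrm{span}(u_s)$, an event of probability zero for a continuously distributed direction in a space of dimension at least two. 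Thus a.s.\ $\chi_s = 1$ for all $s$, the collection $\lbrace u_0,\ldots,u_k \rbrace$ carries no information about $x_0$, and the law of $D_k$ is independent of $x_0$.

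With this in hand, the verification is a decoupling argument. A maximal linearly independent set of unit vectors has a strictly positive Gram determinant, so $D_k > 0$ almost surely, whence $\inPrb{D_k \geq \delta} \to 1$ as $\delta \downarrow 0$, uniformly in $x_0$ because the law of $D_k$ does not depend on it. Now fix an arbitrary family $\lbrace \mathcal{A}_k \rbrace$ with $\inf_{x_0 \neq \Prj_{\mathcal{H}} x_0}\incondPrb{\mathcal{A}_k}{\mathcal{F}_1^0} \to 1$. Choose $k^\ast$ so large that $\sup_{x_0}\incondPrb{\mathcal{A}_{k^\ast}^c}{\mathcal{F}_1^0} \leq 1/2$, and then choose a single threshold $\delta > 0$, depending only on $k^\ast$ and the common law of $D_{k^\ast}$, with $\inPrb{D_{k^\ast} \geq \delta} \geq 3/4$. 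Since $\lbrace D_{k^\ast} \geq \delta \rbrace$ is independent of $\mathcal{F}_1^0$, for every admissible $x_0$ we obtain
\begin{equation}
\cond{ D_{k^\ast} \1{\mathcal{A}_{k^\ast}} }{\mathcal{F}_1^0} \geq \delta\, \condPrb{ \lbrace D_{k^\ast} \geq \delta \rbrace \cap \mathcal{A}_{k^\ast} }{\mathcal{F}_1^0} \geq \delta \left( \tfrac{3}{4} - \tfrac{1}{2} \right) = \frac{\delta}{4},
\end{equation}
so taking $g_{\mathcal{A}} = \delta/4 > 0$ verifies \cref{def-unif-control-expectation-row}.

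The main obstacle, and the reason the argument must be arranged this way, is that $D_k$ is monotonically nonincreasing in $k$ and in fact $D_k \to 0$ almost surely: with infinitely many i.i.d.\ directions one can always find $\rnk A$ of them that are nearly dependent. Consequently the naive approach of sending $k \to \infty$ to force $\inPrb{\mathcal{A}_k} \to 1$ destroys the lower bound, and a crude split $\cond{D_k \1{\mathcal{A}_k}}{\mathcal{F}_1^0} \geq \cond{D_k}{\mathcal{F}_1^0} - \incondPrb{\mathcal{A}_k^c}{\mathcal{F}_1^0}$ is useless because both terms vanish while $\lbrace \mathcal{A}_k \rbrace$ is adversarial. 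The decoupling above circumvents this by using $k^\ast$ only to tame the indicator and then absorbing the smallness of $D_{k^\ast}$ once and for all into the fixed constant $\delta$, which is legitimate precisely because the law of $D_{k^\ast}$ is free of the initialization.
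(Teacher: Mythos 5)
Your proof is correct and takes essentially the same route as the paper's own: almost-sure non-degeneracy ($\chi_k = 1$ for every $k$ when $\rnk{A} \geq 2$, by continuity of the Gaussian directions in a space of dimension at least two), independence of the Gram-determinant quantity from the initialization, a threshold $\delta$ extracted from its almost-sure positivity, and an inclusion--exclusion bound against $\condPrb{\mathcal{A}_k}{\mathcal{F}_1^0}$ yielding $g_{\mathcal{A}} = \delta/4$, which matches the paper's $\epsilon_k/4$ step for step. The only discrepancy is the rank-one case, which the lemma also asserts: deferring it to \cref{theorem-convergence-common-finite} establishes convergence but not the uniformly-nontrivial property itself, whereas the paper verifies the property directly (there $\chi_0 = 1$ and $\chi_k = 0$ afterwards, so every $G$ is a single unit column, $\det(G\T G) = 1$, and the defining inequality holds with constant $1$) --- a one-line repair that your own observation $\col(\Prj_k) = \row(A)$ already supplies.
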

\begin{proof}
Suppose $x_k \neq \Prj_{\mathcal{H}} x_0$. Then, $w_k^\intercal (Ax_k - b) \neq 0$ and $A^\intercal w_k \neq 0$ with probability one since $w_k$ is a continuous random variable. Hence, $\chi_k = 1$ with probability one so long as $x_k \neq \Prj_{\mathcal{H}}x_0$. We now have two cases: either $\rnk A = 1$ or $\rnk A > 1$. 

If $\rnk A = 1$, then $x_{0} - \Prj_{\mathcal{H}}x_0 \in \row( A)$. Hence, $\linspan{x_0 - \Prj_{\mathcal{H}} x_0}$ equals $\linspan{ A^\intercal w_0}$ with probability one. Therefore,
\begin{align}
x_1 - \Prj_\mathcal{H} x_0 = x_0 - \Prj_{\mathcal{H}}x_0 - \frac{(x_0 - \Prj_\mathcal{H} x_0)(x_0 - \Prj_\mathcal{H} x_0)^\intercal}{\norm{(x_0 - \Prj_\mathcal{H} x_0)}_2^2} (x_0 - \Prj_{\mathcal{H}} x_0) = 0.
\end{align}
Hence, $x_1$ solves the system, $\chi_0 = 1$ and $\chi_k = 0$ for all $k\in\mathbb{N}$. Thus, $\mathcal{G}(Q_0,\ldots,Q_s)$ equals $\mathcal{G}(Q_0)$, which is simply the $\mathbb{R}^{d \times 1}$ matrix whose column is a unit vector in $\row(A)$. To conclude this case, $\det(G^\intercal G) = 1$ and
\begin{equation}
\begin{aligned}
&\inf_{ \substack{x_0: Ax_0 \neq b \\ \zeta_{-1} \in \mathfrak{Z}} } \sup_{k \in \mathbb{N} \cup \lbrace 0 \rbrace } \cond{ \sup_{ \substack{ Q_s \in \mathfrak{Q}_s \\ s \in \lbrace 0,\ldots,k \rbrace}} \min_{ G \in \mathcal{G}(Q_0,\ldots,Q_k)} \det(G^\intercal G) \1{ \mathcal{A}_k(x_0, \zeta_{-1})}}{\mathcal{F}^0_1}\\
& = \inf_{ \substack{x_0: Ax_0 \neq b \\ \zeta_{-1} \in \mathfrak{Z}} } \sup_{k \in \mathbb{N} \cup \lbrace 0 \rbrace } \condPrb{ \mathcal{A}_k(x_0,\zeta_{-1})}{\mathcal{F}_1^0} = 1.
\end{aligned}
\end{equation}
In other words, when $\rnk A = 1$, Gaussian Vector Kaczmarz is uniformly nontrivial.

If $\rnk A > 1$, we will proceed by induction. Since $A^\intercal w_0$ is a continuous random variable with co-domain of dimension at least two, $x_0 - \Prj_{\mathcal{H}} x_0 \in \linspan{ A^\intercal w_0}$ with probability zero. Thus, $x_1 \neq \Prj_{\mathcal{H}} x_0$. Suppose that $x_{k-1} \neq \Prj_{\mathcal{H}} x_0$ with probability one. Then, by the same reasoning as the base case, $x_{k} \neq \Prj_{\mathcal{H}} x_0$. Hence, $\chi_k = 1$ for all $k+1 \in \mathbb{N}$ with probability one. It follows that
\begin{equation} \label{eqn-xl0z3d}
\sup_{ \substack{ Q_s \in \mathfrak{Q}_s \\ s \in \lbrace 0,\ldots,k \rbrace}} \min_{ G \in \mathcal{G}(Q_0,\ldots,Q_k)} \det(G^\intercal G)
\end{equation}
is independent of $x_0$. Moreover, \cref{eqn-xl0z3d} is positive with probability one. Hence, $\exists \lbrace \epsilon_k : k+1 \in \mathbb{N} \rbrace \in \mathbb{R}_{\geq 0}$ such that
\begin{equation}
\Prb{ \sup_{ \substack{ Q_s \in \mathfrak{Q}_s \\ s \in \lbrace 0,\ldots,k \rbrace}} \min_{ G \in \mathcal{G}(Q_0,\ldots,Q_k)} \det(G^\intercal G) > \epsilon_k} \geq 1/2.
\end{equation}
Therefore, by Markov's inequality,
\begin{equation}
\begin{aligned}
&\inf_{ \substack{x_0: Ax_0 \neq b \\ \zeta_{-1} \in \mathfrak{Z}} } \sup_{k \in \mathbb{N} \cup \lbrace 0 \rbrace } \cond{ \sup_{ \substack{ Q_s \in \mathfrak{Q}_s \\ s \in \lbrace 0,\ldots,k \rbrace}} \min_{ G \in \mathcal{G}(Q_0,\ldots,Q_k)} \det(G^\intercal G) \1{ \mathcal{A}_k(x_0, \zeta_{-1})}}{\mathcal{F}^0_1}\\
&\geq\inf_{ \substack{x_0: Ax_0 \neq b \\ \zeta_{-1} \in \mathfrak{Z}} } \sup_{k \in \mathbb{N} \cup \lbrace 0 \rbrace } \epsilon_k \condPrb{\sup_{ \substack{ Q_s \in \mathfrak{Q}_s \\ s \in \lbrace 0,\ldots,k \rbrace}} \min_{ G \in \mathcal{G}(Q_0,\ldots,Q_k)} \det(G^\intercal G) > \epsilon_k, \mathcal{A}_k(x_0, \zeta_{-1})}{\mathcal{F}^0_1}
\end{aligned}
\end{equation}
Now, $\exists K \in \mathbb{N}$ such that, for all $k \geq K$, $\incondPrb{ \mathcal{A}_k(x_0,\zeta_{-1})}{\mathcal{F}^0_1} \geq 3/4$. By the inclusion-exclusion principle, for $k \geq K$, 
\begin{equation}
\condPrb{\sup_{ \substack{ Q_s \in \mathfrak{Q}_s \\ s \in \lbrace 0,\ldots,k \rbrace}} \min_{ G \in \mathcal{G}(Q_0,\ldots,Q_k)} \det(G^\intercal G) > \epsilon_k, \mathcal{A}_k(x_0, \zeta_{-1})}{\mathcal{F}^0_1} \geq 1/4.
\end{equation}
Therefore, for all $k \geq K$, 
\begin{equation}
\inf_{ \substack{x_0: Ax_0 \neq b \\ \zeta_{-1} \in \mathfrak{Z}} } \sup_{k \in \mathbb{N} \cup \lbrace 0 \rbrace } \cond{ \sup_{ \substack{ Q_s \in \mathfrak{Q}_s \\ s \in \lbrace 0,\ldots,k \rbrace}} \min_{ G \in \mathcal{G}(Q_0,\ldots,Q_k)} \det(G^\intercal G) \1{ \mathcal{A}_k(x_0, \zeta_{-1})}}{\mathcal{F}^0_1} \geq \epsilon_k / 4.
\end{equation}
The result follows.
\end{proof}

We can now apply \cref{theorem-convergence-row-infinite} to conclude as follows

\begin{theorem} \label{theorem-gaussian-vector-kaczmarz}
Let $A \in \mathbb{R}^{n \times d}$ and $b \in \mathbb{R}^n$ such that the linear system's solution set, $\mathcal{H}$, is nonempty. Let $x_0 \in \mathbb{R}^d$. Let $\lbrace x_k : k \in \mathbb{N} \rbrace$ be a sequence generated by Gaussian Vector Kaczmarz. Then, there exists a stopping time $\tau$ with finite expectation such that $x_{\tau} \in \mathcal{H}$; or there exists a sequence of non-negative stopping times $\lbrace \tau_{j} : j +1 \in \mathbb{N} \rbrace$ for which $\E{ \tau_{j} } \leq j \rnk{A}$, and there exists $\bar \gamma \in (0,1)$ such that for any $\gamma \in (\bar \gamma, 1)$,
\begin{equation}
\Prb{ \bigcup_{L=0}^\infty \bigcap_{j=L}^\infty \left\lbrace \norm{ x_{\tau_j} - \Prj_{\mathcal{H}}x_0 }_2^2 \leq \gamma^j \norm{ x_0 - \Prj_{\mathcal{H}} x_0 }_2^2 \right\rbrace} = 1.
\end{equation}
\end{theorem}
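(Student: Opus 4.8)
The plan is to verify the three structural hypotheses required by \cref{theorem-convergence-row-infinite} and then invoke that corollary directly, since all of the analytic work has already been carried out at the level of the common formulation. The three preceding lemmas establish precisely what is needed: Gaussian Vector Kaczmarz is Markovian (\cref{def-markovian-row}), it is $N,\pi$-Exploratory with $N = 1$ and $\pi = 1$ (\cref{def-exploratory-row}), and it is Uniformly Nontrivial (\cref{def-unif-control-expectation-row}). Thus the role of this final statement is mainly to specialize the generic dichotomy to this method and to record the resulting explicit constants.

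First I would observe that because each $w_k$ is a continuous (standard Gaussian) vector, the row-action column spaces $\col(A^\intercal w_k)$ range over an infinite set with positive probability, so the finite-set result \cref{theorem-convergence-row-finite} does not apply and \cref{theorem-convergence-row-infinite} is the correct tool. Applying that corollary immediately yields the stated dichotomy: either there is a finite-expectation stopping time $\tau$ with $x_\tau = \Prj_{\mathcal{H}} x_0$, or there are non-negative stopping times $\lbrace \tau_j \rbrace$, a constant $\bar\gamma \in (0,1)$, and random variables $\lbrace \gamma_j \rbrace \subset (0,1)$ with the advertised almost-sure decay. Substituting $N/\pi = 1$ into the generic bound $\E{\tau_j} \leq j[(\rnk{A} - 1)(N/\pi) + 1]$ collapses the bracket to $\rnk{A}$, giving $\E{\tau_j} \leq j\,\rnk{A}$, which is exactly the rate claimed.

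The one remaining step is to reconcile the two almost-sure events supplied by \cref{theorem-convergence-row-infinite} into the single clean statement of the theorem. The corollary provides, with probability one, both the bound $\norm{x_{\tau_j} - \Prj_{\mathcal{H}} x_0}_2^2 \leq \left(\prod_{\ell=0}^{j-1}\gamma_\ell\right)\norm{x_0 - \Prj_{\mathcal{H}} x_0}_2^2$ for all $j$, and, for each fixed $\gamma \in (\bar\gamma,1)$, the eventual bound $\prod_{\ell=0}^{j-1}\gamma_\ell \leq \gamma^j$. Intersecting these two full-probability events and invoking \cref{remark-io-interpretation} (there is a finite random $L$ beyond which $\prod_{\ell=0}^{j-1}\gamma_\ell \leq \gamma^j$), I would chain the two inequalities to conclude $\norm{x_{\tau_j} - \Prj_{\mathcal{H}} x_0}_2^2 \leq \gamma^j \norm{x_0 - \Prj_{\mathcal{H}} x_0}_2^2$ for all $j \geq L$, which is precisely the $\cup_{L=0}^\infty \cap_{j=L}^\infty$ form in the statement.

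The main obstacle is essentially absent at the level of this theorem: the substantive difficulty was discharged earlier, in establishing the Uniformly Nontrivial property (the rank-one base case, the induction on $\rnk{A}$, and the Markov-inequality argument), and in the general machinery behind \cref{theorem-convergence-row-infinite}. The only genuine bookkeeping here is the final reconciliation of the two almost-sure events, and even that is routine once \cref{remark-io-interpretation} is in hand.
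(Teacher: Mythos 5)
Your proposal is correct and follows exactly the paper's route: the paper proves the same three lemmas (Markovian, $1,1$-Exploratory, Uniformly Nontrivial) and then simply invokes \cref{theorem-convergence-row-infinite}, with $N/\pi = 1$ collapsing the stopping-time bound to $j\,\rnk{A}$. Your explicit reconciliation of the two almost-sure events into the $\cup_{L}\cap_{j\geq L}$ form is a detail the paper leaves implicit, but it is the intended (and valid) reading.
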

\subsection{Strohmer-Vershynin Vector Kaczmarz} \label{subsection-strohmer-vershynin-vector-kaczmarz}
Suppose we are given $A \in \mathbb{R}^{n \times d}$ and $b \in \mathbb{R}$ such that $Ax=b$ has a solution and every row of $A$ has at least one nonzero entry.
In Strohmer-Vershynin Vector Kaczmarz method, we select an equation by sampling it with replacement from the set of all equations with a probability proportional to the sum of squares of the coefficients of the equation. Once this equation is selected, the regular Kaczmarz update is used. In our notation,
$
\varphi_R(A, b) = (e_{i_k}, \emptyset),
$
where 
\begin{equation}
\Prb{i_k = j} \propto \begin{cases}
\innorm{ e_j^\intercal A}_2^2 & j=1,\ldots,n \\
0 & \text{otherwise}.
\end{cases}
\end{equation}

\begin{lemma}
Strohmer-Vershynin Vector Kaczmarz is Markovian.
\end{lemma}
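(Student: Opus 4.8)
The plan is to mirror the proof that Gaussian Vector Kaczmarz is Markovian, since the decisive feature—that the sketching step is drawn independently of the iteration history—is shared by both methods. First I would note that here $\mathfrak{Z} = \lbrace \emptyset \rbrace$ and $\zeta_k = \emptyset$ deterministically, so the $\zeta_k$ component carries no information; the only randomness to track is the index $i_k$, with $W_k = e_{i_k}$.

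Next, the key structural fact is that at each iteration $i_k$ is sampled with replacement according to the fixed law $\Prb{i_k = j} \propto \innorm{e_j^\intercal A}_2^2$, which depends only on $A$ and not on any of $\lbrace x_j : j \leq k \rbrace$, $\lbrace W_j : j < k \rbrace$, or the previously selected indices. Consequently, for any measurable $\mathcal{W} \subset \mathbb{R}^{n \times n_k}$ and $\mathcal{Z} \subset \mathfrak{Z}$, the conditional probability $\condPrb{W_k \in \mathcal{W}, \zeta_k \in \mathcal{Z}}{\mathcal{F}_{k+1}^k}$ collapses to the deterministic quantity $\left( \sum_{j:\, e_j \in \mathcal{W}} \Prb{i_k = j} \right) \1{ \emptyset \in \mathcal{Z}}$.

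Because this value does not depend on which $\sigma$-algebra in the nested family $\lbrace \mathcal{F}_m^k \rbrace$ we condition on, it is in particular equal to $\condPrb{W_k \in \mathcal{W}, \zeta_k \in \mathcal{Z}}{\mathcal{F}_1^k}$; taking $M = 1$ in \cref{def-markovian-row} (as justified by \cref{remark-markovian}) then yields the claim. I do not expect a genuine obstacle: the entire argument rests on the independence of the index sampling from the history. The only point requiring mild care is to confirm that $\zeta_k \equiv \emptyset$ makes the event $\lbrace \zeta_k \in \mathcal{Z} \rbrace$ either the whole sample space or empty, so that it factors cleanly out of the conditional probability exactly as written above.
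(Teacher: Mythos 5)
Your proposal is correct and follows essentially the same route as the paper: both arguments rest on the independence of the sampled indices $\lbrace i_k \rbrace$ from the iteration history, so that the conditional law of $(W_k,\zeta_k)$ is the same under $\mathcal{F}_{k+1}^k$ and $\mathcal{F}_1^k$, giving $M=1$. Your version is merely more explicit about the trivial $\zeta_k \equiv \emptyset$ component and the indicator $\1{\emptyset \in \mathcal{Z}}$, which the paper's one-line proof glosses over.
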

\begin{proof}
By the independence of $\lbrace i_k : k+1 \in \mathbb{N} \rbrace$, $\condPrb{ e_{i_k} \in \mathcal{W}, \zeta_{-1} \in \mathcal{Z}}{\mathcal{F}_{k+1}^k} = \Prb{ e_{i_k} \in \mathcal{W} } = \condPrb{e_{i_k} \in \mathcal{W}, \zeta_{-1} \in \mathcal{Z}}{\mathcal{F}_1^k}.$
\end{proof}

\begin{lemma}
Let $\pi_{\min} = \min_{j=1,\ldots,n} \innorm{e_j^\intercal A} / \innorm{A}_F^2$. Strohmer-Vershynin Vector Kaczmarz is $1,\pi_{\min}$-Exploratory.
\end{lemma}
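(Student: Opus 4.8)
The plan is to verify \cref{def-exploratory-row} directly, taking $N = 1$. With $N = 1$ the intersection collapses to a single event, so the requirement becomes
\[
\sup_{\substack{x_0:\, x_0 \neq \Prj_{\mathcal{H}} x_0 \\ \zeta_{-1} \in \mathfrak{Z}}} \condPrb{\col(A^\intercal W_0) \perp x_0 - \Prj_{\mathcal{H}} x_0}{\mathcal{F}_1^0} \leq 1 - \pi_{\min}.
\]
Since $W_0 = e_{i_0}$, the space $\col(A^\intercal W_0)$ is the line spanned by the $i_0$-th row of $A$, so the orthogonality event is $\lbrace e_{i_0}^\intercal A (x_0 - \Prj_{\mathcal{H}} x_0) = 0 \rbrace$.

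First I would translate this event into a condition on the residual. Because \cref{assumption-consistency} holds and $\Prj_{\mathcal{H}} x_0 \in \mathcal{H}$, we have $A \Prj_{\mathcal{H}} x_0 = b$, hence $A(x_0 - \Prj_{\mathcal{H}} x_0) = Ax_0 - b =: r$, and the orthogonality event is exactly $\lbrace e_{i_0}^\intercal r = 0 \rbrace$, i.e. the $i_0$-th coordinate of the residual vanishes. I would also record the equivalence $x_0 \neq \Prj_{\mathcal{H}} x_0 \iff Ax_0 \neq b \iff r \neq 0$, which holds because $\Prj_{\mathcal{H}} x_0 = x_0$ precisely when $x_0 \in \mathcal{H}$. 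Thus for every admissible $x_0$ there is at least one index $j^*$ with $r_{j^*} \neq 0$.

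Next I would bound the complementary probability from below. Since $i_0$ is drawn independently of the past with $\inPrb{i_0 = j} = \innorm{e_j^\intercal A}_2^2 / \innorm{A}_F^2$, the value $\pi_{\min}$ is exactly the smallest selection probability, $\pi_{\min} = \min_{j} \inPrb{i_0 = j}$; the hypothesis that every row of $A$ has a nonzero entry guarantees $\pi_{\min} > 0$, so $\pi_{\min} \in (0,1]$ as the definition requires. Because $r$ is $\mathcal{F}_1^0$-measurable while $i_0$ is independent of $\mathcal{F}_1^0$, the conditional probability coincides with its unconditional counterpart, and restricting to the single favorable draw $i_0 = j^*$ yields
\[
\condPrb{e_{i_0}^\intercal r \neq 0}{\mathcal{F}_1^0} \geq \inPrb{i_0 = j^*} = \frac{\innorm{e_{j^*}^\intercal A}_2^2}{\innorm{A}_F^2} \geq \pi_{\min}.
\]
Hence $\condPrb{e_{i_0}^\intercal r = 0}{\mathcal{F}_1^0} \leq 1 - \pi_{\min}$ uniformly over all admissible $x_0$, and taking the supremum gives the claim.

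This verification presents no real obstacle; the only steps demanding care are the rewriting of the orthogonality event as a residual-coordinate condition through the identity $A \Prj_{\mathcal{H}} x_0 = b$, and confirming that the row-nondegeneracy hypothesis makes $\pi_{\min}$ strictly positive so that it is an admissible exploratory constant.
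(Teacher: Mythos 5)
Your proposal is correct and follows essentially the same route as the paper's proof: identify an equation violated by $x_0$ (equivalently, a nonzero residual coordinate $r_{j^*}$, using consistency to write $A(x_0 - \Prj_{\mathcal{H}}x_0) = Ax_0 - b$), then lower-bound the probability of selecting that row by $\pi_{\min}$ so that the orthogonality event has conditional probability at most $1 - \pi_{\min}$. The paper's version is just terser; your added steps (the residual reformulation, the equivalence $x_0 \neq \Prj_{\mathcal{H}}x_0 \iff r \neq 0$, and the positivity of $\pi_{\min}$ from the row-nondegeneracy hypothesis) are exactly the details it leaves implicit.
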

\begin{proof}
Let $x_0 \neq \Prj_{\mathcal{H}}x_0$.  Then, there is an equation of $A x = b$ which is not satisfied by $x_0$. The probability of selecting this row on the first iteration is at least $\pi_{\min}$. Hence,
\begin{equation}
\sup_{\substack{ x_0 \in \mathbb{R}^d:\, x_0 \neq \Prj_{\mathcal{H}}x_0  \\ \zeta_{-1} \in \mathfrak{Z} }} \condPrb{ e_{i_0}^\intercal A(x_0 - \Prj_\mathcal{H}x_0) = 0}{\mathcal{F}_1^0} \leq 1 - \pi_{\min}.
\end{equation}
\end{proof}

\begin{remark}
Depending on the rows of $A$, we can always play with constants in the $N,\pi$-Exploratory definition to choose one that will have the smallest ratio. However, for demonstration, our selection is sufficient.
\end{remark}

Now, we apply \cref{theorem-convergence-row-finite} to conclude as follows.

\begin{theorem} \label{theorem-strohmer-verhsynin-vector-kaczmarz}
Let $A \in \mathbb{R}^{n \times d}$ and $b \in \mathbb{R}^n$ such that the linear system's solution set, $\mathcal{H}$, is nonempty. Let $\pi_{\min} = \min_{j=1,\ldots,n} \innorm{e_j^\intercal A} / \innorm{A}_F^2$. Let $x_0 \in \mathbb{R}^d$. Let $\lbrace x_k : k \in \mathbb{N} \rbrace$ be a sequence generated by the Strohmer-Vershynin Vector Kaczmarz method. Then, there exists a stopping time $\tau$ with finite expectation such that $x_{\tau} \in \mathcal{H}$; or there exists a sequence of non-negative stopping times $\lbrace \tau_{j} : j +1 \in \mathbb{N} \rbrace$ for which $\E{ \tau_{j} } \leq j [ (\rnk{A} - 1)/\pi_{\min} + 1]$,  and there exist $\gamma \in (0,1)$ and a sequence of random variables $\lbrace \gamma_j : j+1 \in \mathbb{N}\rbrace \subset (0,\gamma]$, such that
\begin{equation}
\Prb{ \bigcap_{j=0}^\infty \left\lbrace \norm{ x_{\tau_j} - \Prj_{\mathcal{H}} x_0 }_2^2 \leq \left( \prod_{\ell=0}^{j-1} \gamma_{\ell} \right) \norm{ x_0 - \Prj_{\mathcal{H}}x_0 }_2^2 \right\rbrace} = 1.
\end{equation}
\end{theorem}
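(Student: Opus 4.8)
The plan is to verify that the Strohmer-Vershynin Vector Kaczmarz method satisfies the three hypotheses of \cref{theorem-convergence-row-finite} and then invoke that result directly. Two of these are already in hand: the preceding lemmas establish that the method is Markovian and $1,\pi_{\min}$-Exploratory. Thus it remains only to confirm the finite-set condition and to track the constants through to the stated conclusion.

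First I would check that $\lbrace \col(A^\intercal W_k) : k+1 \in \mathbb{N} \rbrace$ takes value in a finite set. Since the adaptive procedure $\varphi_R$ always returns a standard basis vector, we have $W_k = e_{i_k}$ with $i_k \in \lbrace 1,\ldots,n \rbrace$, so $\col(A^\intercal W_k) = \inlinspan{A^\intercal e_{i_k}}$. As $i_k$ ranges over at most $n$ indices, this column space can assume at most $n$ distinct values, namely those in $\lbrace \inlinspan{A^\intercal e_j} : j = 1,\ldots,n \rbrace$, so the finite-set hypothesis holds.

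With all three hypotheses verified, I would apply \cref{theorem-convergence-row-finite} with $N = 1$ and $\pi = \pi_{\min}$. Substituting these values into the expectation bound $\E{\tau_j} \leq j[(\rnk{A}-1)(N/\pi)+1]$ yields exactly $\E{\tau_j} \leq j[(\rnk{A}-1)/\pi_{\min}+1]$, as stated, while the dichotomy between finite termination and the almost-sure product bound on $\lbrace \gamma_\ell \rbrace$ transfers verbatim once one identifies $y_k = x_k - \Prj_{\mathcal{H}}x_0$ per \cref{eqn-common-y-def}.

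This argument is essentially a bookkeeping application of the general finite-set theory, so there is no substantive obstacle; the only point requiring any care is the finite-set verification, which rests entirely on the observation that sampling rows with replacement never produces a column space outside the fixed finite collection $\lbrace \inlinspan{A^\intercal e_j} : j = 1,\ldots,n \rbrace$. Everything else is a direct specialization of \cref{theorem-convergence-row-finite} with the constants $N=1$ and $\pi=\pi_{\min}$ supplied by the exploratory lemma.
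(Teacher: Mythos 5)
Your proposal is correct and matches the paper's own proof: the paper likewise invokes its Markovian and $1,\pi_{\min}$-Exploratory lemmas and then applies \cref{theorem-convergence-row-finite} with $N=1$ and $\pi=\pi_{\min}$. Your explicit verification that $\lbrace \col(A^\intercal W_k) \rbrace \subset \lbrace \inlinspan{A^\intercal e_j} : j=1,\ldots,n \rbrace$ is a finite set is a welcome addition, since the paper leaves that hypothesis implicit in this subsection.
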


\subsection{Steinerberger's Vector Kaczmarz} \label{subsection-steinerberger-vector-kaczmarz}

Suppose $Ax=b$ is a consistent system. Steinerberger's method selects an equation from the system by using an $l^p$ weighted residual, and then performing the Kaczmarz update with this equation. In our notation, $\varphi_R( A, b, x_k) = (e_{i_k}, \emptyset)$ where
\begin{equation}
\Prb{e_{i_k} = j} \propto \begin{cases}
| e_j^\intercal (A x_k - b) |^p & j=1,\ldots,n \\
0 & \text{otherwise},
\end{cases}
\end{equation} 
and
\begin{equation}
x_{k+1} = x_k - A^\intercal e_{i_k} \frac{e_{i_k}^\intercal (Ax_k - b)}{\norm{ A^\intercal e_{i_k}}_2^2}.
\end{equation}

\begin{lemma}
Steinerberger's Vector Kaczmarz method is Markovian.
\end{lemma}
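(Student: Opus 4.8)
The plan is to verify \cref{def-markovian-row} directly and exhibit the value $M = 1$. First I would dispense with the summary state: for this method $\mathfrak{Z} = \lbrace \emptyset \rbrace$, so $\zeta_k = \emptyset$ deterministically for every $k$, and the event $\lbrace \zeta_k \in \mathcal{Z} \rbrace$ reduces to the constant $\1{\emptyset \in \mathcal{Z}}$. Hence the entire content of the lemma is the conditional law of $W_k = e_{i_k}$, and it suffices to show that this law, conditioned on the full past $\mathcal{F}_{k+1}^k$, already coincides with the law conditioned on $\mathcal{F}_1^k = \sigma(\zeta_{k-1}, x_k)$.

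The key observation is that the selection probabilities $\Prb{e_{i_k} = j} \propto | e_j^\intercal (A x_k - b) |^p$ depend on the history only through the residual $A x_k - b$, which, since $A$ and $b$ are fixed, is a deterministic Borel function of $x_k$ alone. Consequently, for any measurable $\mathcal{W} \subset \mathbb{R}^{n \times 1}$ and $\mathcal{Z} \subset \mathfrak{Z}$, I would write
\begin{equation}
\condPrb{ W_k \in \mathcal{W},\, \zeta_k \in \mathcal{Z} }{\mathcal{F}_{k+1}^k} = \1{\emptyset \in \mathcal{Z}} \sum_{j:\, e_j \in \mathcal{W}} \frac{ | e_j^\intercal (A x_k - b) |^p }{ \sum_{i=1}^n | e_i^\intercal (A x_k - b) |^p },
\end{equation}
and then note that the right-hand side is $\sigma(x_k)$-measurable, hence $\mathcal{F}_1^k$-measurable. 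Therefore the conditional probability given $\mathcal{F}_{k+1}^k$ equals the conditional probability given $\mathcal{F}_1^k = \mathcal{F}_{\min\lbrace 1,\, k+1 \rbrace}^k$, which is exactly the Markovian property with $M = 1$. (If the residual vanishes then $x_k$ is a solution and the iteration has effectively terminated, so the normalization is only invoked when $A x_k - b \neq 0$; this edge case does not affect the measurability argument.)

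There is no substantive obstacle here, and I would keep the proof short in the style of the preceding Markovian lemmas. The one point I would state carefully — the sole feature distinguishing this argument from the Strohmer-Vershynin case — is that the sampling weights here are \emph{iterate-dependent} rather than fixed, yet this is fully compatible with Markovianity precisely because $\mathcal{F}_1^k$ already contains $x_k$. Thus the only thing to get right is recognizing that the $l^p$-weighted residual sampling reads the past exclusively through the current iterate.
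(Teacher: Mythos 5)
Your proof is correct and takes essentially the same route as the paper, which simply observes that the distribution of $e_{i_k}$ depends only on $x_k$ and hence the conditional laws given $\mathcal{F}_{k+1}^k$ and $\mathcal{F}_1^k$ coincide. Your version merely makes this one-line argument explicit, writing out the $l^p$-weighted selection probabilities as a measurable function of $x_k$ and noting the trivial role of $\zeta_k$, which is a faithful formalization rather than a different approach.
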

\begin{proof}
As $e_{i_k}$'s distribution only depends on $x_k$, $\incondPrb{ e_{i_k} \in \mathcal{W}}{\mathcal{F}_{k+1}^k} = \incondPrb{ e_{i_k} \in \mathcal{W}}{\mathcal{F}_{1}^k}$. 
\end{proof}

\begin{lemma}
Steinberger's Vector Kaczmarz method is $1,1$-Exploratory.
\end{lemma}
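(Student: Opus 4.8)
The plan is to verify Definition~\ref{def-exploratory-row} with $N=1$ and $\pi=1$. Because $1-\pi = 0$ in this case, the task reduces to showing that
$
\sup_{x_0 \neq \Prj_{\mathcal{H}} x_0,\, \zeta_{-1} \in \mathfrak{Z}} \condPrb{\col(A^\intercal W_0) \perp x_0 - \Prj_{\mathcal{H}} x_0}{\mathcal{F}_1^0} = 0,
$
i.e., that whenever the current iterate fails to solve the system, the perpendicularity event occurs with conditional probability zero.

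First I would fix an arbitrary $x_0 \neq \Prj_{\mathcal{H}} x_0$. Since $W_0 = e_{i_0}$, we have $\col(A^\intercal W_0) = \linspan{A^\intercal e_{i_0}}$, so the event $\lbrace \col(A^\intercal W_0) \perp x_0 - \Prj_{\mathcal{H}} x_0 \rbrace$ is exactly $\lbrace e_{i_0}^\intercal A(x_0 - \Prj_{\mathcal{H}} x_0) = 0 \rbrace$. The key algebraic simplification is that $\Prj_{\mathcal{H}} x_0 \in \mathcal{H}$ solves the system, so $A \Prj_{\mathcal{H}} x_0 = b$ and hence $A(x_0 - \Prj_{\mathcal{H}} x_0) = A x_0 - b$ is precisely the residual. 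Thus the perpendicularity event coincides with $\lbrace e_{i_0}^\intercal(A x_0 - b) = 0 \rbrace$, the event that the selected equation has a vanishing residual component.

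The crux is then the structure of Steinerberger's sampling distribution: the index $i_0$ is drawn with probability proportional to $|e_j^\intercal(A x_0 - b)|^p$, so any index $j$ with $e_j^\intercal(A x_0 - b) = 0$ receives probability zero. Since $x_0 \neq \Prj_{\mathcal{H}} x_0$ forces $A x_0 \neq b$, at least one residual component is nonzero, so the normalizing constant is positive and the sampling is well defined; moreover, with conditional probability one the drawn index satisfies $e_{i_0}^\intercal(A x_0 - b) \neq 0$. Consequently $\condPrb{e_{i_0}^\intercal(A x_0 - b) = 0}{\mathcal{F}_1^0} = 0$, and taking the supremum over all admissible $x_0$ and $\zeta_{-1}$ leaves the bound at $0$, establishing the $1,1$-Exploratory property.

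There is no substantive obstacle here: the whole argument rests on the observation that the $l^p$ weighting assigns zero mass to equations that are already satisfied, so a single draw almost surely selects an unsatisfied equation. The only point requiring minor care is the identity $A(x_0 - \Prj_{\mathcal{H}} x_0) = A x_0 - b$, which rewrites the abstract perpendicularity condition in terms of the concrete residual that governs the sampling weights.
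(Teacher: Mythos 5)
Your proposal is correct and follows essentially the same argument as the paper: the $l^p$ residual weighting assigns zero probability to any satisfied equation, and since $x_0 \neq \Prj_{\mathcal{H}} x_0$ forces a nonzero residual, the selected index almost surely satisfies $e_{i_0}^\intercal(Ax_0 - b) \neq 0$, making the perpendicularity event have conditional probability zero. The paper's proof is merely terser, stating this directly, while you additionally spell out the reduction from $\col(A^\intercal W_0) \perp x_0 - \Prj_{\mathcal{H}} x_0$ to the residual event via $A(x_0 - \Prj_{\mathcal{H}} x_0) = Ax_0 - b$, which the paper leaves implicit.
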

\begin{proof}
For any $x_0 \neq \Prj_\mathcal{H} x_0$, $e_{i_0}^\intercal (Ax_0 - b) \neq 0$ with probability one. Hence,
\begin{equation}
\sup_{ x_0: x_0 \neq \Prj_{\mathcal{H}} x_0 } \condPrb{ e_{i_0}^\intercal (Ax_0 - b) = 0}{\mathcal{F}_{1}^0} = 0.
\end{equation}
The claim follows.
\end{proof}

We can conclude using \cref{theorem-convergence-row-finite}.
\begin{theorem} \label{theorem-steinerberger-vector-kaczmarz}
Let $A \in \mathbb{R}^{n \times d}$ and $b \in \mathbb{R}^n$ such that the linear system's solution set, $\mathcal{H}$, is nonempty. Let $x_0 \in \mathbb{R}^d$. Let $\lbrace x_k : k \in \mathbb{N} \rbrace$ be a sequence generated by the Steinerberger's Vector Kaczmarz method. Then, there exists a stopping time $\tau$ with finite expectation such that $x_{\tau} \in \mathcal{H}$; or there exists a sequence of non-negative stopping times $\lbrace \tau_{j} : j +1 \in \mathbb{N} \rbrace$ for which $\E{ \tau_{j} } \leq j \rnk{A}$,  and there exist $\gamma \in (0,1)$ and a sequence of random variables $\lbrace \gamma_j : j+1 \in \mathbb{N}\rbrace \subset (0,\gamma]$, such that
\begin{equation}
\Prb{ \bigcap_{j=0}^\infty \left\lbrace \norm{ x_{\tau_j} - \Prj_{\mathcal{H}} x_0 }_2^2 \leq \left( \prod_{\ell=0}^{j-1} \gamma_{\ell} \right) \norm{ x_0 - \Prj_{\mathcal{H}}x_0 }_2^2 \right\rbrace} = 1.
\end{equation}
\end{theorem}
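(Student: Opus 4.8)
The plan is to apply \cref{theorem-convergence-row-finite} directly, so the whole task reduces to checking its three hypotheses: that the method is Markovian, that it is $N,\pi$-Exploratory, and that the relevant column spaces $\lbrace \col(A^\intercal W_k) \rbrace$ take values in a finite set. The first two are already delivered by the two lemmas immediately preceding the theorem, which establish that Steinerberger's Vector Kaczmarz is Markovian and $1,1$-Exploratory. Hence we may fix $N = 1$ and $\pi = 1$ for the invocation.

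The one remaining item to verify is finiteness of the column-space set. Because the selection rule always returns a single standard basis vector, $W_k = e_{i_k}$ with $i_k \in \lbrace 1,\ldots,n \rbrace$, the space $\col(A^\intercal e_{i_k}) = \linspan{A^\intercal e_{i_k}}$ can equal at most $n$ distinct one-dimensional subspaces of $\row(A)$. Thus $\lbrace \col(A^\intercal W_k) : k+1 \in \mathbb{N} \rbrace$ lies in a finite set, and the finite-set hypothesis of \cref{theorem-convergence-row-finite} holds. This is precisely why we land in the finite-set branch of the theory and do \emph{not} need to establish the Uniformly Nontrivial property required by \cref{theorem-convergence-row-infinite}: the residual-weighted sampling distribution changes only the \emph{probabilities} with which the $n$ basis directions are chosen, never the set of directions itself.

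With all three hypotheses confirmed, \cref{theorem-convergence-row-finite} furnishes the stated dichotomy verbatim. Plugging $N = 1$ and $\pi = 1$ into its generic expectation bound $\E{\tau_j} \leq j[(\rnk{A}-1)(N/\pi) + 1]$ collapses it to $\E{\tau_j} \leq j\,\rnk{A}$, matching the claimed rate. I do not anticipate any genuine obstacle; the only subtlety is the conceptual one of recognizing that, despite the adaptivity of the sampling weights, the reachable column spaces remain finite, so the finite-set result—rather than its infinite-set counterpart—is the correct tool.
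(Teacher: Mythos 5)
Your proposal is correct and follows exactly the paper's own route: the paper likewise invokes the two preceding lemmas (Markovian and $1,1$-Exploratory) and then applies \cref{theorem-convergence-row-finite}, with the finiteness of $\lbrace \col(A^\intercal e_{i_k}) \rbrace$ being immediate since only the $n$ standard basis vectors can be selected. Your explicit remark that the adaptive residual weighting changes only the selection probabilities, never the set of reachable subspaces, is a point the paper leaves implicit, but the argument is the same.
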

\subsection{Motkzin's Method} \label{subsection-motzkin-vector}

Given a consistent linear systems, $Ax=b$, Motzkin's method starts by choosing an equation from the system whose solution hyperplane is the furthest from the current iterate, and then performs the Kaczmarz update. In our notation, $\varphi_R(A,b,x_k) = (e_{i_k}, \emptyset)$ where
\begin{equation}
i_k \in \argmax_{j \in \lbrace 1,\ldots,n \rbrace} \frac{|e_j^\intercal (Ax_k - b)|}{\norm{ A^\intercal e_j}_2^2},
\end{equation}
and
\begin{equation}
x_{k+1} = x_k - A^\intercal e_{i_k} \frac{e_{i_k}^\intercal (A x_k - b)}{\norm{A^\intercal e_j}_2^2}.
\end{equation}

\begin{lemma}
Motzkin's method is Markovian.
\end{lemma}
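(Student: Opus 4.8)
The plan is to verify \cref{def-markovian-row} directly and to show that the smallest admissible value, $M = 1$, suffices. Recall that $\mathcal{F}_{k+1}^k = \sigma(\zeta_{-1}, x_0, W_0, \ldots, W_{k-1}, \zeta_{k-1}, x_k)$ retains the entire history of the procedure, whereas $\mathcal{F}_1^k = \sigma(\zeta_{k-1}, x_k)$ keeps only the most recent iterate and summary variable. Thus it is enough to argue that the conditional law of $(W_k, \zeta_k)$ given the full history agrees with the conditional law given only $(\zeta_{k-1}, x_k)$.

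The essential observation is that Motzkin's selection rule is a deterministic function of $x_k$ alone. First I would fix a tie-breaking convention for the $\argmax$ (for instance, take the least index attaining the maximum); with $A$ and $b$ fixed, the selected index $i_k \in \argmax_{j} |e_j^\intercal (A x_k - b)| / \norm{A^\intercal e_j}_2^2$ is then a deterministic function of $x_k$. Consequently $W_k = e_{i_k}$ is a deterministic function of $x_k$, and $\zeta_k = \emptyset$ is constant, so the pair $(W_k, \zeta_k)$ is $\sigma(x_k)$-measurable and hence $\mathcal{F}_1^k$-measurable.

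Since $(W_k, \zeta_k)$ is already measurable with respect to the coarser $\sigma$-algebra $\mathcal{F}_1^k \subseteq \mathcal{F}_{k+1}^k$, for any measurable $\mathcal{W}$ and $\mathcal{Z}$ the random variable $\1{W_k \in \mathcal{W}, \zeta_k \in \mathcal{Z}}$ is itself $\mathcal{F}_1^k$-measurable. Both conditional probabilities appearing in \cref{def-markovian-row} therefore reduce to this indicator almost surely and in particular coincide, which establishes the Markovian property. I do not foresee a genuine obstacle here; the only point meriting a word of care is the potential non-uniqueness of the $\argmax$, which is dispatched by any fixed deterministic tie-breaking rule that keeps $i_k$ a function of $x_k$.
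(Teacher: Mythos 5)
Your proposal is correct and follows the same reasoning as the paper, which simply observes that the method is deterministic and depends only on the information in $\mathcal{F}_1^k$; you have merely spelled out the measurability details (including the tie-breaking convention for the $\argmax$) that the paper leaves implicit.
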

\begin{proof}
The method is deterministic and only depends on the information in $\mathcal{F}_1^k$. 
\end{proof}

\begin{lemma}
Motzkin's method is $1,1$-Exploratory
\end{lemma}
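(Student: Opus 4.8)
The goal is to verify \cref{def-exploratory-row} with $N = 1$ and $\pi = 1$; that is, to show that
\[
\sup_{\substack{ x_0 \in \mathbb{R}^d:\, x_0 \neq \Prj_{\mathcal{H}} x_0  \\ \zeta_{-1} \in \mathfrak{Z} }} \condPrb{ \col(A\T W_0) \perp x_0 - \Prj_{\mathcal{H}} x_0 }{\mathcal{F}_1^0} = 0.
\]
Since Motzkin's rule is deterministic given $x_0$ (and $\mathfrak{Z} = \lbrace \emptyset \rbrace$, so the supremum over $\zeta_{-1}$ is vacuous), each conditional probability above is either $0$ or $1$. Thus the plan is to show that the event in question is impossible for every $x_0 \neq \Prj_{\mathcal{H}} x_0$, exactly paralleling the Steinerberger calculation but replacing the randomness argument with a maximality argument.

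First I would translate the perpendicularity event into a residual condition. Because $W_0 = e_{i_0}$, the space $\col(A\T e_{i_0})$ is spanned by the single vector $A\T e_{i_0}$, so the event $\lbrace \col(A\T e_{i_0}) \perp x_0 - \Prj_{\mathcal{H}} x_0 \rbrace$ is precisely $\lbrace e_{i_0}\T A(x_0 - \Prj_{\mathcal{H}} x_0) = 0 \rbrace$. Using $A \Prj_{\mathcal{H}} x_0 = b$ (valid since $\Prj_{\mathcal{H}} x_0 \in \mathcal{H}$), this collapses to $\lbrace e_{i_0}\T (A x_0 - b) = 0 \rbrace$; in words, the equation selected by Motzkin has a vanishing residual at $x_0$.

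Next I would argue that the Motzkin rule never selects such an equation when $x_0$ is not a solution. Note that $x_0 \neq \Prj_{\mathcal{H}} x_0$ is equivalent to $x_0 \notin \mathcal{H}$, hence $A x_0 - b \neq 0$, so at least one residual coordinate is nonzero. Since $i_0 \in \argmax_{j} |e_j\T (A x_0 - b)| / \norm{A\T e_j}_2^2$, the maximized ratio is strictly positive, which forces $e_{i_0}\T (A x_0 - b) \neq 0$. Therefore the event $\lbrace e_{i_0}\T (A x_0 - b) = 0 \rbrace$ cannot occur, its conditional probability is $0$, and taking the supremum over admissible $x_0$ yields $0 \leq 1 - \pi = 0$, establishing that the method is $1,1$-Exploratory.

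The only delicate point is the bookkeeping for zero-norm rows in the $\argmax$: if $A\T e_j = 0$, consistency forces $e_j\T b = 0$, so $e_j\T(A x_0 - b) = 0$ and such an index yields a zero (or, as an indeterminate $0/0$, an excluded) ratio and is never a maximizer. I would dispatch this with one sentence invoking \cref{assumption-consistency}; beyond that, the result is a direct consequence of the defining maximality property of the Motzkin selection and requires no probabilistic input.
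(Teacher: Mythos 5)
Your proposal is correct and follows essentially the same route as the paper's proof: both arguments reduce the exploratory event to the selected residual $e_{i_0}\T(Ax_0 - b)$ vanishing, and both rule this out by noting that $x_0 \neq \Prj_{\mathcal{H}} x_0$ forces a violated equation, so the maximality of Motzkin's selection guarantees a nonzero residual is chosen. Your treatment is somewhat more careful than the paper's (you spell out the reduction via $A\Prj_{\mathcal{H}}x_0 = b$ and dispatch the zero-row $0/0$ edge case using \cref{assumption-consistency}, which the paper glosses over), but the underlying argument is identical.
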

\begin{proof}
For any $x_0 \neq \Prj_\mathcal{H} x_0$, $Ax_0 - b \neq 0$. Therefore, there is an equation that is not satisfied by $x_0$. Hence, the set of equations that maximize the distance between $x_0$ and the solution set of the equation is nonempty. The result follows.
\end{proof}

We can now apply \cref{theorem-convergence-row-finite} to conclude.
\begin{theorem} \label{theorem-motzkin-vector}
Let $A \in \mathbb{R}^{n \times d}$ and $b \in \mathbb{R}^n$ such that the linear system's solution set, $\mathcal{H}$, is nonempty. Let $x_0 \in \mathbb{R}^d$. Let $\lbrace x_k : k \in \mathbb{N} \rbrace$ be a sequence generated by Motzkin's method. Then, there exists a stopping time $\tau$ with finite expectation such that $x_{\tau} \in \mathcal{H}$; or there exists a sequence of non-negative stopping times $\lbrace \tau_{j} : j +1 \in \mathbb{N} \rbrace$ for which $\E{ \tau_{j} } \leq j \rnk{A}$,  and there exist $\gamma \in (0,1)$ and a sequence of random variables $\lbrace \gamma_j : j+1 \in \mathbb{N}\rbrace \subset (0,\gamma]$, such that
\begin{equation}
\Prb{ \bigcap_{j=0}^\infty \left\lbrace \norm{ x_{\tau_j} - \Prj_{\mathcal{H}} x_0 }_2^2 \leq \left( \prod_{\ell=0}^{j-1} \gamma_{\ell} \right) \norm{ x_0 - \Prj_{\mathcal{H}}x_0 }_2^2 \right\rbrace} = 1.
\end{equation}
\end{theorem}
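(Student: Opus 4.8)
The plan is to assemble the three preceding lemmas together with the finite-set hypothesis and invoke \cref{theorem-convergence-row-finite} directly, exactly as in the pattern established for the earlier worked examples. Concretely, the two lemmas immediately above the theorem statement establish that Motzkin's method is Markovian and $1,1$-Exploratory, so with $N = 1$ and $\pi = 1$ the hypotheses of \cref{theorem-convergence-row-finite} concerning \cref{def-markovian-row,def-exploratory-row} are already in hand. It remains only to verify the finite-set condition on $\lbrace \col(A^\intercal W_k) : k+1 \in \mathbb{N} \rbrace$ and then to specialize the generic conclusion's constants.

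For the finite-set verification, I would observe that Motzkin's method always selects $W_k = e_{i_k}$ with $i_k \in \lbrace 1,\ldots,n \rbrace$, so every realization of $\col(A^\intercal W_k)$ lies in the set $\lbrace \col(A^\intercal e_j) : j=1,\ldots,n \rbrace$. This set has at most $n$ distinct elements and is therefore finite irrespective of the adaptive (argmax-based) selection rule, since the selection rule only chooses \emph{which} of these finitely many subspaces is used at each step, never enlarging the collection of attainable subspaces. This is precisely the condition required to fall into the finite-set branch of the theory rather than the infinite-set branch handled by \cref{theorem-convergence-row-infinite}.

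Having checked all hypotheses, I would apply \cref{theorem-convergence-row-finite} and substitute $N=1$, $\pi=1$ into the expectation bound: the generic estimate $\E{\tau_j} \leq j[(\rnk{A}-1)(N/\pi)+1]$ collapses to $\E{\tau_j} \leq j[(\rnk{A}-1)+1] = j\,\rnk{A}$, matching the bound asserted in the statement. The two alternatives (finite termination at a solution, or the almost-sure geometric decay of $\norm{x_{\tau_j} - \Prj_{\mathcal{H}}x_0}_2^2$ along the stopping-time subsequence) then transfer verbatim.

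I do not expect any genuine obstacle here: all of the analytic difficulty is discharged by \cref{theorem-convergence-row-finite}, which in turn rests on the generalized block Meany inequality (\cref{theorem-block-meany}) and the stopping-time bound (\cref{theorem-nu-finite}). The only point requiring the slightest care is arguing that the adaptive argmax selection does not generate infinitely many distinct column spaces; but since the candidate directions are restricted to the finitely many standard basis vectors $e_1,\ldots,e_n$, this is immediate. Thus the proof is a routine assembly of the already-established properties.
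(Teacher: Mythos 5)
Your proposal is correct and follows exactly the paper's route: the paper likewise establishes the Markovian and $1,1$-Exploratory lemmas, implicitly uses that the selected directions lie in the finite collection $\lbrace \col(A^\intercal e_j) : j = 1,\ldots,n \rbrace$, and then invokes \cref{theorem-convergence-row-finite} with $N=1$, $\pi=1$ so that the bound $\E{\tau_j} \leq j[(\rnk{A}-1)(N/\pi)+1]$ reduces to $j\,\rnk{A}$. Your explicit verification of the finite-set condition is a point the paper leaves tacit, but it is the same argument.
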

\subsection{Agmon's Method} \label{subsection-agmon-vector}

Given a consistent linear systems, $Ax=b$, Agmon's method starts by choosing an equation from the system with the largest absolute residual at the current iterate, and then performs the Kaczmarz update. In our notation, $\varphi_R(A,b,x_k) = (e_{i_k}, \emptyset)$ where
\begin{equation}
i_k \in \argmax_{j \in \lbrace 1,\ldots,n \rbrace} |e_j^\intercal (Ax_k - b)|,
\end{equation}
and
\begin{equation}
x_{k+1} = x_k - A^\intercal e_{i_k} \frac{e_{i_k}^\intercal (A x_k - b)}{\norm{A^\intercal e_j}_2^2}.
\end{equation}

\begin{lemma}
Agmon's method is Markovian.
\end{lemma}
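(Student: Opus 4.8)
The plan is to mirror the argument already used for Motzkin's method in \Cref{subsection-motzkin-vector}, since Agmon's method is likewise deterministic with no auxiliary state. First I would observe that here $\mathfrak{Z} = \lbrace \emptyset \rbrace$, so $\zeta_k = \emptyset$ for every $k$, and thus no history is carried forward through $\zeta$. The only remaining component to examine is $W_k = e_{i_k}$, where $i_k \in \argmax_{j} |e_j^\intercal (Ax_k - b)|$.

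Next I would verify that $i_k$ is a (deterministic) measurable function of $x_k$ alone, given the fixed data $A$ and $b$. Indeed, the residual $Ax_k - b$ is a deterministic function of $x_k$, and selecting the index of maximal absolute residual is a deterministic rule applied to that vector. Consequently $(W_k, \zeta_k) = (e_{i_k}, \emptyset)$ is a deterministic function of $x_k$, which is $\mathcal{F}_1^k = \sigma(\zeta_{k-1}, x_k)$-measurable. Therefore, for any measurable $\mathcal{W}$ and $\mathcal{Z} \subset \mathfrak{Z}$, the conditional distribution $\condPrb{W_k \in \mathcal{W}, \zeta_k \in \mathcal{Z}}{\mathcal{F}_{k+1}^k}$ is already determined by $x_k$ and hence coincides with $\condPrb{W_k \in \mathcal{W}, \zeta_k \in \mathcal{Z}}{\mathcal{F}_1^k}$. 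Taking $M = 1$ in \Cref{def-markovian-row} then yields the claim.

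I do not expect any genuine obstacle here, as this is a routine verification. The only point requiring a sentence of care is the potential non-uniqueness of the $\argmax$: if several indices attain the maximum absolute residual, I would fix a deterministic tie-breaking convention (e.g., the smallest such index), which keeps $i_k$ a well-defined deterministic function of $x_k$ and leaves the argument unchanged.
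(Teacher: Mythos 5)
Your proposal is correct and follows the same route as the paper, whose entire proof is the observation that the method is deterministic and depends only on information in $\mathcal{F}_1^k$; you merely spell out the measurability details and the $M=1$ choice explicitly. Your added remark on deterministic tie-breaking in the $\argmax$ is a sensible bit of care the paper leaves implicit.
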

\begin{proof}
The method is deterministic and only depends on the information in $\mathcal{F}_1^k$. 
\end{proof}

\begin{lemma}
Agmon's method is $1,1$-Exploratory
\end{lemma}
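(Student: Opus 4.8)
The plan is to verify \cref{def-exploratory-row} directly with $N = 1$ and $\pi = 1$; that is, to show
\[
\sup_{\substack{ x_0 \in \mathbb{R}^d:\, x_0 \neq \Prj_{\mathcal{H}}x_0  \\ \zeta_{-1} \in \mathfrak{Z} }} \condPrb{ \col(A\T W_0) \perp x_0 - \Prj_{\mathcal{H}} x_0 }{\mathcal{F}_1^0} = 0.
\]
Since Agmon's method is deterministic, $W_0 = e_{i_0}$ is fully determined by the information in $\mathcal{F}_1^0$ (which contains $x_0$), so each conditional probability above is either $0$ or $1$; it therefore suffices to rule out the value $1$ for every admissible $x_0$.

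First I would translate the orthogonality event into a statement about the residual. Because $\Prj_{\mathcal{H}} x_0 \in \mathcal{H}$ solves the system, $A \Prj_{\mathcal{H}} x_0 = b$, and hence $A(x_0 - \Prj_{\mathcal{H}} x_0) = A x_0 - b$. Since $\col(A\T e_{i_0}) = \linspan{ A\T e_{i_0}}$, the event $\lbrace \col(A\T W_0) \perp x_0 - \Prj_{\mathcal{H}} x_0 \rbrace$ is exactly the event $\lbrace e_{i_0}\T (A x_0 - b) = 0 \rbrace$.

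Next I would invoke the selection rule. For any $x_0 \neq \Prj_{\mathcal{H}} x_0$ we have $x_0 \notin \mathcal{H}$, so $A x_0 - b \neq 0$ and thus $\max_{j} |e_j\T(A x_0 - b)| > 0$. Since $i_0 \in \argmax_j |e_j\T(Ax_0 - b)|$ attains this maximum, $|e_{i_0}\T(Ax_0 - b)| > 0$, so the orthogonality event cannot occur. Consequently each conditional probability equals $0$, the supremum is $0 \le 1 - \pi$, and the claim follows.

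There is no real obstacle in this argument; the only step requiring care is the identity $A \Prj_{\mathcal{H}} x_0 = b$, which converts the abstract orthogonality condition of \cref{def-exploratory-row} into a statement about the residual that the $\argmax$ selection is designed to keep nonzero. This is the same mechanism used for Motzkin's method above, with the distance-weighted criterion replaced by the plain absolute residual.
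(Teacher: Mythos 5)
Your proof is correct and follows essentially the same route as the paper's: since $x_0 \neq \Prj_{\mathcal{H}} x_0$ forces $Ax_0 - b \neq 0$, the $\argmax$ selection guarantees $|e_{i_0}\T(Ax_0 - b)| > 0$, so the orthogonality event in \cref{def-exploratory-row} never occurs. You simply spell out the details the paper leaves implicit (the identity $A\Prj_{\mathcal{H}}x_0 = b$ translating orthogonality into a residual statement, and the determinism making the conditional probability $\lbrace 0,1 \rbrace$-valued), which is a faithful elaboration rather than a different argument.
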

\begin{proof}
For any $x_0 \neq \Prj_\mathcal{H} x_0$, $Ax_0 - b \neq 0$. Therefore, there is an equation that is not satisfied by $x_0$. Hence, the set of equations that maximize the absolute residual at $x_0$ is nonempty. The result follows.
\end{proof}

We can now apply \cref{theorem-convergence-row-finite} to conclude.
\begin{theorem} \label{theorem-agmon-vector}
Let $A \in \mathbb{R}^{n \times d}$ and $b \in \mathbb{R}^n$ such that the linear system's solution set, $\mathcal{H}$, is nonempty. Let $x_0 \in \mathbb{R}^d$. Let $\lbrace x_k : k \in \mathbb{N} \rbrace$ be a sequence generated by Agmon's method. Then, there exists a stopping time $\tau$ with finite expectation such that $x_{\tau} \in \mathcal{H}$; or there exists a sequence of non-negative stopping times $\lbrace \tau_{j} : j +1 \in \mathbb{N} \rbrace$ for which $\E{ \tau_{j} } \leq j \rnk{A}$,  and there exist $\gamma \in (0,1)$ and a sequence of random variables $\lbrace \gamma_j : j+1 \in \mathbb{N}\rbrace \subset (0,\gamma]$, such that
\begin{equation}
\Prb{ \bigcap_{j=0}^\infty \left\lbrace \norm{ x_{\tau_j} - \Prj_{\mathcal{H}} x_0 }_2^2 \leq \left( \prod_{\ell=0}^{j-1} \gamma_{\ell} \right) \norm{ x_0 - \Prj_{\mathcal{H}}x_0 }_2^2 \right\rbrace} = 1.
\end{equation}
\end{theorem}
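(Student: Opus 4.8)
The plan is to verify the two structural hypotheses required by the finite-set row-action convergence result, \cref{theorem-convergence-row-finite}, and then invoke it directly. The two preceding lemmas already furnish most of what is needed: Agmon's method is Markovian, and it is $1,1$-Exploratory, so we may take $N = 1$ and $\pi = 1$. What remains is to confirm the finite-set hypothesis and to specialize the generic constants appearing in \cref{theorem-convergence-row-finite}.

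First I would observe that at every iteration the selection procedure returns $W_k = e_{i_k}$ for some $i_k \in \lbrace 1,\ldots,n \rbrace$, so that $A^\intercal W_k = A^\intercal e_{i_k}$ is one of the $n$ rows of $A$. Consequently, $\col(A^\intercal W_k)$ takes values in the finite collection $\lbrace \col(A^\intercal e_j) : j = 1,\ldots,n \rbrace$, whose cardinality is at most $n$. This is exactly the finite-set hypothesis of \cref{theorem-convergence-row-finite}, and it holds regardless of which maximizer $i_k$ is chosen by the $\argmax$ rule (ties may be broken arbitrarily without affecting the argument).

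With the Markovian property, the $1,1$-Exploratory property, and the finite-set hypothesis in hand, I would apply \cref{theorem-convergence-row-finite} with $N = 1$ and $\pi = 1$. The expectation bound there specializes to $\E{\tau_j} \leq j[(\rnk{A}-1)(N/\pi) + 1] = j[(\rnk{A}-1) + 1] = j\,\rnk{A}$, matching the claimed rate; the dichotomy between finite termination at $\Prj_{\mathcal{H}} x_0$ and the almost-sure geometric decay of $\norm{x_{\tau_j} - \Prj_{\mathcal{H}} x_0}_2^2$ then transfers verbatim. Since the substantive work is carried by \cref{theorem-convergence-row-finite} together with the two lemmas, there is no real obstacle here: the only points demanding care are confirming that the range of $\col(A^\intercal W_k)$ is genuinely finite (immediate, as the method selects single standard basis vectors) and correctly collapsing $(\rnk{A}-1)(N/\pi)+1$ to $\rnk{A}$ under $N = \pi = 1$.
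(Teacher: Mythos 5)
Your proposal is correct and follows essentially the same route as the paper: verify the Markovian and $1,1$-Exploratory properties via the two lemmas, note that $\lbrace \col(A^\intercal e_{j}) : j = 1,\ldots,n \rbrace$ is a finite set, and invoke \cref{theorem-convergence-row-finite} with $N = \pi = 1$ so that the bound $j[(\rnk{A}-1)(N/\pi)+1]$ collapses to $j\,\rnk{A}$. Your explicit check of the finite-set hypothesis is a point the paper leaves implicit for this method, but it is the same argument used there for Cyclic Vector Kaczmarz.
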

\subsection{Greedy Randomized Vector Kaczmarz} \label{subsection-greedy-randomized-vector-kaczmarz}

In this method, at each iteration, a threshold value is calculated based on the current iterate value
\begin{equation}
\epsilon_k = \frac{1}{2} \max_{j \in \lbrace 1,\ldots,n \rbrace} \frac{|e_j^\intercal (Ax_k - b)|^2}{\norm{Ax_k - b}_2^2\norm{ A^\intercal e_j }_2^2} + \frac{1}{2\norm{A}_F^2} ;
\end{equation}
then a subset of equations whose residual surpassing this threshold is generated,
\begin{equation}
\mathcal{U}_k = \left\lbrace j : \frac{ | e_j^\intercal (Ax_k - b) |^2}{\norm{A^\intercal e_j}_2^2\norm{Ax_k - b}_2^2} \geq \epsilon_k \right\rbrace;
\end{equation}
an equation, $i_k$, is then selected from this subset with a probability distribution
\begin{equation}
\Prb{ i_k = j } \propto \begin{cases}
| e_j^\intercal (Ax_k-b) |^2 & j \in \mathcal{U}_k \\
0 & \text{otherwise};
\end{cases}
\end{equation}
and a Kaczmarz update is then performed. 

\begin{lemma}
Greedy Randomized Vector Kaczmarz is Markovian.
\end{lemma}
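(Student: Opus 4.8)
The plan is to mirror the proofs of the preceding Markovian lemmas (e.g., those for Steinerberger's and Strohmer--Vershynin Vector Kaczmarz) and to exploit the fact that every quantity defining the selection of $i_k$ is a function of the current iterate $x_k$ alone. First, I would fix the state space: since no auxiliary information needs to be carried between iterations, set $\mathfrak{Z} = \lbrace \emptyset \rbrace$, take $\zeta_k = \emptyset$ for all $k$, and take $M = 1$ in \cref{def-markovian-row}. With this choice, $W_k = e_{i_k}$ and the $\zeta_k$-component of the pair $(W_k,\zeta_k)$ is trivially $\mathcal{F}_1^k$-measurable, so it suffices to analyze the conditional law of $W_k$.

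Next, I would observe that the threshold $\epsilon_k$, the active set $\mathcal{U}_k$, and the sampling weights $\Prb{i_k = j} \propto |e_j^\intercal(Ax_k - b)|^2 \1{j \in \mathcal{U}_k}$ are all explicit deterministic functions of $x_k$ together with the fixed data $(A,b)$. Consequently, the conditional distribution of $e_{i_k}$, given the entire history generated by $\mathcal{F}_{k+1}^k$, depends on that history only through $x_k$, because the fresh randomness used to draw $i_k$ from its $x_k$-determined categorical law is independent of the earlier variables. Therefore, for any measurable $\mathcal{W}$, the quantity $\condPrb{W_k \in \mathcal{W}}{\mathcal{F}_{k+1}^k}$ equals a measurable function of $x_k$, which is $\sigma(\zeta_{k-1}, x_k) = \mathcal{F}_1^k$-measurable; hence $\condPrb{W_k \in \mathcal{W}}{\mathcal{F}_{k+1}^k} = \condPrb{W_k \in \mathcal{W}}{\mathcal{F}_1^k}$. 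Combining this with the trivial $\zeta_k$-component yields the identity required by \cref{def-markovian-row}.

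The only point requiring care --- and where I would be most careful --- is justifying that, conditioned on $x_k$, the draw of $i_k$ is independent of the earlier variables generating $\mathcal{F}_{k+1}^k$. This is precisely the memorylessness invoked in the other worked examples, and it holds here because the sampling uses independent randomness at each iteration while all of its parameters are recovered from $x_k$. Since no part of the construction references $x_0,\ldots,x_{k-1}$ or $W_0,\ldots,W_{k-1}$ except through $x_k$, the collapse to $\mathcal{F}_1^k$ is immediate once this independence is recorded, completing the argument.
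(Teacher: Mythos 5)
Your proposal is correct and follows essentially the same route as the paper's own (much terser) proof: the paper simply notes that the distribution of $e_{i_k}$ depends only on the current iterate $x_k$, so that $\incondPrb{ e_{i_k} \in \mathcal{W} }{\mathcal{F}_{k+1}^k} = \incondPrb{ e_{i_k} \in \mathcal{W}}{\mathcal{F}_1^k}$. Your additional bookkeeping---fixing $\mathfrak{Z} = \lbrace \emptyset \rbrace$, noting that $\epsilon_k$, $\mathcal{U}_k$, and the sampling weights are deterministic functions of $(A,b,x_k)$, and recording the independence of the fresh sampling randomness from the history---just makes explicit what the paper leaves implicit.
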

\begin{proof}
The probability distribution of $e_{i_k}$ only depends on the current iterate, $x_k$. Hence, $\condPrb{ e_{i_k} \in \mathcal{W} }{\mathcal{F}_{k+1}^k} = \condPrb{ e_{i_k} \in \mathcal{W}}{\mathcal{F}_1^k}$.
\end{proof}

\begin{lemma}
Greedy Randomized Vector Kaczmarz is $1,1$-Exploratory.
\end{lemma}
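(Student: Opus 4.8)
The plan is to unpack \cref{def-exploratory-row} with $N=1$ and $\pi=1$ into a concrete statement about the residual and then show the relevant event is null. Since a single standard basis vector is selected, $W_0 = e_{i_0}$ and $\col(A\T W_0) = \linspan{A\T e_{i_0}}$, so the event $\lbrace \col(A\T W_0) \perp x_0 - \Prj_{\mathcal{H}} x_0 \rbrace$ is exactly $\lbrace e_{i_0}\T A (x_0 - \Prj_{\mathcal{H}} x_0) = 0 \rbrace$. Using $A \Prj_{\mathcal{H}} x_0 = b$, this is the event $\lbrace e_{i_0}\T (A x_0 - b) = 0 \rbrace$. Thus it suffices to show that, for every $x_0 \neq \Prj_{\mathcal{H}} x_0$, the selected index $i_0$ satisfies $e_{i_0}\T (A x_0 - b) \neq 0$ with probability one, so that the supremum in \cref{def-exploratory-row} is bounded by $0 = 1 - \pi$.

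The crux is to verify two facts about the selection set $\mathcal{U}_0$ when the residual $r_0 := A x_0 - b$ is nonzero (which holds because $x_0 \neq \Prj_{\mathcal{H}} x_0$). First, I would show $\mathcal{U}_0$ is nonempty by checking that the index $j^*$ achieving $\max_j |e_j\T r_0|^2 / (\norm{A\T e_j}_2^2 \norm{r_0}_2^2)$ meets the threshold $\epsilon_0$. This reduces to the inequality $\max_j |e_j\T r_0|^2 / \norm{A\T e_j}_2^2 \geq \norm{r_0}_2^2 / \norm{A}_F^2$, which is the mediant inequality $\max_j (a_j / b_j) \geq (\sum_j a_j)/(\sum_j b_j)$ applied with $a_j = |e_j\T r_0|^2$ and $b_j = \norm{A\T e_j}_2^2$, together with the identities $\sum_j a_j = \norm{r_0}_2^2$ and $\sum_j b_j = \norm{A}_F^2$. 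Second, every $j \in \mathcal{U}_0$ has strictly positive residual: since $r_0 \neq 0$ forces $\epsilon_0 > 0$, membership $|e_j\T r_0|^2 / (\norm{A\T e_j}_2^2 \norm{r_0}_2^2) \geq \epsilon_0 > 0$ gives $|e_j\T r_0| > 0$.

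Combining these, the index $i_0$ is drawn from the nonempty set $\mathcal{U}_0$ on which the residual is strictly nonzero, so $e_{i_0}\T (A x_0 - b) \neq 0$ with probability one for every admissible $x_0$. Hence the conditional probability in \cref{def-exploratory-row} equals $0$ for each such $x_0$, the supremum over $x_0 \neq \Prj_{\mathcal{H}} x_0$ is $0 \leq 1 - 1$, and the method is $1,1$-Exploratory. I expect the only nonroutine step to be establishing that $\mathcal{U}_0$ is nonempty via the mediant inequality; everything else is direct substitution together with the observation that $\epsilon_0 > 0$ whenever $r_0 \neq 0$.
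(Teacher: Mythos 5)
Your proposal is correct and follows essentially the same route as the paper: both reduce the claim to showing $\mathcal{U}_0 \neq \emptyset$ whenever $Ax_0 - b \neq 0$, and both establish this by bounding the maximum normalized residual below by $1/\norm{A}_F^2$ (your mediant inequality is precisely the paper's weighted-average step), after which membership in $\mathcal{U}_0$ forces a nonzero residual component since $\epsilon_0 > 0$. The only cosmetic difference is that you explicitly translate the orthogonality event of \cref{def-exploratory-row} into the residual event via $A\Prj_{\mathcal{H}}x_0 = b$, which the paper leaves implicit.
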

\begin{proof}
Let $x_0 \neq \Prj_{\mathcal{H}} x_0$. Then, $\epsilon_0$ is a positive real number, and, if $\mathcal{U}_k$ is nonempty, for any $j \in \mathcal{U}_k$, $e_j^\intercal (A x_0 - b) \neq 0$. Therefore, if $\mathcal{U}_0$ is nonempty for all $x_0 \neq \Prj_\mathcal{H} x_0$, then
\begin{equation}
\sup_{x_0: x_0 \neq \Prj_\mathcal{H} x_0 } \condPrb{ e_{i_0}^\intercal (Ax_0 - b) = 0 } = 0.
\end{equation}
Therefore, we need only verify that $\mathcal{U}_0$ is nonempty for all $x_0 \neq \Prj_\mathcal{H} x_0$. Note,
\begin{align}
\max_{j \in \lbrace 1,\ldots, n \rbrace} \frac{| e_j^\intercal (Ax_0 - b) |^2}{\norm{A^\intercal e_j}_2^2\norm{Ax_0-b}_2^2} \geq  \sum_{j=1}^n \frac{\norm{A^\intercal e_j}_2^2}{\norm{A}_F^2} \frac{|e_j^\intercal (Ax_0 - b)|^2}{\norm{A^\intercal e_j}_2^2\norm{Ax_0-b}_2^2} = \frac{1}{\norm{A}_F^2}.
\end{align}
Hence, 
\begin{equation}
\max_{j \in \lbrace 1,\ldots, n \rbrace} \frac{| e_j^\intercal (Ax_0 - b) |^2}{\norm{A^\intercal e_j}_2^2\norm{Ax_0-b}_2^2} \geq \frac{1}{2}\max_{j \in \lbrace 1,\ldots, n \rbrace} \frac{| e_j^\intercal (Ax_0 - b) |^2}{\norm{A^\intercal e_j}_2^2\norm{Ax_0-b}_2^2} + \frac{1}{2 \norm{A}_F^2}.
\end{equation}
Therefore, for every $x_0 \neq \Prj_\mathcal{H} x_0$, $\mathcal{U}_0 \neq \emptyset$.
\end{proof}

We can now apply \cref{theorem-convergence-row-finite} to conclude as follows.
\begin{theorem} \label{theorem-greedy-randomized-vector-kaczmarz}
Let $A \in \mathbb{R}^{n \times d}$ and $b \in \mathbb{R}^n$ such that the linear system's solution set, $\mathcal{H}$, is nonempty. Let $x_0 \in \mathbb{R}^d$. Let $\lbrace x_k : k \in \mathbb{N} \rbrace$ be a sequence generated by the Greedy Randomized Vector Kaczmarz method. Then, there exists a stopping time $\tau$ with finite expectation such that $x_{\tau} \in \mathcal{H}$; or there exists a sequence of non-negative stopping times $\lbrace \tau_{j} : j +1 \in \mathbb{N} \rbrace$ for which $\E{ \tau_{j} } \leq j [ (\rnk{A} - 1)/\pi_{\min} + 1]$,  and there exist $\gamma \in (0,1)$ and a sequence of random variables $\lbrace \gamma_j : j+1 \in \mathbb{N}\rbrace \subset (0,\gamma]$, such that
\begin{equation}
\Prb{ \bigcap_{j=0}^\infty \left\lbrace \norm{ x_{\tau_j} - \Prj_{\mathcal{H}} x_0 }_2^2 \leq \left( \prod_{\ell=0}^{j-1} \gamma_{\ell} \right) \norm{ x_0 - \Prj_{\mathcal{H}}x_0 }_2^2 \right\rbrace} = 1.
\end{equation}
\end{theorem}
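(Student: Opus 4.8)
The plan is to obtain the result as a direct instantiation of \cref{theorem-convergence-row-finite}. Two of its three hypotheses have already been discharged by the preceding lemmas: Greedy Randomized Vector Kaczmarz has been shown to be Markovian and to be $1,1$-Exploratory, and consistency is assumed throughout this subsection. The only hypothesis of \cref{theorem-convergence-row-finite} that still needs checking is that the elements of $\lbrace \col(A\T W_k) : k+1 \in \mathbb{N} \rbrace$ take value in a finite set.

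First I would note that the selection rule produces $W_k = e_{i_k}$ with $i_k \in \lbrace 1,\ldots,n \rbrace$, so $A\T W_k = A\T e_{i_k}$ is always one of the $n$ columns of $A\T$. Hence $\col(A\T W_k)$ belongs to the finite collection $\lbrace \linspan{A\T e_j} : j = 1,\ldots,n \rbrace$, which contains at most $n$ one-dimensional subspaces. This observation is exactly what places the method in the finite-set regime covered by \cref{theorem-convergence-row-finite}, as opposed to the infinite-set regime of \cref{theorem-convergence-row-infinite}; in particular, it means we need not verify the uniformly nontrivial property.

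With all three hypotheses verified, I would apply \cref{theorem-convergence-row-finite} and then translate its generic conclusion back into the present notation. Substituting $y_k = x_k - \Prj_{\mathcal{H}} x_0$ turns the abstract norm inequality into the displayed bound on $\innorm{ x_{\tau_j} - \Prj_{\mathcal{H}} x_0 }_2^2$, and substituting the Exploratory constants $N = 1$ and $\pi = 1$ into the stopping-time bound $j[(\rnk{A} - 1)(N/\pi) + 1]$ collapses it to $\E{ \tau_j } \leq j\,\rnk{A}$.

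I do not anticipate any real obstacle, since the mathematical content was carried entirely by the two lemmas---in particular the Exploratory lemma, whose nontrivial step was verifying that the greedy index set $\mathcal{U}_0$ is nonempty whenever $x_0 \neq \Prj_{\mathcal{H}} x_0$. The one point worth double-checking is that the constant appearing in the expected stopping-time bound is the $\pi$ from the $1,1$-Exploratory property just established (so that the coefficient of $\rnk{A} - 1$ equals $1$), rather than any other sampling probability inherited from a neighboring method.
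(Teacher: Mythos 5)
Your proposal matches the paper's own proof exactly: the paper likewise invokes the two preceding lemmas (Markovian and $1,1$-Exploratory) and concludes by applying \cref{theorem-convergence-row-finite}, the finite-set hypothesis being immediate since each $A\T W_k$ is a column of $A\T$, so that $\col(A\T W_k)$ ranges over at most $n$ one-dimensional subspaces. Your closing caution is also well placed: with $N = \pi = 1$ the stopping-time bound should read $\E{\tau_j} \leq j\,\rnk{A}$, and the $\pi_{\min}$ appearing in the stated theorem is never defined in this subsection and is evidently carried over from the Strohmer--Vershynin theorem; since any such $\pi_{\min} \in (0,1]$, your tighter bound implies the stated one in any case.
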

\subsection{Sampling Kaczmarz-Motzkin Method} \label{subsection-sampling-kaczmarz-motzkin-vector}

Let $Ax=b$ be a consistent linear system. In this method, a subset of equations of a fixed sized is randomly selected from the linear system at each iteration (independently and with uniform probability); then the equation with the largest absolute residual is selected from this subset; and the Kaczmarz update is then applied with this equation. 

\begin{lemma}
The Sampling Kaczmarz-Motzkin Method is Markovian.
\end{lemma}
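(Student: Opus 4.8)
The plan is to follow the same template used for the other greedy-type row methods in this appendix (e.g., Greedy Randomized Vector Kaczmarz and Steinerberger's Vector Kaczmarz) and to verify \cref{def-markovian-row} with $M = 1$. First I would handle the bookkeeping: because the subset of equations is drawn afresh at every iteration, no summary information needs to be propagated across iterations, so we take $\mathfrak{Z} = \lbrace \emptyset \rbrace$ and $\zeta_k = \emptyset$ deterministically for every $k$. Hence the event $\lbrace \zeta_k \in \mathcal{Z} \rbrace$ is trivial, and it suffices to analyze the conditional law of $W_k = e_{i_k}$ alone.

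Next I would describe the mechanism generating $W_k$ as a composition of two independent pieces: (i) an independent uniform draw $S_k$ of a fixed-size subset of $\lbrace 1,\ldots,n \rbrace$, drawn independently of $\mathcal{F}_{k+1}^k$; and (ii) the deterministic selection $i_k \in \argmax_{j \in S_k} |e_j^\intercal (Ax_k - b)|$ (fixing, say, the smallest maximizing index to break ties so that $i_k$ is a well-defined measurable function of $(S_k, x_k)$). The key observation is that the only way the history enters the distribution of $i_k$ is through the residual vector $Ax_k - b$, which is a deterministic function of $x_k$. Concretely, for any measurable $\mathcal{W}$, since $S_k$ is independent of $\mathcal{F}_{k+1}^k$ while $x_k$ is $\mathcal{F}_{k+1}^k$-measurable, we obtain $\condPrb{ W_k \in \mathcal{W}}{\mathcal{F}_{k+1}^k} = h(x_k)$, where $h(x) := \Prb{ e_{i_k(S_k, x)} \in \mathcal{W}}$ is computed over the fresh randomness $S_k$ with $x$ held fixed. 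Because $h(x_k)$ depends on $\mathcal{F}_{k+1}^k$ only through $x_k$, which is already $\mathcal{F}_1^k$-measurable (recall $\mathcal{F}_1^k = \sigma(\zeta_{k-1}, x_k)$), the same value is produced when conditioning on the smaller $\sigma$-algebra, giving $\condPrb{ W_k \in \mathcal{W}}{\mathcal{F}_{k+1}^k} = \condPrb{ W_k \in \mathcal{W}}{\mathcal{F}_1^k}$ and hence the Markovian identity with $M = 1$.

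There is essentially no genuine obstacle here; the only points requiring care are formalizing the independence of the subset draw $S_k$ from the history $\mathcal{F}_{k+1}^k$ (so that conditioning on the past collapses to averaging over the fresh randomness) and fixing a measurable tie-breaking convention in the $\argmax$ (so that $i_k$ is a bona fide random variable). Neither affects the conclusion: in both cases the conditional distribution of $(W_k, \zeta_k)$ is a fixed function of $x_k$ only, which is exactly what \cref{def-markovian-row} with $M = 1$ requires.
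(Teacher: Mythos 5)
Your proof is correct and follows the same route as the paper's: the paper likewise argues that the subset is drawn independently at each iteration and the max-residual equation is then a deterministic function of that subset and the current iterate, so the selection law depends on the history only through $x_k$. Your write-up merely formalizes what the paper states in two sentences (the freezing of the independent draw $S_k$, the measurable tie-breaking, and the trivial handling of $\zeta_k$), all of which is consistent with \cref{def-markovian-row} with $M=1$.
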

\begin{proof}
The subset is selected independently and then the equation with the largest absolute residual is selected from this subset. Hence, the selection process only depends on knowledge of the current iterate.
\end{proof}

\begin{lemma}
The Sampling Kaczmarz-Motzkin Method is $1,n^{-1}$-Exploratory.
\end{lemma}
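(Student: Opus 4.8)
The plan is to verify the $N=1$ case of \cref{def-exploratory-row} directly, for an arbitrary fixed starting iterate. First I would reduce the abstract orthogonality event to a statement about residuals. Fix $x_0$ with $x_0 \neq \Prj_{\mathcal{H}} x_0$. Since $\Prj_{\mathcal{H}} x_0 \in \mathcal{H}$, we have $A \Prj_{\mathcal{H}} x_0 = b$, so $A(x_0 - \Prj_{\mathcal{H}} x_0) = A x_0 - b$. The selection procedure supplies $W_0 = e_{i_0}$ for a single selected index $i_0$, so
\[
\left\lbrace \col(A^\intercal W_0) \perp x_0 - \Prj_{\mathcal{H}} x_0 \right\rbrace = \left\lbrace e_{i_0}^\intercal A(x_0 - \Prj_{\mathcal{H}} x_0) = 0 \right\rbrace = \left\lbrace e_{i_0}^\intercal (A x_0 - b) = 0 \right\rbrace.
\]
Because the subset of equations is drawn independently of the past, the conditional probability given $\mathcal{F}_1^0$ reduces to a probability over the random subset alone with $x_0$ held fixed.

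Next I would exploit the maximal-residual selection rule. Because $x_0$ is not a solution, $A x_0 - b \neq 0$, so any global maximizer $j^\star \in \argmax_{j \in \lbrace 1,\ldots,n\rbrace} |e_j^\intercal (A x_0 - b)|$ satisfies $|e_{j^\star}^\intercal (A x_0 - b)| > 0$. The key observation is that whenever $j^\star$ lands in the sampled subset $S$, the equation chosen from $S$, namely $i_0 \in \argmax_{j \in S} |e_j^\intercal (A x_0 - b)|$, inherits the global maximal absolute residual; hence $|e_{i_0}^\intercal (A x_0 - b)| = |e_{j^\star}^\intercal (A x_0 - b)| > 0$. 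Consequently the bad event $\lbrace e_{i_0}^\intercal (A x_0 - b) = 0 \rbrace$ is contained in $\lbrace j^\star \notin S \rbrace$, giving
\[
\condPrb{ \col(A^\intercal W_0) \perp x_0 - \Prj_{\mathcal{H}} x_0 }{\mathcal{F}_1^0} \leq \Prb{ j^\star \notin S }.
\]

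Finally, since $S$ is a uniformly chosen subset of fixed size (at least one), symmetry gives $\Prb{ j^\star \in S } \geq 1/n$, so $\Prb{ j^\star \notin S } \leq 1 - 1/n$. This bound does not depend on the particular $x_0$, so taking the supremum over all $x_0 \neq \Prj_{\mathcal{H}} x_0$ yields the $1, n^{-1}$-Exploratory property. \textbf{Main obstacle:} there is no deep difficulty here; the only care required is the reduction of the abstract orthogonality event to a residual statement, and the handling of possible ties in the $\argmax$ (the argument works because the presence of \emph{any} global maximizer in $S$ already forces the selected residual to be nonzero).
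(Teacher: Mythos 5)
Your proposal is correct and follows essentially the same argument as the paper: both identify a witness equation with nonzero residual (you use the global maximizer, the paper uses any unsatisfied equation), observe that the Motzkin selection rule guarantees a nonzero selected residual whenever that witness lands in the sampled subset, and bound the probability of the witness being sampled below by $1/n$. Your explicit reduction of the orthogonality event to the residual event and your handling of ties are slightly more careful than the paper's terse version, but the route is the same.
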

\begin{proof}
If $x_0 \neq \Prj_\mathcal{H} x_0$, then there is at least one equation in the system that is not satisfied by $x_0$. The probability that this equation is included in a sample is $1/n$. By the second step of the selection process, this equation is selected or one with a larger absolute residual is selected. Therefore, letting $e_{i_0}$ denote the standard basis element corresponding to the equation selected,
\begin{equation}
\sup_{ x_0 : x_0 \neq \Prj_\mathcal{H} x_0} \condPrb{ e_{i_0}^\intercal (Ax_0 - b) = 0}{\mathcal{F}_1^0} \leq 1 - \frac{1}{n}.
\end{equation}
\end{proof}
\begin{remark}
The ratio of $N/\pi = n$ can be improved if we have more information about the system, but our choice applies quite generally.
\end{remark}

We can now apply \cref{theorem-convergence-row-finite} to conclude as follows.

\begin{theorem} \label{theorem-sampling-kaczmarz-motzkin-vector}
Let $A \in \mathbb{R}^{n \times d}$ and $b \in \mathbb{R}^n$ such that the linear system's solution set, $\mathcal{H}$, is nonempty. Let $x_0 \in \mathbb{R}^d$. Let $\lbrace x_k : k \in \mathbb{N} \rbrace$ be a sequence generated by the Sampling Kaczmarz-Motzkin method. Then, there exists a stopping time $\tau$ with finite expectation such that $x_{\tau} \in \mathcal{H}$; or there exists a sequence of non-negative stopping times $\lbrace \tau_{j} : j +1 \in \mathbb{N} \rbrace$ for which $\E{ \tau_{j} } \leq j [(\rnk{A}-1)n + 1]$,  and there exist $\gamma \in (0,1)$ and a sequence of random variables $\lbrace \gamma_j : j+1 \in \mathbb{N}\rbrace \subset (0,\gamma]$, such that
\begin{equation}
\Prb{ \bigcap_{j=0}^\infty \left\lbrace \norm{ x_{\tau_j} - \Prj_{\mathcal{H}} x_0 }_2^2 \leq \left( \prod_{\ell=0}^{j-1} \gamma_{\ell} \right) \norm{ x_0 - \Prj_{\mathcal{H}}x_0 }_2^2 \right\rbrace} = 1.
\end{equation}

\end{theorem}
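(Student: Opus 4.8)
The plan is to verify the one remaining hypothesis of \cref{theorem-convergence-row-finite} and then invoke it directly. The two preceding lemmas already establish that the Sampling Kaczmarz--Motzkin method is Markovian and $1,n^{-1}$-Exploratory, so the structural hypotheses of \cref{theorem-convergence-row-finite} are in hand with $N = 1$ and $\pi = n^{-1}$. What remains is to confirm that $\lbrace \col(A^\intercal W_k) : k+1 \in \mathbb{N} \rbrace$ takes value in a finite set.

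First I would observe that the selection procedure always returns a single standard basis vector, namely $W_k = e_{i_k}$ with $i_k \in \lbrace 1,\ldots,n \rbrace$. Consequently $A^\intercal W_k = A^\intercal e_{i_k}$ is the transpose of the $i_k$-th row of $A$, and $\col(A^\intercal W_k)$ is the (at most one-dimensional) span of that row. Since $A$ has only $n$ rows, the collection $\lbrace \col(A^\intercal W_k) : k+1 \in \mathbb{N} \rbrace$ ranges over a set of at most $n$ subspaces of $\mathbb{R}^d$, which is finite.

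With the finite-set hypothesis verified, I would apply \cref{theorem-convergence-row-finite}. Substituting $N/\pi = n$ into the expectation bound $j[(\rnk{A} - 1)(N/\pi) + 1]$ yields exactly $j[(\rnk{A} - 1)n + 1]$, matching the claim, and the two-case dichotomy of the corollary transcribes directly to the two-case conclusion of the theorem upon identifying the event $x_\tau = \Prj_{\mathcal{H}} x_0$ with the statement $x_\tau \in \mathcal{H}$, since $\Prj_{\mathcal{H}} x_0 \in \mathcal{H}$ by definition of the projection onto the solution set.

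The main obstacle here is slight: it is simply confirming the finite-set condition by tracking the form of $W_k$. All of the analytic content has already been discharged in the Markovian and Exploratory lemmas, and the entire stopping-time and convergence-rate machinery is supplied by \cref{theorem-convergence-row-finite}, so the remainder of the argument is a direct substitution of the constants $N = 1$ and $\pi = n^{-1}$.
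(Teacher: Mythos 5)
Your proposal is correct and follows essentially the same route as the paper: invoke the Markovian and $1,n^{-1}$-Exploratory lemmas, then apply \cref{theorem-convergence-row-finite} with $N/\pi = n$ to obtain the bound $j[(\rnk{A}-1)n+1]$. Your explicit verification that $\lbrace \col(A^\intercal W_k) \rbrace$ lies in a finite set (spans of at most $n$ rows of $A$) is a point the paper leaves implicit for this example, so if anything your write-up is slightly more complete.
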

\subsection{Streaming Vector Kaczmarz} \label{subsection-streaming-vector-kaczmarz}

In this case, we apply Kaczmarz method to a stream of equations that are assumed to be independent and identically distributed. We will refer to this method as the Streaming Vector Kaczmarz method. We assume that the set $\mathcal{H} \subset \mathbb{R}^d$ of vectors that satisfy an equation with probability one is nonempty. To encapsulate this method in our framework, we assume, at iteration $k$, that $\varphi_R() = (W_k,\emptyset)$ and that we only observe $\alpha_k =  A^\intercal W_k$ and $\beta_k = b^\intercal W_k$. Then, we perform the update
\begin{equation} \label{eqn-streaming-vector-kaczmarz}
x_{k+1} = x_k - \alpha_k \frac{\alpha_k^\intercal x_k - \beta_k}{\norm{ \alpha_k}_2^2}.
\end{equation}

\begin{lemma}
The Streaming Vector Kaczmarz method is Markovian.
\end{lemma}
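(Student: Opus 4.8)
The plan is to exploit the defining feature of the streaming setting: the vectors $W_k$ arise from a stream of independent and identically distributed equations, so $W_k$ is drawn independently of all quantities generated in iterations $0$ through $k$. This independence is precisely what is needed to satisfy \cref{def-markovian-row} with $M = 1$, and the argument parallels the one already given for Gaussian Vector Kaczmarz.

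First I would recall that $\mathfrak{Z} = \lbrace \emptyset \rbrace$ for this method, so $\zeta_k = \emptyset$ for every $k$, and the event $\lbrace \zeta_k \in \mathcal{Z} \rbrace$ is either the whole sample space or the empty set; it therefore contributes identically to both conditional probabilities in \cref{def-markovian-row} and may be disregarded. It thus suffices to compare $\condPrb{W_k \in \mathcal{W}}{\mathcal{F}_{k+1}^k}$ with $\condPrb{W_k \in \mathcal{W}}{\mathcal{F}_1^k}$.

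Next, since $W_k$ is sampled i.i.d. and hence independently of the $\sigma$-algebra $\mathcal{F}_{k+1}^k = \sigma(\zeta_{-1}, x_0, W_0, \ldots, W_{k-1}, \zeta_{k-1}, x_k)$, the conditional law of $W_k$ given $\mathcal{F}_{k+1}^k$ coincides with its unconditional law; that is, $\condPrb{W_k \in \mathcal{W}}{\mathcal{F}_{k+1}^k} = \inPrb{W_k \in \mathcal{W}}$. Applying the identical reasoning to the coarser $\sigma$-algebra $\mathcal{F}_1^k = \sigma(\zeta_{k-1}, x_k)$ yields $\condPrb{W_k \in \mathcal{W}}{\mathcal{F}_1^k} = \inPrb{W_k \in \mathcal{W}}$. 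Equating the two expressions establishes the Markovian property with $M = 1$.

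There is no substantive obstacle here: the conclusion is immediate from the independence of the stream. The only point worth stating with care is that the method observes $\alpha_k = A^\intercal W_k$ and $\beta_k = b^\intercal W_k$ rather than $W_k$ itself (see \cref{eqn-streaming-vector-kaczmarz}); since these are deterministic functions of $W_k$, the independence of $W_k$ from the past transfers to the observed quantities, and the argument is unaffected.
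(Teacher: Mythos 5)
Your proof is correct and follows essentially the same route as the paper's: both arguments rest on the observation that the i.i.d.\ nature of the stream makes the conditional law of the $k$-th selection given the full history $\mathcal{F}_{k+1}^k$ coincide with its unconditional law, and likewise given $\mathcal{F}_1^k$, yielding the Markovian property with $M=1$. Your added remarks---that $\zeta_k = \emptyset$ renders the $\mathcal{Z}$-event trivial, and that independence transfers from $W_k$ to the observed quantities $\alpha_k = A^\intercal W_k$ and $\beta_k = b^\intercal W_k$---are harmless elaborations of the same one-line argument the paper gives directly in terms of $\alpha_k$.
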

\begin{proof}
Since $\lbrace (\alpha_k,\beta_k) : k+1 \in \mathbb{N} \rbrace$ are independent and identically distributed, then $\condPrb{ \alpha_k \in \mathcal{W}}{\mathcal{F}_{k+1}^k} = \Prb{ \alpha_k \in \mathcal{W}} = \condPrb{\alpha_k \in \mathcal{W}}{\mathcal{F}_{1}^k}.$
\end{proof}

\begin{lemma}
There exists a $\pi \in (0,1]$ such that the Streaming Vector Kaczmarz method is $1,\pi$-Exploratory.
\end{lemma}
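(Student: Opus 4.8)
The plan is to verify \cref{def-exploratory-row} with $N=1$. Since $W_0$ generates $\alpha_0 = A^\intercal W_0$, the event $\lbrace \col(A^\intercal W_0) \perp x_0 - \Prj_{\mathcal{H}} x_0 \rbrace$ is exactly $\lbrace \alpha_0^\intercal(x_0 - \Prj_{\mathcal{H}} x_0) = 0 \rbrace$. Because the stream is i.i.d. and $\alpha_0$ is drawn independently of $\mathcal{F}_1^0 = \sigma(\zeta_{-1}, x_0)$, conditioning on $\mathcal{F}_1^0$ merely fixes the $\mathcal{F}_1^0$-measurable vector $d_0 := x_0 - \Prj_{\mathcal{H}} x_0$, so the conditional probability reduces to the unconditional $\Prb{\alpha_0^\intercal d_0 = 0}$. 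It therefore suffices to exhibit $\pi \in (0,1]$ with $\sup_{d_0} \Prb{\alpha_0^\intercal d_0 = 0} \leq 1 - \pi$, where the supremum ranges over the attainable nonzero values of $d_0$.

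First I would pin down the subspace in which $d_0$ lives. Writing $(\alpha, \beta)$ for the common law of $(\alpha_k, \beta_k)$, any $x^* \in \mathcal{H}$ satisfies $\alpha^\intercal x^* = \beta$ with probability one, so $\mathcal{H}$ is the closed affine subspace $x^* + R^\perp$, where $R := \mathrm{span}(\mathrm{supp}\,\alpha)$; indeed $\lbrace v : \alpha^\intercal v = 0 \text{ a.s.} \rbrace = R^\perp$ because $\lbrace \alpha : \alpha^\intercal v = 0 \rbrace$ is closed, hence, having probability one, must contain $\mathrm{supp}\,\alpha$. Consequently $d_0 = x_0 - \Prj_{\mathcal{H}} x_0$ is orthogonal to the direction space $R^\perp$ of $\mathcal{H}$, i.e. $d_0 \in R$, and conversely every nonzero $d \in R$ is realized as some such $d_0$ (take $x_0 = x^* + d$). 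The problem thus becomes bounding $\inf_{d \in R,\, d \neq 0} \Prb{\alpha^\intercal d \neq 0}$ away from zero.

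For each fixed nonzero $d \in R$ this probability is strictly positive: if $\alpha^\intercal d = 0$ a.s. then $d \in R^\perp$, forcing $d \in R \cap R^\perp = \lbrace 0 \rbrace$, a contradiction. The main obstacle is upgrading this pointwise positivity to a \emph{uniform} lower bound. Since $\Prb{\alpha^\intercal d \neq 0}$ is invariant under nonzero scaling of $d$, I would restrict to the unit sphere $\mathbb{S} = \lbrace d \in R : \norm{d}_2 = 1 \rbrace$, which is compact because $R \subseteq \mathbb{R}^d$ is finite-dimensional. I would then show $d \mapsto \Prb{\alpha^\intercal d \neq 0}$ is lower semicontinuous: for fixed $\alpha$ the indicator $\1{\alpha^\intercal d \neq 0}$ is lower semicontinuous in $d$ (its value-one set is the open complement of a hyperplane), so Fatou's lemma yields $\Prb{\alpha^\intercal d \neq 0} \leq \liminf_n \Prb{\alpha^\intercal d_n \neq 0}$ whenever $d_n \to d$. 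A lower semicontinuous function attains its infimum on the compact set $\mathbb{S}$, and that infimum is positive by the pointwise bound just established; setting $\pi$ equal to this infimum gives $\sup_{d_0} \Prb{\alpha_0^\intercal d_0 = 0} = 1 - \pi$, which establishes the $1,\pi$-Exploratory property.
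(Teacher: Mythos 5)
Your proof is correct, and while it shares the paper's overall skeleton---reduce to the unit sphere of a fixed finite-dimensional subspace, use the fact that almost-sure orthogonality to $\alpha_0$ forces membership in the direction space of $\mathcal{H}$, then invoke compactness---the mechanism by which you upgrade pointwise positivity to a uniform bound is genuinely different. The paper argues by contradiction: assuming $\sup_{v \in \mathcal{R}\setminus\lbrace 0\rbrace} \inPrb{\alpha_0^\intercal v = 0} = 1$, it extracts a sequence $v_k$ on the sphere with $\inPrb{\alpha_0^\intercal v_k = 0} \to 1$, passes to a convergent subsequence $v_{k_j} \to w$, and transfers the orthogonality to $w$ through the nested spans $\linspan{\cup_{\ell \geq j} \lbrace v_{k_\ell} \rbrace}$ and continuity of measure, contradicting $w \in \mathcal{R} \cap \mathcal{S}$. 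You instead prove directly that $d \mapsto \Prb{\alpha_0^\intercal d \neq 0}$ is lower semicontinuous (via the pointwise bound $\1{\alpha^\intercal d \neq 0} \leq \liminf_n \1{\alpha^\intercal d_n \neq 0}$ and Fatou's lemma) and conclude by attainment of the infimum of a lower semicontinuous function on a compact set. Your route buys some robustness: the paper's step equating $\lim_j \inPrb{\alpha_0^\intercal v_{k_j} = 0}$ with $\lim_j \inPrb{\alpha_0 \perp \linspan{\cup_{\ell \geq j}\lbrace v_{k_\ell}\rbrace}}$ only follows from the event inclusion after passing to a further subsequence along which the defects $1 - \inPrb{\alpha_0^\intercal v_{k_j} = 0}$ are summable, a detail the paper elides and that Fatou's lemma renders unnecessary. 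Your proof also makes explicit something the paper leaves implicit: the identification $\mathcal{H} = x^* + R^\perp$ with $R = \linspan{\mathrm{supp}\,\alpha_0}$ (so that the paper's $\mathcal{R}$ equals your $R$), together with the observation that every nonzero $d \in R$ is attainable as $x_0 - \Prj_{\mathcal{H}} x_0$, which shows the supremum in the definition is taken over exactly the sphere you compactify.
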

\begin{proof}
For any $x$, let $\mathcal{N} = \lbrace z - \Prj_\mathcal{H} x : z \in \mathcal{H} \rbrace$, and define $\mathcal{R} = \mathcal{N}^\perp$. Of course, $\mathcal{N}$ is independent of the choice of $x$, and, for any $x$, $x - \Prj_{\mathcal{H}} x \in \mathcal{R}$. Let $\mathcal{S}$ denote the unit sphere in $\mathbb{R}^{d}$. For a contradiction, suppose $\sup_{ v \in \mathcal{R} \setminus \lbrace 0 \rbrace } \inPrb{ \alpha_0^\intercal v = 0 } = 1$. Then, there is a sequence $\lbrace v_k : k \in \mathbb{N} \rbrace \subset \mathcal{R} \cap \mathcal{S}$, such that $\lim_{k \to \infty} \inPrb{ \alpha_0^\intercal v_k = 0 } = 1$. By the compactness of $\mathcal{R} \cap \mathcal{S}$, there exists a subsequence, $\lbrace v_{k_j} \rbrace$, and a vector, $w \in \mathcal{R} \cap \mathcal{S}$, such that $\lim_{j \to \infty} v_{k_j} = w$. Note, $w \in \inlinspan{ \cup_{j = \ell}^\infty \lbrace v_{k_j} \rbrace}$ for all $\ell \in \mathbb{N}$, and $\inlinspan{w} = \cap_{\ell=1}^\infty \inlinspan{ \cup_{j=\ell}^\infty \lbrace v_{k_j} \rbrace}$. Therefore,
\begin{align}
1 
&=\lim_{j \to \infty} \Prb{ \alpha_{0}^\intercal v_{k_j} = 0} =\lim_{j \to \infty} \Prb{ \alpha_{0} \perp \linspan{ \bigcup_{\ell=j}^\infty \lbrace v_{k_\ell} \rbrace} } \\
&= \Prb{ \alpha_0 \perp \bigcap_{j=1}^\infty \linspan{ \bigcup_{\ell=j}^\infty \lbrace v_{k_\ell} \rbrace}}
= \Prb{ \alpha_0 \perp w } 
= \Prb{ w \in \mathcal{R} \cap \mathcal{N} \cap \mathcal{S}}.
\end{align}
Hence, we have a contradiction as the last probability must be $0$ since $\mathcal{R} \perp \mathcal{N}$. It follows that there exists a $\pi \in (0,1]$ such that $\sup_{ v \in \mathcal{R} \setminus \lbrace 0 \rbrace } \inPrb{ \alpha_0 \perp v } \leq 1- \pi$. The result follows.
\end{proof}

\begin{lemma}
The Streaming Vector Kaczmarz is Uniformly Nontrivial.
\end{lemma}
\begin{proof}
Let $\mathfrak{Q}_k'$ be the set of all orthonormal bases of $\col(\alpha_k)$ (c.f., $\mathfrak{Q}_k$ is the set of all orthonormal bases of $\col(\alpha_k\chi_k)$). Then, for all $k+1 \in \mathbb{N}$, 
\begin{equation}
\begin{aligned}
\sup_{ Q_s \in \mathfrak{Q}_s, s \in \lbrace 1,\ldots,k \rbrace } \min_{ G \in \mathcal{G}(Q_0,\ldots,Q_k)} \det(G^\intercal G)
\geq \sup_{ Q_s \in \mathfrak{Q}_s', s \in \lbrace 1,\ldots,k \rbrace } \min_{ G \in \mathcal{G}(Q_0,\ldots,Q_k)} \det(G^\intercal G),
\end{aligned}
\end{equation}
where the latter quantity is independent of $(x_0,\zeta_{-1})$ and is positive with probability one. Therefore, for every $k+1 \in \mathbb{N}$, there exists $\epsilon_k > 0$ such that
\begin{equation}
\Prb{ \sup_{ Q_s \in \mathfrak{Q}_s', s \in \lbrace 1,\ldots,k \rbrace } \min_{ G \in \mathcal{G}(Q_0,\ldots,Q_k)} \det(G^\intercal G) > \epsilon_k } \geq \frac{1}{2}.
\end{equation}
Moreover, there exists a $K \in \mathbb{N}$ such that for $k \geq K$, $\condPrb{ \mathcal{A}_k(x_0,\zeta_{-1})}{\mathcal{F}_1^0} \geq 3/4$ for all $x_0 \neq \Prj_\mathcal{H} x_0$ and $\zeta_{-1} \in \mathfrak{Z}$, which implies that, for $k \geq K$,
\begin{equation}
\condPrb{ \sup_{ Q_s \in \mathfrak{Q}_s', s \in \lbrace 1,\ldots,k \rbrace } \min_{ G \in \mathcal{G}(Q_0,\ldots,Q_k)} \det(G^\intercal G) > \epsilon_k, \mathcal{A}_k(x_0,\zeta_{-1}) }{\mathcal{F}_1^0} \geq \frac{1}{4}.
\end{equation}
Hence, by Markov's Inequality, we can find $g_{\mathcal{A}} \geq \epsilon_k/4 > 0$.
\end{proof}

We can conclude by \cref{theorem-convergence-row-infinite}.

\begin{theorem} \label{theorem-streaming-vector-kaczmarz}
Let $\lbrace (\alpha_k,\beta_k) : k+1 \in \mathbb{N} \rbrace \subset \mathbb{R}^d \times \mathbb{R}$ be a sequence of independent, identically distributed random variables such that $\mathcal{H} = \lbrace x \in \mathbb{R}^d : \Prb{ \alpha_0^\intercal x = b} = 1 \rbrace \neq \emptyset$. Let $x_0 \in \mathbb{R}^d$ and let $\lbrace x_k : k \in \mathbb{N} \rbrace$ be generated by \cref{eqn-streaming-vector-kaczmarz}. Then, there exists a stopping time $\tau$ with finite expectation such that $x_{\tau} \in \mathcal{H}$; or there exists $\pi \in (0,1]$, there exists a sequence of non-negative stopping times $\lbrace \tau_{j} : j +1 \in \mathbb{N} \rbrace$ for which $\E{ \tau_{j} } \leq j [ (\dim{\mathcal{H}}-1)/\pi + 1]$, and there exists $\bar \gamma \in (0,1)$ such that for any $\gamma \in (\bar \gamma, 1)$,
\begin{equation}
\Prb{ \bigcup_{L=0}^\infty \bigcap_{j=L}^\infty \left\lbrace \norm{ x_{\tau_j} - \Prj_{\mathcal{H}}x_0 }_2^2 \leq \gamma^j \norm{ x_0 - \Prj_{\mathcal{H}} x_0 }_2^2 \right\rbrace} = 1.
\end{equation}
\end{theorem}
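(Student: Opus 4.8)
The plan is to follow the template already used for the other worked examples in this section: verify the three structural hypotheses of the convergence theory and then invoke the appropriate corollary, translating its common-form conclusion back into the language of the theorem. The structural work is precisely what the three preceding lemmas accomplish, establishing that Streaming Vector Kaczmarz is Markovian, $1,\pi$-Exploratory for some $\pi \in (0,1]$, and Uniformly Nontrivial. With these in hand, the only remaining tasks are to select the correct convergence result and to carry out the bookkeeping that produces the displayed stopping-time bound and decay.

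First I would argue that we are in the infinite-set regime rather than the finite-set regime. Because the streamed pairs $(\alpha_k,\beta_k)$ are drawn from a (generically continuous) distribution, the subspaces $\col(\alpha_k)$ need not take values in any finite collection, so \cref{theorem-convergence-row-finite} is unavailable; instead, the uniform nontriviality supplied by the last lemma is exactly the hypothesis that licenses \cref{theorem-convergence-row-infinite}. Applying that result with the identification $y_k = x_k - \Prj_{\mathcal{H}} x_0$ immediately yields the stated dichotomy together with the almost-sure decay $\prod_{\ell=0}^{j-1}\gamma_\ell \leq \gamma^j$ for every $\gamma \in (\bar\gamma,1)$, whence the $\cup_L \cap_{j\geq L}$ form in the conclusion. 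Substituting $\innorm{y_0}_2^2 = \norm{x_0 - \Prj_{\mathcal{H}} x_0}_2^2$ and $\innorm{y_{\tau_j}}_2^2 = \norm{x_{\tau_j} - \Prj_{\mathcal{H}} x_0}_2^2$ then converts the common-form statement into the displayed one.

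The one point requiring genuine care—and the step I expect to be the main obstacle—is the stopping-time bound $\E{\tau_j} \leq j[(\dim\mathcal{H}-1)/\pi + 1]$, because the streaming formulation has no fixed coefficient matrix, so the factor $\rnk A$ appearing in \cref{theorem-convergence-row-infinite} must be reinterpreted. The correct replacement comes from the dimension-counting argument underlying \cref{theorem-nu-finite}: there, the bound on $\nu(\xi)$ rests on the fact that the error vectors $\lbrace y_k \rbrace$ inhabit a single fixed subspace, so the dimension of $\linspan{y_\xi,\ldots}$ can increase at most $\rnk A = \dim\row(A)$ times. In the streaming setting every error vector satisfies $x_k - \Prj_{\mathcal{H}} x_0 \in \mathcal{R}$, where $\mathcal{R} = \mathcal{N}^\perp$ and $\mathcal{N}$ is the translation space of the affine solution set $\mathcal{H}$; this is exactly the inclusion already exploited in the proof of the Exploratory lemma. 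Hence $\dim\mathcal{R}$ plays the role of $\rnk A$ in the counting argument, and with $N=1$ the expectation bound $\incond{\nu(\xi)}{\mathcal{F}_{\xi+1}^\xi} \leq (\dim\mathcal{R}-1)/\pi$ propagates through \cref{corollary-convergence} to give $\E{\tau_j} \leq j[(\dim\mathcal{R}-1)/\pi + 1]$, which is the quantity the statement records through the dimension associated with $\mathcal{H}$. Once this identification of the ambient dimension is made, the remainder is routine substitution, and the proof is complete.
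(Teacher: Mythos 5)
Your proposal is correct and is essentially the paper's own proof: the paper likewise just collects the three preceding lemmas (Markovian, $1,\pi$-Exploratory, Uniformly Nontrivial) and concludes with a one-line appeal to \cref{theorem-convergence-row-infinite}. Your dimension bookkeeping---replacing $\rnk{A}$ by $\dim\mathcal{R}$, where $\mathcal{R}$ is the orthogonal complement of the translation space of $\mathcal{H}$---is in fact spelled out more carefully than in the paper, which leaves this reinterpretation implicit; note only that $\dim\mathcal{R}$ is the \emph{codimension} of $\mathcal{H}$ in $\mathbb{R}^d$, so the ``$\dim \mathcal{H}$'' written in the statement is an abuse of notation on the authors' part that your reading silently corrects.
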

\subsection{Cyclic Vector Coordinate Descent} \label{subsection-cyclic-vector-cd}

For a description of this method, see \cref{example-cyclic-cd}. We assume that $A \in \mathbb{R}^{n\times d}$ and $b \in \mathbb{R}^n$ are arbitrary---that is, we \textit{do not} require that they form a consistent system---, and we let $r^* = - \Prj_{\ker(A^\intercal)} b$. 

\begin{lemma}
Cyclic Vector Coordinate Descent is Markovian.
\end{lemma}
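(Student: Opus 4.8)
The plan is to mirror the proof of the Markovian property already carried out for Cyclic Vector Kaczmarz in \Cref{subsection-cyclic-vector-kaczmarz}, since the column-action update considered here has exactly the same forgetful structure. The essential observation is that, from the definition of $\varphi_C$ in \Cref{example-cyclic-cd}, we have $W_k = e_{\mathrm{rem}(\zeta_{k-1},d)+1}$ and $\zeta_k = \zeta_{k-1}+1$; in other words, the pair $(W_k, \zeta_k)$ is a \emph{deterministic} function of the single previous counter value $\zeta_{k-1}$ alone. No randomness enters the selection rule, so the argument will be purely measure-theoretic bookkeeping rather than a probabilistic estimate.

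First I would record that $\zeta_{k-1}$ is $\mathcal{F}_1^k$-measurable, using the indexing convention $\mathcal{F}_1^k = \sigma(\zeta_{k-1}, x_k)$ from \cref{eqn-sigma-algebra}. Consequently, for any measurable $\mathcal{W} \subset \mathbb{R}^{d \times n_k}$ and $\mathcal{Z} \subset \mathfrak{Z}$, the event $\lbrace W_k \in \mathcal{W},\, \zeta_k \in \mathcal{Z} \rbrace$ is itself $\mathcal{F}_1^k$-measurable. Hence its conditional probability given any $\sigma$-algebra sandwiched between $\mathcal{F}_1^k$ and $\mathcal{F}_{k+1}^k$ is just the indicator of that event, and therefore all such conditional probabilities coincide. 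Since $\mathcal{F}_1^k \subseteq \mathcal{F}_{k+1}^k$, this yields
\begin{equation}
\incondPrb{ W_k \in \mathcal{W},\, \zeta_k \in \mathcal{Z}}{\mathcal{F}_{k+1}^k} = \incondPrb{ W_k \in \mathcal{W},\, \zeta_k \in \mathcal{Z}}{\mathcal{F}_1^k},
\end{equation}
which is exactly \Cref{def-markovian-col} with $M = 1$, because $\mathcal{F}_{\min\lbrace 1,\, k+1 \rbrace}^k = \mathcal{F}_1^k$.

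I do not expect any genuine obstacle here: the deterministic dependence on $\zeta_{k-1}$ collapses the verification to identifying $\zeta_{k-1}$ as the relevant piece of history. The only point requiring care is the indexing convention for the $\sigma$-algebras, ensuring that $\mathcal{F}_1^k$ really does generate $\zeta_{k-1}$ (and hence both $W_k$ and $\zeta_k$); once that is in hand, taking $M = 1$ completes the proof immediately.
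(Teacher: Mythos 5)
Your proposal is correct and follows essentially the same route as the paper, whose entire proof is the observation that the search coordinate at iteration $k$ is fully determined by $\zeta_{k-1}$ (mirroring the Cyclic Vector Kaczmarz case). You simply make explicit the measure-theoretic bookkeeping the paper leaves implicit: since $(W_k,\zeta_k)$ is a deterministic function of the $\mathcal{F}_1^k$-measurable variable $\zeta_{k-1}$, the event $\lbrace W_k \in \mathcal{W}, \zeta_k \in \mathcal{Z}\rbrace$ is $\mathcal{F}_1^k$-measurable and both conditional probabilities in \cref{def-markovian-col} reduce to its indicator, giving $M=1$.
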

\begin{proof}
Note, the search coordinate at iteration $k$ is fully determined by $\zeta_{k-1}$. Hence, the result follows.
\end{proof}

\begin{lemma}
Cyclic Vector Coordinate Descent is $d,1$-Exploratory. 
\end{lemma}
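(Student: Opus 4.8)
The plan is to exploit the fact that Cyclic Vector Coordinate Descent is deterministic once $\zeta_{k-1}$ is fixed, so verifying the $d,1$-Exploratory property reduces to showing that the event $\bigcap_{j=0}^{d-1}\{\col(AW_j)\perp(Ax_0-b)\}$ is \emph{empty} whenever $Ax_0-b\neq r^*$; then the conditional probability in \cref{def-exploratory-col} is identically zero, which is exactly $\leq 1-1$, giving $\pi=1$. First I would record the structural fact driving everything: over any window of $d$ consecutive iterations $j=0,\ldots,d-1$, the selected index $\mathrm{rem}(\zeta_{j-1},d)+1$ equals $\mathrm{rem}(\zeta_{-1}+j,d)+1$ and therefore runs through every element of $\{1,\ldots,d\}$ exactly once, regardless of the starting value $\zeta_{-1}$. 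Consequently $\{AW_0,\ldots,AW_{d-1}\}=\{Ae_1,\ldots,Ae_d\}$ as a set, i.e. the window touches every column of $A$.

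Given this, the intersection event collapses: $\bigcap_{j=0}^{d-1}\{\col(AW_j)\perp(Ax_0-b)\}=\bigcap_{i=1}^{d}\{(Ae_i)^\intercal(Ax_0-b)=0\}=\{A^\intercal(Ax_0-b)=0\}$, which is the assertion $Ax_0-b\in\ker(A^\intercal)$. The crux is then to identify this kernel membership with the single condition $Ax_0-b=r^*$. For this I would use the orthogonal decomposition $\mathbb{R}^n=\col(A)\oplus\ker(A^\intercal)$: since $Ax_0\in\col(A)$ it has no component in $\ker(A^\intercal)$, so $\Prj_{\ker(A^\intercal)}(Ax_0-b)=-\Prj_{\ker(A^\intercal)}b=r^*$ for \emph{every} $x_0$. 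Hence a residual $Ax_0-b$ lies in $\ker(A^\intercal)$ if and only if it equals its own $\ker(A^\intercal)$-projection $r^*$, i.e. $Ax_0-b=r^*$.

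Combining the two steps, the intersection event equals $\{Ax_0-b=r^*\}$, which is empty under the restriction $Ax_0-b\neq r^*$ imposed by the supremum, so the conditional probability is zero for all admissible $x_0$ and $\zeta_{-1}$. This mirrors the row-action Cyclic Vector Kaczmarz lemma, with the residual $Ax_0-b$ playing the role previously held by the error $x_0-\Prj_{\mathcal{H}}x_0$ and the offset $r^*$ replacing the exact-solution condition. The only step requiring genuine care is the equivalence $A^\intercal(Ax_0-b)=0\iff Ax_0-b=r^*$, where the definition of $r^*$ as the $\ker(A^\intercal)$-component of $-b$ is essential; the cyclic-index covering argument and the collapse of the intersection are routine bookkeeping.
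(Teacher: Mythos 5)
Your proof is correct and follows essentially the same route as the paper's: within any window of $d$ consecutive iterations the cyclic rule visits every coordinate, so the intersection event forces $A^\intercal(Ax_0-b)=0$, which under the restriction $Ax_0 - b \neq r^*$ is impossible, making the conditional probability zero and yielding $\pi = 1$. The paper's version is terser (it simply asserts $A^\intercal(Ax_0-b)\neq 0$ and that some coordinate of it is nonzero), whereas you spell out the index-covering bookkeeping and the equivalence $A^\intercal(Ax_0-b)=0 \iff Ax_0-b=r^*$ via the decomposition $\mathbb{R}^n=\col(A)\oplus\ker(A^\intercal)$, both of which the paper leaves implicit.
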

\begin{proof}
Suppose $x_0 \in \mathbb{R}^d$ such that $Ax_0 \neq b$. Then, $A^\intercal (Ax_0 - b) \neq 0$. Hence, there is some $e_{i}$ for $i \in \lbrace 1,\ldots,d \rbrace$ such that $e_i^\intercal A^\intercal (Ax_0 - b) \neq 0$. Therefore,
\begin{equation}
\sup_{ x_0 : Ax_0 - b \neq r^* } \condPrb{ \bigcap_{j=0}^{d-1} \lbrace e_i^\intercal A^\intercal (Ax_0 - b) = 0 \rbrace }{\mathcal{F}_1^0} = 0.
\end{equation}
The conclusion follows. 
\end{proof}

Since $\lbrace \col(A e_i) : i =1,\ldots,d \rbrace$ is a finite set, we can apply \cref{theorem-convergence-col-finite} to conclude as follows.
\begin{theorem} \label{theorem-cyclic-vector-cd}
Let $A \in \mathbb{R}^{n \times d}$ and $b \in \mathbb{R}^n$, and define $r^* = - \Prj_{\ker(A^\intercal)} b$. Let $x_0 \in \mathbb{R}^d$ and $\lbrace x_k : k \in \mathbb{N} \rbrace$ be a sequence generated by cyclic vector coordinate descent. Then, either there exists a stopping time $\tau$ with finite expectation such that $Ax_{\tau} - b = r^*$; or
there exists a sequence of non-negative stopping times $\lbrace \tau_{j} : j +1 \in \mathbb{N} \rbrace$ for which $\E{ \tau_{j} } \leq j [ (\rnk{A} - 1) d + 1]$,  and there exist $\gamma \in (0,1)$ and a sequence of random variables $\lbrace \gamma_j : j+1 \in \mathbb{N}\rbrace \subset (0,\gamma]$, such that
\begin{equation}
\Prb{ \bigcap_{j=0}^\infty \left\lbrace \norm{ Ax_{\tau_j} - b  - r^*}_2^2 \leq \left( \prod_{\ell=0}^{j-1} \gamma_{\ell} \right) \norm{ Ax_0 - b - r^* }_2^2 \right\rbrace} = 1.
\end{equation}
\end{theorem}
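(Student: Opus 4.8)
The plan is to derive Theorem~\ref{theorem-cyclic-vector-cd} as a direct instance of the general column-action convergence result \cref{theorem-convergence-col-finite}, so that the entire task reduces to checking that cyclic vector coordinate descent meets the three structural hypotheses of that corollary: that it is a column-action RBAS, that it is Markovian, and that it is $N,\pi$-Exploratory for the specific values $N=d$ and $\pi=1$, together with the finiteness of the set in which $\lbrace \col(AW_k) \rbrace$ takes values.

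First I would invoke \cref{example-cyclic-cd}, which already exhibits the method in the canonical form \cref{eqn-base-col-update,eqn-col-adaptive} with $W_k = e_{\mathrm{rem}(\zeta_{k-1},d)+1}$ and $\zeta_k = \zeta_{k-1}+1$; this places the method squarely within the common-form analysis of \cref{subsection-common}, so that the relevant quantity is $y_k = Ax_k - b - r^*$ and its definition in \cref{eqn-common-y-def} is automatic (no consistency of the system is required). Next I would cite the two preceding lemmas: the Markovian property holds because $(W_k,\zeta_k)$ is a deterministic function of $\zeta_{k-1}$ alone, so $M=1$ suffices as in \cref{remark-markovian}; and the $d,1$-Exploratory property holds because, starting from any $x_0$ with $Ax_0 - b \neq r^*$, one has $A^\intercal(Ax_0 - b) \neq 0$, and hence at least one of the $d$ coordinates swept in a full cycle carries a nonzero directional residual, forcing a nontrivial projection with probability one. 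Since the search directions are the $d$ standard basis vectors, $\lbrace \col(Ae_i) : i = 1,\ldots,d \rbrace$ has at most $d$ elements and is therefore finite, so the finite-set branch of the theory applies.

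With these facts in hand, I would apply \cref{theorem-convergence-col-finite} with $N=d$ and $\pi=1$: substituting $N/\pi = d$ into the general expectation bound $j[(\rnk{A} - 1)(N/\pi)+1]$ yields $\E{\tau_j} \leq j[(\rnk{A} - 1)d + 1]$, and the almost-sure geometric decay of $\norm{ y_{\tau_j} }_2^2 = \norm{ Ax_{\tau_j} - b - r^* }_2^2$ transfers verbatim to the conclusion of the theorem, with the finite-termination alternative $Ax_\tau - b = r^*$ being exactly the first case of the corollary.

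I expect the only genuinely delicate point to be the bookkeeping around $r^*$ in the exploratory step rather than any hard estimate. One must be careful that the correct nondegeneracy condition defining the supremum in \cref{def-exploratory-col} is $Ax_0 - b \neq r^*$ and not $Ax_0 \neq b$, and then verify the equivalence $Ax_0 - b \neq r^* \iff A^\intercal(Ax_0 - b) \neq 0$; this holds because $Ax_0 - b - r^* \in \col(A)$ while $A^\intercal r^* = 0$, so the residual projected into $\col(A)$ vanishes precisely when $x_0$ is a least-squares solution. Once this identification is secured, the determinism of the sweep makes $\pi=1$ immediate and no further probabilistic argument is needed; all of the analytical weight is carried by the already-established \cref{theorem-convergence-col-finite} and the block Meany inequality \cref{theorem-block-meany} that underlies it.
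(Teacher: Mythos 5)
Your proposal is correct and follows essentially the same route as the paper: verify the Markovian property (the selection is a deterministic function of $\zeta_{k-1}$), verify $d,1$-Exploratory via $Ax_0 - b \neq r^* \Rightarrow A^\intercal(Ax_0-b)\neq 0$ so that a full cycle of the $d$ coordinates cannot all be orthogonal to the residual, note that $\lbrace \col(Ae_i) : i=1,\ldots,d\rbrace$ is finite, and then apply \cref{theorem-convergence-col-finite} with $N/\pi = d$. Your ``delicate point'' about using $Ax_0 - b \neq r^*$ rather than $Ax_0 \neq b$ is well taken---indeed the paper's own exploratory lemma opens with the (literally false for inconsistent systems) claim that $Ax_0 \neq b$ implies $A^\intercal(Ax_0-b)\neq 0$, though its displayed supremum is taken over the correct set, so your version is the more careful one.
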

\subsection{Gaussian Vector Column Space Descent} \label{subsection-gaussian-vector-cs}
Let $A \in \mathbb{R}^{n \times d}$ and $b \in \mathbb{R}^n$ be arbitrary, and let $r^* = - \Prj_{\ker(A^\intercal)} b$. The Gaussian Vector Column Space method is specified as follows. 
Let $\lbrace w_k : k+1 \rbrace \subset \mathbb{R}^d$ be a sequence of Gaussian random variables, and consider an update scheme $x_{k+1} = x_k + \alpha_k w_k$, where
\begin{equation}
\alpha_k \in \argmin_{\alpha} \norm{ Ax_k - b + \alpha Aw_k }_2^2.
\end{equation}
By solving for $\alpha_k$, the update is
\begin{equation}
x_{k+1} = x_k + w_k \frac{w_k^\intercal A^\intercal( b - Ax_k)}{\norm{ A w_k}_2^2},
\end{equation}
which we see is of the form \cref{eqn-base-col-update} with $\varphi_C() = (w_k,\emptyset)$. 

\begin{lemma}
The Gaussian Vector Column Space method is Markovian.
\end{lemma}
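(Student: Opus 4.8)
The plan is to reduce the claim to the independence of the Gaussian directions, mirroring the argument already used for Gaussian Vector Kaczmarz in \Cref{subsection-gaussian-vector-kaczmarz}. First I would record the two structural facts that make this method problem-blind: here $W_k = w_k$ and $\zeta_k = \emptyset$, so $\zeta_k$ carries no information, and the only randomness in the pair $(W_k, \zeta_k)$ is that of $w_k$. Since $\lbrace w_k : k+1 \in \mathbb{N} \rbrace$ are drawn independently from a fixed Gaussian law that does not depend on any previously generated quantity, $w_k$ is independent of the $\sigma$-algebra $\mathcal{F}_{k+1}^k = \sigma(\zeta_{-1}, x_0, W_0, \ldots, x_k)$.

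With this in hand, I would verify the Markovian identity of \cref{def-markovian-col} directly with $M = 1$. For any measurable $\mathcal{W} \subset \mathbb{R}^{d \times n_k}$ and $\mathcal{Z} \subset \mathfrak{Z} = \lbrace \emptyset \rbrace$, independence collapses the conditional law of $(W_k, \zeta_k)$ onto the marginal law of $w_k$, giving
\begin{equation}
\condPrb{ W_k \in \mathcal{W}, \zeta_k \in \mathcal{Z}}{\mathcal{F}_{k+1}^{k}} = \Prb{ w_k \in \mathcal{W}} \1{ \emptyset \in \mathcal{Z}}.
\end{equation}
Performing the identical computation after replacing $\mathcal{F}_{k+1}^k$ with the coarser $\sigma$-algebra $\mathcal{F}_{\min\lbrace 1, k+1 \rbrace}^k = \mathcal{F}_1^k = \sigma(\zeta_{k-1}, x_k)$ yields exactly the same deterministic right-hand side, because independence renders both conditional probabilities equal to the unconditional value $\Prb{ w_k \in \mathcal{W}} \1{ \emptyset \in \mathcal{Z}}$. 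Hence the two conditional probabilities coincide, and the method is Markovian with $M = 1$.

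There is essentially no obstacle in this argument: the entire content is that the search directions are sampled i.i.d.\ and problem-blind, in contrast to the cyclic or greedy column-action examples where $\varphi_C$ reads $\zeta_{k-1}$ or $x_k$. Consequently, the conditional distribution is insensitive to how much of the history we condition on. The only point requiring minor bookkeeping is the trivial $\zeta_k = \emptyset$ component, which is handled immediately by the indicator $\1{ \emptyset \in \mathcal{Z}}$ since $\mathfrak{Z}$ is a singleton.
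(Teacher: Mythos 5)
Your proof is correct and takes essentially the same approach as the paper, which simply notes that the claim follows from the independence of $\lbrace w_k \rbrace$; you have merely written out the details that independence makes both conditional probabilities in \cref{def-markovian-col} collapse to the unconditional law $\Prb{w_k \in \mathcal{W}}\1{\emptyset \in \mathcal{Z}}$, exactly as in the paper's treatment of Gaussian Vector Kaczmarz.
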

\begin{proof}
This follows from the independence of $\lbrace w_k \rbrace$.
\end{proof}

\begin{lemma}
The Gaussian Vector Column Space method is $1,1$-Exploratory.
\end{lemma}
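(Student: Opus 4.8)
The plan is to unfold \cref{def-exploratory-col} in the case $N=1$, $\pi=1$, which amounts to showing that $\condPrb{\col(Aw_0) \perp (Ax_0-b)}{\mathcal{F}_1^0} = 0$ for every $x_0$ with $Ax_0 - b \neq r^*$. Since $W_0 = w_0$ is a single Gaussian vector, $\col(Aw_0) = \linspan{Aw_0}$, so the orthogonality event is exactly $\lbrace (Aw_0)\T(Ax_0-b) = 0 \rbrace = \lbrace w_0\T v = 0 \rbrace$, where $v := A\T(Ax_0 - b)$ is a fixed (nonrandom) vector once $x_0$ is fixed. The whole lemma therefore reduces to showing $v \neq 0$ whenever $Ax_0 - b \neq r^*$, together with the standard observation that $w_0$ places zero mass on any fixed hyperplane.

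First I would isolate the role of $r^*$. Since $r^* = -\Prj_{\nullsp{A\T}} b$ lies in $\nullsp{A\T}$, we have $A\T r^* = 0$, so $v = A\T(Ax_0 - b) = A\T(Ax_0 - b - r^*)$. Next I would show that $u := Ax_0 - b - r^*$ lies in $\range{A}$: indeed $Ax_0 \in \range{A}$, and $-b - r^* = -b + \Prj_{\nullsp{A\T}} b = -\Prj_{\range{A}} b \in \range{A}$, using the orthogonal decomposition $\mathbb{R}^n = \range{A} \oplus \nullsp{A\T}$. Thus $u \in \range{A}$, and $u \neq 0$ precisely when $Ax_0 - b \neq r^*$.

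The crux is then a one-line orthogonality argument: if $A\T u = 0$ then $u \in \nullsp{A\T} = \range{A}^\perp$, but $u \in \range{A}$, forcing $u \in \range{A} \cap \range{A}^\perp = \lbrace 0 \rbrace$, a contradiction. Hence $v = A\T u \neq 0$. Finally, with $v \neq 0$ fixed, $\lbrace w_0\T v = 0 \rbrace$ is a proper hyperplane of Lebesgue measure zero, so the continuity (density) of the Gaussian $w_0$ gives $\inPrb{ w_0\T v = 0 } = 0$, exactly as in the Gaussian Vector Kaczmarz argument; since nothing depends on $\zeta_{-1}$ here, taking the supremum over admissible $x_0$ yields the bound $\leq 1 - 1 = 0$. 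I do not expect a serious obstacle: the only nonroutine step is recognizing that $A\T(Ax_0-b)$ vanishes if and only if $Ax_0 - b = r^*$, which is precisely the content of the $\range{A} \oplus \nullsp{A\T}$ decomposition, while the probabilistic step is the standard continuity fact.
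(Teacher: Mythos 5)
Your proposal is correct and follows essentially the same route as the paper: both reduce the $1,1$-exploratory property to the fact that $A\T(Ax_0-b) \neq 0$ whenever $Ax_0 - b \neq r^*$, and then invoke the standard fact that a continuous Gaussian vector lies in a fixed proper hyperplane with probability zero. Your write-up is in fact slightly more careful than the paper's, whose stated claim $\inPrb{w_0\T A\T v = 0} = 0$ for any $v \neq 0$ strictly requires $A\T v \neq 0$ (i.e., $v \notin \nullsp{A\T}$); your $\range{A} \oplus \nullsp{A\T}$ decomposition supplies exactly that justification, which the paper leaves implicit.
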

\begin{proof}
Since $w_0$ is a continuous random variable, $\inPrb{ w_0^\intercal A^\intercal v = 0 } = 0$ for any $v \neq 0$. In particular, for any $x_0$ such that $Ax_0 - b \neq r^*$, $\incondPrb{ w_0^\intercal A^\intercal (Ax_0 - b ) = 0 }{\mathcal{F}_1^0} = 0$.
\end{proof}

\begin{lemma}
The Gaussian Vector Column Space method is Uniformly Nontrivial.
\end{lemma}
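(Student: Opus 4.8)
The plan is to mirror the argument already used for Gaussian Vector Kaczmarz, since this method is its exact column-action analogue, and to verify the three quantifiers appearing in \cref{def-unif-control-expectation-col} through the residual surrogate $y_k = Ax_k - b - r^*$ from \cref{eqn-common-y-def}. Recall $y_k \in \col(A)$ and $y_k = 0$ iff $Ax_k - b = r^*$. First I would show that the scheme makes progress at every non-terminal step, i.e. $\chi_k = 1$ (see \cref{eqn-progress-indicator-col}) whenever $y_k \neq 0$. From \cref{eqn-base-col-update} the residual increment is $Ax_{k+1} - b - (Ax_k - b) = Aw_k \cdot w_k^\intercal A^\intercal(b - Ax_k)/\norm{Aw_k}_2^2$. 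Since $A^\intercal r^* = 0$, we have $A^\intercal(b - Ax_k) = -A^\intercal y_k$, and $A^\intercal y_k \neq 0$ because $y_k \in \col(A)\setminus\lbrace 0\rbrace$ cannot lie in $\ker(A^\intercal) = \col(A)^\perp$. Hence $w_k^\intercal A^\intercal y_k$ is a nontrivial linear functional of the continuous vector $w_k$ and is nonzero almost surely, and likewise $Aw_k \neq 0$ almost surely, so the increment is nonzero and $\chi_k = 1$.

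Next I would split on the rank of $A$, exactly as in the row case. If $\rnk{A} = 1$, then $\col(Aw_0) = \col(A)$ almost surely, and since $y_0 \in \col(A)$ one step annihilates the residual, giving $y_1 = 0$; here $\chi_0 = 1$ while $\chi_k = 0$ for $k \geq 1$, and every $G \in \mathcal{G}(Q_0,\ldots,Q_k)$ is a single unit vector spanning $\col(A)$, so $\det(G^\intercal G) = 1$. The inner expression of \cref{eqn-unif-control-expectation-col} then collapses to $\condPrb{\mathcal{A}_k(x_0,\zeta_{-1})}{\mathcal{F}_1^0}$, whose supremum over $k$ is $1$ by the hypothesis on $\lbrace \mathcal{A}_k \rbrace$, so $g_{\mathcal{A}} = 1$ works.

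For $\rnk{A} > 1$ I would argue by induction that $y_k \neq 0$ almost surely for every $k$: conditioned on $y_k \neq 0$, the line $\col(Aw_k)$ is drawn from a continuous law inside the at-least-two-dimensional space $\col(A)$ independently of $y_k$, so the event $y_k \in \col(Aw_k)$ has conditional probability zero and $y_{k+1} = (I - \Prj_k)y_k \neq 0$. Consequently $\chi_k = 1$ for all $k$ almost surely, which both decouples the Gram quantity $D_k := \sup_{Q_s \in \mathfrak{Q}_s}\min_{G \in \mathcal{G}(Q_0,\ldots,Q_k)}\det(G^\intercal G)$ from $x_0$ (it then depends only on the $w_s$) and keeps it strictly positive almost surely, since each maximal linearly independent subset yields a nonsingular, and by Hadamard sub-unit, Gram matrix. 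Choosing $\epsilon_k > 0$ with $\Prb{D_k > \epsilon_k} \geq 1/2$, then $K$ so that $\condPrb{\mathcal{A}_k(x_0,\zeta_{-1})}{\mathcal{F}_1^0} \geq 3/4$ uniformly for $k \geq K$, and combining via inclusion--exclusion and Markov's inequality yields $\cond{D_k \1{\mathcal{A}_k(x_0,\zeta_{-1})}}{\mathcal{F}_1^0} \geq \epsilon_k/4$ for all such $k$, uniformly in $x_0$ and $\zeta_{-1}$, so $g_{\mathcal{A}} = \epsilon_K/4 \in (0,1]$ works; \cref{theorem-convergence-col-infinite} then applies.

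The step I expect to be the main obstacle is the $\rnk{A} > 1$ case: specifically, justifying that the residual surrogate never reaches zero exactly (so the progress indicators stay on and the Gram quantity becomes independent of $x_0$), and that this $x_0$-free quantity is almost surely strictly positive, so that the thresholds $\epsilon_k$ may be taken strictly positive and the infimum over $x_0$ does not collapse to zero. The remaining bookkeeping with inclusion--exclusion and Markov's inequality is routine and identical to the row-action argument.
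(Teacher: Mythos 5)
Your proposal is correct and follows essentially the same route as the paper's own proof: establish $\chi_k = 1$ at every non-terminal step via continuity of the Gaussian directions, split on $\rnk{A} = 1$ (finite termination, Gram determinant equal to $1$, $g_{\mathcal{A}} = 1$) versus $\rnk{A} > 1$ (induction to keep $\chi_k = 1$ for all $k$, so the Gram quantity is $x_0$-free and almost surely positive, then the $\epsilon_k$/$1/2$ threshold, the $3/4$ bound on $\condPrb{\mathcal{A}_k}{\mathcal{F}_1^0}$, and inclusion--exclusion plus Markov's inequality to get $g_{\mathcal{A}} \geq \epsilon_k/4$). Your write-up is in fact slightly more explicit than the paper's at the step you flagged as the main obstacle---justifying $y_{k+1} \neq 0$ via the measure-zero event $y_k \in \col(Aw_k)$ in the at-least-two-dimensional space $\col(A)$---but the argument is the same.
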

\begin{proof}
Because $\lbrace w_k \rbrace$ are continuous random variables, $\incondPrb{ w_k^\intercal A^\intercal (Ax_k - b) =0 }{\mathcal{F}_{1}^k}$ is $0$ whenever $Ax_k - b \neq r^*$. Therefore, $\chi_k = 1$ whenever $Ax_k - b \neq r^*$. We now have two cases.

For the first case, $\rnk A = 1$. Then, for any $Ax_0 - b - r^*,A w_0 \in \col(A) \setminus \lbrace 0 \rbrace$. Therefore, we can substitute $A w_0 / \innorm{A w_0}_2$ with $(Ax_0 - b - r^*)/ \innorm{Ax_0 - b - r^*}_2$ in the update for $x_1$, which implies $Ax_1 - b = r^*$. Therefore, $\chi_k = 0$ for all $k \in \mathbb{N}$. So $\mathcal{G}(Q_0,\ldots,Q_k) = \mathcal{G}(Q_0)$ for all $k \in \mathbb{N}$, and for all $G \in \mathcal{G}(Q_0)$, $\det(G^\intercal G) = 1$. Therefore, 
\begin{equation}
\inf_{ x_0: Ax_0 - b \neq r^*} \sup_{k \in \mathbb{N}\cup\lbrace 0 \rbrace} \cond{ \sup_{\substack{ Q_s \in \mathfrak{Q}_s \\ s \in \lbrace 1,\ldots,k \rbrace }} \min_{G \in \mathcal{G}(Q_0,\ldots,Q_k )} \det(G^\intercal G) \1{ \mathcal{A}_k(x_{0}, \zeta_{-1})} }{\mathcal{F}_1^0} = 1.
\end{equation}
In other words, when $\rnk A = 1$, the Gaussian Vector Column Space method is uniformly nontrivial.

For the second case, $\rnk A > 1$. We now proceed by induction. Note, by the continuity of the random variables, $Ax_{k+1} - b \neq r^*$ if $Ax_{k} - b \neq r^*$, which implies $\chi_k = 1$. By induction, we can conclude that $\chi_k = 1$ for all $k+1 \in \mathbb{N}$. Therefore, owing to their independence of $x_0$ and positivity, for every $k+1 \in \mathbb{N}$ there exists $\epsilon_k > 0$ such that
\begin{equation}
\Prb{ \sup_{\substack{ Q_s \in \mathfrak{Q}_s \\ s \in \lbrace 1,\ldots,k \rbrace }} \min_{G \in \mathcal{G}(Q_0,\ldots,Q_k )} \det(G^\intercal G) > \epsilon_k } \geq \frac{1}{2}.
\end{equation}
Moreover, $\exists K \in \mathbb{N}$ such that for all $k \geq K$, $\incondPrb{ \mathcal{A}_k(x_0,\zeta_{-1} }{\mathcal{F}_1^0} \geq 3/4$. By the inclusion-exclusion principles and Markov's Inequality, for all $k \geq K$,
\begin{equation}
\cond{ \sup_{\substack{ Q_s \in \mathfrak{Q}_s \\ s \in \lbrace 1,\ldots,k \rbrace }} \min_{G \in \mathcal{G}(Q_0,\ldots,Q_k )} \det(G^\intercal G) \1{ \mathcal{A}_k(x_{0}, \zeta_{-1})} }{\mathcal{F}_1^0} \geq \epsilon_k/4.
\end{equation}
The conclusion follows.
\end{proof}

From \cref{theorem-convergence-col-infinite}, we can conclude as follows. 

\begin{theorem} \label{theorem-gaussian-vector-cs}
Let $A \in \mathbb{R}^{n \times d}$ and $b \in \mathbb{R}^n$, and define $r^* = - \Prj_{\ker(A^\intercal)} b$. Let $x_0 \in \mathbb{R}^d$ and $\lbrace x_k : k \in \mathbb{N} \rbrace$ be a sequence generated by the Gaussian Vector Column Space method. Then, either there exists a stopping time $\tau$ with finite expectation such that $Ax_{\tau} - b= r^*$; or
there exists a sequence of non-negative stopping times $\lbrace \tau_{j} : j +1 \in \mathbb{N} \rbrace$ for which $\E{ \tau_{j} } \leq j \rnk{A} $,  and there exists $\bar \gamma \in (0,1)$ such that for any $\gamma \in (\bar \gamma, 1)$,
\begin{equation}
\Prb{ \bigcup_{L=0}^\infty \bigcap_{j=L}^\infty \left\lbrace \norm{ Ax_{\tau_j} - b - r^* }_2^2 \leq \gamma^j \norm{ Ax_0 - b - r^* }_2^2 \right\rbrace} = 1.
\end{equation}
\end{theorem}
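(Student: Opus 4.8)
The plan is to obtain this theorem as an immediate corollary of the general column-action convergence result for the infinite-set regime, namely \cref{theorem-convergence-col-infinite}. That result demands exactly three structural hypotheses on the RBAS — that it be Markovian (\cref{def-markovian-col}), $N,\pi$-Exploratory (\cref{def-exploratory-col}), and Uniformly Nontrivial (\cref{def-unif-control-expectation-col}) — and each of these has just been verified as a lemma for the Gaussian Vector Column Space method, with the explicit constants $N = 1$ and $\pi = 1$. Since $\col(A w_k)$ is a random one-dimensional subspace of $\col(A)$ taking infinitely many values, we are genuinely in the infinite-set branch of the theory, so \cref{theorem-convergence-col-infinite} is the correct result to invoke rather than its finite-set counterpart \cref{theorem-convergence-col-finite}.

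First I would instantiate \cref{theorem-convergence-col-infinite} with the stated $r^* = -\Prj_{\ker(A^\intercal)} b$ and the common error variable $y_k = Ax_k - b - r^*$ from \cref{eqn-common-y-def}. The corollary then delivers the dichotomy verbatim: either a finite-expectation stopping time $\tau$ with $Ax_\tau - b = r^*$, or a sequence of non-negative stopping times $\lbrace \tau_j \rbrace$ governing the geometric decay of $\norm{Ax_{\tau_j} - b - r^*}_2^2$ together with the almost-sure eventual rate $\gamma^j$ for every $\gamma \in (\bar\gamma, 1)$. The only arithmetic step is to simplify the expectation bound: the general statement gives $\E{\tau_j} \leq j[(\rnk{A} - 1)(N/\pi) + 1]$, and substituting $N = 1$, $\pi = 1$ collapses this to $j[(\rnk{A} - 1) + 1] = j\,\rnk{A}$, which is precisely the bound claimed. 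With the three properties in hand and the constants substituted, no further argument is required.

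I do not expect a genuine obstacle at the level of the theorem statement itself, since its proof is purely a packaging of the established machinery. The substantive work lives entirely in the three supporting lemmas, and among these only \textbf{Uniformly Nontrivial} is delicate: the Markovian and $1,1$-Exploratory properties follow at once from the independence and absolute continuity of the Gaussian vectors, whereas verifying \cref{def-unif-control-expectation-col} requires isolating the degenerate $\rnk{A} = 1$ case — where the method solves the system in a single step and the relevant Meany determinant is identically $1$ — from the $\rnk{A} > 1$ case, in which one must exhibit uniform positive lower bounds $\epsilon_k$ on the determinant and couple them with the eventual certainty of the events $\mathcal{A}_k$ via Markov's inequality and inclusion–exclusion. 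That is where any difficulty would concentrate, but it has already been discharged in the preceding lemmas, so the theorem follows directly.
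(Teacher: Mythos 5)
Your proposal matches the paper's proof exactly: the paper likewise establishes the Markovian, $1,1$-Exploratory, and Uniformly Nontrivial properties as three preliminary lemmas (with the same $\rnk{A}=1$ versus $\rnk{A}>1$ case split in the last one) and then concludes by invoking \cref{theorem-convergence-col-infinite}, with $N=\pi=1$ collapsing the expectation bound to $j\,\rnk{A}$. No gaps; this is the intended argument.
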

\subsection{Zouzias-Freris Vector Coordinate Descent} \label{subsection-zouzias-freris-vector-cd}

Let $A \in \mathbb{R}^{n \times d}$ and $b \in \mathbb{R}^n$ be arbitrary. Note, if $A$ has a column that is entirely zero, we can eliminate it, so we will assume this case does not happen.
In Zouzias-Freris Vector Coordinate Descent,\footnote{In \cite{zouzias2013}, the authors propose a randomized extended Kaczmarz method, not a coordinate descent method. However, the authors did propose the distribution for the coordinate descent method that we discuss here, which is why we have named this method as we have.} at iteration $k$, we select independently sample an element, $e_{i_k}$, from the standard basis of $\mathbb{R}^d$ from distribution
\begin{equation}
\Prb{ i_k = j} \propto \begin{cases}
\norm{ Ae_j}_2^2 & j=1,\ldots,n \\
0 & \text{otherwise};
\end{cases}
\end{equation}
and then apply coordinate descent with this row.

\begin{lemma} 
Zouzias-Freris Vector Coordinate Descent is Markovian.
\end{lemma}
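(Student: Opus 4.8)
The plan is to verify \cref{def-markovian-col} directly with $M = 1$, exploiting the fact that the selection rule is memoryless. The essential observation is that the index $i_k$ is drawn independently at each iteration from the fixed distribution $\Prb{i_k = j} \propto \norm{Ae_j}_2^2$, and this distribution depends only on the fixed coefficient matrix $A$---not on the current iterate $x_k$, nor on any of the previous iterates, sampling matrices $W_j$, or states $\zeta_j$. Consequently, with $W_k = e_{i_k}$ and $\zeta_k = \emptyset$ (since $\mathfrak{Z} = \lbrace \emptyset \rbrace$ for this method), the pair $(W_k, \zeta_k)$ is independent of the entire history encoded in $\mathcal{F}_{k+1}^k$.

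Concretely, first I would note that for any measurable $\mathcal{W} \subset \mathbb{R}^{d \times 1}$ and $\mathcal{Z} \subset \mathfrak{Z}$, independence gives $\condPrb{W_k \in \mathcal{W}, \zeta_k \in \mathcal{Z}}{\mathcal{F}_{k+1}^k} = \Prb{e_{i_k} \in \mathcal{W}}$, since conditioning on the history does not alter the law of the freshly sampled index. The same independence argument shows that conditioning instead on the minimal $\sigma$-algebra $\mathcal{F}_1^k$ yields the identical unconditional probability. Equating these two conditional probabilities verifies the defining identity of \cref{def-markovian-col} with $M = 1$.

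I expect no genuine obstacle here: the only care needed is bookkeeping to match the exact form of the column-action Markovian definition and to confirm that the sampling distribution carries no dependence on the iterate---in contrast to, e.g., Steinerberger's or Motzkin's methods, where the distribution depends on $x_k$ but still only on the most recent iterate, so that the argument would route through $\mathcal{F}_1^k$ rather than the trivial $\sigma$-algebra. Since here the distribution is entirely data-dependent and history-free, the conclusion is immediate.
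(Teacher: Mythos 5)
Your proposal is correct and follows essentially the same route as the paper: the paper's proof likewise observes that the indices $\lbrace i_k \rbrace$ are sampled independently at each iteration (with a distribution depending only on $A$), so the conditional law of $(W_k,\zeta_k)$ given $\mathcal{F}_{k+1}^k$ coincides with its unconditional law and hence with the law conditioned on $\mathcal{F}_1^k$. Your write-up merely spells out the bookkeeping that the paper compresses into one line.
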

\begin{proof}
This follows immediately from the fact that $\lbrace i_k \rbrace$ are selected independently at each iteration.
\end{proof}

\begin{lemma}
Let $\pi_{\min} = \min_j \lbrace \innorm{A e_j}_2^2/\innorm{A}_F^2 \rbrace$. Zouzias-Freris Vector Coordinate Descent is $1,\pi_{\min}$-Exploratory.
\end{lemma}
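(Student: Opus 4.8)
The plan is to verify \cref{def-exploratory-col} directly with $N = 1$. Since here $W_0 = e_{i_0}$ and $\col(A e_{i_0}) = \linspan{A e_{i_0}}$, the event $\{\col(A W_0) \perp Ax_0 - b\}$ is exactly $\{(A e_{i_0})\T(Ax_0 - b) = 0\}$, so I must show
\begin{equation}
\sup_{\substack{x_0:\, Ax_0 - b \neq r^* \\ \zeta_{-1} \in \mathfrak{Z}}} \condPrb{(Ae_{i_0})\T (Ax_0 - b) = 0}{\mathcal{F}_1^0} \leq 1 - \pi_{\min}.
\end{equation}
The crux is to argue that whenever the residual is suboptimal, i.e.\ $Ax_0 - b \neq r^*$, the normal vector $A\T(Ax_0 - b)$ is nonzero, so that at least one coordinate direction is not orthogonal to the residual.

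First I would establish this key algebraic fact. Decompose $Ax_0 - b = (Ax_0 - b - r^*) + r^*$. By definition $r^* = -\Prj_{\ker(A\T)} b \in \ker(A\T)$, so $A\T r^* = 0$; meanwhile $Ax_0 - b - r^* = Ax_0 - \Prj_{\col(A)} b \in \col(A)$. Hence $A\T(Ax_0 - b) = A\T(Ax_0 - b - r^*)$. Because $\mathbb{R}^n = \col(A) \oplus \ker(A\T)$, the restriction of $A\T$ to $\col(A)$ is injective; since $Ax_0 - b - r^* \in \col(A)$ is nonzero by the standing hypothesis $Ax_0 - b \neq r^*$, I conclude $A\T(Ax_0 - b) \neq 0$.

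Consequently there exists an index $j \in \{1,\ldots,d\}$ with $e_j\T A\T(Ax_0 - b) = (A e_j)\T(Ax_0 - b) \neq 0$, so the event $\{(A e_{i_0})\T(Ax_0 - b) = 0\}$ fails on $\{i_0 = j\}$. Since the columns of $A$ are nonzero, the sampling distribution assigns to this index $j$ the probability $\innorm{A e_j}_2^2 / \innorm{A}_F^2 \geq \pi_{\min} > 0$, and this choice is independent of the conditioning captured by $\mathcal{F}_1^0$. Therefore
\begin{equation}
\condPrb{(A e_{i_0})\T(Ax_0 - b) = 0}{\mathcal{F}_1^0} \leq 1 - \Prb{i_0 = j} \leq 1 - \pi_{\min},
\end{equation}
and taking the supremum over all admissible $x_0$ and $\zeta_{-1}$ yields the claim.

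The main obstacle is the algebraic step in the second paragraph: correctly identifying $r^*$ as the least-squares residual lying in $\ker(A\T)$ and using the orthogonal splitting $\mathbb{R}^n = \col(A) \oplus \ker(A\T)$ to transfer the nontriviality of the residual gap $Ax_0 - b - r^*$ to the nontriviality of $A\T(Ax_0 - b)$. Once that injectivity argument is in place, the remaining probability estimate is a single line, exactly paralleling the $1,\pi_{\min}$-Exploratory proof for Strohmer--Vershynin in \cref{subsection-strohmer-vershynin-vector-kaczmarz}.
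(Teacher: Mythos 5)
Your proof is correct and follows essentially the same route as the paper's: exhibit a coordinate $j$ with $e_j\T A\T(Ax_0-b) \neq 0$ and lower-bound the probability of selecting it by $\pi_{\min}$. The only difference is that you spell out, via the orthogonal splitting $\mathbb{R}^n = \col(A) \oplus \ker(A\T)$, why $Ax_0 - b \neq r^*$ forces $A\T(Ax_0-b) \neq 0$ --- a step the paper simply asserts.
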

\begin{proof}
Suppose $x_0 \in \mathbb{R}^d$ such that $Ax_0 - b \neq r^*$, where $r^* = - \Prj_{\ker(A^\intercal)} b$. Then, for some $e_j$, $e_j^\intercal A^\intercal (Ax_0 - b) \neq 0$. The probability that we select this basis element on the first iteration (i.e., $k=0$) is at least $\pi_{\min}$. The conclusion follows.
\end{proof}

We now apply \cref{theorem-convergence-col-finite} to conclude.

\begin{theorem} \label{theorem-zouzias-freris-vector-cd}
Let $A \in \mathbb{R}^{n \times d}$ and $b \in \mathbb{R}^n$, and define $r^* = - \Prj_{\ker(A^\intercal)} b$. Let $x_0 \in \mathbb{R}^d$ and $\lbrace x_k : k \in \mathbb{N} \rbrace$ be a sequence generated by Zouzias-Freris vector coordinate descent. Then, either there exists a stopping time $\tau$ with finite expectation such that $Ax_{\tau} - b= r^*$; or
there exists a sequence of non-negative stopping times $\lbrace \tau_{j} : j +1 \in \mathbb{N} \rbrace$ for which $\E{ \tau_{j} } \leq j [ (\rnk{A} - 1)/\pi_{\min} + 1]$,  and there exist $\gamma \in (0,1)$ and a sequence of random variables $\lbrace \gamma_j : j+1 \in \mathbb{N}\rbrace \subset (0,\gamma]$, such that
\begin{equation}
\Prb{ \bigcap_{j=0}^\infty \left\lbrace \norm{ Ax_{\tau_j} - b  - r^*}_2^2 \leq \left( \prod_{\ell=0}^{j-1} \gamma_{\ell} \right) \norm{ Ax_0 - b - r^* }_2^2 \right\rbrace} = 1.
\end{equation}
\end{theorem}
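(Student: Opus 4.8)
The plan is to verify the hypotheses of \cref{theorem-convergence-col-finite} and then invoke it directly. The two preceding lemmas already supply most of what is needed: they establish that Zouzias-Freris Vector Coordinate Descent is Markovian (in the sense of \cref{def-markovian-col}) and that it is $1,\pi_{\min}$-Exploratory (in the sense of \cref{def-exploratory-col}), where $\pi_{\min} = \min_j \lbrace \innorm{Ae_j}_2^2/\innorm{A}_F^2 \rbrace$. The only remaining hypothesis of \cref{theorem-convergence-col-finite} is that the elements of $\lbrace \col(AW_k) : k+1 \in \mathbb{N} \rbrace$ take value in a finite set.

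To discharge this last condition, I would observe that in this method $W_k = e_{i_k}$ is always one of the $d$ standard basis vectors of $\mathbb{R}^d$. Consequently $\col(AW_k) = \col(Ae_{i_k}) = \inlinspan{Ae_{i_k}}$ can equal at most the one-dimensional subspaces $\lbrace \inlinspan{Ae_j} : j = 1,\ldots,d \rbrace$. Since we have assumed that no column of $A$ is identically zero, each $Ae_j \neq 0$, so these are genuine one-dimensional subspaces; in any case the collection is finite, and the hypothesis is satisfied.

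With all hypotheses in place, I would apply \cref{theorem-convergence-col-finite} with $N = 1$ and $\pi = \pi_{\min}$. The generic expected-stopping-time bound $\E{\tau_j} \leq j[(\rnk{A}-1)(N/\pi) + 1]$ then specializes to $\E{\tau_j} \leq j[(\rnk{A}-1)/\pi_{\min} + 1]$, matching the statement, and the full dichotomy (finite termination with $Ax_\tau - b = r^*$ versus the almost-sure geometric decay of $\innorm{Ax_{\tau_j} - b - r^*}_2^2$) transfers verbatim. There is no genuine obstacle here: the substantive content lies entirely in the general machinery of \cref{theorem-convergence-col-finite}, which itself rests on the generalized block Meany inequality (\cref{theorem-block-meany}) and the stopping-time bound (\cref{theorem-nu-finite}), combined with the two already-proven lemmas. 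The only points requiring a moment's care are confirming the finiteness of $\lbrace \col(AW_k) \rbrace$ and correctly substituting the exploratory constants $N=1$, $\pi = \pi_{\min}$ into the expected-stopping-time bound.
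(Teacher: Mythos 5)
Your proposal is correct and follows essentially the same route as the paper: the paper's two lemmas establish the Markovian and $1,\pi_{\min}$-Exploratory properties, the finiteness of $\lbrace \col(Ae_j) : j=1,\ldots,d \rbrace$ is immediate since each $W_k$ is a standard basis vector, and the conclusion follows by invoking \cref{theorem-convergence-col-finite} with $N=1$ and $\pi = \pi_{\min}$. Your explicit verification of the finite-set hypothesis is slightly more careful than the paper, which leaves it implicit, but the argument is identical in substance.
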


The methods described in in this section, \cref{subsection-cyclic-vector-cd,subsection-gaussian-vector-cs} are compared on a simple statistical regression problem in \cref{figure-cd-comparison}.
\begin{figure}
\centering
\begin{tikzpicture}
\begin{axis}[
	width=0.9\textwidth,
	height=0.4\textwidth,
    title={A Comparison of Some Vector Column-action Methods},
    xlabel={Iteration},
    ylabel={Normal Residual Norm},
    ymode=log,
    xmin=0, xmax=1200,
    ymin=1e-15, ymax=1000,
    xtick={0,200,400,600,800,1000,1200},
    ytick={1e-15,1e-12,1e-9,1e-6,1e-3,1e0, 1e3},
    ymajorgrids=true,
    grid style=dashed,
]

\addplot[
    color=blue,
    mark=star,
    ]
    coordinates {
(0, 753.9367470420124)
(20, 135.49978254914294)
(40, 3.3521382066984574e-14)
}; \addlegendentry{Cyclic}

\addplot[
    color=red,
    mark=triangle,
    ]
    coordinates {
(0, 753.9367470420124)
(20, 228.69088549449305)
(40, 83.92044506159563)
(60, 67.99096225363814)
(80, 35.33495367486853)
(100, 28.24281828746998)
(120, 15.186685838463083)
(140, 13.996799033117519)
(160, 9.203506415626265)
(180, 3.3424722821509283)
(200, 1.8404946478403017)
(220, 1.3223720576244589)
(240, 0.5308163167813693)
(260, 0.3493251623855677)
(280, 0.20952000172915017)
(300, 0.12872555941080838)
(320, 0.07278538598563225)
(340, 0.038546562960857164)
(360, 0.030012161911000857)
(380, 0.018013085498288516)
(400, 0.01468277561854185)
(420, 0.011820584199559149)
(440, 0.0076003024841426025)
(460, 0.004614284475707255)
(480, 0.002347206465172056)
(500, 0.002534107836049238)
(520, 0.0007123343940431133)
(540, 0.0003510981890026634)
(560, 0.000242640099389283)
(580, 0.00012725058533352522)
(600, 0.00010193418436325157)
(620, 5.532256398265451e-5)
(640, 3.838666265212035e-5)
(660, 2.0743979014779352e-5)
(680, 2.2638324290328666e-5)
(700, 1.2095494978257245e-5)
(720, 9.172694145674525e-6)
(740, 8.037922810160893e-6)
(760, 2.5181380176444496e-6)
(780, 2.067577441502e-6)
(800, 1.1083759262133593e-6)
(820, 8.911409397940236e-7)
(840, 6.259986601091271e-7)
(860, 3.2456829426065137e-7)
(880, 1.7699440160016657e-7)
(900, 1.0942172878835439e-7)
(920, 9.231870011870951e-8)
(940, 9.561179077391397e-8)
(960, 5.2162256565834774e-8)
(980, 4.220091481103811e-8)
(1000, 2.231117554998759e-8)
(1020, 1.853361167465002e-8)
(1040, 1.0503599522286897e-8)
(1060, 9.746207907866915e-9)
};\addlegendentry{Gauss}

\addplot[
    color=black,
    mark=square,
    ]
    coordinates {
(0, 753.9367470420124)
(20, 186.10039807953115)
(40, 157.38502419683633)
(60, 156.8689614737774)
(80, 123.42322989709707)
(100, 81.4915984126917)
(120, 72.1081380758748)
(140, 70.08014492158962)
(160, 67.56292490859143)
(180, 12.643064138954221)
(200, 12.085816308000663)
(220, 3.7134180444314473)
(240, 2.5012809934816485)
(260, 2.485188324103951)
(280, 2.334914081187615)
(300, 2.0705583868250645)
(320, 1.986197906982011)
(340, 0.5945945207186315)
(360, 0.5807871346637252)
(380, 0.5659565493214924)
(400, 0.550879423692666)
(420, 0.1164283717007869)
(440, 0.060535424418227866)
(460, 0.060357516545823796)
(480, 0.0274848436190407)
(500, 0.015567851605910456)
(520, 0.01412587189608026)
(540, 0.01313174398507232)
(560, 0.010171476490129349)
(580, 0.006405461867538396)
(600, 0.0015072391502457924)
(620, 0.0009306532139607922)
(640, 0.0008495426564304165)
(660, 0.000312452588182086)
(680, 0.00023309626559241253)
(700, 7.002390443831613e-5)
(720, 2.648505513261435e-5)
(740, 2.5353553075659004e-5)
(760, 2.356946172026237e-5)
(780, 2.27619018718793e-5)
(800, 2.1092505209878564e-5)
(820, 2.1068507944316303e-5)
(840, 9.839684451804445e-6)
(860, 9.000890838850795e-6)
(880, 8.58990469043043e-6)
(900, 1.8371791828307136e-6)
(920, 1.344240324933869e-6)
(940, 1.0921306043243282e-6)
(960, 9.814290525632572e-7)
(980, 4.21932118700471e-7)
(1000, 3.2766260448719263e-7)
(1020, 2.931751953593992e-7)
(1040, 2.8710919293562086e-7)
(1060, 2.3734531896222046e-7)
(1080, 5.0565141247831436e-8)
(1100, 1.966283357656033e-8)
(1120, 1.848866667873235e-8)
(1140, 1.7993609439168736e-8)
(1160, 1.7584930349743718e-8)
(1180, 2.541330113673414e-9)
    };\addlegendentry{Zouzias-Freris}

\end{axis}
\end{tikzpicture}
\caption{A comparison of three vector column-action methods on a linear regression problem where the design matrix is derived from a balanced design of 50 treatments with twenty replicates each. All methods are stopped when the residual-norm of the normal equation is less than $10^{-8}$.} \label{figure-cd-comparison}
\end{figure}
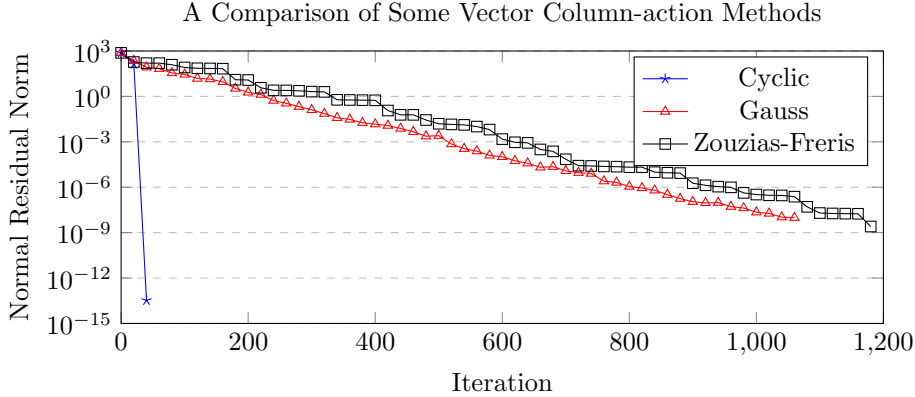
\subsection{Max Residual Vector Coordinate Descent} \label{subsection-max-residual-vector-cd}

In this approach, we have $e_{i_k}$ at an iteration $k$ such that
\begin{equation}
i_k \in \argmax_{j \in \lbrace 1,\ldots,d\rbrace} | e_j^\intercal A^\intercal (A x_k - b) |,
\end{equation}
where ties are broken by choosing the smallest index. We then do the coordinate descent update with this choice of $i_k$.\footnote{This selection procedure is an analogue of Agmon's method, but it is not Agmon's method applied to the normal equations.}

\begin{lemma}
Max Residual Vector Coordinate Descent is Markovian.
\end{lemma}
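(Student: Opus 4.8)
The plan is to verify \cref{def-markovian-col} with $M = 1$, mirroring the arguments used for Motzkin's and Agmon's methods. Recall that $\mathcal{F}_1^k = \sigma(\zeta_{k-1}, x_k)$, and that here $\mathfrak{Z} = \lbrace \emptyset \rbrace$, so $\zeta_k = \emptyset$ deterministically and carries no information. Thus the only quantity whose conditional law I must control is $W_k = e_{i_k}$.

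First I would observe that, because $A$ and $b$ are fixed, the vector $A^\intercal(Ax_k - b)$ is a deterministic, measurable function of $x_k$ alone. The selection rule $i_k \in \argmax_{j \in \lbrace 1,\ldots,d \rbrace} | e_j^\intercal A^\intercal(Ax_k - b) |$, with ties broken by the smallest index, returns a single well-defined index for each value of $x_k$; hence $i_k$, and therefore $W_k = e_{i_k}$, is a deterministic measurable function of $x_k$. Since $x_k$ is $\mathcal{F}_1^k$-measurable, $W_k$ is $\mathcal{F}_1^k$-measurable as well.

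Consequently, for any measurable $\mathcal{W}$ and $\mathcal{Z}$, both conditional probabilities collapse to the same indicator determined by $x_k$, namely
\begin{equation}
\condPrb{ W_k \in \mathcal{W}, \zeta_k \in \mathcal{Z}}{\mathcal{F}_{k+1}^{k}} = \1{ e_{i_k} \in \mathcal{W}} \1{ \emptyset \in \mathcal{Z}} = \condPrb{ W_k \in \mathcal{W}, \zeta_k \in \mathcal{Z}}{\mathcal{F}_{1}^k},
\end{equation}
which establishes the Markovian property. I do not anticipate any genuine obstacle here: the only point requiring care is confirming that the tie-breaking convention makes $i_k$ a single-valued function of $x_k$ rather than a set-valued selection, which is precisely why the ``smallest index'' rule is imposed.
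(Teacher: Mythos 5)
Your proposal is correct and follows essentially the same route as the paper, which simply observes that the choice of $e_{i_k}$ depends only on the most recent iterate $x_k$ and concludes the Markovian property. Your version just spells out the details the paper leaves implicit: the deterministic single-valued selection via the tie-breaking rule, the triviality of $\zeta_k$, and the collapse of both conditional probabilities to the same $\mathcal{F}_1^k$-measurable indicator.
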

\begin{proof}
The choice of $e_{i_k}$ only depends on the most recent iterate, $x_k$. Thus, the procedure is Markovian.
\end{proof}

\begin{lemma}
Max Residual Vector Coordinate Descent is $1,1$-Exploratory.
\end{lemma}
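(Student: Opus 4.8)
The plan is to verify \cref{def-exploratory-col} directly with $N = 1$ and $\pi = 1$; that is, I will show that for every $x_0$ with $Ax_0 - b \neq r^*$ (where $r^* = -\Prj_{\ker(A^\intercal)} b$) we have $\condPrb{\col(AW_0) \perp (Ax_0 - b)}{\mathcal{F}_1^0} = 0$, so that the supremum appearing in the definition is bounded by $0 = 1 - \pi$. Because the index $i_0 \in \argmax_{j} |e_j^\intercal A^\intercal(Ax_0 - b)|$ is chosen deterministically from $x_0$ (with ties broken by smallest index), we have $W_0 = e_{i_0}$ and $\col(AW_0) = \inlinspan{Ae_{i_0}}$. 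Hence it suffices to prove that $(Ae_{i_0})^\intercal(Ax_0 - b) = e_{i_0}^\intercal A^\intercal(Ax_0 - b) \neq 0$ whenever $Ax_0 - b \neq r^*$.

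The one substantive step, exactly as in the analysis of Cyclic Vector Coordinate Descent (\cref{subsection-cyclic-vector-cd}) and Zouzias--Freris Vector Coordinate Descent (\cref{subsection-zouzias-freris-vector-cd}), is the linear-algebra claim that $A^\intercal(Ax_0 - b) \neq 0$ whenever $Ax_0 - b \neq r^*$. To see this, first I would record that $r^* \in \ker(A^\intercal)$ and that $\range(A) = \ker(A^\intercal)^\perp$, so that $Ax_0 - b - r^* = Ax_0 - \Prj_{\range(A)} b \in \range(A)$ is precisely the orthogonal projection of the residual onto $\range(A)$; the hypothesis $Ax_0 - b \neq r^*$ states exactly that this vector is nonzero. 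Since $\range(A) \cap \ker(A^\intercal) = \lbrace 0 \rbrace$, the map $A^\intercal$ is injective on $\range(A)$, so $A^\intercal(Ax_0 - b - r^*) \neq 0$; combining this with $A^\intercal r^* = 0$ yields $A^\intercal(Ax_0 - b) = A^\intercal(Ax_0 - b - r^*) \neq 0$.

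Finally, because $A^\intercal(Ax_0 - b) \neq 0$, some coordinate $j$ satisfies $|e_j^\intercal A^\intercal(Ax_0 - b)| > 0$, and the maximizing choice obeys $|e_{i_0}^\intercal A^\intercal(Ax_0 - b)| \geq |e_j^\intercal A^\intercal(Ax_0 - b)| > 0$; thus $e_{i_0}^\intercal A^\intercal(Ax_0 - b) \neq 0$ and $\col(AW_0) \not\perp (Ax_0 - b)$ deterministically, which completes the verification and lets us invoke \cref{theorem-convergence-col-finite}. I do not expect a genuine obstacle here: the argument is a deterministic, purely linear-algebraic check, and the only point demanding care is to phrase everything in terms of the $\range(A)$-component $Ax_0 - b - r^*$ rather than the raw residual $Ax_0 - b$, so that the rank-deficient and inconsistent cases are handled correctly---which is exactly what the definition of $r^*$ is designed to permit.
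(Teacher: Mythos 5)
Your proof is correct and follows the same route as the paper's: the paper's one-line claim that $Ax_0 - b \neq r^*$ guarantees some $j$ with $Ae_j \not\perp Ax_0 - b$ is exactly the fact $A^\intercal(Ax_0-b) \neq 0$ that you establish, after which the argmax selection inherits a nonzero inner product. Your write-up merely makes explicit the linear-algebra step (injectivity of $A^\intercal$ on $\range(A)$ and $A^\intercal r^* = 0$) that the paper leaves implicit, so there is no substantive difference.
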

\begin{proof}
Let $x_0$ be such that $Ax_0 - b \neq r^*$ where $r^* = - \Prj_{\ker(A^\intercal)} b$. Then, there exists a $j \in \lbrace 1,\ldots,d\rbrace$ such that $A e_j \not\perp Ax_0 - b$. As a result, $e_{i_0}^\intercal A^\intercal (Ax_0 - b) \neq 0$. The conclusion follows.
\end{proof}

We can now apply \cref{theorem-convergence-col-finite} to conclude.

\begin{theorem} \label{theorem-max-residual-vector-cd}
Let $A \in \mathbb{R}^{n \times d}$ and $b \in \mathbb{R}^n$, and define $r^* = - \Prj_{\ker(A^\intercal)} b$. Let $x_0 \in \mathbb{R}^d$ and $\lbrace x_k : k \in \mathbb{N} \rbrace$ be a sequence generated by Max Residual vector coordinate descent. Then, either there exists a stopping time $\tau$ with finite expectation such that $Ax_{\tau} - b = r^*$; or
there exists a sequence of non-negative stopping times $\lbrace \tau_{j} : j +1 \in \mathbb{N} \rbrace$ for which $\E{ \tau_{j} } \leq j \rnk{A}$,  and there exist $\gamma \in (0,1)$ and a sequence of random variables $\lbrace \gamma_j : j+1 \in \mathbb{N}\rbrace \subset (0,\gamma]$, such that
\begin{equation}
\Prb{ \bigcap_{j=0}^\infty \left\lbrace \norm{ Ax_{\tau_j} - b  - r^*}_2^2 \leq \left( \prod_{\ell=0}^{j-1} \gamma_{\ell} \right) \norm{ Ax_0 - b - r^* }_2^2 \right\rbrace} = 1.
\end{equation}
\end{theorem}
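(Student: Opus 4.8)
The plan is to recognize that this statement is an immediate consequence of \cref{theorem-convergence-col-finite} once the hypotheses of that result are verified, which is precisely the content of the two preceding lemmas. First I would cast Max Residual Vector Coordinate Descent in the column-action framework of \cref{eqn-base-col-update,eqn-col-adaptive}: here $W_k = e_{i_k}$ with $\mathfrak{Z} = \lbrace \emptyset \rbrace$ and $\zeta_{-1} = \emptyset$, so that the update is exactly \cref{eqn-base-col-update} specialized to a single coordinate direction. The two lemmas above then supply the two structural hypotheses needed: the method is Markovian, since its selection rule depends only on $x_k$; and it is $1,1$-Exploratory, since whenever $Ax_0 - b \neq r^*$ there is a coordinate $j$ with $Ae_j \not\perp (Ax_0 - b)$, and the $\argmax$ rule is guaranteed to select such a coordinate with probability one.

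The remaining ingredient is the finite-set condition of \cref{theorem-convergence-col-finite}. Here I would observe that $\col(A W_k) = \col(A e_{i_k}) = \inlinspan{A e_{i_k}}$, and since $i_k$ ranges over $\lbrace 1, \ldots, d \rbrace$, the collection $\lbrace \col(A W_k) : k+1 \in \mathbb{N} \rbrace$ takes value in the finite set $\lbrace \inlinspan{A e_j} : j = 1, \ldots, d \rbrace$, which has at most $d$ elements. With all three hypotheses in hand, I would apply \cref{theorem-convergence-col-finite} with $N = 1$ and $\pi = 1$, which yields the stated dichotomy together with the expectation bound $\E{\tau_j} \leq j[(\rnk{A} - 1)(N/\pi) + 1] = j\,\rnk{A}$, exactly as claimed.

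There is no genuine obstacle in this proof beyond the verification of the exploratory property, which is the only nontrivial hypothesis and is already established in the preceding lemma; the argument there hinges on the fact that a nonzero normal residual $A^\intercal(Ax_0 - b)$ forces the existence of a coordinate with nonzero inner product, and that the deterministic $\argmax$ selection cannot miss such a coordinate. Everything else is bookkeeping: matching the specialized update to \cref{eqn-base-col-update}, counting the finitely many possible one-dimensional column spaces, and substituting $N/\pi = 1$ into the general expectation bound.
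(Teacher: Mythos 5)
Your proposal is correct and follows essentially the same route as the paper: verify the Markovian and $1,1$-Exploratory properties (exactly as in the paper's two lemmas), note that $\lbrace \col(Ae_{i_k}) \rbrace$ lies in a finite set of at most $d$ one-dimensional spaces, and invoke \cref{theorem-convergence-col-finite} with $N=\pi=1$ to get $\E{\tau_j} \leq j\,\rnk{A}$. If anything, you are slightly more careful than the paper, which leaves the finite-set condition implicit rather than stating it.
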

\subsection{Max Distance Vector Coordinate Descent} \label{subsection-max-distance-vector-cd}

In this approach, we have $e_{i_k}$ such that
\begin{equation}
i_k \in \argmax_{j \in \lbrace 1,\ldots,d \rbrace} \frac{|e_j^\intercal A^\intercal (A x_k - b)|}{\norm{ A^\intercal A e_j }_2^2},
\end{equation}
where ties are broken by choosing the smallest index. Then, we update $x_{k}$ using coordinate descent at coordinate $i_k$.\footnote{The selection process is analogous to Motkzin's method, but the update is not equivalent to applying Motzkin's method to the normal equations.}

\begin{lemma}
Max Distance Vector Coordinate Descent is Markovian.
\end{lemma}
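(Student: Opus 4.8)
The plan is to reuse the template already established for the deterministic coordinate-selection schemes, most directly the proof that Max Residual Vector Coordinate Descent is Markovian. The goal is to show that the pair $(W_k, \zeta_k)$ is a deterministic function of the current iterate $x_k$ alone, so that the conditional law appearing in \cref{def-markovian-col} collapses onto $\mathcal{F}_1^k = \sigma(\zeta_{k-1}, x_k)$, yielding the Markovian property with $M = 1$.

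First I would note that $A$ and $b$ are fixed data, and that the selected index is
\[
i_k \in \argmax_{j \in \lbrace 1,\ldots,d \rbrace} \frac{|e_j\T A\T (A x_k - b)|}{\norm{ A\T A e_j }_2^2},
\]
with ties broken by taking the smallest index. The point worth stating explicitly is that this tie-breaking convention makes $i_k$ a single, well-defined element of $\lbrace 1,\ldots,d \rbrace$ rather than a set, so that $i_k = f(x_k)$ for a fixed Borel-measurable map $f$ depending only on the iterate. Hence $W_k = e_{f(x_k)}$, and $\zeta_k = \emptyset$ is constant; both are measurable functions of $x_k$ only.

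Then, since $x_k$ is $\mathcal{F}_1^k$-measurable and $\mathcal{F}_1^k \subseteq \mathcal{F}_{k+1}^k$, conditioning $(W_k, \zeta_k)$ on either $\sigma$-algebra produces the same (degenerate) conditional distribution, so $\condPrb{ W_k \in \mathcal{W}, \zeta_k \in \mathcal{Z}}{\mathcal{F}_{k+1}^{k}} = \condPrb{ W_k \in \mathcal{W}, \zeta_k \in \mathcal{Z}}{\mathcal{F}_{1}^k}$, which is exactly \cref{def-markovian-col} with $M=1$. I do not expect any genuine obstacle here: the claim is an immediate consequence of the memoryless, deterministic selection rule, precisely as in the preceding coordinate-descent lemmas, and the only substantive remark is that the smallest-index tie-breaking keeps the map $x_k \mapsto i_k$ single-valued and measurable.
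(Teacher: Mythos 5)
Your proof is correct and matches the paper's argument, which simply observes that the choice of $e_{i_k}$ depends only on $x_k$, making the procedure Markovian. Your additional remarks about the smallest-index tie-breaking rule and measurability just make explicit what the paper leaves implicit.
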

\begin{proof}
The choice of $e_{i_k}$ only depends on $x_k$, which makes this procedure Markovian.
\end{proof}

\begin{lemma}
Max Distance Vector Coordinate Descent is $1,1$-Exploratory.
\end{lemma}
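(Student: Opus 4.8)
The plan is to mirror the strategy of the preceding Max Residual and Motzkin lemmas, exploiting that the selection rule is deterministic given the current iterate, so that the conditional probability appearing in \cref{def-exploratory-col} is either $0$ or $1$. Fix $x_0 \in \mathbb{R}^d$ with $Ax_0 - b \neq r^*$, where $r^* = -\Prj_{\ker(A^\intercal)} b$. Since $W_0 = e_{i_0}$ is determined by $x_0$, the event $\lbrace \col(A W_0) \perp (Ax_0 - b) \rbrace$ coincides with $\lbrace e_{i_0}^\intercal A^\intercal (Ax_0 - b) = 0 \rbrace$, and it suffices to show this event is impossible for such $x_0$. The supremum in \cref{def-exploratory-col} is then $0 \leq 1 - 1$, yielding $N=1$ and $\pi = 1$.

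The first key step is to verify that $A^\intercal (Ax_0 - b) \neq 0$ whenever $Ax_0 - b \neq r^*$. I would decompose $\mathbb{R}^n = \col(A) \oplus \ker(A^\intercal)$, giving $Ax_0 - b - r^* = Ax_0 - \Prj_{\col(A)} b \in \col(A)$, so that $Ax_0 - b \neq r^*$ is precisely the statement that this vector is a \emph{nonzero} element of $\col(A)$. Because $r^* \in \ker(A^\intercal)$, we have $A^\intercal r^* = 0$, hence $A^\intercal (Ax_0 - b) = A^\intercal (Ax_0 - b - r^*)$. If the right-hand side vanished, then $Ax_0 - b - r^*$ would lie in $\ker(A^\intercal) \cap \col(A) = \lbrace 0 \rbrace$, contradicting $Ax_0 - b \neq r^*$; thus $A^\intercal (Ax_0 - b) \neq 0$.

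The second step translates this into a statement about the maximizing index. Since $A^\intercal (Ax_0 - b) \neq 0$, there is a coordinate $j$ with $e_j^\intercal A^\intercal (Ax_0 - b) \neq 0$; writing this as $(A e_j)^\intercal (Ax_0 - b)$ shows $A e_j \neq 0$, and since $\norm{A e_j}_2^2 = e_j^\intercal A^\intercal A e_j$, this forces $A^\intercal A e_j \neq 0$. Hence the ratio $|e_j^\intercal A^\intercal (Ax_0 - b)| / \norm{A^\intercal A e_j}_2^2$ is well-defined and strictly positive. As all the ratios are nonnegative, the maximum over $j$ is strictly positive, so the maximizing index $i_0$ satisfies $e_{i_0}^\intercal A^\intercal (Ax_0 - b) \neq 0$, i.e. $\col(A e_{i_0}) \not\perp (Ax_0 - b)$, completing the argument.

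The only subtlety to watch, and the main obstacle, is the possibility of a zero denominator in the Max Distance ratio when a column of $A$ vanishes, which would make the $\argmax$ ill-posed. This is why I first locate a coordinate whose \emph{numerator} is nonzero and deduce that its denominator is then automatically nonzero, rather than reasoning about all coordinates simultaneously; with the argument ordered this way, the maximizer is guaranteed to have a strictly positive, well-defined ratio, and the conclusion follows.
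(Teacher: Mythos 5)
Your proof is correct and follows essentially the same route as the paper's: locate a coordinate $j$ with $e_j^\intercal A^\intercal (Ax_0 - b) \neq 0$ (which exists precisely because $Ax_0 - b - r^*$ is a nonzero element of $\col(A)$), then conclude the maximizing index $i_0$ must also satisfy $e_{i_0}^\intercal A^\intercal (Ax_0 - b) \neq 0$. Your added care about zero columns making the denominator $\norm{A^\intercal A e_j}_2^2$ vanish is a reasonable refinement of a detail the paper's two-sentence proof leaves implicit, but it does not change the argument.
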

\begin{proof}
For any $x_0$ such that $Ax_0 - b \neq r^*$ where $r^* = - \Prj_{\ker(A^\intercal)} b$, there exists an $e_j$ such that $e_j^\intercal A^\intercal (Ax_0 - b) \neq 0$. Hence, $e_{i_0}^\intercal A^\intercal (Ax_0 - b) \neq 0$.
\end{proof}

We can now conclude by \cref{theorem-convergence-col-finite}.

\begin{theorem} \label{theorem-max-distance-vector-cd}
Let $A \in \mathbb{R}^{n \times d}$ and $b \in \mathbb{R}^n$, and define $r^* = - \Prj_{\ker(A^\intercal)} b$. Let $x_0 \in \mathbb{R}^d$ and $\lbrace x_k : k \in \mathbb{N} \rbrace$ be a sequence generated by Max Distance vector coordinate descent. Then, either there exists a stopping time $\tau$ with finite expectation such that $Ax_{\tau} - b = r^*$; or
there exists a sequence of non-negative stopping times $\lbrace \tau_{j} : j +1 \in \mathbb{N} \rbrace$ for which $\E{ \tau_{j} } \leq j \rnk{A}$,  and there exist $\gamma \in (0,1)$ and a sequence of random variables $\lbrace \gamma_j : j+1 \in \mathbb{N}\rbrace \subset (0,\gamma]$, such that
\begin{equation}
\Prb{ \bigcap_{j=0}^\infty \left\lbrace \norm{ Ax_{\tau_j} - b  - r^*}_2^2 \leq \left( \prod_{\ell=0}^{j-1} \gamma_{\ell} \right) \norm{ Ax_0 - b - r^* }_2^2 \right\rbrace} = 1.
\end{equation}
\end{theorem}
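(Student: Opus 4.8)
The plan is to verify that Max Distance Vector Coordinate Descent satisfies every hypothesis of \cref{theorem-convergence-col-finite} and then read the conclusion off directly. Two of the three ingredients are already supplied by the preceding lemmas: the selection of $e_{i_k}$ is a deterministic function of the current iterate $x_k$ alone, so the method is Markovian in the sense of \cref{def-markovian-col}; and, because $Ax_0 - b \neq r^*$ forces some coordinate of $A^\intercal(Ax_0 - b)$ to be nonzero, the argmax criterion guarantees $e_{i_0}^\intercal A^\intercal(Ax_0 - b) \neq 0$, so the method is $1,1$-Exploratory in the sense of \cref{def-exploratory-col}. Thus I may take $N = 1$ and $\pi = 1$ in the notation of the corollary.

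First I would confirm the remaining finite-set requirement. Here $W_k = e_{i_k}$ with $i_k \in \lbrace 1,\ldots,d \rbrace$, so $\col(A W_k) = \col(A e_{i_k})$ is simply the span of the $i_k$-th column of $A$. Consequently the family $\lbrace \col(A W_k) : k+1 \in \mathbb{N} \rbrace$ takes value in $\lbrace \col(A e_i) : i = 1,\ldots,d \rbrace$, a set of at most $d$ elements; in particular it is finite, which is exactly the hypothesis demanded by \cref{theorem-convergence-col-finite}.

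With all three properties in hand, I would invoke \cref{theorem-convergence-col-finite}. Substituting $N = 1$ and $\pi = 1$ collapses the expectation bound to $\E{\tau_j} \leq j[(\rnk{A} - 1)(N/\pi) + 1] = j[(\rnk{A} - 1) + 1] = j\,\rnk{A}$, which is precisely the stated bound, while the dichotomy between finite termination (a stopping time $\tau$ with $Ax_\tau - b = r^*$) and geometric decay of $\norm{Ax_{\tau_j} - b - r^*}_2^2$ transfers verbatim. I do not expect a genuine obstacle, since this is a direct specialization of an already-established corollary; the only point meriting a moment's care is that the tie-breaking rule (smallest index) not disrupt the Markovian structure, but as it is a deterministic function of $x_k$, it is already subsumed by the measurability underlying the Markovian lemma.
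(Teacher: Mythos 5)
Your proposal is correct and follows essentially the same route as the paper: verify the Markovian and $1,1$-Exploratory properties (deterministic selection depending only on $x_k$; nonzero coordinate of $A^\intercal(Ax_0-b)$ whenever $Ax_0 - b \neq r^*$), note that $\lbrace \col(Ae_i) : i=1,\ldots,d\rbrace$ is finite, and specialize \cref{theorem-convergence-col-finite} with $N=\pi=1$ to get $\E{\tau_j} \leq j\,\rnk{A}$. Your explicit check of the finite-set hypothesis is in fact slightly more careful than the paper, which leaves it implicit in this subsection.
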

\subsection{Random Permutation Block Kaczmarz} \label{subsection-random-permutation-block-kaczmarz}

For details on this method, see \cref{example-random-permute-kaczmarz}. Suppose $A \in \mathbb{R}^{n \times d}$ and $b \in \mathbb{R}^n$ form a consistent system with solution set $\mathcal{H}$. Suppose we split the system up into $\epsilon$ blocks. 

\begin{lemma}
Random Permutation Block Kaczmarz is Markovian.
\end{lemma}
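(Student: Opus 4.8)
The plan is to verify Definition~\ref{def-markovian-row} with $M = 1$; that is, to show that the conditional distribution of the pair $(W_k, \zeta_k)$ given the full history $\mathcal{F}_{k+1}^k = \sigma(\zeta_{-1}, x_0, W_0, \ldots, W_{k-1}, \zeta_{k-1}, x_k)$ agrees with its conditional distribution given $\mathcal{F}_1^k = \sigma(\zeta_{k-1}, x_k)$. Following the template of the Cyclic Vector Kaczmarz lemma, the key is to separate the part of $(W_k,\zeta_k)$ that is a deterministic function of $\zeta_{k-1}$ from the part that carries genuinely new randomness, and then to show that the new randomness is independent of the entire past.

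First I would unwind the two branches of $\varphi_R$. In both branches $W_k = E_{\zeta_{k-1}[1][\cdot]}$ is a deterministic function of $\zeta_{k-1}$ alone, and the counter component $\zeta_k[2] = \zeta_{k-1}[2] + 1$ is likewise deterministic given $\zeta_{k-1}$. In the first branch ($\mathrm{rem}(\zeta_{k-1}[2], \epsilon) < \epsilon - 1$) the permutation component is carried over unchanged, $\zeta_k[1] = \zeta_{k-1}[1]$, so $(W_k,\zeta_k)$ is a deterministic function of $\zeta_{k-1}$ and the claimed equality is immediate. Genuine randomness enters only in the second (refresh) branch, where $\zeta_k[1] = Z_{\mathrm{div}(\zeta_{k-1}[2]+1, \epsilon)}$ is a freshly drawn permutation. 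The crux is to show that this fresh permutation is independent of $\mathcal{F}_{k+1}^k$. Using $\zeta_{-1}[2] = 0$ and $\zeta_j[2] = \zeta_{j-1}[2] + 1$, I would track that $\zeta_{k-1}[2] = k$, so that the refresh condition $\mathrm{rem}(k, \epsilon) = \epsilon - 1$ forces $k = (m+1)\epsilon - 1$ and hence $\mathrm{div}(\zeta_{k-1}[2]+1, \epsilon) = m+1$. Since the permutations consumed (and thereby revealed through $\zeta_{-1}, \ldots, \zeta_{k-1}$) up to this point are exactly $Z_0, \ldots, Z_m$, the requested permutation $Z_{m+1}$ has a strictly larger index and, by the independence of $\lbrace Z_j : j+1 \in \mathbb{N} \rbrace$, is independent of $\mathcal{F}_{k+1}^k$. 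Its conditional law given $\mathcal{F}_{k+1}^k$ is therefore the uniform distribution over permutations of $\lbrace 1, \ldots, \epsilon \rbrace$; the same holds given $\mathcal{F}_1^k$, because $\zeta_{k-1}$ already encodes which branch we are in and the index of the new permutation, while neither $\sigma$-algebra reveals $Z_{m+1}$ itself. Combining the deterministic-given-$\zeta_{k-1}$ pieces with this independent fresh draw yields equality of the two conditional distributions.

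The main obstacle I anticipate is the indexing bookkeeping in the refresh branch: one must argue carefully that the index $\mathrm{div}(\zeta_{k-1}[2]+1, \epsilon)$ of the newly requested permutation strictly exceeds the indices of all permutations already encoded in the history, so that independence genuinely holds rather than merely conditional independence. Once this indexing is pinned down via the explicit value of the counter $\zeta_{k-1}[2] = k$, the remainder is a routine appeal to the independence of the $\lbrace Z_j \rbrace$, and the Markovian property follows with $M = 1$.
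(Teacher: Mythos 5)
Your proof is correct and follows the same idea as the paper's own (much terser) argument: $(W_k,\zeta_k)$ is a deterministic function of $\zeta_{k-1}$ except at refresh steps, where the newly drawn permutation is independent of the entire history. Your careful indexing of the counter and of which $Z_j$'s have been revealed simply makes rigorous what the paper asserts in one sentence.
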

\begin{proof}
The value of $E_{i_k}$ and $\zeta_{k}$ are dependent only on $\zeta_{k-1}$ and, periodically, on an independently generated permutation. Hence, the procedure is Markovian.
\end{proof}

\begin{lemma}
Random Permutation Block Kaczmarz is $1,\epsilon^{-1}$-Exploratory.
\end{lemma}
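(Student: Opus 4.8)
The plan is to verify \cref{def-exploratory-row} directly with $N = 1$ and $\pi = \epsilon^{-1}$, so it suffices to bound the single-step quantity $\condPrb{\col(A^\intercal W_0) \perp x_0 - \Prj_{\mathcal{H}} x_0}{\mathcal{F}_1^0}$ uniformly over $x_0 \neq \Prj_{\mathcal{H}} x_0$ and $\zeta_{-1} \in \mathfrak{Z}$. First I would record the geometry of $y_0 := x_0 - \Prj_{\mathcal{H}} x_0$. Since $\mathcal{H}$ is the affine solution set, its direction space is $\nullsp{A}$, and the orthogonal displacement $y_0$ lies in $\nullsp{A}^\perp = \row(A) = \col(A^\intercal)$. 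Consequently $A y_0 = A x_0 - A \Prj_{\mathcal{H}} x_0 = A x_0 - b$, and because $A$ is injective on $\row(A)$ and $y_0 \neq 0$, the residual $r_0 := A x_0 - b$ is nonzero.

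Next I would translate the orthogonality event into a statement about the residual. Writing $W_0 = E_{i_0}$ for the selected block, we have $\col(A^\intercal E_{i_0}) = \linspan{\text{rows of } A \text{ in block } i_0}$, so $\col(A^\intercal E_{i_0}) \perp y_0$ holds exactly when $E_{i_0}^\intercal A y_0 = E_{i_0}^\intercal r_0 = 0$; that is, the chosen block contributes no residual. Because $\lbrace E_i \rbrace$ partitions the rows of $A$ and $r_0 \neq 0$, at least one block index $i^\ast$ satisfies $E_{i^\ast}^\intercal r_0 \neq 0$, equivalently $\col(A^\intercal E_{i^\ast}) \not\perp y_0$.

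It then remains to show that the selected block $i_0$ coincides with some such good index with probability at least $\epsilon^{-1}$. Here I would use that $i_0$ is an entry of a uniformly random permutation of $\lbrace 1, \ldots, \epsilon \rbrace$; since every single coordinate of a uniform permutation is itself uniform over $\lbrace 1, \ldots, \epsilon \rbrace$, we obtain $\incondPrb{i_0 = i^\ast}{\mathcal{F}_1^0} = \epsilon^{-1}$. Hence $\condPrb{\col(A^\intercal W_0) \not\perp y_0}{\mathcal{F}_1^0} \geq \epsilon^{-1}$, and taking complements followed by the supremum over $x_0 \neq \Prj_{\mathcal{H}} x_0$ and $\zeta_{-1} \in \mathfrak{Z}$ yields the bound $1 - \epsilon^{-1}$, establishing that the method is $1, \epsilon^{-1}$-Exploratory.

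The main obstacle is precisely this last step: justifying that the selected block lands on a good index with probability at least $\epsilon^{-1}$ uniformly over the conditioning state $\zeta_{-1}$. The delicate point is that $\zeta_{-1}$ records both the permutation and the cycle counter, so one must argue that the relevant permutation entry is still uniformly distributed over the blocks under $\mathcal{F}_1^0$ rather than pinned down by the stored permutation. This is the only place where the permutation structure, as opposed to a mere partition of the rows, is genuinely used, and it is where the symmetry of the uniform permutation must be invoked carefully; everything else is routine once $r_0 \neq 0$ is secured.
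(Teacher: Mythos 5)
Your argument is the same as the paper's: locate a block $i^\ast$ with $E_{i^\ast}^\intercal(Ax_0-b) \neq 0$, which exists because $y_0 = x_0 - \Prj_{\mathcal{H}}x_0$ is a nonzero element of $\row(A)$, and then assert that this block is selected at step $0$ with probability $\epsilon^{-1}$ by uniformity of a single coordinate of a uniform permutation. The paper's proof is precisely this two-line assertion.

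However, the obstacle you flag in your closing paragraph is a genuine gap, and neither your write-up nor the paper's own proof closes it. Under the formulation of \cref{example-random-permute-kaczmarz}, the realized permutation is stored in $\zeta_{-1} = (Z_0,0)$, and by \cref{eqn-sigma-algebra} we have $\mathcal{F}_1^0 = \sigma(\zeta_{-1},x_0)$, so $W_0 = E_{Z_0[1]}$ is $\mathcal{F}_1^0$-measurable. Hence $\condPrb{i_0 = i^\ast}{\mathcal{F}_1^0} = \1{Z_0[1] = i^\ast}$ is $\lbrace 0,1\rbrace$-valued, not $\epsilon^{-1}$. Since \cref{def-exploratory-row} takes a supremum over all $\zeta_{-1} \in \mathfrak{Z}$, an adversarial pair defeats the $N=1$ claim: take $A$ the $2\times 2$ identity, $b=0$, each row its own block, $x_0 = e_1$ (so $E_2^\intercal(Ax_0-b) = 0$ while $x_0 \neq \Prj_{\mathcal{H}}x_0$), and $\zeta_{-1}$ whose permutation lists block $2$ first; then the conditional probability of the orthogonality event equals $1$, the supremum is $1$, and no $\pi > 0$ works with $N = 1$. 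A clean repair inside the paper's framework is to take $N = \epsilon$: within $\epsilon$ steps the method exhausts the stored permutation and begins consuming a fresh permutation that is independent of $\mathcal{F}_1^0$ and uniform. Writing $m = \mathrm{rem}(\zeta_{-1}[2],\epsilon)$, either $i^\ast$ lies among the $\epsilon - m$ unconsumed entries of the stored permutation, in which case the intersection event has conditional probability $0$; or $m \geq 1$ and the intersection forces the fresh permutation to keep $i^\ast$ out of its first $m$ positions, an event of conditional probability $(\epsilon-m)/\epsilon \leq 1 - \epsilon^{-1}$. Thus the method is $\epsilon,\epsilon^{-1}$-Exploratory, which still feeds \cref{theorem-convergence-row-finite}, at the cost of the weaker expectation bound $\E{\tau_j} \leq j\left[(\rnk{A}-1)\epsilon^2+1\right]$.
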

\begin{proof}
If $x_0 \neq \Prj_{\mathcal{H}} x_0$, then for some $j \in \lbrace 1,\ldots,n\rbrace$, $E_j^\intercal (Ax_0 - b) \neq 0$. The probability of selecting this particular $E_j$ on the first sample is $1/\epsilon$. The conclusion follows.
\end{proof}

We can conclude now using \cref{theorem-convergence-row-finite}.

\begin{theorem} \label{theorem-random-permutation-block-kaczmarz}
Let $A \in \mathbb{R}^{n\times d}$ and $b \in \mathbb{R}^n$ such that the linear system's solution set, $\mathcal{H}$, is nonempty. Let $x_0 \in \mathbb{R}^d$ and let $\lbrace x_k : k\in \mathbb{N} \rbrace$ be generated by Random Permutation Block Kaczmarz. Then either there exists a stopping time $\tau$ with finite expectation such that $x_{\tau} \in \mathcal{H}$; or there exist a sequence of non-negative stopping times, $\lbrace \tau_j : j+1 \in \mathbb{N} \rbrace$, such that $\E{ \tau_j } \leq j[ (\rnk A - 1)/\epsilon + 1]$, and $\exists \gamma \in (0,1)$ and a sequence of random variables $\lbrace \gamma_j : j+1 \in \mathbb{N} \rbrace \subset (0,\gamma]$ such that
\begin{equation}
\Prb{ \bigcap_{j=0}^\infty \left\lbrace \norm{ x_{\tau_j} - \Prj_\mathcal{H} x_0 }_2^2 \leq \left( \prod_{\ell=0}^{j-1} \gamma_{\ell} \right) \norm{ x_0 - \Prj_\mathcal{H} x_0}_2^2 \right\rbrace} = 1.
\end{equation}
\end{theorem}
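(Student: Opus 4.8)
The plan is to obtain the conclusion as a direct instance of \cref{theorem-convergence-row-finite}, exactly as in the preceding worked examples. The two lemmas above have already verified that Random Permutation Block Kaczmarz is Markovian and $1,\epsilon^{-1}$-Exploratory, so the only remaining hypothesis of \cref{theorem-convergence-row-finite} to check is that the elements of $\lbrace \col(A^\intercal W_k) : k+1 \in \mathbb{N} \rbrace$ take value in a finite set.

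To verify this, I would recall from \cref{example-random-permute-kaczmarz} that each $W_k$ is one of the fixed partition matrices $E_1,\ldots,E_\epsilon$ obtained from a partitioning of the $n \times n$ identity. Hence $A^\intercal W_k \in \lbrace A^\intercal E_1,\ldots,A^\intercal E_\epsilon \rbrace$, and therefore $\col(A^\intercal W_k)$ always lies in the finite collection $\lbrace \col(A^\intercal E_i) : i = 1,\ldots,\epsilon \rbrace$, which has at most $\epsilon$ distinct members. This is immediate because the partition is fixed in advance and does not change with the iteration; the random permutation only governs the order in which the blocks are visited, not the set of blocks itself.

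With all three hypotheses established, I would apply \cref{theorem-convergence-row-finite} under the row-action identification $y_k = x_k - \Prj_{\mathcal{H}} x_0$ from \cref{eqn-common-y-def}, substituting $N = 1$ and $\pi = \epsilon^{-1}$ into the general stopping-time bound $j[(\rnk{A}-1)(N/\pi)+1]$. The first alternative of \cref{theorem-convergence-row-finite} gives a finite-expectation stopping time $\tau$ with $x_\tau = \Prj_{\mathcal{H}} x_0 \in \mathcal{H}$, and the second alternative gives the sequence $\lbrace \tau_j \rbrace$ satisfying the stated expectation bound together with the almost-sure geometric decay of $\norm{ x_{\tau_j} - \Prj_{\mathcal{H}} x_0 }_2^2$ controlled by some $\gamma \in (0,1)$ and random factors $\lbrace \gamma_j \rbrace \subset (0,\gamma]$.

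Since every ingredient is either already proved or follows by direct substitution, there is no substantive obstacle in this argument. The only step requiring any thought is the finite-set verification, and even that reduces to the observation that the block structure $\lbrace E_i \rbrace$ is a fixed finite family; consequently the substance of the analysis for this method lies entirely in the general convergence theorem rather than in its specialization here.
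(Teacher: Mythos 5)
Your proposal is correct and takes essentially the same route as the paper: the paper's own proof consists of precisely the two lemmas (Markovian, $1,\epsilon^{-1}$-Exploratory) followed by an invocation of \cref{theorem-convergence-row-finite}, with the finite-set hypothesis implicit from $W_k \in \lbrace E_1,\ldots,E_\epsilon \rbrace$, which you make explicit.

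One caveat worth flagging: with $N=1$ and $\pi = \epsilon^{-1}$, the general bound $j[(\rnk{A}-1)(N/\pi)+1]$ evaluates to $j[(\rnk{A}-1)\,\epsilon+1]$, \emph{not} the $j[(\rnk{A}-1)/\epsilon+1]$ appearing in the statement, so your assertion that the substitution yields ``the stated expectation bound'' is not literally accurate. The division by $\epsilon$ appears to be a typo in the theorem statement itself---the paper's one-line proof has the same mismatch, and the analogous column-action result, \cref{theorem-random-permutation-block-cd}, correctly reports $j[(\rnk{A}-1)\,\epsilon+1]$---but a careful write-up should either prove the weaker (multiplied) bound or note that the stated (divided) bound does not follow from \cref{theorem-convergence-row-finite} as applied.
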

\subsection{Steinerberger's Block Kaczmarz} \label{subsection-steinerberger-block-kaczmarz}

Though not explicitly discussed, this method is an example of sketch-and-project \cite{gower2015,richtarik2020,gower2021}. Let $\lbrace E_j : j=1,\ldots,\epsilon \rbrace$ where $\begin{bmatrix} E_1 & \cdots & E_{\epsilon} \end{bmatrix}$ is a column permutation of the $\mathbb{R}^{n \times n}$ identity matrix. In Steinerberger's block method, at iteration $x_k$, we select $E_{i_k}$ such that
\begin{equation}
\Prb{ i_k = j } \propto \begin{cases}
\norm{ E_j^\intercal(Ax_k - b)}_p^p & j=1,\ldots,\epsilon \\
0 & \text{otherwise}.
\end{cases}
\end{equation}
With this choice of $E_{i_k}$, we compute $x_{k+1}$ using the block Kaczmarz update.

\begin{lemma}
Steinerberger's Block Kaczmarz is Markovian.
\end{lemma}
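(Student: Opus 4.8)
The plan is to follow the same template used for the vector analogues, namely Steinerberger's Vector Kaczmarz and Greedy Randomized Vector Kaczmarz: the whole argument reduces to the observation that the block-selection rule inspects only the current iterate. First I would record the relevant bookkeeping for this method within the row-action framework: here $W_k = E_{i_k}$, and, as in the Greedy Block Selection example, the state space is trivial, $\mathfrak{Z} = \lbrace \emptyset \rbrace$, so that $\zeta_k = \emptyset$ is deterministic and carries no information to condition upon.

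Next I would isolate the key step. The law of $i_k$ is the categorical distribution $\Prb{i_k = j} \propto \norm{E_j^\intercal(Ax_k - b)}_p^p$, which is a deterministic function of $x_k$ together with the fixed data $A$, $b$ and the fixed partition $\lbrace E_j \rbrace$. In particular it depends on none of the earlier iterates $x_0,\ldots,x_{k-1}$, none of the earlier sketches $W_0,\ldots,W_{k-1}$, and none of the earlier $\zeta_j$. Consequently, conditioning the pair $(W_k,\zeta_k)$ on the full history $\mathcal{F}_{k+1}^k = \sigma(\zeta_{-1},x_0,W_0,\ldots,W_{k-1},\zeta_{k-1},x_k)$ produces the same conditional law as conditioning on the one-step $\sigma$-algebra $\mathcal{F}_1^k = \sigma(\zeta_{k-1},x_k)$.

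From this I would conclude that \cref{def-markovian-row} holds with $M = 1$, since $\min\lbrace 1,k+1\rbrace = 1$; that is, for every measurable $\mathcal{W}$ and $\mathcal{Z}$,
\[
\condPrb{W_k \in \mathcal{W}, \zeta_k \in \mathcal{Z}}{\mathcal{F}_{k+1}^k} = \condPrb{W_k \in \mathcal{W}, \zeta_k \in \mathcal{Z}}{\mathcal{F}_1^k}.
\]
I do not expect any genuine obstacle: the statement is among the easiest verifications in the worked examples. The only point deserving a moment's care is to note that the draw of $i_k$ is made from a fresh source of randomness conditionally independent of $\mathcal{F}_{k+1}^k$ given $x_k$ (and that any convention chosen when $Ax_k - b = 0$ is itself a function of $x_k$), which is exactly the property guaranteeing that the two conditional expectations coincide. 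I would therefore keep the proof to a single sentence in the style of the preceding lemmas.
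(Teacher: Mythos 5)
Your proposal is correct and follows exactly the paper's argument: the paper's one-line proof likewise observes that the choice of $E_{i_k}$ depends only on $x_k$, so conditioning on the full history $\mathcal{F}_{k+1}^k$ agrees with conditioning on $\mathcal{F}_1^k$. Your added remarks about the trivial state space $\mathfrak{Z} = \lbrace \emptyset \rbrace$ and the fresh source of randomness for $i_k$ are just a more explicit spelling-out of the same observation.
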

\begin{proof}
The choice of $E_{i_k}$ only depends on $x_k$, which implies the conclusion.
\end{proof}

\begin{lemma}
Steinerberger's block Kaczmarz is $1,1$-Exploratory.
\end{lemma}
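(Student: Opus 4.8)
The plan is to verify \cref{def-exploratory-row} with $N=1$ and $\pi=1$, which amounts to showing that
\begin{equation}
\sup_{\substack{x_0:\, x_0 \neq \Prj_{\mathcal{H}}x_0 \\ \zeta_{-1} \in \mathfrak{Z}}} \condPrb{\col(A\T E_{i_0}) \perp x_0 - \Prj_{\mathcal{H}}x_0}{\mathcal{F}_1^0} = 0.
\end{equation}
First I would unwind the orthogonality event. A vector $v$ is perpendicular to $\col(A\T E_{i_0})$ exactly when $(A\T E_{i_0})\T v = E_{i_0}\T A v = 0$. Taking $v = x_0 - \Prj_{\mathcal{H}}x_0$ and using that $\Prj_{\mathcal{H}}x_0 \in \mathcal{H}$ so that $A\Prj_{\mathcal{H}}x_0 = b$, the event reduces to $\lbrace E_{i_0}\T(Ax_0 - b) = 0 \rbrace$. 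This mirrors the reduction already used in the vector case (\Cref{subsection-steinerberger-vector-kaczmarz}), the only difference being that $E_{i_0}$ now selects a whole block of rows rather than a single one.

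Next I would argue that, conditioned on $x_0 \neq \Prj_{\mathcal{H}}x_0$, the selected block almost surely has a nonzero residual. Since $x_0 - \Prj_{\mathcal{H}}x_0$ lies in $\row(A)\setminus\lbrace 0 \rbrace$, we have $Ax_0 - b = A(x_0 - \Prj_{\mathcal{H}}x_0) \neq 0$; because $\lbrace E_j \rbrace$ partitions the rows of the identity, at least one block satisfies $E_j\T(Ax_0 - b) \neq 0$, and hence $\norm{E_j\T(Ax_0-b)}_p^p > 0$. The crucial observation is that the Steinerberger sampling law assigns probability proportional to $\norm{E_j\T(Ax_0-b)}_p^p$, so every block with zero residual receives zero mass. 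Consequently the drawn block $i_0$ satisfies $E_{i_0}\T(Ax_0 - b) \neq 0$ with probability one, which makes the conditional probability above equal to $0$ for every admissible $x_0$, and the supremum is therefore $0 \leq 1 - 1$.

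I do not expect a genuine obstacle here: the argument is essentially a one-line consequence of the support of the selection distribution, exactly paralleling the $1,1$-Exploratory proof for Steinerberger's vector method. The only points requiring care are the algebraic reduction of the orthogonality condition to a residual condition (via $A\Prj_{\mathcal{H}}x_0 = b$) and the remark that zero-residual blocks are never sampled; the block structure introduces no new difficulty because the sampling law is already defined on blocks. Having established the Markovian and $1,1$-Exploratory properties, I would then invoke \cref{theorem-convergence-row-finite} (noting that $\lbrace \col(A\T E_j) : j = 1,\ldots,\epsilon \rbrace$ is a finite set) to obtain the stated worst-case exponential convergence, in the same manner as the preceding worked examples.
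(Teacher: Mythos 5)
Your proposal is correct and follows essentially the same argument as the paper: since $x_0 \neq \Prj_{\mathcal{H}}x_0$ forces some block to have nonzero residual, and the sampling law puts zero mass on zero-residual blocks, the selected block satisfies $E_{i_0}\T(Ax_0 - b) \neq 0$ with probability one. The paper's proof is terser (it leaves the reduction of the orthogonality event to the residual event and the support argument implicit), but the substance is identical.
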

\begin{proof}
Let $\mathcal{H}$ denote the solution set of the linear system. For any $x_0 \neq \Prj_\mathcal{H} x_0$, $\exists j \in \lbrace 1,\ldots,\epsilon \rbrace$ such that $E_j^\intercal (Ax_0 - b) \neq 0$. Therefore, $\incondPrb{ E_{i_0}^\intercal (Ax_0 - b) = 0 }{\mathcal{F}_1^0} = 0$. The result follows.
\end{proof}

We can conclude using \cref{theorem-convergence-row-finite} as follows.

\begin{theorem} \label{theorem-steinerberger-block-kaczmarz}
Let $A \in \mathbb{R}^{n \times d}$ and $b \in \mathbb{R}^n$ such that the linear system's solution set, $\mathcal{H}$, is nonempty. Let $x_0 \in \mathbb{R}^d$. Let $\lbrace x_k : k \in \mathbb{N} \rbrace$ be a sequence generated by the Steinerberger's Block Kaczmarz method. Then, there exists a stopping time $\tau$ with finite expectation such that $x_{\tau} \in \mathcal{H}$; or there exists a sequence of non-negative stopping times $\lbrace \tau_{j} : j +1 \in \mathbb{N} \rbrace$ for which $\E{ \tau_{j} } \leq j \rnk{A}$,  and there exist $\gamma \in (0,1)$ and a sequence of random variables $\lbrace \gamma_j : j+1 \in \mathbb{N}\rbrace \subset (0,\gamma]$, such that
\begin{equation}
\Prb{ \bigcap_{j=0}^\infty \left\lbrace \norm{ x_{\tau_j} - \Prj_{\mathcal{H}} x_0 }_2^2 \leq \left( \prod_{\ell=0}^{j-1} \gamma_{\ell} \right) \norm{ x_0 - \Prj_{\mathcal{H}}x_0 }_2^2 \right\rbrace} = 1.
\end{equation}
\end{theorem}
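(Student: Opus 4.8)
The plan is to invoke \cref{theorem-convergence-row-finite} directly, since the two lemmas immediately preceding the theorem have already established that Steinerberger's Block Kaczmarz is Markovian and $1,1$-Exploratory. The entire proof reduces to checking the remaining hypothesis of that corollary, reading off its conclusion, and specializing the constants.

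First I would confirm the structural hypotheses of \cref{theorem-convergence-row-finite}. \cref{assumption-consistency} holds because $\mathcal{H}$ is assumed nonempty. The Markovian property is supplied by the first lemma, and the $N,\pi$-Exploratory property is supplied by the second lemma with $N=1$ and $\pi=1$. Thus the sequence $\lbrace x_k \rbrace$ is generated by an RBAS of the form \cref{eqn-base-row-update,eqn-row-adaptive} satisfying \cref{def-markovian-row,def-exploratory-row} for these parameters.

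Second, I would verify the finite-set condition that $\lbrace \col(A^\intercal W_k) : k+1 \in \mathbb{N} \rbrace$ takes value in a finite set. This is the only genuinely structural observation in the proof, and it follows immediately from the block construction: at each iteration $W_k = E_{i_k}$, where $E_{i_k}$ is drawn from the fixed collection $\lbrace E_1,\ldots,E_\epsilon \rbrace$ of blocks of a column permutation of the $\mathbb{R}^{n\times n}$ identity matrix. Consequently $\col(A^\intercal W_k)$ always lies in the finite set $\lbrace \col(A^\intercal E_j) : j = 1,\ldots,\epsilon \rbrace$, which has cardinality at most $\epsilon$.

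Finally I would apply the conclusion of \cref{theorem-convergence-row-finite} and specialize $N=1$, $\pi=1$. The first alternative (finite-time termination with $x_\tau = \Prj_{\mathcal{H}} x_0 \in \mathcal{H}$) transfers verbatim. For the second alternative, substituting into the expectation bound gives $\E{\tau_j} \leq j[(\rnk{A}-1)(N/\pi) + 1] = j[(\rnk{A}-1)+1] = j\,\rnk{A}$, exactly as claimed, while the existence of $\gamma \in (0,1)$, the sequence $\lbrace \gamma_j \rbrace \subset (0,\gamma]$, and the probability-one intersection estimate follow directly. I do not expect any real obstacle here: the substance of the argument lives entirely in \cref{theorem-convergence-row-finite} and in the two already-proved lemmas, so the only step requiring care is the finite-set verification, which is elementary given the block structure.
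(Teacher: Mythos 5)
Your proposal is correct and follows exactly the paper's own route: the paper likewise invokes \cref{theorem-convergence-row-finite} after its two lemmas establish the Markovian and $1,1$-Exploratory properties, and specializes $N=\pi=1$ to obtain $\E{\tau_j} \leq j\,\rnk{A}$. Your explicit verification that $\lbrace \col(A^\intercal W_k) \rbrace$ lies in the finite set $\lbrace \col(A^\intercal E_j) : j=1,\ldots,\epsilon \rbrace$ is a point the paper leaves implicit, so if anything your write-up is slightly more complete.
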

\subsection{Motzkin's Block Method} \label{subsection-motzkin-block}

Let $A \in \mathbb{R}^{n \times d}$ and $b \in \mathbb{R}^n$ form a consistent linear system.
In this method, we begin with $\lbrace E_{j} : j =1,\ldots,\epsilon \rbrace$ where $\begin{bmatrix} E_1 &\cdots & E_\epsilon \end{bmatrix}$ is a column permutation of the $\mathbb{R}^{n \times n}$ identity matrix. At iteration $k$, we choose $i_k$ such that
\begin{equation}
i_k \in \argmax_{j \in \lbrace 1,\ldots, \epsilon \rbrace} \norm{ (E_j^\intercal A A^\intercal E_j)^\dagger E_j^\intercal (b - Ax_k) }_2.
\end{equation}
Then, we perform the block Kaczmarz update using $E_{i_k}$. Note, this update rule is not the same as the max distance adaptive sketch-and-project \cite{gower2021}.

\begin{lemma}
Motzkin's Block Method is Markovian.
\end{lemma}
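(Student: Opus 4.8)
The plan is to mirror the strategy used for the other deterministic, greedy block selection rules in this section (most directly Steinerberger's Block Kaczmarz), and to verify \cref{def-markovian-row} by exhibiting $M = 1$ explicitly. First I would record that for this method $\mathfrak{Z} = \lbrace \emptyset \rbrace$, exactly as in the greedy block selection examples, so that $\zeta_k = \emptyset$ for every $k$; the component $\zeta_k$ therefore carries no information and is trivially measurable with respect to any of the $\sigma$-algebras $\mathcal{F}_k^j$.

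Next I would argue that the selected block index $i_k$ is a deterministic function of the current iterate $x_k$ alone, together with the non-random data $A$, $b$, and the fixed partition $\lbrace E_j : j = 1,\ldots,\epsilon \rbrace$. Concretely, for each fixed $j$ the scalar $\norm{ (E_j^\intercal A A^\intercal E_j)^\dagger E_j^\intercal (b - A x_k) }_2$ is a deterministic function of $x_k$, and hence so is the $\argmax$ over $j$ once a fixed tie-breaking convention (e.g., smallest index) is adopted. Consequently $W_k = E_{i_k}$ is a deterministic, and in particular measurable, function of $x_k$.

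To conclude, I would note that $x_k$ is $\mathcal{F}_1^k = \sigma(\zeta_{k-1}, x_k)$-measurable, so both $W_k$ and $\zeta_k$ are $\mathcal{F}_1^k$-measurable. Therefore, for any measurable $\mathcal{W}$ and $\mathcal{Z} \subset \mathfrak{Z}$, the conditional probability $\condPrb{ W_k \in \mathcal{W}, \zeta_k \in \mathcal{Z} }{ \mathcal{F}_{k+1}^k }$ is an $\mathcal{F}_1^k$-measurable indicator, so it coincides with $\condPrb{ W_k \in \mathcal{W}, \zeta_k \in \mathcal{Z} }{ \mathcal{F}_{\min\lbrace 1,\, k+1 \rbrace}^k }$, verifying \cref{def-markovian-row} with $M = 1$.

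Because the selection rule is fully deterministic, there is no substantive obstacle here; the only point requiring minor care is confirming that the tie-breaking in the $\argmax$ is itself a deterministic function of $x_k$, so that $i_k$ is well defined and $\mathcal{F}_1^k$-measurable even when the maximizer is not unique. This is immediate for any fixed deterministic convention, and so the argument goes through verbatim.
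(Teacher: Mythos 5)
Your proposal is correct and follows essentially the same route as the paper, whose proof simply observes that the selection of $E_{i_k}$ depends only on $x_k$ and hence the procedure is Markovian. Your additional care about the tie-breaking convention in the $\argmax$ and the triviality of $\zeta_k$ is a sound (if unneeded by the paper) elaboration of that same one-line argument.
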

\begin{proof}
The selection of $E_{i_k}$ only depends on $x_k$. Hence, the procedure is Markovian.
\end{proof}

\begin{lemma}
Motzkin's Block Method is $1,1$-Exploratory.
\end{lemma}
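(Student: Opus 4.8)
The plan is to verify \cref{def-exploratory-row} with $N = 1$ and $\pi = 1$; since the selection is deterministic, this reduces to showing that for every $x_0 \neq \Prj_{\mathcal{H}} x_0$ the event $\{\col(A^\intercal E_{i_0}) \perp x_0 - \Prj_{\mathcal{H}} x_0\}$ simply cannot occur, so that the conditional probability is $0 = 1 - \pi$. First I would translate the orthogonality condition into a statement about the residual. Writing $r_0 = Ax_0 - b$ and using $A \Prj_{\mathcal{H}} x_0 = b$, we have $r_0 = A(x_0 - \Prj_{\mathcal{H}} x_0)$, so $\col(A^\intercal E_{i_0}) \perp (x_0 - \Prj_{\mathcal{H}} x_0)$ is equivalent to $E_{i_0}^\intercal A(x_0 - \Prj_{\mathcal{H}} x_0) = E_{i_0}^\intercal r_0 = 0$. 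Thus it suffices to show the selected index $i_0$ satisfies $E_{i_0}^\intercal r_0 \neq 0$. Along the way I would record that $x_0 - \Prj_{\mathcal{H}} x_0 \in \row(A) \setminus \{0\}$, on which $A$ acts injectively, so $r_0 \neq 0$.

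Next I would analyze the selection criterion, which maximizes $g_j := \norm{(E_j^\intercal A A^\intercal E_j)^\dagger E_j^\intercal (b - Ax_0)}_2$ over $j$. The claim to establish is that $g_j > 0$ if and only if $E_j^\intercal r_0 \neq 0$ (note $b - Ax_0 = -r_0$, so $E_j^\intercal(b-Ax_0)$ vanishes exactly when $E_j^\intercal r_0$ does, and the two have equal norm). If $E_j^\intercal r_0 = 0$ then $g_j = 0$ trivially. Because $\begin{bmatrix} E_1 & \cdots & E_\epsilon \end{bmatrix}$ is a column permutation of the identity, the maps $E_j^\intercal$ partition the coordinates of $r_0$; as $r_0 \neq 0$, at least one block $j^*$ has $E_{j^*}^\intercal r_0 \neq 0$. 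Provided I can show $g_{j^*} > 0$, the maximum of $\{g_j\}$ is strictly positive, so any maximizer $i_0$ has $g_{i_0} > 0$, which forces $E_{i_0}^\intercal r_0 \neq 0$, exactly what is needed.

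The main obstacle is precisely showing $g_{j^*} > 0$, i.e., that the pseudoinverse does not annihilate a nonzero $E_{j^*}^\intercal r_0$; a priori $E_{j^*}^\intercal r_0 \neq 0$ could lie in the kernel of $(E_{j^*}^\intercal A A^\intercal E_{j^*})^\dagger$. I would dispatch this with a column-space containment: $E_{j^*}^\intercal r_0 = (E_{j^*}^\intercal A)(x_0 - \Prj_{\mathcal{H}} x_0) \in \col(E_{j^*}^\intercal A) = \col(E_{j^*}^\intercal A A^\intercal E_{j^*})$, where the last equality is the standard identity $\col(MM^\intercal) = \col(M)$. Since $E_{j^*}^\intercal A A^\intercal E_{j^*}$ is symmetric positive semidefinite, its pseudoinverse is injective on its own column space, hence sends the nonzero vector $E_{j^*}^\intercal r_0$ to a nonzero vector, giving $g_{j^*} > 0$. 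Combining the three steps yields $\condPrb{\col(A^\intercal E_{i_0}) \perp x_0 - \Prj_{\mathcal{H}} x_0}{\mathcal{F}_1^0} = 0$ for every admissible $x_0$, so the supremum in \cref{def-exploratory-row} equals $0 = 1 - \pi$ with $\pi = 1$ and $N = 1$, establishing that Motzkin's Block Method is $1,1$-Exploratory.
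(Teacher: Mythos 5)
Your proof is correct and follows essentially the same route as the paper's: exhibit a block $j^*$ with $E_{j^*}^\intercal(b - Ax_0) \neq 0$, argue that the pseudoinverse cannot annihilate it so the maximized quantity is strictly positive, and conclude that the selected block $i_0$ has nonzero block residual, making the orthogonality event empty. The only difference is that the paper asserts the step ``$E_j^\intercal(b-Ax_0) \neq 0$ implies $(E_j^\intercal A A^\intercal E_j)^\dagger E_j^\intercal(b-Ax_0) \neq 0$'' without justification, whereas you supply the needed argument explicitly---via consistency, $E_j^\intercal(b - Ax_0) \in \col(E_j^\intercal A) = \col(E_j^\intercal A A^\intercal E_j)$, on which the pseudoinverse is injective---which is precisely the fact the paper's ``Hence'' tacitly relies on.
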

\begin{proof}
Let $\mathcal{H} = \lbrace x: Ax=b \rbrace$. For any $x_0 \neq \Prj_\mathcal{H} x_0$, there exists $j \in \lbrace 1,\ldots, \epsilon \rbrace$ such that $E_{j}^\intercal (b - Ax_0)  \neq 0$. Hence, $(E_j^\intercal A A^\intercal E_j)^\dagger E_j^\intercal (b - Ax_0) \neq 0$. Therefore, $E_{i_0}^\intercal (Ax_0 - b) \neq 0$.
\end{proof}

We can now conclude using \cref{theorem-convergence-row-finite} as follows.

\begin{theorem} \label{theorem-motzkin-block}
Let $A \in \mathbb{R}^{n \times d}$ and $b \in \mathbb{R}^n$ such that the linear system's solution set, $\mathcal{H}$, is nonempty. Let $x_0 \in \mathbb{R}^d$. Let $\lbrace x_k : k \in \mathbb{N} \rbrace$ be a sequence generated by the Motzkin's block method. Then, there exists a stopping time $\tau$ with finite expectation such that $x_{\tau} \in \mathcal{H}$; or there exists a sequence of non-negative stopping times $\lbrace \tau_{j} : j +1 \in \mathbb{N} \rbrace$ for which $\E{ \tau_{j} } \leq j \rnk{A}$,  and there exist $\gamma \in (0,1)$ and a sequence of random variables $\lbrace \gamma_j : j+1 \in \mathbb{N}\rbrace \subset (0,\gamma]$, such that
\begin{equation}
\Prb{ \bigcap_{j=0}^\infty \left\lbrace \norm{ x_{\tau_j} - \Prj_{\mathcal{H}} x_0 }_2^2 \leq \left( \prod_{\ell=0}^{j-1} \gamma_{\ell} \right) \norm{ x_0 - \Prj_{\mathcal{H}}x_0 }_2^2 \right\rbrace} = 1.
\end{equation}
\end{theorem}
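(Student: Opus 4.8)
The plan is to recognize that the two lemmas just established---that Motzkin's Block Method is Markovian and $1,1$-Exploratory---together with an elementary observation about the finiteness of the relevant column spaces, place us exactly in the hypotheses of \cref{theorem-convergence-row-finite}. First I would note that this method is a row-action RBAS in the sense of \cref{eqn-base-row-update,eqn-row-adaptive}, with the selection procedure $\varphi_R$ returning $W_k = E_{i_k}$ and a trivial summary state (so $\mathfrak{Z} = \lbrace \emptyset \rbrace$), since the choice of $i_k$ depends only on the current iterate $x_k$ through the argmax criterion.

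Next I would verify the finite-set hypothesis of \cref{theorem-convergence-row-finite}. Because $\begin{bmatrix} E_1 & \cdots & E_\epsilon \end{bmatrix}$ is a fixed column permutation of the identity, the selector $W_k$ always equals one of the $\epsilon$ fixed matrices $E_1,\ldots,E_\epsilon$. Consequently $\col(A^\intercal W_k) \in \lbrace \col(A^\intercal E_j) : j = 1,\ldots,\epsilon \rbrace$, a collection of at most $\epsilon$ subspaces, so the elements of $\lbrace \col(A^\intercal W_k) : k+1 \in \mathbb{N} \rbrace$ indeed take value in a finite set.

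Having confirmed that the method is Markovian, $1,1$-Exploratory, and has column spaces in a finite set, I would invoke \cref{theorem-convergence-row-finite} directly with $N=1$ and $\pi=1$. The dichotomy in that result yields either a finite-expectation stopping time $\tau$ with $x_\tau = \Prj_{\mathcal{H}} x_0 \in \mathcal{H}$, or the claimed sequence of stopping times together with the almost-sure geometric bound. The expected-iteration estimate specializes by substituting $N/\pi = 1$ into $j[(\rnk{A}-1)(N/\pi)+1]$, which collapses to $\E{\tau_j} \leq j[(\rnk{A}-1)+1] = j\,\rnk{A}$, matching the statement.

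There is essentially no analytical obstacle here, since all the heavy lifting---the generalized block Meany inequality and the stopping-time expectation bound---is carried out once and for all in \cref{theorem-convergence-row-finite}. The only point requiring genuine care is the $1,1$-Exploratory verification: one must check that whenever $x_0 \neq \Prj_{\mathcal{H}} x_0$ some block carries a nonzero residual, and that the argmax criterion based on $(E_j^\intercal A A^\intercal E_j)^\dagger E_j^\intercal (b - Ax_0)$ necessarily selects such a block. This is precisely the content of the preceding lemma, which hinges on the fact that $(E_j^\intercal A A^\intercal E_j)^\dagger E_j^\intercal (b - Ax_0) \neq 0$ exactly when $E_j^\intercal(b - Ax_0) \neq 0$, the pseudo-inverse preserving this nonvanishing on the relevant range.
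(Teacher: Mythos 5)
Your proposal matches the paper's own proof: establish the Markovian and $1,1$-Exploratory properties, note that the selectors range over the finite collection $\lbrace E_1,\ldots,E_\epsilon \rbrace$ so the column spaces lie in a finite set, and invoke \cref{theorem-convergence-row-finite} with $N=\pi=1$ to obtain $\E{\tau_j} \leq j\,\rnk{A}$. Your added care about the pseudo-inverse---that $(E_j^\intercal A A^\intercal E_j)^\dagger E_j^\intercal(b-Ax_0) \neq 0$ follows from $E_j^\intercal(b-Ax_0) \neq 0$ because consistency places the residual in the range of $E_j^\intercal A$---is exactly the implicit step in the paper's exploratory lemma, so the two arguments coincide.
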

\subsection{Agmon's Block Method} \label{subsection-agmon-block}

Let $A \in \mathbb{R}^{n \times d}$ and $b \in \mathbb{R}^n$ form a consistent linear system.
In this method, we begin with $\lbrace E_{j} : j =1,\ldots,\epsilon \rbrace$ where $\begin{bmatrix} E_1 &\cdots & E_\epsilon \end{bmatrix}$ is a column permutation of the $\mathbb{R}^{n \times n}$ identity matrix. At iteration $k$, we choose $i_k$ such that
\begin{equation}
i_k \in \argmax_{j \in \lbrace 1,\ldots, \epsilon \rbrace} \norm{ E_j^\intercal (b - Ax_k) }_2.
\end{equation}
Then, we perform the block Kaczmarz update using $E_{i_k}$. Note, this update rule is not the same as the max distance adaptive sketch-and-project \cite{gower2021}.

\begin{lemma}
Agmon's Block Method is Markovian.
\end{lemma}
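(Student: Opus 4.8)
The plan is to mirror the proofs given earlier for the other deterministic selection rules (e.g., the vector Agmon method and Motzkin's Block Method): I will show that the pair $(W_k,\zeta_k)$ is a deterministic, measurable function of the most recent iterate $x_k$ alone, so that conditioning on the full history $\mathcal{F}_{k+1}^k$ gives the same conditional law as conditioning on $\mathcal{F}_1^k = \sigma(\zeta_{k-1},x_k)$. Concretely, I would take $M = 1$ in \cref{def-markovian-row}, so that $\mathcal{F}_{\min\lbrace M,\, k+1\rbrace}^k = \mathcal{F}_1^k$, and argue that both conditional probabilities reduce to the same indicator.

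First I would note that the method carries no auxiliary state beyond the iterate, so $\mathfrak{Z} = \lbrace \emptyset \rbrace$ and $\zeta_k = \emptyset$ deterministically; this component is trivially $\mathcal{F}_1^k$-measurable. The substantive step is then the selection of $W_k = E_{i_k}$. Since
$i_k \in \argmax_{j \in \lbrace 1,\ldots,\epsilon\rbrace} \norm{ E_j^\intercal (b - Ax_k)}_2$,
and ties are resolved by a fixed rule (taking the smallest index, as in the analogous earlier examples), the map $x_k \mapsto i_k$ is a well-defined, deterministic, Borel-measurable function of $x_k$. Hence $W_k = E_{i_k}$ is $\sigma(x_k)$-measurable, and in particular measurable with respect to $\mathcal{F}_1^k$.

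Therefore, for any measurable $\mathcal{W} \subset \mathbb{R}^{n \times n_k}$ and $\mathcal{Z} \subset \mathfrak{Z}$, the event $\lbrace W_k \in \mathcal{W},\,\zeta_k \in \mathcal{Z}\rbrace$ is itself $\mathcal{F}_1^k$-measurable, so both $\condPrb{W_k \in \mathcal{W},\zeta_k \in \mathcal{Z}}{\mathcal{F}_{k+1}^k}$ and $\condPrb{W_k \in \mathcal{W},\zeta_k \in \mathcal{Z}}{\mathcal{F}_1^k}$ equal the same indicator $\1{W_k \in \mathcal{W},\zeta_k \in \mathcal{Z}}$, giving the required equality. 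I do not expect any real obstacle here: the only point requiring a moment of care is confirming that the deterministic tie-breaking makes $i_k$ a genuine measurable selection, which is immediate once the smallest-index convention is fixed. The argument is essentially identical to that for Motzkin's Block Method, with $\norm{E_j^\intercal(b - Ax_k)}_2$ playing the role of the selection functional.
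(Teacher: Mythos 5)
Your proof is correct and follows the same route as the paper, whose entire argument is that the selection of $E_{i_k}$ depends only on $x_k$, hence the procedure is Markovian. Your additional care about measurability and the fixed tie-breaking rule for the $\argmax$ simply makes explicit what the paper leaves implicit.
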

\begin{proof}
The selection of $E_{i_k}$ only depends on $x_k$. Hence, the procedure is Markovian.
\end{proof}

\begin{lemma}
Agmon's Block Method is $1,1$-Exploratory.
\end{lemma}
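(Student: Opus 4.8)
The plan is to mirror the argument used for Motzkin's Block Method, since Agmon's Block Method is also deterministic: the only randomness in $\mathcal{F}_1^0$ is absorbed, and the selection $i_0$ is a fixed function of $x_0$. Thus I will show the event $\lbrace \col(A^\intercal E_{i_0}) \perp x_0 - \Prj_{\mathcal{H}} x_0 \rbrace$ is in fact impossible whenever $x_0 \neq \Prj_{\mathcal{H}} x_0$, which immediately yields $N=1$, $\pi=1$.

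First I would set $\mathcal{H} = \lbrace x : Ax = b \rbrace$ and record the standard fact that $x_0 - \Prj_{\mathcal{H}} x_0 \in \row(A)$, so that $A(x_0 - \Prj_{\mathcal{H}} x_0) = A x_0 - b$; in particular $x_0 \neq \Prj_{\mathcal{H}} x_0$ forces $A x_0 - b \neq 0$. Next, because $\begin{bmatrix} E_1 & \cdots & E_\epsilon \end{bmatrix}$ is a column permutation of the identity, the blocks $\lbrace E_j^\intercal (A x_0 - b) \rbrace_{j=1}^\epsilon$ exhaust the entries of the nonzero vector $A x_0 - b$, so there exists $j$ with $\norm{ E_j^\intercal (b - A x_0) }_2 > 0$. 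Consequently the maximizer $i_0$ in the selection rule satisfies $\norm{ E_{i_0}^\intercal (b - A x_0) }_2 > 0$, i.e. $E_{i_0}^\intercal (A x_0 - b) \neq 0$.

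Finally I would translate this into the perpendicularity statement of \cref{def-exploratory-row}: since $\col(A^\intercal E_{i_0}) \perp (x_0 - \Prj_{\mathcal{H}} x_0)$ is equivalent to $E_{i_0}^\intercal A (x_0 - \Prj_{\mathcal{H}} x_0) = E_{i_0}^\intercal (A x_0 - b) = 0$, the previous step shows this perpendicularity cannot occur. Hence the conditional probability in the definition equals zero for every admissible $x_0$ and $\zeta_{-1}$, so the supremum is $0 \le 1 - 1$, establishing $1,1$-Exploratory.

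There is no real obstacle here; the argument is deterministic and combinatorial. The only point requiring a little care is the equivalence in the last paragraph between the definition's condition $\col(A^\intercal W_0) \perp (x_0 - \Prj_{\mathcal{H}} x_0)$ and the nonvanishing of the selected block's residual $E_{i_0}^\intercal (A x_0 - b)$, which hinges on the identity $A(x_0 - \Prj_{\mathcal{H}} x_0) = A x_0 - b$ noted in the first step.
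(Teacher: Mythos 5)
Your proposal is correct and follows essentially the same route as the paper's proof: both argue that a nonzero residual must have some nonzero block $E_j^\intercal(b - Ax_0)$, so the maximizing block $E_{i_0}$ also has a nonzero residual, making the perpendicularity event impossible. Your write-up merely makes explicit two steps the paper leaves implicit, namely the identity $A(x_0 - \Prj_{\mathcal{H}} x_0) = Ax_0 - b$ and the equivalence between $\col(A^\intercal E_{i_0}) \perp (x_0 - \Prj_{\mathcal{H}} x_0)$ and $E_{i_0}^\intercal (Ax_0 - b) = 0$.
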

\begin{proof}
Let $\mathcal{H} = \lbrace x: Ax=b \rbrace$. For any $x_0 \neq \Prj_\mathcal{H} x_0$, there exists $j \in \lbrace 1,\ldots, \epsilon \rbrace$ such that $E_{j}^\intercal (b - Ax_0)  \neq 0$. Therefore, $E_{i_0}^\intercal (Ax_0 - b) \neq 0$.
\end{proof}

We can now conclude using \cref{theorem-convergence-row-finite} as follows.

\begin{theorem} \label{theorem-agmon-block}
Let $A \in \mathbb{R}^{n \times d}$ and $b \in \mathbb{R}^n$ such that the linear system's solution set, $\mathcal{H}$, is nonempty. Let $x_0 \in \mathbb{R}^d$. Let $\lbrace x_k : k \in \mathbb{N} \rbrace$ be a sequence generated by the Agmon's block method. Then, there exists a stopping time $\tau$ with finite expectation such that $x_{\tau} \in \mathcal{H}$; or there exists a sequence of non-negative stopping times $\lbrace \tau_{j} : j +1 \in \mathbb{N} \rbrace$ for which $\E{ \tau_{j} } \leq j \rnk{A}$,  and there exist $\gamma \in (0,1)$ and a sequence of random variables $\lbrace \gamma_j : j+1 \in \mathbb{N}\rbrace \subset (0,\gamma]$, such that
\begin{equation}
\Prb{ \bigcap_{j=0}^\infty \left\lbrace \norm{ x_{\tau_j} - \Prj_{\mathcal{H}} x_0 }_2^2 \leq \left( \prod_{\ell=0}^{j-1} \gamma_{\ell} \right) \norm{ x_0 - \Prj_{\mathcal{H}}x_0 }_2^2 \right\rbrace} = 1.
\end{equation}
\end{theorem}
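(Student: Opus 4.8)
The plan is to read off the conclusion directly from \cref{theorem-convergence-row-finite} once its hypotheses are verified, since the two lemmas immediately preceding already establish that Agmon's Block Method is Markovian and $1,1$-Exploratory. The only remaining hypothesis to check is that the column spaces $\lbrace \col(A^\intercal W_k) : k+1 \in \mathbb{N} \rbrace$ take value in a finite set.

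First I would note that at every iteration $W_k = E_{i_k}$ is drawn from the fixed, finite collection $\lbrace E_1,\ldots,E_\epsilon \rbrace$ obtained from a column permutation of the $\mathbb{R}^{n \times n}$ identity, where $i_k$ is the selected argmax index. Therefore $\col(A^\intercal W_k)$ always lies in the set $\lbrace \col(A^\intercal E_j) : j=1,\ldots,\epsilon \rbrace$, which has at most $\epsilon$ elements. This verifies the finiteness hypothesis directly from the block structure, with no dependence on $A$, $b$, or the iterates.

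Second, having confirmed all hypotheses with $N=1$ and $\pi=1$, I would invoke \cref{theorem-convergence-row-finite}. Its expectation bound $\E{\tau_j} \leq j[(\rnk{A} - 1)(N/\pi)+1]$ collapses to $\E{\tau_j} \leq j\,\rnk{A}$ upon substituting $N/\pi=1$, matching the stated rate, and its second alternative furnishes the constant $\gamma \in (0,1)$, the random variables $\lbrace \gamma_j \rbrace \subset (0,\gamma]$, and the almost-sure exponential decay. Translating the common-form iterate back through the identification $y_k = x_k - \Prj_{\mathcal{H}} x_0$ of \cref{eqn-common-y-def} then reproduces the displayed guarantee.

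There is no genuine obstacle: the argument is a pure application of the general theory, and the sole piece of real content---the finiteness of the column-space set---is immediate because the blocks are fixed in advance. The proof is thus essentially a one-line appeal to \cref{theorem-convergence-row-finite}, exactly parallel to the treatment of Random Permutation Block Kaczmarz and the other finite-set examples.
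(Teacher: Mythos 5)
Your proposal is correct and follows essentially the same route as the paper: verify the Markovian and $1,1$-Exploratory properties, then invoke \cref{theorem-convergence-row-finite} with $N=\pi=1$ so that the expectation bound collapses to $j\,\rnk{A}$. Your explicit check that $\col(A^\intercal W_k)$ ranges over the finite set $\lbrace \col(A^\intercal E_j) : j = 1,\ldots,\epsilon \rbrace$ is a point the paper leaves implicit for this example, so if anything your write-up is slightly more complete.
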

\subsection{Adaptive Sketch-and-Project} \label{subsection-adaptive-sketch-and-project}

Let $\bar A \in \mathbb{R}^{n \times d}$ and $b \in \mathbb{R}^n$ form a consistent system. Let $B$ be a positive definite symmetric matrix. 
In adaptive sketch-and-project, we first generate a set of sketching matrices $\lbrace S_j : j=1,\ldots,\epsilon \rbrace \subset \mathbb{R}^{n \times p}$. Then, we initialize with a vector $z_0 \in \mathbb{R}^d$ and perform the update
\begin{equation}
z_{k+1} = z_k - B^{-1} \bar{A}^\intercal S_{i_k} (S_{i_k}^\intercal \bar A B^{-1} \bar{A}^\intercal S_{i_k} )^\dagger S_{i_k}^\intercal (\bar A z_k - b),
\end{equation}
where $i_k$ is selected either deterministically or by sampling from a distribution that depends on the functions
\begin{equation}
f_j(z_k) = (\bar A z_k - b)^\intercal S_j (S_j^\intercal \bar A B^{-1} \bar{A}^\intercal S_{i_k} )^\dagger S_{i_k}^\intercal (\bar A z_k - b),~j=1,\ldots,\epsilon.
\end{equation}
To fix a procedure, we consider selecting $i_k \in \argmax_{j} f_j(z_k)$, which we refer to as the maximum approach. Note, random procedures will follow a similar pattern to what we have seen previously.

To rewrite this in our notation, we let $x_k = B^{1/2} z_k$ and $A = \bar A B^{-1/2}$. Then, the update of $x_k$ is
\begin{equation}
x_{k+1} = x_k - A^\intercal  S_{i_k} (S_{i_k}^\intercal  A B^{-1} A^\intercal S_{i_k} )^\dagger S_{i_k}^\intercal (A x_k - b),
\end{equation}
and
\begin{equation}
f_j(x_k) = ( A x_k - b)^\intercal S_j (S_j^\intercal AA^\intercal S_{i_k} )^\dagger S_{i_k}^\intercal (A x_k - b),~j=1,\ldots.\epsilon.
\end{equation}
Thus, we see that the inner product is readily accounted for in our framework. 

While the sketch-and-project framework is quite general, we have already covered many of the interesting special cases already. Therefore, we will focus on a special set of sketching matrices that allow us to effectively do away with the exactness assumption for the sketch-and-project framework (see \cite[Assumption 1]{gower2021} and \cite[Assumption 2]{richtarik2020}). That is, we will focus on the case where $\lbrace S_j : j =1,\ldots,\epsilon \rbrace \subset \mathbb{R}^{n \times p}$ are independently drawn from a distribution that satisfies the Johnson-Lindenstrauss Property: there exist $C, w > 0$ such that for all $\delta \geq 0$ and for any $r \in \mathbb{R}^n$
\begin{equation}
\Prb{ \left\vert \norm{ S_1^\intercal r}_2^2 - \norm{ r }_2^2 \right\vert > \delta \norm{ r}_2^2 } 
 < 2\exp\left(-Cp\delta\min\left\lbrace \delta, w^{-1} \right\rbrace  \right).
\end{equation}

\begin{remark} 
This condition is satisfied by many interesting distributions such as Gaussian Matrices \cite{dasgupta2003}, Achlioptas Sketches \cite{achlioptas2001}, and Fast Johnson Lindenstrauss Sketches \cite{ailon2009}. Moreover, values of $C$ and $w$ are summarized in \cite[Table 2]{pritchard2022}.
\end{remark}

Moreover, for such matrices, the exactness condition \cite[Assumption 2]{richtarik2020} is effectively moot as we now explain. 
\begin{lemma} \label{lemma-jl-bound}
Suppose $\lbrace S_{j} : j=1,\ldots,\epsilon \rbrace \subset \mathbb{R}^{n \times p}$ are drawn independnetly from a distribution that satisfies the Johnson-Lindenstrauss Property. If for some $\rho > 0$
\begin{equation}
p > \frac{(\rho + 1)\log(2)}{0.999C} \max \left\lbrace \frac{1}{0.999}, w \right\rbrace,
\end{equation}
then
\begin{equation}
\sup_{ v \in \mathbb{R}^{n} \setminus \lbrace 0 \rbrace } \Prb{ \bigcap_{j=1}^\epsilon \left\lbrace \norm{ S_j^\intercal v }_2^2 = 0 \right\rbrace } \leq 2^{-\epsilon \rho }.
\end{equation}
\end{lemma}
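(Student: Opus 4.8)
The plan is to fix an arbitrary nonzero $v$, bound the single-sketch event $\{\norm{S_j^\intercal v}_2^2 = 0\}$ by the Johnson-Lindenstrauss tail estimate, and then use independence across $j$ to factorize the intersection probability, which the hypothesized lower bound on $p$ converts into the target $2^{-\epsilon\rho}$.

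First I would observe that the event $\{ \norm{S_j^\intercal v}_2^2 = 0 \}$ forces $\vert \norm{S_j^\intercal v}_2^2 - \norm{v}_2^2 \vert = \norm{v}_2^2$, so in particular it is contained in the event $\{ \vert \norm{S_j^\intercal v}_2^2 - \norm{v}_2^2 \vert > 0.999\,\norm{v}_2^2 \}$. I deliberately use the threshold $\delta = 0.999$ rather than $\delta = 1$: the stated Johnson-Lindenstrauss bound is a strict inequality governing events of the form $\{ \vert \cdot \vert > \delta\norm{v}_2^2 \}$, whereas $\{\vert \cdot \vert = \norm{v}_2^2\}$ is not of that form, and shrinking $\delta$ just below one repairs this mismatch without materially weakening the estimate. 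Applying the property with $r = v$ and this choice of $\delta$ then gives
\begin{equation}
\Prb{ \norm{S_j^\intercal v}_2^2 = 0 } < 2\exp\left( -0.999\,Cp\,\min\{0.999,\, w^{-1}\} \right),
\end{equation}
and this bound is uniform over all $v \neq 0$, since the constants $C$ and $w$ do not depend on $r$.

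Next I would invoke the independence of $\{S_j : j = 1,\ldots,\epsilon\}$ to factorize, obtaining
\begin{equation}
\Prb{ \bigcap_{j=1}^\epsilon \left\lbrace \norm{S_j^\intercal v}_2^2 = 0 \right\rbrace } < \left( 2\exp\left( -0.999\,Cp\,\min\{0.999,\, w^{-1}\} \right) \right)^{\epsilon}.
\end{equation}
To finish, I would rewrite the hypothesis on $p$ using the identity $1/\min\{0.999,\, w^{-1}\} = \max\{1/0.999,\, w\}$, which shows that $p > \frac{(\rho+1)\log(2)}{0.999\,C}\max\{1/0.999,\, w\}$ is exactly equivalent to $0.999\,Cp\,\min\{0.999,\, w^{-1}\} > (\rho+1)\log(2)$, and hence that $2\exp(-0.999\,Cp\,\min\{0.999,\, w^{-1}\}) < 2 \cdot 2^{-(\rho+1)} = 2^{-\rho}$. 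Raising this to the $\epsilon$-th power and taking the supremum over $v \neq 0$ yields the claimed bound.

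There is no substantial obstacle in this argument; it is essentially a union-free, independence-based application of the tail bound. The only points demanding care are the strict-versus-nonstrict inequality bookkeeping that motivates the threshold $\delta = 0.999$, and the elementary algebraic identity relating the reciprocal of $\min\{0.999,\, w^{-1}\}$ to $\max\{1/0.999,\, w\}$, which is precisely what aligns the exponent with the stated lower bound on $p$.
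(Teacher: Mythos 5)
Your proof is correct and follows essentially the same route as the paper's: bound the event $\lbrace \Vert S_j^\intercal v \Vert_2^2 = 0 \rbrace$ by the Johnson--Lindenstrauss deviation event with $\delta = 0.999$, factorize by independence, and convert the hypothesis on $p$ into the bound $2^{-\epsilon\rho}$ via the identity $1/\min\lbrace 0.999, w^{-1}\rbrace = \max\lbrace 1/0.999, w\rbrace$. The only differences are expository (you bound each factor before factorizing and spell out the strict-inequality bookkeeping), not mathematical.
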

\begin{proof}
For any $v \in \mathbb{R}^n$ such that $v \neq 0$, we apply independence and the Johnson-Lindenstrauss Property with $\delta = 0.999$ to proceed as follows.
\begin{align}
\Prb{ \bigcap_{j=1}^\epsilon \left\lbrace \norm{ S_j^\intercal v }_2^2 = 0 \right\rbrace } 
&=\prod_{j=1}^\epsilon \Prb{ \norm{ S_j^\intercal v }_2^2 = 0 } \\
&\leq \prod_{j=1}^\epsilon \Prb{ \left\vert \norm{ S_1^\intercal v}_2^2 - \norm{ v }_2^2 \right\vert > 0.999 \norm{v}_2^2 } \\
&\leq \prod_{j=1}^\epsilon \exp( - 0.999 C p \min\lbrace 0.999, w^{-1} \rbrace + \log(2) ) \\
&\leq \exp( -\epsilon \rho \log(2) ),
\end{align}
where the last line follows for the constraint on $p$.
\end{proof}

To demonstrate how we use this, consider Achlioptas Sketches for which $C=0.23467$ and $w =0.1127$. If we choose $\rho = 4$, then we can choose $p=15$. Moreover, if we sample $\epsilon = 20$ such matrices, then the probability bound in the preceding lemma is bounded by $10^{-30}$. Thus, if we used $20$ independently sampled Achlioptas Sketches of embedding dimension $15$, then the probability that each of them would find $S_j^\intercal (Ax_0 - b) = 0$ when $Ax_0 \neq b$ is less than $10^{-30}$. To explain the scale of this, in expectation, we would need to repeat this process (independently) a trillion times a second for the remaining life of the sun (8 billion years, conservatively) before we generated a sample (in exact arithmetic) for which we could find an $x_0$ such that $Ax_0 \neq b$ and $S_j^\intercal (Ax_0 - b) = 0$ for all $j=1,\ldots,20$.

We can now verify the relevant properties to analyze this case of maximum adaptive sketch-and-project.

\begin{lemma}
For a fixed $\lbrace S_1,\ldots,S_{\epsilon} \rbrace$, Maximum Adaptive Sketch-and-Project is Markovian.
\end{lemma}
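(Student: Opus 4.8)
The plan is to reduce the claim to the observation that, once the sketching matrices $\{S_1,\ldots,S_\epsilon\}$ are fixed in advance, the entire selection mechanism is a deterministic, memoryless function of the current iterate $x_k$ alone. This immediately forces the constant $M$ in \cref{def-markovian-row} to be $1$, and the argument will mirror the deterministic block examples already handled (Motzkin's and Agmon's block methods), where the same ``the selection of $E_{i_k}$ only depends on $x_k$'' reasoning was used.

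Concretely, I would first identify the random quantities in our framework: here $W_k = S_{i_k}$, and since the sketches are chosen once at the outset there is no auxiliary summary state to propagate, so $\mathfrak{Z}$ may be taken to be a singleton (say $\{\emptyset\}$) with $\zeta_k = \emptyset$ for every $k$. Thus verifying the Markovian property reduces to showing that $W_k = S_{i_k}$ is a measurable function of $x_k$ only. To see this, note that with $A$, $b$, $B$, and $S_1,\ldots,S_\epsilon$ all fixed, each $f_j(x_k) = (Ax_k - b)^\intercal S_j (S_j^\intercal A B^{-1} A^\intercal S_j)^\dagger S_j^\intercal (Ax_k - b)$ is a fixed Borel function of $x_k$, and the maximum approach takes $i_k \in \argmax_j f_j(x_k)$; after fixing a deterministic tie-breaking convention (e.g.\ the smallest maximizing index, as in the vector cases), the map $x_k \mapsto i_k$ is single-valued and measurable, so $W_k = S_{i_k}$ is $\mathcal{F}_1^k$-measurable. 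Because $(W_k,\zeta_k)$ is then a deterministic function of $x_k$, for any measurable $\mathcal{W}$ and $\mathcal{Z}$ both sides of the defining identity in \cref{def-markovian-row} collapse to the same $x_k$-determined indicator, so they coincide and $M = 1$ suffices.

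The only genuine subtlety — and a point requiring care rather than a real obstacle — is the possible multivaluedness of $\argmax_j f_j(x_k)$: one must pin down a fixed tie-breaking rule so that $i_k$ is a bona fide deterministic function of $x_k$ and so that the measurability of $x_k \mapsto i_k$ is unambiguous. Given the finiteness of the index set $\{1,\ldots,\epsilon\}$ and the measurability of each $f_j$, this is entirely routine, and the remainder of the proof is a one-line appeal to the fact that a quantity measurable with respect to $\mathcal{F}_1^k$ is trivially measurable with respect to the larger $\mathcal{F}_{k+1}^k$.
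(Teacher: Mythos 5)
Your proposal is correct and follows essentially the same route as the paper, whose entire proof is the single observation that the selection of $S_{i_k}$ depends only on $x_k$. You simply fill in the details the paper leaves implicit (measurability of the $f_j$, a deterministic tie-breaking rule for the $\argmax$, and the resulting collapse of both sides of the defining identity in \cref{def-markovian-row} to an $x_k$-determined quantity), which is a sound elaboration rather than a different argument.
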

\begin{proof}
The selection method for $S_{i_k}$ only depends on $x_k$, hence it is Markovian.
\end{proof}

\begin{lemma}
Suppose $\lbrace S_1,\ldots,S_{\epsilon} \rbrace$ are sampled independently from a distribution satisfying the Johnson-Lindenstrauss Property with
\begin{equation}
p > \frac{(\rho + 1)\log(2)}{0.999C} \max \left\lbrace \frac{1}{0.999}, w \right\rbrace,
\end{equation}
for some choice of $\rho > 0$. Maximum Adaptive Sketch-and-Project is $1,1$-Exploratory with probability at least $1-2^{-\epsilon \rho}$.
\end{lemma}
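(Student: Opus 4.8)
The plan is to reduce the exploration-failure event to the event that \emph{every} sketch annihilates the residual, and then to invoke \cref{lemma-jl-bound}. First I would record the relevant identities. Write $y_0 = x_0 - \Prj_{\mathcal{H}} x_0$ and $r_0 = A x_0 - b$. Because the system is consistent, $A \Prj_{\mathcal{H}} x_0 = b$, so $r_0 = A y_0$; moreover $y_0 \in \row(A) = \nullsp{A}^\perp$, on which $A$ acts injectively, so $x_0 \neq \Prj_{\mathcal{H}} x_0$ forces $y_0 \neq 0$ and hence $r_0 \neq 0$. This is the only place where consistency and the geometry of $\mathcal{H}$ enter.

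The key step is to identify the bad event $\lbrace \col(A\T W_0) \perp y_0 \rbrace$ explicitly. Recalling from \cref{eqn-base-proj-def} that $\Prj_j := A\T S_j (S_j\T A A\T S_j)^\dagger S_j\T A$ is the orthogonal projection onto $\col(A\T S_j)$, a short computation using $A y_0 = r_0$ gives $f_j(x_0) = r_0\T S_j (S_j\T A A\T S_j)^\dagger S_j\T r_0 = y_0\T \Prj_j y_0 = \norm{\Prj_j y_0}_2^2$. Since $\Prj_j$ is an orthogonal projection, $f_j(x_0) = 0$ iff $\Prj_j y_0 = 0$ iff $y_0 \perp \col(A\T S_j)$ iff $S_j\T r_0 = 0$. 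In the maximum approach we take $W_0 = S_{i_0}$ with $i_0 \in \argmax_j f_j(x_0)$, so $f_{i_0}(x_0) = \max_j f_j(x_0)$. Therefore $\col(A\T W_0) \perp y_0$ occurs exactly when $\max_j f_j(x_0) = 0$, i.e.\ when $f_j(x_0) = 0$ for every $j$, i.e.\ precisely on the event $\bigcap_{j=1}^\epsilon \lbrace \norm{S_j\T r_0}_2^2 = 0 \rbrace$.

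It then remains to apply \cref{lemma-jl-bound} with $v = r_0$. Since $r_0 \neq 0$ and the sketches satisfy the Johnson--Lindenstrauss property with the stated lower bound on $p$, the lemma yields $\Prb{ \bigcap_{j=1}^\epsilon \lbrace \norm{S_j\T r_0}_2^2 = 0 \rbrace } \leq 2^{-\epsilon \rho}$ uniformly over the direction $r_0$. Consequently, with probability at least $1 - 2^{-\epsilon\rho}$ over the draw of $\lbrace S_1, \ldots, S_\epsilon \rbrace$, some sketch satisfies $S_j\T r_0 \neq 0$, so that $\max_j f_j(x_0) > 0$ and the maximum rule selects an $S_{i_0}$ with $\col(A\T W_0) \not\perp y_0$; this is exactly the $1,1$-Exploratory conclusion at the asserted confidence level.

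The main obstacle is the reduction in the second paragraph rather than any probabilistic estimate, since the probabilistic content is entirely delegated to \cref{lemma-jl-bound}. The crux is the observation that, under the argmax selection rule, failure of exploration requires even the single best sketch to yield zero progress, which through the identity $f_j(x_0) = \norm{\Prj_j y_0}_2^2$ forces every sketch to be orthogonal to the residual simultaneously. I would take particular care to state the equivalence $f_j(x_0) = 0 \iff S_j\T r_0 = 0$ cleanly (using that $\Prj_j$ is an orthogonal projection, so that its quadratic form vanishes iff $\Prj_j y_0 = 0$), as this is what converts a statement about the scalar progress functions $f_j$ into the norm condition $\norm{S_j\T r_0}_2^2 = 0$ handled by \cref{lemma-jl-bound}.
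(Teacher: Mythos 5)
Your proposal is correct and takes essentially the same route as the paper's proof: both reduce failure of exploration under the argmax rule to the event $\bigcap_{j=1}^\epsilon \lbrace S_j\T (Ax_0 - b) = 0 \rbrace$ for the fixed $x_0$ under consideration, and then invoke \cref{lemma-jl-bound} with $v = Ax_0 - b$ to bound that event's probability by $2^{-\epsilon\rho}$. The only difference is that you spell out the equivalence $f_j(x_0) = \norm{\Prj_j y_0}_2^2 = 0 \iff S_j\T(Ax_0-b) = 0$, which the paper's terse argument leaves implicit when it asserts that the selected sketch satisfies $S_{i_0}\T(Ax_0 - b) \neq 0$.
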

\begin{proof}
Suppose $x_0$ is such that $Ax_0 \neq b$. Then, off of an event of probability at most $2^{-\epsilon \rho}$, we will choose $\lbrace S_1,\ldots,S_\epsilon \rbrace$ such that $\exists j \in \lbrace 1,\ldots,\epsilon \rbrace$ for which $S_j^\intercal (Ax_0 - b) \neq 0$ on this event (up to measure zero). Therefore, $S_{i_0}^\intercal (Ax_0 - b) \neq 0$ with probability one on this event (up to measure zero). The claim follows.
\end{proof}

We can now apply \cref{theorem-convergence-row-finite} to conclude as follows.
\begin{theorem} \label{theorem-adaptive-sketch-and-project}
Suppose $\lbrace S_1,\ldots,S_{\epsilon} \rbrace$ are sampled independently from a distribution satisfying the Johnson-Lindenstrauss Property with
\begin{equation}
p > \frac{(\rho + 1)\log(2)}{0.999C} \max \left\lbrace \frac{1}{0.999}, w \right\rbrace,
\end{equation}
for some choice of $\rho > 0$. Let $A \in \mathbb{R}^{n \times d}$ and $b \in \mathbb{R}^n$ such that the linear system's solution set, $\mathcal{H}$, is nonempty. Let $x_0 \in \mathbb{R}^d$. Let $\lbrace x_k : k \in \mathbb{N} \rbrace$ be a sequence generated by Maximum Adaptive Sketch-and-Project. Then, on an event of probability at least $1 - 2^{-\epsilon \rho}$, either there exists a stopping time $\tau$ such that $\incond{ \tau }{\sigma(S_1,\ldots,S_\epsilon)} < \infty$ and such that $x_{\tau} \in \mathcal{H}$; or there exists a sequence of stopping times $\lbrace \tau_{j} : j+1 \in \mathbb{N} \rbrace$ such that $\incond{ \tau_j }{\sigma(S_1,\ldots,S_\epsilon)} \leq j \rnk A$ and $\exists \gamma \in (0,1)$ (depending on $\lbrace S_1,\ldots,S_\epsilon \rbrace$) such that 
\begin{equation}
\norm{ x_{\tau_j} - \Prj_\mathcal{H} x_0 }_2^2 \leq \gamma^j \norm{ x_0 - \Prj_\mathcal{H} x_0 }_2^2
\end{equation}
up to a set of measure zero.
\end{theorem}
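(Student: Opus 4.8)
The plan is to reduce the theorem to \cref{theorem-convergence-row-finite} by conditioning on the $\sigma$-algebra $\sigma(S_1,\ldots,S_\epsilon)$ generated by the one-time draw of the sketching matrices, and then verifying that, on a high-probability event for this draw, the resulting conditional process is a Markovian, $1,1$-Exploratory row-action RBAS whose projection column spaces take only finitely many values. First I would invoke the reformulation already carried out in this subsection: with $x_k = B^{1/2}z_k$ and $A = \bar A B^{-1/2}$, the iteration is exactly \cref{eqn-base-row-update} with $W_k = S_{i_k}$, so the method is a genuine row-action RBAS and the Euclidean quantities $\norm{x_{\tau_j} - \Prj_\mathcal{H} x_0}_2$ appearing in the conclusion are the $B$-norm quantities of the original iterates, the change of inner product being absorbed as in the remark following \cref{eqn-base-proj-def}.

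Next I would isolate the good event $\mathcal{E}$ over the sketch draw on which the three RBAS hypotheses hold simultaneously. The method is Markovian for any fixed choice of sketches by the preceding lemma, since $S_{i_k}$ is selected deterministically from $x_k$. It is $1,1$-Exploratory on $\mathcal{E}$ by the preceding lemma, whose proof rests on \cref{lemma-jl-bound} and the stated lower bound on the embedding dimension $p$, giving $\Prb{\mathcal{E}^c} \le 2^{-\epsilon\rho}$. Finally, because there are only $\epsilon$ sketching matrices, the set $\lbrace \col(A^\intercal W_k) : k+1 \in \mathbb{N} \rbrace$ is contained in the finite set $\lbrace \col(A^\intercal S_j) : j = 1,\ldots,\epsilon \rbrace$. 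With these facts in hand I would apply \cref{theorem-convergence-row-finite} conditionally on $\sigma(S_1,\ldots,S_\epsilon)$ and restricted to $\mathcal{E}$: substituting $N = \pi = 1$ into its stopping-time bound gives $\incond{\tau_j}{\sigma(S_1,\ldots,S_\epsilon)} \le j[(\rnk A - 1) + 1] = j\,\rnk A$, and the reduction factor produced there is the maximum of Meany's constant over the finite collection $\lbrace \col(A^\intercal S_j) \rbrace$, hence a function of the sketches alone, matching the claimed $\gamma$ (depending on $\lbrace S_1,\ldots,S_\epsilon \rbrace$). The ``up to a set of measure zero'' qualifier absorbs the probability-one events internal to the conditional conclusion.

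The main obstacle is making the conditioning rigorous against the uniform way the exploratory property is consumed by the convergence machinery. Inside \cref{theorem-nu-finite} the $1,1$-Exploratory property is invoked at arbitrary iterates (at a stopping time $\xi$), so what is truly needed on $\mathcal{E}$ is that a single draw of the sketches resolves \emph{every} nonzero residual encountered, not merely one fixed direction; since consistency forces all residuals into $\col(A)$, the correct good event is $\mathcal{E} = \lbrace \col(A) \cap \bigcap_{j=1}^\epsilon \ker(S_j^\intercal) = \lbrace 0 \rbrace \rbrace$, on which the method is deterministically $1,1$-Exploratory. The delicate point is therefore the passage from the pointwise failure bound of \cref{lemma-jl-bound} (fixed $v$) to the subspace statement $\Prb{\col(A) \cap \bigcap_j \ker(S_j^\intercal) \ne \lbrace 0 \rbrace} \le 2^{-\epsilon\rho}$. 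I would handle this by letting $U$ have orthonormal columns spanning $\col(A)$, recognizing $\mathcal{E}^c$ as the rank-deficiency event for the stacked matrix with blocks $S_j^\intercal U$, and then controlling that event while preserving the $2^{-\epsilon\rho}$ constant; confirming that this upgrade does not degrade the probability bound is the crux, as everything downstream is a direct application of the finite-set theory.
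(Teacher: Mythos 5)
Your proposal follows the paper's own proof essentially step for step: the paper likewise treats the sketches as a one-time draw (conditioning on $\sigma(S_1,\ldots,S_\epsilon)$), proves a lemma that the method is Markovian because $S_{i_k}$ is a deterministic function of $x_k$, proves a lemma that it is $1,1$-Exploratory off an event of probability at most $2^{-\epsilon\rho}$ using \cref{lemma-jl-bound}, and then concludes by applying \cref{theorem-convergence-row-finite} with $N=\pi=1$, which yields exactly the bound $\incond{\tau_j}{\sigma(S_1,\ldots,S_\epsilon)} \leq j \rnk{A}$ and a $\gamma$ determined by the finite collection $\lbrace \col(A^\intercal S_j) : j=1,\ldots,\epsilon \rbrace$, hence depending only on the sketches.

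The one point worth discussing is the ``crux'' you isolate at the end. You are right that what the downstream machinery actually consumes is the subspace event $\mathcal{E} = \lbrace \col(A) \cap \bigcap_{j=1}^{\epsilon} \ker(S_j^\intercal) = \lbrace 0 \rbrace \rbrace$: \cref{theorem-nu-finite} invokes the exploratory property at arbitrary stopped iterates, whose residuals are themselves functions of the sketches and therefore cannot be plugged into the fixed-$v$ bound of \cref{lemma-jl-bound}. Upgrading the pointwise bound to $\Prb{\mathcal{E}^c} \leq 2^{-\epsilon\rho}$ is indeed not automatic. However, the paper does not perform this upgrade either: its exploratory lemma fixes a single $x_0$, applies \cref{lemma-jl-bound} to $v = Ax_0 - b$, and asserts that the claim follows, leaving the uniformity over all $x_0$ (equivalently, over all nonzero vectors of $\col(A)$) implicit. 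So your proposal is not missing anything relative to the published argument --- it reproduces it, and is more candid about the one step that is genuinely loose; carrying out your stacked-matrix rank-deficiency argument for $[S_1^\intercal U, \ldots, S_\epsilon^\intercal U]$ would close a gap that the paper's own proof leaves open rather than merely match it.
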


\subsection{Greedy Randomized Block Kaczmarz} \label{subsection-greedy-randomized-block-kaczmarz}

Suppose $A \in \mathbb{R}^{n\times d}$ and $b \in \mathbb{R}^n$ form a consistent linear system with a solution set $\mathcal{H}$. Moreover, let $\lbrace E_j : j=1,\ldots,\epsilon \rbrace$ be such that $\begin{bmatrix} E_1 & \cdots & E_\epsilon \end{bmatrix}$ is a column permutation of the $\mathbb{R}^{n\times n}$ identity matrix. For the Greedy Randomized Block Kaczmarz method at iteration $k$, we first compute a threshold,
\begin{equation}
\epsilon_k = \max_{j \in\lbrace 1,\ldots,\epsilon \rbrace} \frac{\norm{E_j^\intercal (Ax_k - b)}_2^2}{2\norm{Ax_k - b}_2^2 \norm{A^\intercal E_j}_F^2} + \frac{1}{2 \norm{A}_F^2},
\end{equation}
and identify the set
\begin{equation}
\mathcal{U}_k = \left\lbrace j : \frac{\norm{E_j^\intercal (Ax_k - b)}_2^2}{\norm{Ax_k - b}_2^2 \norm{A^\intercal E_j}_F^2} \geq \epsilon_k \right\rbrace.
\end{equation}
Then, we randomly choose an index $i_k$ according to the distribution
\begin{equation}
\Prb{ i_k = j } \propto \begin{cases}
\norm{ E_j^\intercal (Ax_k - b)}_2^2 & j \in \mathcal{U}_k \\
0 & \text{otherwise}.
\end{cases}
\end{equation}
We then perform the Kaczmarz update with $E_{i_k}$ to compute $x_{k+1}$ from $x_k$.

\begin{lemma}
Greedy Randomized Block Kaczmarz is Markovian.
\end{lemma}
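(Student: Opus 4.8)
The plan is to verify Definition~\ref{def-markovian-row} directly with the smallest possible value of the Markov horizon, namely $M=1$. That is, I would show that the conditional law of the pair $(W_k,\zeta_k)=(E_{i_k},\emptyset)$ given the full recent history $\mathcal{F}_{k+1}^k$ coincides with its conditional law given only $\mathcal{F}_1^k=\sigma(\zeta_{k-1},x_k)$. As in the analogous lemmas for Steinerberger's and the greedy randomized vector methods, the entire argument reduces to tracing the data dependencies of the selection rule and confirming they factor through $x_k$ alone.

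Concretely, I would observe the following chain of dependencies. First, $\mathfrak{Z}=\{\emptyset\}$ for this method (the partition $\{E_j\}$ is fixed in advance and no auxiliary state is accumulated), so $\zeta_k=\emptyset$ deterministically and contributes nothing. Next, the threshold $\epsilon_k$ is an explicit function of the residual $Ax_k-b$, hence of $x_k$; the active set $\mathcal{U}_k$ is in turn a function of $Ax_k-b$ and $\epsilon_k$, hence again of $x_k$; and finally the sampling distribution $\incondPrb{i_k=j}{\,\cdot\,}\propto \norm{E_j^\intercal(Ax_k-b)}_2^2\mathbf{1}[j\in\mathcal{U}_k]$ is determined entirely by $x_k$. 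Therefore the conditional distribution of $W_k=E_{i_k}$ depends only on $x_k$.

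Since $x_k$ (together with the trivial $\zeta_{k-1}$) is $\mathcal{F}_1^k$-measurable, conditioning on the richer $\sigma$-algebra $\mathcal{F}_{k+1}^k$ supplies no additional information relevant to the law of $(W_k,\zeta_k)$. Formally, for any measurable $\mathcal{W}$ and $\mathcal{Z}\subset\mathfrak{Z}$,
\begin{equation}
\condPrb{W_k\in\mathcal{W},\zeta_k\in\mathcal{Z}}{\mathcal{F}_{k+1}^k}
=\condPrb{W_k\in\mathcal{W},\zeta_k\in\mathcal{Z}}{\mathcal{F}_{1}^k},
\end{equation}
which is exactly Definition~\ref{def-markovian-row} with $M=1$. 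This is a routine verification rather than a substantive proof, so I do not anticipate a genuine obstacle; the only point requiring care is to note explicitly that the partition matrices $\{E_j:j=1,\ldots,\epsilon\}$ are fixed before the iteration begins and are not re-randomized, so that the sole source of randomness in $W_k$ given $x_k$ is the draw of $i_k$ from the $x_k$-measurable distribution above. With that observation in hand, the conclusion is immediate.
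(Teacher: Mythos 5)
Your proposal is correct and follows essentially the same route as the paper, which simply notes that the selection of $E_{i_k}$ depends only on $x_k$ and concludes Markovianity; your version just spells out the dependency chain (threshold $\epsilon_k$, active set $\mathcal{U}_k$, sampling distribution) and the triviality of $\zeta_k$ in more detail. No gap.
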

\begin{proof}
The selection of $E_{i_k}$ only depends on knowing $x_k$. Hence, the procedure is Markovian.
\end{proof}

\begin{lemma}
Greedy Randomized Block Kaczmarz is $1,1$-Exploratory.
\end{lemma}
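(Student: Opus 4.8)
The plan is to show that the conditional probability appearing in \cref{def-exploratory-row} is in fact exactly zero, which certifies the stronger bound $1-\pi = 0$ required for $1,1$-Exploratory. The first step is a reduction that replaces the geometric perpendicularity condition with a statement about block residuals. Fix any $x_0 \neq \Prj_{\mathcal{H}} x_0$ and note that, by \cref{assumption-consistency}, $\Prj_{\mathcal{H}} x_0 \in \mathcal{H}$, so $A\Prj_{\mathcal{H}} x_0 = b$ and hence $A(x_0 - \Prj_{\mathcal{H}} x_0) = Ax_0 - b$. Consequently, for the selected block $W_0 = E_{i_0}$,
\[
\col(A\T E_{i_0}) \perp (x_0 - \Prj_{\mathcal{H}} x_0) \iff E_{i_0}\T A(x_0 - \Prj_{\mathcal{H}} x_0) = 0 \iff E_{i_0}\T (Ax_0 - b) = 0,
\]
so it suffices to prove that $E_{i_0}\T(Ax_0 - b) \neq 0$ with conditional probability one.

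The crux is to show that $\mathcal{U}_0$ is nonempty for every $x_0 \neq \Prj_{\mathcal{H}} x_0$; this is the block analogue of the argument already used for Greedy Randomized Vector Kaczmarz (\cref{theorem-greedy-randomized-vector-kaczmarz}). Writing $\theta_j = \innorm{E_j\T(Ax_0 - b)}_2^2 / (\innorm{Ax_0 - b}_2^2\,\innorm{A\T E_j}_F^2)$, membership in $\mathcal{U}_0$ is the condition $\theta_j \geq \epsilon_0 = \tfrac12 \max_j \theta_j + \tfrac{1}{2\innorm{A}_F^2}$, so $\mathcal{U}_0 \neq \emptyset$ as soon as $\max_j \theta_j \geq \innorm{A}_F^{-2}$. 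To obtain this I would form the convex combination of the $\theta_j$ with weights $\innorm{A\T E_j}_F^2 / \innorm{A}_F^2$. Because $\begin{bmatrix} E_1 & \cdots & E_\epsilon \end{bmatrix}$ is a column permutation of the identity, the blocks partition the rows of $A$ and the entries of $Ax_0 - b$, giving $\sum_j \innorm{A\T E_j}_F^2 = \innorm{A}_F^2$ (so the weights sum to one) and $\sum_j \innorm{E_j\T(Ax_0 - b)}_2^2 = \innorm{Ax_0 - b}_2^2$. The weighted average then telescopes to exactly $\innorm{A}_F^{-2}$, and since a maximum dominates any weighted average with nonnegative weights summing to one, $\max_j \theta_j \geq \innorm{A}_F^{-2}$, as needed. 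All quantities are well defined because $x_0 \neq \Prj_{\mathcal{H}} x_0$ forces $Ax_0 - b \neq 0$.

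Finally, I would close the argument by observing that the selection distribution for $i_0$ is supported entirely on $\mathcal{U}_0$, and that every $j \in \mathcal{U}_0$ satisfies $\theta_j \geq \epsilon_0 \geq \tfrac{1}{2\innorm{A}_F^2} > 0$, which forces $E_j\T(Ax_0 - b) \neq 0$. Hence $E_{i_0}\T(Ax_0 - b) \neq 0$ with conditional probability one, so the event in the reduction has conditional probability zero uniformly over admissible $x_0$ and $\zeta_{-1}$, yielding $1,1$-Exploratory. The main (and only genuinely nontrivial) obstacle is the nonemptiness of $\mathcal{U}_0$; once the partition identities are in hand the weighted-average bound is immediate and the remaining steps are routine. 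With this property established, \cref{theorem-convergence-row-finite} applies since $\lbrace \col(A\T E_j) : j = 1,\ldots,\epsilon \rbrace$ is a finite set.
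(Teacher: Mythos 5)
Your proof is correct and follows essentially the same route as the paper: both establish that $\mathcal{U}_0 \neq \emptyset$ by bounding $\max_j \theta_j$ below by the weighted average with weights $\innorm{A\T E_j}_F^2/\innorm{A}_F^2$ (which telescopes to $\innorm{A}_F^{-2}$, hence dominates $\epsilon_0$), and then conclude that every index in $\mathcal{U}_0$, in particular $i_0$, has a nonzero block residual. The additional details you supply---the explicit reduction from the perpendicularity event to $E_{i_0}\T(Ax_0-b)=0$ via consistency, and the verification of the partition identities behind the weighted average---are steps the paper's proof leaves implicit.
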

\begin{proof}
Let $x_0 \neq \Prj_{\mathcal{H}}x_0$. If we show that $\mathcal{U}_0$ is nonempty, then it follows that for any $j \in \mathcal{U}_0$, $E_j^\intercal (Ax_0 - b) \neq 0$, and, in particular, $E_{i_0}^\intercal (Ax_0 - b) \neq 0$. To verify that $\mathcal{U}_0$ is nonempty, note
\begin{equation}
\max_{j \in\lbrace 1,\ldots,\epsilon \rbrace} \frac{\norm{E_j^\intercal (Ax_0 - b)}_2^2}{\norm{Ax_0 - b}_2^2 \norm{A^\intercal E_j}_F^2} \geq \sum_{j=1}^\epsilon \frac{\norm{A^\intercal E_j}_F^2}{\norm{A}_F^2} \frac{\norm{E_j^\intercal (Ax_k - b)}_2^2}{\norm{Ax_0 - b}_2^2 \norm{A^\intercal E_j}_F^2} = \frac{1}{\norm{A}_F^2}.
\end{equation}
Therefore, 
\begin{equation}
\max_{j \in\lbrace 1,\ldots,\epsilon \rbrace} \frac{\norm{E_j^\intercal (Ax_0 - b)}_2^2}{\norm{Ax_0 - b}_2^2 \norm{A^\intercal E_j}_F^2} \geq \epsilon_0.
\end{equation}
\end{proof}

We can now conclude using \cref{theorem-convergence-row-finite} as follows.

\begin{theorem} \label{theorem-greedy-randomized-block-kaczmarz}
Let $A \in \mathbb{R}^{n \times d}$ and $b \in \mathbb{R}^n$ such that the linear system's solution set, $\mathcal{H}$, is nonempty. Let $x_0 \in \mathbb{R}^d$. Let $\lbrace x_k : k \in \mathbb{N} \rbrace$ be a sequence generated by Greedy Block Randomized Kaczmarz. Then, there exists a stopping time $\tau$ with finite expectation such that $x_{\tau} \in \mathcal{H}$; or there exists a sequence of non-negative stopping times $\lbrace \tau_{j} : j +1 \in \mathbb{N} \rbrace$ for which $\E{ \tau_{j} } \leq j \rnk{A}$,  and there exist $\gamma \in (0,1)$ and a sequence of random variables $\lbrace \gamma_j : j+1 \in \mathbb{N}\rbrace \subset (0,\gamma]$, such that
\begin{equation}
\Prb{ \bigcap_{j=0}^\infty \left\lbrace \norm{ x_{\tau_j} - \Prj_{\mathcal{H}} x_0 }_2^2 \leq \left( \prod_{\ell=0}^{j-1} \gamma_{\ell} \right) \norm{ x_0 - \Prj_{\mathcal{H}}x_0 }_2^2 \right\rbrace} = 1.
\end{equation}
\end{theorem}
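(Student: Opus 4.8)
The plan is to assemble the conclusion directly from the two lemmas immediately preceding the statement together with \cref{theorem-convergence-row-finite}, since the substantive work has already been discharged. First, I would recall that the preceding lemmas establish that Greedy Randomized Block Kaczmarz is Markovian and $1,1$-Exploratory; these furnish the two structural hypotheses of \cref{theorem-convergence-row-finite} with the specific values $N = 1$ and $\pi = 1$.

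Second, I would verify the finiteness hypothesis of \cref{theorem-convergence-row-finite}, namely that the elements of $\lbrace \col(A^\intercal W_k) : k+1 \in \mathbb{N} \rbrace$ take value in a finite set. This is immediate: at every iteration the procedure sets $W_k = E_{i_k}$ where $i_k \in \lbrace 1,\ldots,\epsilon \rbrace$, and the collection $\lbrace E_j : j = 1,\ldots,\epsilon \rbrace$ is a fixed partition of the identity that does not depend on the iteration. Consequently $\col(A^\intercal W_k)$ always lies in $\lbrace \col(A^\intercal E_j) : j = 1,\ldots,\epsilon \rbrace$, which is a set of cardinality at most $\epsilon$.

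With the Markovian property, the $1,1$-Exploratory property, and the finite-set property in hand, I would invoke \cref{theorem-convergence-row-finite} to obtain the stated dichotomy for $\lbrace x_k \rbrace$ relative to $\Prj_{\mathcal{H}} x_0$. The only remaining cosmetic step is to specialize the stopping-time bound: substituting $N = 1$ and $\pi = 1$ into $\E{\tau_j} \leq j[(\rnk{A} - 1)(N/\pi) + 1]$ gives $\E{\tau_j} \leq j[(\rnk{A} - 1) + 1] = j \rnk{A}$, precisely as claimed. I do not anticipate any genuine obstacle here, as the proof is a mechanical application of the general theory; the substantive content lives entirely in \cref{theorem-convergence-row-finite} and in the two verification lemmas, where the only delicate point was showing $\mathcal{U}_0 \neq \emptyset$ to guarantee that a violated block is always selectable, and that has already been handled via the averaging inequality in the Exploratory lemma.
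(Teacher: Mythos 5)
Your proposal is correct and follows exactly the paper's route: verify the Markovian and $1,1$-Exploratory lemmas, note that $\lbrace \col(A^\intercal E_j) : j = 1,\ldots,\epsilon \rbrace$ is finite, and invoke \cref{theorem-convergence-row-finite} with $N = \pi = 1$ to get the bound $\E{\tau_j} \leq j\rnk{A}$. Your explicit check of the finite-set hypothesis is a small point the paper leaves implicit, but it is the same argument.
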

\subsection{Streaming Block Kaczmarz} \label{subsection-streaming-block-kaczmarz}

We assume that we have a sequence of independent, identically distributed random variables, $\lbrace (\alpha_k, \beta_k) : k+1 \in \mathbb{N} \rbrace \subset \mathbb{R}^{d \times p} \times \mathbb{R}^p$ (for $p > 1$), such that $\mathcal{H} = \lbrace x \in \mathbb{R}^d : \Prb{ \alpha_0^\intercal x = \beta_0} = 1 \rbrace \neq \emptyset$. Given $x_0 \in \mathbb{R}^d$, we generate $\lbrace x_k : k \in \mathbb{N} \rbrace$ according to $x_{k+1} = x_k - \alpha_k(\alpha_k^\intercal \alpha_k )^\dagger (\alpha_k^\intercal x_k - \beta_k)$.

\begin{lemma}
Streaming Block Kaczmarz is Markovian.
\end{lemma}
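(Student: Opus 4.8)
The plan is to verify the Markovian property (\cref{def-markovian-row}) directly, exploiting the fact that the stream $\lbrace (\alpha_k, \beta_k) : k+1 \in \mathbb{N} \rbrace$ is i.i.d. In this method the sampling quantity generated at iteration $k$ is $W_k = \alpha_k$, and the auxiliary state is trivial ($\mathfrak{Z} = \lbrace \emptyset \rbrace$, so $\zeta_k = \emptyset$ carries no information and plays no role). The entire content of the claim is therefore that the conditional law of $\alpha_k$ does not depend on how much of the past history one conditions on.

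First I would observe that, by construction, $(\alpha_k, \beta_k)$ is generated independently of all previous iterates, sampling matrices, and auxiliary states---that is, independently of every random variable generating $\mathcal{F}_{k+1}^k$. Consequently the conditional distribution of $W_k = \alpha_k$ given the full history coincides with its marginal distribution, and since this marginal is a deterministic quantity it is measurable with respect to every sub-$\sigma$-algebra of $\mathcal{F}_{k+1}^k$. Spelling this out, for any measurable $\mathcal{W}$ and $\mathcal{Z} \subset \mathfrak{Z}$,
\[
\condPrb{ W_k \in \mathcal{W}, \zeta_k \in \mathcal{Z}}{\mathcal{F}_{k+1}^k} = \Prb{ \alpha_k \in \mathcal{W}} \1{\emptyset \in \mathcal{Z}} = \condPrb{ W_k \in \mathcal{W}, \zeta_k \in \mathcal{Z}}{\mathcal{F}_{1}^k},
\]
which is exactly the defining identity with $M = 1$.

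I expect no genuine obstacle here: this is the verbatim block analogue of the Streaming Vector Kaczmarz argument (\cref{subsection-streaming-vector-kaczmarz}), and the only point worth flagging is that the block dimensions ($\alpha_k \in \mathbb{R}^{d \times p}$, $\beta_k \in \mathbb{R}^p$) are irrelevant to the reasoning, since the Markovian property turns entirely on the independence of $W_k$ from the past rather than on the shape of $W_k$.
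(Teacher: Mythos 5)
Your proof is correct and follows essentially the same route as the paper's: both arguments reduce the Markovian property to the independence of the i.i.d.\ stream, so that the conditional law of $W_k = \alpha_k$ given $\mathcal{F}_{k+1}^k$ equals its marginal law, which in turn equals the conditional law given $\mathcal{F}_1^k$, i.e., the definition holds with $M=1$. Your explicit handling of the trivial $\zeta_k$ component via $\1{\emptyset \in \mathcal{Z}}$ is a harmless elaboration the paper omits.
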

\begin{proof}
By independence, $\condPrb{ \alpha_k \in \mathcal{W} }{\mathcal{F}_{k+1}^k } = \Prb{\alpha_k \in \mathcal{W}} = \condPrb{\alpha_k \in \mathcal{W}}{\mathcal{F}_1^k}$. 
\end{proof}

\begin{lemma}
There exists a $\pi \in (0,1]$ such that Streaming Block Kaczmarz is $1,\pi$-Exploratory.
\end{lemma}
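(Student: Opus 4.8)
The plan is to mirror the argument that establishes Streaming Vector Kaczmarz is $1,\pi$-Exploratory, exploiting that in the block case the relevant event has exactly the same form. First I would match the streaming update to the row-action template \cref{eqn-base-row-update} by setting $\alpha_k = A^\intercal W_k$ and $\beta_k = W_k^\intercal b$, so that $\col(A^\intercal W_0) = \col(\alpha_0)$ and the event $\{\col(A^\intercal W_0)\perp v\}$ coincides with $\{\alpha_0^\intercal v = 0\}$ (column-orthogonality of $\alpha_0$ against $v$). Since the stream draw $(\alpha_0,\beta_0)$ is independent of $\mathcal{F}_1^0 = \sigma(\zeta_{-1},x_0)$, the conditional probability appearing in \cref{def-exploratory-row} collapses to the unconditional $\Prb{\alpha_0^\intercal v = 0}$ evaluated at the $\mathcal{F}_1^0$-measurable vector $v = x_0 - \Prj_{\mathcal{H}} x_0$.

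Next I would set up the solution-direction geometry. Fixing any $x^\ast \in \mathcal{H}$, one checks that $\mathcal{H} = x^\ast + \mathcal{N}$ where $\mathcal{N} = \{v : \Prb{\alpha_0^\intercal v = 0} = 1\}$ is the almost-sure null space of $\alpha_0$; in particular $x \in \mathcal{H}$ iff $\Prb{\alpha_0^\intercal(x - x^\ast) = 0} = 1$. Writing $\mathcal{R} = \mathcal{N}^\perp$ and letting $\mathcal{S}$ denote the unit sphere, the vector $x_0 - \Prj_{\mathcal{H}} x_0$ always lies in $\mathcal{R}$, and conversely every $v \in \mathcal{R}\setminus\{0\}$ is realized by $x_0 = x^\ast + v$ (since $\Prj_{\mathcal{H}}(x^\ast + v) = x^\ast$ for $v \perp \mathcal{N}$). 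Hence the supremum over $\{x_0 \neq \Prj_{\mathcal{H}} x_0\}$ equals $\sup_{v \in \mathcal{R}\cap\mathcal{S}} \Prb{\alpha_0^\intercal v = 0}$, and it suffices to show this quantity is strictly less than $1$; the claimed $\pi$ is then one minus this supremum.

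To prove the strict inequality I would argue by contradiction and compactness, exactly as in the vector case. If the supremum were $1$, extract $\{v_k\} \subset \mathcal{R}\cap\mathcal{S}$ with $\Prb{\alpha_0^\intercal v_k = 0} \to 1$ and, by compactness of $\mathcal{R}\cap\mathcal{S}$, a subsequence $v_{k_j} \to w \in \mathcal{R}\cap\mathcal{S}$. Using that the events $\{\alpha_0 \perp \linspan{\cup_{\ell \geq j} v_{k_\ell}}\}$ increase in $j$ and that $w \in \cap_j \linspan{\cup_{\ell \geq j} v_{k_\ell}}$, continuity of measure yields $\Prb{\alpha_0^\intercal w = 0} = 1$, i.e.\ $w \in \mathcal{N}$. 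But $w \in \mathcal{R} = \mathcal{N}^\perp$ with $w \neq 0$, contradicting $\mathcal{N}\cap\mathcal{N}^\perp = \{0\}$.

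The main obstacle is this last compactness/continuity-of-measure step: transferring the ``almost surely orthogonal to $\alpha_0$'' property from the sequence $\{v_{k_j}\}$ to the limiting direction $w$, which requires relating the limit of the scalar events $\{\alpha_0^\intercal v_{k_j} = 0\}$ to orthogonality against the entire limiting span. The block structure ($p > 1$) does not affect this argument, since $\{\alpha_0^\intercal v = 0\} = \{\col(\alpha_0)\perp v\}$ is the same kind of event as in the vector case; the only care needed is to read $\alpha_0^\intercal v = 0$ as a column-orthogonality event throughout.
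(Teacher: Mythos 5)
Your proposal is correct and takes essentially the same approach as the paper's own proof: the identical contradiction-by-compactness argument on $\mathcal{R} \cap \mathcal{S}$, passing to a convergent subsequence $v_{k_j} \to w$ and using $\linspan{w} = \cap_{j}\linspan{\cup_{\ell \geq j} \lbrace v_{k_\ell} \rbrace}$ together with continuity of measure to force $w \in \mathcal{N} \cap \mathcal{R} \cap \mathcal{S}$, which is impossible. The reduction steps you spell out explicitly (independence collapsing the conditional probability to an unconditional one, and realizing every unit vector of $\mathcal{R}$ as some $x_0 - \Prj_{\mathcal{H}} x_0$) are left implicit in the paper but are fully consistent with it.
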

\begin{proof}
For any $x$, let $\mathcal{N} = \lbrace z - \Prj_\mathcal{H} x : z \in \mathcal{H} \rbrace$ and $\mathcal{R} = \mathcal{N}^\perp$. Note, $\mathcal{N}$ does not depend on the choice of $x$ and, for any $x$, $x - \Prj_\mathcal{H} x \in \mathcal{R}$. Let $\mathcal{S}$ denote the unit sphere in $\mathbb{R}^d$.

For a contradiction, suppose there exists a sequence $\lbrace v_k : k \in \mathbb{N} \rbrace \subset \mathcal{R} \cap \mathcal{S}$ such that $\lim_{k \to \infty} \Prb{ \col(\alpha_0) \perp v_k } = 1$. Then, there exists a $w \in \mathcal{R} \cap \mathcal{S}$ and a subsequence $\lbrace v_{k_j} \rbrace$ such that $\lim_{j \to \infty} v_{k_j} = w$. Note, $w \in \inlinspan{ \cup_{\ell=j}^\infty \lbrace v_{k_\ell} \rbrace}$ for all $j \in \mathbb{N}$, and, consequently, $\inlinspan(w) = \cap_{j=1}^\infty \inlinspan{\cup_{\ell=j}^\infty \lbrace v_{k_\ell} \rbrace}$. Therefore,

\begin{equation}
\begin{aligned}
1 &= \lim_{k \to \infty} \Prb{ \col(\alpha_0) \perp v_{k}} = \lim_{j \to \infty} \Prb{ \col(\alpha_0) \perp \linspan{ \bigcup_{\ell=j}^\infty \lbrace v_{k_\ell} \rbrace } } \\
&= \Prb{\col(\alpha_0) \perp \bigcap_{j=1}^\infty \linspan{ \bigcup_{\ell=j}^\infty \lbrace v_{k_\ell} \rbrace}} = \Prb{ \col(\alpha_0) \perp w } = \Prb{ w \in \mathcal{R} \cap \mathcal{N} \cap \mathcal{S}}.
\end{aligned}
\end{equation}

The ultimate probability is zero, which supplies the contradiction. Thus, no such sequence $\lbrace v_k \rbrace$, which implies $\exists \pi \in (0,1]$ such that $\sup_{ v \in \mathcal{R} \setminus \lbrace 0 \rbrace} \inPrb{ \col(\alpha_0) \perp v } \leq 1 - \pi$.
\end{proof}

\begin{lemma}
The Streaming Vector Kaczmarz is Uniformly Nontrivial.
\end{lemma}
\begin{proof}
Let $\mathfrak{Q}_k'$ be the set of all orthonormal bases of $\col(\alpha_k)$ (c.f., $\mathfrak{Q}_k$ is the set of all orthonormal bases of $\col(\alpha_k\chi_k)$). Then, for all $k+1 \in \mathbb{N}$, 
\begin{equation}
\begin{aligned}
\sup_{ Q_s \in \mathfrak{Q}_s, s \in \lbrace 1,\ldots,k \rbrace } \min_{ G \in \mathcal{G}(Q_0,\ldots,Q_k)} \det(G^\intercal G)
\geq \sup_{ Q_s \in \mathfrak{Q}_s', s \in \lbrace 1,\ldots,k \rbrace } \min_{ G \in \mathcal{G}(Q_0,\ldots,Q_k)} \det(G^\intercal G),
\end{aligned}
\end{equation}
where the latter quantity is independent of $(x_0,\zeta_{-1})$ and is positive with probability one. Therefore, for every $k+1 \in \mathbb{N}$, there exists $\epsilon_k > 0$ such that
\begin{equation}
\Prb{ \sup_{ Q_s \in \mathfrak{Q}_s', s \in \lbrace 1,\ldots,k \rbrace } \min_{ G \in \mathcal{G}(Q_0,\ldots,Q_k)} \det(G^\intercal G) > \epsilon_k } \geq \frac{1}{2}.
\end{equation}
Moreover, there exists a $K \in \mathbb{N}$ such that for $k \geq K$, $\condPrb{ \mathcal{A}_k(x_0,\zeta_{-1})}{\mathcal{F}_1^0} \geq 3/4$ for all $x_0 \neq \Prj_\mathcal{H} x_0$ and $\zeta_{-1} \in \mathfrak{Z}$, which implies that, for $k \geq K$,
\begin{equation}
\condPrb{ \sup_{ Q_s \in \mathfrak{Q}_s', s \in \lbrace 1,\ldots,k \rbrace } \min_{ G \in \mathcal{G}(Q_0,\ldots,Q_k)} \det(G^\intercal G) > \epsilon_k, \mathcal{A}_k(x_0,\zeta_{-1}) }{\mathcal{F}_1^0} \geq \frac{1}{4}.
\end{equation}
Hence, by Markov's Inequality, we can find $g_{\mathcal{A}} \geq \epsilon_k/4 > 0$.
\end{proof}

We can conclude by \cref{theorem-convergence-row-infinite}.

\begin{theorem} \label{theorem-streaming-block-kaczmarz}
Let $\lbrace (\alpha_k,\beta_k) : k+1 \in \mathbb{N} \rbrace \subset \mathbb{R}^{d \times p} \times \mathbb{R}^p$ be a sequence of independent, identically distributed random variables such that $\mathcal{H} = \lbrace x \in \mathbb{R}^d : \Prb{ \alpha_0^\intercal x = b} = 1 \rbrace \neq \emptyset$. Let $x_0 \in \mathbb{R}^d$ and let $\lbrace x_k : k \in \mathbb{N} \rbrace$ be generated by Steaming Block Kaczmarz. Then, there exists a stopping time $\tau$ with finite expectation such that $x_{\tau} \in \mathcal{H}$; or there exists $\pi \in (0,1]$, there exists a sequence of non-negative stopping times $\lbrace \tau_{j} : j +1 \in \mathbb{N} \rbrace$ for which $\E{ \tau_{j} } \leq j [ (\dim{\mathcal{H}}-1)/\pi + 1]$, and there exists $\bar \gamma \in (0,1)$ such that for any $\gamma \in (\bar \gamma, 1)$,
\begin{equation}
\Prb{ \bigcup_{L=0}^\infty \bigcap_{j=L}^\infty \left\lbrace \norm{ x_{\tau_j} - \Prj_{\mathcal{H}}x_0 }_2^2 \leq \gamma^j \norm{ x_0 - \Prj_{\mathcal{H}} x_0 }_2^2 \right\rbrace} = 1.
\end{equation}
\end{theorem}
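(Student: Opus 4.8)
The plan is to reduce the claim to the common-formulation result \cref{theorem-convergence-infinite} (equivalently \cref{theorem-convergence-row-infinite}) by checking its three structural hypotheses and then pinning down the finite-dimensional subspace in which the errors live. The three preceding lemmas already deliver exactly these hypotheses: the method is Markovian, it is $1,\pi$-Exploratory for some $\pi \in (0,1]$, and it is uniformly nontrivial. So the substance of the argument is the translation into the common form together with the dimension count, after which the dichotomy and the almost-sure rate are immediate.

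First I would recast the method in the common formulation of \cref{subsection-common}. Setting $y_k = x_k - \Prj_{\mathcal{H}} x_0$, the update $x_{k+1} = x_k - \alpha_k(\alpha_k\T \alpha_k)^\dagger(\alpha_k\T x_k - \beta_k)$ becomes $y_{k+1} = (I - \Prj_k)y_k$ with $\Prj_k = \alpha_k(\alpha_k\T \alpha_k)^\dagger \alpha_k\T$, the orthogonal projection onto $\col(\alpha_k)$. This is precisely \cref{eqn-base-common-update,eqn-base-proj-def} with the observed $\alpha_k$ playing the role of $A\T W_k$, so \cref{corollary-convergence} and \cref{theorem-convergence-infinite} apply verbatim in terms of $\{y_k\}$ and $\{\Prj_k\}$, once we supply the finite dimension that the proof of \cref{theorem-nu-finite} extracts from $\rnk A$.

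The only genuinely streaming-specific step is that there is no fixed coefficient matrix, hence no $\rnk A$; I would replace it by the dimension of the deterministic subspace containing every error. Let $\mathcal{N} = \{v : \Prb{\alpha_0\T v = 0} = 1\}$ be the direction space of the affine solution set $\mathcal{H}$ and $\mathcal{R} = \mathcal{N}^\perp$. Because any two points of $\mathcal{H}$ differ by a vector annihilated by $\alpha_0$ almost surely, a basis argument gives $\col(\alpha_k) \subseteq \mathcal{R}$ with probability one for every $k$; combined with $y_0 = x_0 - \Prj_{\mathcal{H}} x_0 \in \mathcal{R}$ and $y_{k+1} - y_k \in \col(\alpha_k)$, this confines $\{y_k\}$ to $\mathcal{R}$ almost surely. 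Re-running the dimension count in \cref{theorem-nu-finite} inside $\mathcal{R}$ — the span of the successive errors can increase at most $\dim \mathcal{R}$ times — replaces $\rnk A$ by the effective row-space dimension $\dim \mathcal{R} = \dim \mathcal{N}^\perp$ and yields $\E{\tau_j} \le j[(\dim \mathcal{R}-1)/\pi + 1]$ with $N = 1$, the exponential stopping-time bound in the conclusion.

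Finally, invoking the uniformly-nontrivial branch of \cref{theorem-convergence-infinite} produces the two alternatives: a finite-expectation stopping time $\tau$ with $y_\tau = 0$, or the family $\{\tau_j\}$ together with the Borel--Cantelli conclusion that for every $\gamma \in (\bar\gamma,1)$ one has $\inPrb{\cup_{L} \cap_{j \ge L} \{ \prod_{\ell=0}^{j-1} \gamma_\ell \le \gamma^j \}} = 1$. Unwinding $y_k = x_k - \Prj_{\mathcal{H}} x_0$ (so $\innorm{y_k}_2 = \innorm{x_k - \Prj_{\mathcal{H}} x_0}_2$ and $y_\tau = 0$ is equivalent to $x_\tau \in \mathcal{H}$) gives the theorem. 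I expect the main obstacle to be exactly this dimension bookkeeping: unlike the fixed-$A$ setting, one must verify that the randomly streamed blocks keep every error inside one deterministic subspace $\mathcal{R}$ and that $\dim \mathcal{R}$, rather than the ambient $d$, governs the expected stopping time.
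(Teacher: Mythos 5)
Your proposal is correct and takes essentially the same route as the paper: the paper's proof consists precisely of the three lemmas (Markovian, $1,\pi$-Exploratory, Uniformly Nontrivial) followed by a one-line appeal to \cref{theorem-convergence-row-infinite}. The only substantive addition you make is the dimension bookkeeping---showing $\col(\alpha_k) \subseteq \mathcal{R} = \mathcal{N}^\perp$ almost surely, so that every error $y_k = x_k - \Prj_{\mathcal{H}}x_0$ stays in $\mathcal{R}$ and $\rnk{A}$ in \cref{theorem-nu-finite} can be replaced by $\dim \mathcal{R}$---which the paper leaves entirely implicit. Note that your count yields $\dim \mathcal{N}^\perp$, so the quantity ``$\dim \mathcal{H}$'' in the stated bound must be read as this effective row-space dimension rather than the dimension of the affine solution set $\mathcal{H}$ itself (which would be $\dim \mathcal{N} = d - \dim \mathcal{R}$); your version is the one the argument actually supports, and the paper's one-line proof never reconciles this.
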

\subsection{Random Permutation Block Coordinate Descent} \label{subsection-random-permutation-block-cd}

For details of this method, see \cref{example-cyclic-block-cd}. Let $\mathbb{A}^{n \times d}$ and $b \in \mathbb{R}^n$, and let $r^* = - \Prj_{\ker(A^\intercal)} b$. 

\begin{lemma}
Random Permutation Block Coordinate Descent is Markovian.
\end{lemma}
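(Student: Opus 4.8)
The plan is to mirror the one-line argument used for Random Permutation Block Kaczmarz in \cref{subsection-random-permutation-block-kaczmarz}, but make the measure-theoretic dependence explicit. First I would unpack the state variable: since this method is the column-action analogue of \cref{example-random-permute-kaczmarz}, the information carried in $\zeta_{k-1}$ is a pair consisting of the current permutation of $\lbrace 1, \ldots, \epsilon \rbrace$ and an iteration counter. I would then record $\varphi_C$ via the two-case rule inherited from \cref{example-random-permute-kaczmarz}: when the counter indicates the permutation has not yet been exhausted, both $W_k = E_{i_k}$ and $\zeta_k$ are deterministic functions of $\zeta_{k-1}$ alone; when the permutation has been exhausted, $W_k$ is still a deterministic function of $\zeta_{k-1}$, but the permutation component of the new state $\zeta_k$ is a freshly drawn, independent random permutation $Z_{\cdot}$.

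The key step is to observe that the only source of randomness entering $(W_k, \zeta_k)$ beyond $\zeta_{k-1}$ is this independent permutation $Z_{\cdot}$, and that by construction $Z_{\cdot}$ is independent of the entire history generated up to iteration $k$, that is, independent of $\mathcal{F}_{k+1}^k$ (and in particular of $\mathcal{F}_1^k = \sigma(\zeta_{k-1}, x_k)$). Consequently, for any measurable sets $\mathcal{W}$ and $\mathcal{Z} \subset \mathfrak{Z}$, conditioning the event $\lbrace W_k \in \mathcal{W}, \zeta_k \in \mathcal{Z} \rbrace$ on the full history $\mathcal{F}_{k+1}^k$ yields the same probability as conditioning only on the most recent state $\mathcal{F}_1^k$, because the additional past information in $\mathcal{F}_{k+1}^k$ is independent of $Z_{\cdot}$ and otherwise irrelevant once $\zeta_{k-1}$ is known. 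This gives
\[
\condPrb{ W_k \in \mathcal{W}, \zeta_k \in \mathcal{Z}}{\mathcal{F}_{k+1}^{k}} = \condPrb{ W_k \in \mathcal{W}, \zeta_k \in \mathcal{Z}}{\mathcal{F}_{1}^k},
\]
so \cref{def-markovian-col} holds with $M = 1$, completing the argument.

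The only point requiring care, and it is hardly an obstacle, is verifying that the freshly sampled permutations $\lbrace Z_k \rbrace$ are genuinely independent of the full $\sigma$-algebra $\mathcal{F}_{k+1}^k$ rather than merely of $\zeta_{k-1}$; this follows directly from the specification of the method, in which each new permutation is drawn independently of all previously generated iterates, selection matrices, and states. I expect the entire proof to be two or three sentences once this independence is invoked.
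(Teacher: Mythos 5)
Your proposal is correct and follows essentially the same route as the paper's proof, which is a one-liner observing that $(W_k,\zeta_k)$ depends only on $\zeta_{k-1}$ and, periodically, on an independently generated permutation. Your version merely spells out the measure-theoretic details (the two-case structure of $\varphi_C$ and the independence of the fresh permutation from $\mathcal{F}_{k+1}^k$) that the paper leaves implicit.
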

\begin{proof}
The selection of $(W_k,\zeta_{k-1})$ only depends on $\zeta_{k-1}$. Hence, the method is Markovian.
\end{proof}

\begin{lemma}
Random Permutation Block Coordinate Descent is \\ $1,\epsilon^{-1}$-Exploratory.
\end{lemma}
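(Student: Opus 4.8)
The plan is to mirror the row-action argument for Random Permutation Block Kaczmarz (\Cref{subsection-random-permutation-block-kaczmarz}), transported to the column-action residual. Since $N = 1$ here, it suffices to bound, for every $x_0$ with $Ax_0 - b \neq r^*$ and every $\zeta_{-1} \in \mathfrak{Z}$, the conditional probability that the first selected block $W_0 = E_{i_0}$ satisfies $\col(A E_{i_0}) \perp (Ax_0 - b)$. Observing that $\col(AE_{i_0}) \perp (Ax_0-b)$ is equivalent to $E_{i_0}^\intercal A^\intercal (Ax_0 - b) = 0$, the whole argument reduces to a statement about the single vector $A^\intercal(Ax_0 - b) \in \mathbb{R}^d$.

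First I would establish the key reduction: $Ax_0 - b \neq r^*$ implies $A^\intercal(Ax_0 - b) \neq 0$. Using the orthogonal decomposition $\mathbb{R}^n = \col(A) \oplus \ker(A^\intercal)$ together with $r^* = -\Prj_{\ker(A^\intercal)} b$, one checks that $Ax_0 - b - r^* = \Prj_{\col(A)}(Ax_0 - b) \in \col(A)$, so the hypothesis $Ax_0 - b \neq r^*$ says exactly that this column-space component is nonzero. Because $r^* \in \ker(A^\intercal)$ gives $A^\intercal r^* = 0$, we have $A^\intercal(Ax_0 - b) = A^\intercal(Ax_0 - b - r^*)$, and $A^\intercal$ is injective on $\col(A)$ (if $A^\intercal A u = 0$ then $\norm{Au}_2^2 = u^\intercal A^\intercal A u = 0$, forcing $Au=0$). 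Hence $A^\intercal(Ax_0 - b) \neq 0$.

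Next, since the columns of $\begin{bmatrix} E_1 & \cdots & E_\epsilon \end{bmatrix}$ are a permutation of the standard basis of $\mathbb{R}^d$, a nonzero vector cannot be orthogonal to every block; thus there exists $j \in \lbrace 1,\ldots,\epsilon\rbrace$ with $E_j^\intercal A^\intercal(Ax_0-b) \neq 0$, i.e. $\col(AE_j) \not\perp (Ax_0-b)$. Because the block used at iteration $0$ is the leading entry of a freshly drawn uniform random permutation of $\lbrace 1,\ldots,\epsilon\rbrace$, this particular block is selected with probability $1/\epsilon$, so the complementary (``bad'') event has conditional probability at most $1 - 1/\epsilon$ uniformly over $x_0$ and $\zeta_{-1}$. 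Taking the supremum yields precisely the $1, \epsilon^{-1}$-Exploratory bound.

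I expect the only real obstacle to be the residual bookkeeping in the reduction step---tracking how the shift by $r^*$ interacts with $\col(A)$ and $\ker(A^\intercal)$ so that the condition $Ax_0 - b \neq r^*$ is faithfully converted into $A^\intercal(Ax_0-b) \neq 0$. The probabilistic step is identical to the row-action case, and the existence of a ``good'' block is immediate from the fact that the $\lbrace E_j \rbrace$ partition the identity, so no further difficulty should arise there.
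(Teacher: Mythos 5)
Your proof follows the paper's argument exactly: exhibit a block $E_j$ with $E_j^\intercal A^\intercal(Ax_0 - b) \neq 0$, then note that this block is the first element of the random permutation with probability $1/\epsilon$, uniformly in $x_0$ and $\zeta_{-1}$. If anything, your write-up is more complete, since the orthogonal-decomposition step showing that $Ax_0 - b \neq r^*$ forces $A^\intercal(Ax_0 - b) \neq 0$ (via $Ax_0 - b - r^* = \Prj_{\col(A)}(Ax_0-b)$ and injectivity of $A^\intercal$ on $\col(A)$) is asserted without justification in the paper's one-line proof.
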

\begin{proof}
If $Ax_0 - b \neq r^*$, then $\exists j \in \lbrace 1,\ldots,\epsilon \rbrace$ such that $E_j^\intercal A^\intercal (Ax_0 - b) \neq 0$. The probability that we observe this $E_j$ as the first element of a random permutation is $\epsilon^{-1}$. The result follows.
\end{proof}

We now conclude using \cref{theorem-convergence-col-finite} as follows.

\begin{theorem} \label{theorem-random-permutation-block-cd}
Let $A \in \mathbb{R}^{n \times d}$ and $b \in \mathbb{R}^n$, and define $r^* = - \Prj_{\ker(A^\intercal)} b$. Let $x_0 \in \mathbb{R}^d$ and $\lbrace x_k : k \in \mathbb{N} \rbrace$ be a sequence generated by Random Permutation Block Coordinate Descent. Then, either there exists a stopping time $\tau$ with finite expectation such that $Ax_{\tau} - b = r^*$; or
there exists a sequence of non-negative stopping times $\lbrace \tau_{j} : j +1 \in \mathbb{N} \rbrace$ for which $\E{ \tau_{j} } \leq j [ (\rnk{A} - 1) \epsilon + 1]$,  and there exist $\gamma \in (0,1)$ and a sequence of random variables $\lbrace \gamma_j : j+1 \in \mathbb{N}\rbrace \subset (0,\gamma]$, such that
\begin{equation}
\Prb{ \bigcap_{j=0}^\infty \left\lbrace \norm{ Ax_{\tau_j} - b  - r^*}_2^2 \leq \left( \prod_{\ell=0}^{j-1} \gamma_{\ell} \right) \norm{ Ax_0 - b - r^* }_2^2 \right\rbrace} = 1.
\end{equation}
\end{theorem}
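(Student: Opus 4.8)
The plan is to reproduce the template established by the preceding worked examples and reduce everything to a single application of \cref{theorem-convergence-col-finite}, the finite-set convergence corollary for column-action RBASs. The two lemmas immediately above have already discharged two of the three hypotheses: Random Permutation Block Coordinate Descent is Markovian, and it is $1,\epsilon^{-1}$-Exploratory. So the only remaining task before invoking the corollary is to verify its finiteness hypothesis, namely that $\lbrace \col(AW_k) : k+1 \in \mathbb{N} \rbrace$ takes values in a finite set.

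First I would observe that, by construction (see \cref{example-cyclic-block-cd}), the blocks $\lbrace E_1,\ldots,E_\epsilon \rbrace$ come from a \emph{fixed} partition of the $d \times d$ identity matrix, and at every iteration the sampled quantity satisfies $W_k = E_{i_k}$ for some $i_k \in \lbrace 1,\ldots,\epsilon \rbrace$. Consequently $\col(AW_k)$ lies in the set $\lbrace \col(AE_1),\ldots,\col(AE_\epsilon) \rbrace$, which has at most $\epsilon$ distinct elements and is therefore finite. This is the only genuinely new verification required, and it is essentially immediate from the finiteness of the partition.

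Then I would apply \cref{theorem-convergence-col-finite} with $N = 1$ and $\pi = \epsilon^{-1}$. Substituting $N/\pi = \epsilon$ into the expected stopping-time bound $j[(\rnk{A}-1)(N/\pi)+1]$ yields $j[(\rnk{A}-1)\epsilon + 1]$, which matches the statement. Under the common formulation of \cref{subsection-common}, the relevant quantity is $y_k = Ax_k - b - r^*$, so the two dichotomous cases of the corollary transfer verbatim: either a finite stopping time $\tau$ with $Ax_\tau - b = r^*$, or a sequence of stopping times $\lbrace \tau_j \rbrace$ with the claimed expectation bound together with the almost-sure geometric decay of $\norm{Ax_{\tau_j} - b - r^*}_2^2$. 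I expect no real obstacle here; the only point meriting a sentence of care is the finiteness of the column-space set, after which the conclusion follows directly.

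\begin{proof}
By the preceding two lemmas, Random Permutation Block Coordinate Descent is Markovian and $1,\epsilon^{-1}$-Exploratory. Moreover, since $\lbrace E_1,\ldots,E_\epsilon \rbrace$ is a fixed partition of the $d \times d$ identity matrix and $W_k = E_{i_k}$ for some $i_k \in \lbrace 1,\ldots,\epsilon \rbrace$ at each iteration, the set $\lbrace \col(AW_k) : k+1 \in \mathbb{N} \rbrace$ is contained in $\lbrace \col(AE_j) : j = 1,\ldots,\epsilon \rbrace$, which is finite. Applying \cref{theorem-convergence-col-finite} with $N = 1$ and $\pi = \epsilon^{-1}$, so that $N/\pi = \epsilon$, yields the claim.
\end{proof}
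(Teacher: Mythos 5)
Your proof is correct and takes essentially the same route as the paper: cite the two preceding lemmas for the Markovian and $1,\epsilon^{-1}$-Exploratory properties, then apply \cref{theorem-convergence-col-finite} with $N/\pi = \epsilon$ to obtain the bound $j[(\rnk{A}-1)\epsilon + 1]$ and the almost-sure geometric decay of $\norm{Ax_{\tau_j} - b - r^*}_2^2$. Your explicit check that $\lbrace \col(AW_k) : k+1 \in \mathbb{N} \rbrace$ is contained in the finite set $\lbrace \col(AE_j) : j = 1,\ldots,\epsilon \rbrace$ is a hypothesis of the corollary that the paper leaves implicit, so spelling it out is a mild improvement rather than a deviation.
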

\subsection{Gaussian Block Column Space Descent} \label{subsection-gaussian-block-cs}

Let $A \in \mathbb{R}^{n \times d}$, $b \in \mathbb{R}^{n}$, and $r^* = - \Prj_{\ker(A^\intercal)} b$. This method proceeds at each iteration by independently sampling a Gaussian matrix, $W_k$, that maps into $\mathbb{R}^d$, and computes $x_{k+1} = x_k + W_k \alpha_k$ where $\alpha_k \in \argmin_{\alpha} \innorm{ Ax_k - b + AW_k \alpha}_2$. 

\begin{lemma}
Gaussian Block Column Space Descent is Markovian.
\end{lemma}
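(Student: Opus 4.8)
The plan is to mirror the structure of the Markovian proofs already given for the other independence-driven methods in this paper (cf.\ the Gaussian Vector Column Space method and Streaming Block Kaczmarz), exploiting the fact that Gaussian Block Column Space Descent draws each sketch $W_k$ independently of all prior data. First I would fix the bookkeeping by following the column-action analogue of \cref{example-gaussian-column}: set $\mathfrak{Z} = \lbrace \emptyset \rbrace$ and $\zeta_k = \emptyset$ for every $k$, so that the summary variable carries no information and $\varphi_C$ simply returns the pair $(W_k, \emptyset)$. Consequently, the only randomness that \cref{def-markovian-col} asks us to track is $W_k$ itself, since the event $\lbrace \zeta_k \in \mathcal{Z} \rbrace$ is either almost-surely trivial or empty and therefore factors cleanly out of the conditional probability.

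Next I would verify the defining identity in \cref{def-markovian-col} with $M = 1$, so that $\mathcal{F}_{\min\lbrace M,\, k+1 \rbrace}^k = \mathcal{F}_1^k$ (consistent with \cref{remark-markovian}, which guarantees $M$ may always be taken to be $1$ for a finite $\mathfrak{Z}$). The key observation is that $W_k$ is generated independently of the entire history encoded in $\mathcal{F}_{k+1}^k$, and in particular of the coarser $\mathcal{F}_1^k$. Hence, for any measurable $\mathcal{W} \subset \mathbb{R}^{d \times n_k}$ and $\mathcal{Z} \subset \mathfrak{Z}$,
\begin{equation}
\condPrb{ W_k \in \mathcal{W},\, \zeta_k \in \mathcal{Z} }{\mathcal{F}_{k+1}^k} = \Prb{ W_k \in \mathcal{W} } \1{ \emptyset \in \mathcal{Z} } = \condPrb{ W_k \in \mathcal{W},\, \zeta_k \in \mathcal{Z} }{\mathcal{F}_1^k},
\end{equation}
which is exactly the required equality.

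Since the argument rests entirely on the independence of the sampling, I do not expect any genuine obstacle. The only points requiring care are routine: confirming that $\zeta_k$ contributes nothing so that the joint conditional probability collapses to the marginal law of $W_k$, and noting that $n_k$ is fixed prior to drawing $W_k$ so that the ambient space $\mathbb{R}^{d \times n_k}$ is well-defined at step $k$. With these checks in place, the stated identity holds for all $k$, and the method is Markovian with $M = 1$.
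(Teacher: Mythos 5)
Your proof is correct and takes essentially the same approach as the paper, whose entire argument is the one-line observation that the claim follows from the independence of $W_k$ (mirroring the analogous Markovian lemmas for Gaussian Vector Kaczmarz and the streaming methods). Your version merely spells out the bookkeeping---$\zeta_k = \emptyset$, $M=1$, and the collapse of the joint conditional probability to the marginal law of $W_k$---which the paper leaves implicit.
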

\begin{proof}
This follows from the independence of $\mathcal{W}_k$. 
\end{proof}

\begin{lemma}
Gaussian Block Column Space Descent is $1,1$-Exploratory.
\end{lemma}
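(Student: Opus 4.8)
The plan is to verify \cref{def-exploratory-col} directly with $N = 1$ and $\pi = 1$; that is, to show that for every $x_0$ with $Ax_0 - b \neq r^*$ and every $\zeta_{-1} \in \mathfrak{Z}$, the conditional probability $\condPrb{\col(AW_0) \perp (Ax_0 - b)}{\mathcal{F}_1^0}$ is exactly zero. First I would rewrite the orthogonality event: $\col(AW_0) \perp (Ax_0 - b)$ holds if and only if $(AW_0)^\intercal (Ax_0 - b) = W_0^\intercal A^\intercal (Ax_0 - b) = 0$. Thus the event depends on $W_0$ only through the fixed vector $v := A^\intercal (Ax_0 - b)$, which is $\mathcal{F}_1^0$-measurable since $x_0$ is.

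The key preliminary step is to establish that $v \neq 0$ whenever $Ax_0 - b \neq r^*$. Using the orthogonal decomposition $b = \Prj_{\col(A)} b + \Prj_{\ker(A^\intercal)} b$ together with $r^* = -\Prj_{\ker(A^\intercal)} b$ and $\ker(A^\intercal) = \col(A)^\perp$, I would write $Ax_0 - b - r^* = Ax_0 - \Prj_{\col(A)} b \in \col(A)$, a vector that is nonzero precisely when $Ax_0 - b \neq r^*$. Since $r^* \in \ker(A^\intercal)$ gives $A^\intercal r^* = 0$, we have $v = A^\intercal (Ax_0 - b) = A^\intercal (Ax_0 - b - r^*)$, and because $A^\intercal$ is injective on $\col(A)$, this is nonzero.

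With $v \neq 0$ fixed, I would condition on $\mathcal{F}_1^0$ and use that $W_0$ is an independent Gaussian matrix mapping into $\mathbb{R}^d$: each column $w_{0,i}$ of $W_0$ is a standard Gaussian vector, so $w_{0,i}^\intercal v \sim \mathcal{N}(0, \norm{v}_2^2)$ is a nondegenerate Gaussian and equals zero with probability zero. Hence $W_0^\intercal v = 0$ with conditional probability zero, and taking the supremum over admissible $(x_0, \zeta_{-1})$ yields the bound $\leq 0 = 1 - 1$, establishing $N = 1$ and $\pi = 1$. I expect the only genuine content to be the identity $v = A^\intercal (Ax_0 - b - r^*) \neq 0$, since the Gaussian anti-concentration is immediate; this mirrors the vector argument in \cref{subsection-gaussian-vector-cs}, now applied columnwise to the block $W_0$.
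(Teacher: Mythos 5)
Your proposal is correct and follows essentially the same route as the paper's (one-line) proof: both reduce the exploratory event to $W_0^\intercal A^\intercal(Ax_0-b)=0$ and kill it using the continuity of the Gaussian $W_0$. The only difference is that you explicitly verify the step the paper leaves implicit, namely that $A^\intercal(Ax_0-b) = A^\intercal(Ax_0-b-r^*) \neq 0$ whenever $Ax_0 - b \neq r^*$, which is a worthwhile (and correctly executed) addition rather than a departure.
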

\begin{proof}
Since $W_0$ is a continuous random variable, $\inPrb{ W_0^\intercal A^\intercal (A x_0 - b) = 0 } = 0$ for any $Ax_0 - b \neq r^*$. The conclusion follows.
\end{proof}

\begin{lemma}
Gaussian Block Column Space Descent is Uniformly Nontrivial.
\end{lemma}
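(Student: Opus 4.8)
The plan is to mirror, almost verbatim, the proof that the Gaussian Vector Column Space method is uniformly nontrivial (\cref{subsection-gaussian-vector-cs}), replacing the random line $\col(Aw_k)$ by the random subspace $\col(AW_k)$ generated by the Gaussian block $W_k$. First I would record the key progress fact: since $r^* = -\Prj_{\ker(A^\intercal)}b$ satisfies $A^\intercal r^* = 0$, we have $\Prj_k y_k = 0$ if and only if $W_k^\intercal A^\intercal(Ax_k - b) = 0$; because $W_k$ is a continuous random matrix and $A^\intercal(Ax_k-b)\neq 0$ whenever $Ax_k - b \neq r^*$, it follows that $\chi_k = 1$ with probability one on the event $\lbrace Ax_k - b \neq r^*\rbrace$.

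Next I would split into the two regimes seen in the vector case. If $\rnk A = 1$ (or, more generally, if the block dimension $n_0 \geq \rnk A$, so that $\col(AW_0)=\col(A)$ almost surely), then the first update drives the residual to $r^*$, so $\chi_0 = 1$ and $\chi_k = 0$ for every $k \in \mathbb{N}$. Consequently $\mathcal{G}(Q_0,\ldots,Q_k) = \mathcal{G}(Q_0)$, every $G \in \mathcal{G}(Q_0)$ has orthonormal columns, and $\det(G^\intercal G) = 1$; the inner expectation in \cref{eqn-unif-control-expectation-col} is then bounded below by $\condPrb{\mathcal{A}_k(x_0,\zeta_{-1})}{\mathcal{F}_1^0}$, whose infimum over admissible $x_0$ tends to one, so $g_{\mathcal{A}}=1$ suffices.

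In the complementary regime $\rnk A > 1$ with $n_k < \rnk A$, I would argue by induction, exactly as in the vector case, that $\chi_k = 1$ for all $k+1 \in \mathbb{N}$ with probability one, since a fixed nonzero residual is almost never orthogonal to the random subspace $\col(AW_k)$. The central observation is that, on this event, the quantity $\sup_{Q_s \in \mathfrak{Q}_s}\min_{G \in \mathcal{G}(Q_0,\ldots,Q_k)}\det(G^\intercal G)$ depends only on the i.i.d.\ subspaces $\col(AW_0),\ldots,\col(AW_k)$ and hence is independent of $x_0$ and $\zeta_{-1}$, and is positive with probability one. Thus there exists $\epsilon_k > 0$ for which this quantity exceeds $\epsilon_k$ with probability at least $1/2$; choosing $K$ so that $\condPrb{\mathcal{A}_k(x_0,\zeta_{-1})}{\mathcal{F}_1^0}\geq 3/4$ for $k \geq K$, the inclusion-exclusion principle forces the joint event to have probability at least $1/4$, and Markov's inequality then yields a lower bound of $\epsilon_k/4$, giving $g_{\mathcal{A}} \geq \epsilon_k/4 > 0$.

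The main obstacle is the almost-sure positivity and $x_0$-independence of the supremum-minimum determinant in the block setting: unlike the vector case, each $\col(AW_k)$ is a genuine subspace of dimension $\min\lbrace n_k,\rnk A\rbrace$, so I must verify that the union of bases remains in general position (so that some maximal linearly independent subset has strictly positive Gram determinant) and that optimizing the orthonormal basis $Q_s$ over $\mathfrak{Q}_s$ does not destroy this—both following from the continuity of the Gaussian law, but requiring care to state. A secondary bookkeeping issue is cleanly separating the ``converges in one step'' regime from the ``never converges'' regime, which I handle above by the dichotomy on $n_k$ versus $\rnk A$.
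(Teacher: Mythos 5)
Your proposal is correct, but it takes a different route from the paper's own proof of this lemma. You transplant the paper's proof of the \emph{vector} Gaussian column-space case (\cref{subsection-gaussian-vector-cs}): a dichotomy on $\rnk{A}$ versus the block dimension, an induction showing $\chi_k = 1$ for all $k$ with probability one in the non-degenerate regime (so that $\mathfrak{Q}_s$ coincides with the set of orthonormal bases of $\col(AW_s)$ and the sup--min Gram determinant becomes $x_0$-independent), and then the $\epsilon_k$ / inclusion--exclusion / Markov tail argument. The paper's block proof skips both the dichotomy and the induction entirely: it introduces $\mathfrak{Q}_k'$, the orthonormal bases of the column space \emph{without} the $\chi_k$ factor, and uses monotonicity of the minimum Gram determinant under enlarging the vector pool (adjoining another $Q$ can only decrease $\min_{G} \det(G^\intercal G)$) to bound the quantity over $\mathfrak{Q}_s$ from below by the corresponding quantity over $\mathfrak{Q}_s'$; the latter is manifestly independent of $(x_0,\zeta_{-1})$---the Gaussian blocks are sampled obliviously---and positive with probability one, after which the identical tail argument applies. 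The paper's trick buys uniformity and brevity: it needs no claim that the iterates never reach the solution, no general-position verification (precisely the ``main obstacle'' you flag), and it covers all block sizes, including adaptively varying $n_k$ that straddle $\rnk{A}$, where your two-regime split becomes awkward. Your route buys a sharper constant in the degenerate regime ($g_{\mathcal{A}} = 1$ under one-step convergence) and makes the parallel with the vector theory explicit, but at the cost of the extra almost-sure arguments you would still need to write out carefully.
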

\begin{proof}
Let $\mathfrak{Q}_k'$ be the set of all orthonormal bases of $\col(\alpha_k)$ (c.f., $\mathfrak{Q}_k$ is the set of all orthonormal bases of $\col(\alpha_k\chi_k)$). Then, for all $k+1 \in \mathbb{N}$, 
\begin{equation}
\begin{aligned}
\sup_{ Q_s \in \mathfrak{Q}_s, s \in \lbrace 1,\ldots,k \rbrace } \min_{ G \in \mathcal{G}(Q_0,\ldots,Q_k)} \det(G^\intercal G)
\geq \sup_{ Q_s \in \mathfrak{Q}_s', s \in \lbrace 1,\ldots,k \rbrace } \min_{ G \in \mathcal{G}(Q_0,\ldots,Q_k)} \det(G^\intercal G),
\end{aligned}
\end{equation}
where the latter quantity is independent of $(x_0,\zeta_{-1})$ and is positive with probability one. Therefore, for every $k+1 \in \mathbb{N}$, there exists $\epsilon_k > 0$ such that
\begin{equation}
\Prb{ \sup_{ Q_s \in \mathfrak{Q}_s', s \in \lbrace 1,\ldots,k \rbrace } \min_{ G \in \mathcal{G}(Q_0,\ldots,Q_k)} \det(G^\intercal G) > \epsilon_k } \geq \frac{1}{2}.
\end{equation}
Moreover, there exists a $K \in \mathbb{N}$ such that for $k \geq K$, $\condPrb{ \mathcal{A}_k(x_0,\zeta_{-1})}{\mathcal{F}_1^0} \geq 3/4$ for all $x_0 \neq \Prj_\mathcal{H} x_0$ and $\zeta_{-1} \in \mathfrak{Z}$, which implies that, for $k \geq K$,
\begin{equation}
\condPrb{ \sup_{ Q_s \in \mathfrak{Q}_s', s \in \lbrace 1,\ldots,k \rbrace } \min_{ G \in \mathcal{G}(Q_0,\ldots,Q_k)} \det(G^\intercal G) > \epsilon_k, \mathcal{A}_k(x_0,\zeta_{-1}) }{\mathcal{F}_1^0} \geq \frac{1}{4}.
\end{equation}
Hence, by Markov's Inequality, $g_{\mathcal{A}} \geq \epsilon_k/4 > 0$ for all $k \geq K$. 
\end{proof}

We conclude by \cref{theorem-convergence-col-infinite} as follows.
\begin{theorem} \label{theorem-gaussian-block-cs}
Let $A \in \mathbb{R}^{n \times d}$ and $b \in \mathbb{R}^n$, and define $r^* = - \Prj_{\ker(A^\intercal)} b$. Let $x_0 \in \mathbb{R}^d$ and $\lbrace x_k : k \in \mathbb{N} \rbrace$ be a sequence generated by Gaussian Block Column Space Descent. Then, either there exists a stopping time $\tau$ with finite expectation such that $Ax_{\tau} - b = r^*$; or
there exists a sequence of non-negative stopping times $\lbrace \tau_{j} : j +1 \in \mathbb{N} \rbrace$ for which $\E{ \tau_{j} } \leq j \rnk{A} $,  and there exists $\bar \gamma \in (0,1)$ such that for any $\gamma \in (\bar \gamma, 1)$,
\begin{equation}
\Prb{ \bigcup_{L=0}^\infty \bigcap_{j=L}^\infty \left\lbrace \norm{ Ax_{\tau_j} - b - r^* }_2^2 \leq \gamma^j \norm{ Ax_0 - b - r^* }_2^2 \right\rbrace} = 1.
\end{equation}
\end{theorem}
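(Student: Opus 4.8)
The plan is to derive this theorem as an immediate consequence of \cref{theorem-convergence-col-infinite}. The three lemmas stated just above establish precisely the hypotheses that \cref{theorem-convergence-col-infinite} requires: that Gaussian Block Column Space Descent is Markovian, that it is $1,1$-Exploratory, and that it is uniformly nontrivial. Thus the substantive work---verifying these structural properties, and in particular the delicate uniformly nontrivial property---has already been carried out, and what remains is to feed them into the general convergence result and simplify.

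First I would specialize the parameters. Since the method is $N,\pi$-Exploratory with $N = 1$ and $\pi = 1$, the generic expected-stopping-time bound $\E{\tau_j} \leq j[(\rnk{A} - 1)(N/\pi) + 1]$ collapses to $\E{\tau_j} \leq j\,\rnk{A}$, which is exactly the bound claimed in the statement. Next I would invoke \cref{theorem-convergence-col-infinite} under these hypotheses. It returns the required dichotomy: either there is a finite-expectation stopping time $\tau$ with $y_\tau = 0$, or there are non-negative stopping times $\lbrace \tau_j \rbrace$ obeying the expected bound above, together with random variables $\lbrace \gamma_j \rbrace \subset (0,1)$ and a constant $\bar\gamma \in (0,1)$ for which the almost-sure product bound holds and, for every $\gamma \in (\bar\gamma, 1)$, the eventual geometric decay $\prod_{\ell=0}^{j-1}\gamma_\ell \leq \gamma^j$ holds with probability one.

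The only remaining bookkeeping is to translate the conclusion, which \cref{theorem-convergence-col-infinite} phrases in terms of $\lbrace y_k \rbrace$, back into the residual-norm quantities of the statement. By the column-action case of \cref{eqn-common-y-def}, $y_k = Ax_k - b - r^*$, so $\norm{y_{\tau_j}}_2^2 = \norm{Ax_{\tau_j} - b - r^*}_2^2$ and $\norm{y_0}_2^2 = \norm{Ax_0 - b - r^*}_2^2$, while the event $\lbrace y_\tau = 0 \rbrace$ reads as $\lbrace Ax_\tau - b = r^* \rbrace$. Substituting these identities gives the stated result verbatim.

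I do not anticipate any genuine obstacle in this final assembly. The single nontrivial ingredient, the uniformly nontrivial property, has already been dispatched in the preceding lemma via the continuity of the Gaussian iterates (forcing $\chi_k = 1$ until the residual reaches $r^*$) combined with a Markov-inequality argument, exactly as in the earlier streaming and Gaussian vector analyses. Hence the remaining step is purely a matter of parameter specialization and a change of variables, and the proof reduces to citing \cref{theorem-convergence-col-infinite} with the three verified properties in hand.
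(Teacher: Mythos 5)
Your proposal is correct and follows the paper's own route exactly: the paper likewise verifies the Markovian, $1,1$-Exploratory, and uniformly nontrivial properties in the preceding lemmas and then concludes by invoking \cref{theorem-convergence-col-infinite} with $N=\pi=1$, so that $\E{\tau_j} \leq j[(\rnk{A}-1)+1] = j\,\rnk{A}$, translating $y_k = Ax_k - b - r^*$ back into residual norms. The only implicit step in both treatments is intersecting the two probability-one events (the product bound and the eventual geometric decay) to obtain the stated $\bigcup_L \bigcap_{j\geq L}$ form, which is immediate.
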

\subsection{Zouzias-Freris Block Coordinate Descent} \label{subsection-zouzias-freris-block-cd}

Let $A \in \mathbb{R}^{n \times d}$, $b \in \mathbb{R}^{n}$ and $r^* = - \Prj_{\ker(A^\intercal)} b$. Let $\lbrace E_j : j=1,\ldots,\epsilon \rbrace$ be such that $\begin{bmatrix}
E_1 & \cdots & E_{\epsilon}
\end{bmatrix}$ is a column permutation of the $R^{d \times d }$ identity matrix. In this method, at iteration $k$, we select $E_{i_k}$ independently according to 
\begin{equation}
\Prb{ i_k = j} \propto \begin{cases}
\norm{A E_j}_F^2 & j=1,\ldots,\epsilon \\
0 & \text{Otherwise},
\end{cases}
\end{equation}
and we compute $x_{k+1} = x_k + E_{i_k} \alpha_k$ where $\alpha_k \in \argmin_\alpha \innorm{ Ax_k - b - A E_{i_k} \alpha }_2$. 

\begin{lemma}
Zouzias-Freris Block Coordinate Descent is Markovian.
\end{lemma}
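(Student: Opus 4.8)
The plan is to treat this exactly as the Strohmer-Vershynin and Zouzias-Freris vector variants were treated above, since the only mechanism at play is the independence of the block selection. The first step is to identify $(W_k, \zeta_k)$ explicitly: here $W_k = E_{i_k}$, and because running the sampler requires no information from prior iterations, I would take the summary state trivial, $\mathfrak{Z} = \lbrace \emptyset \rbrace$ and $\zeta_k = \emptyset$. The decisive structural observation is that the law of $i_k$, namely $\Prb{ i_k = j } \propto \innorm{ A E_j }_F^2$, is a fixed probability vector determined solely by $A$ and the static partition $\lbrace E_j : j = 1,\ldots,\epsilon \rbrace$; it depends neither on the iterate $x_k$ nor on any previously drawn index.

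The single key step is then to invoke \cref{def-markovian-col} with $M = 1$. Since $\lbrace i_k : k+1 \in \mathbb{N} \rbrace$ are mutually independent and $i_k$ is drawn from a history-independent distribution, the conditional law of $(W_k, \zeta_k)$ given $\mathcal{F}_{k+1}^k$ equals its unconditional law, which equals the conditional law given $\mathcal{F}_1^k$. Formally, for measurable $\mathcal{W}$ and $\mathcal{Z} \subset \mathfrak{Z}$,
\[
\condPrb{ W_k \in \mathcal{W}, \zeta_k \in \mathcal{Z} }{ \mathcal{F}_{k+1}^k } = \Prb{ E_{i_k} \in \mathcal{W} } \1{ \emptyset \in \mathcal{Z} } = \condPrb{ W_k \in \mathcal{W}, \zeta_k \in \mathcal{Z} }{ \mathcal{F}_1^k },
\]
which is precisely the Markovian identity required.

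I do not expect any genuine obstacle, as the argument is essentially immediate from independence. The only point meriting a moment of care is the bookkeeping on the product space $\mathbb{R}^{d \times n_k} \times \mathfrak{Z}$: one must confirm that the $\zeta_k$-component carries no accumulated history, so that conditioning collapses cleanly. Because the sampler is memoryless and reads only static data, choosing $\zeta_k$ trivial discharges this, and the independence of $\lbrace i_k \rbrace$ closes the proof.
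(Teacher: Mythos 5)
Your proof is correct and matches the paper's argument exactly: the paper likewise disposes of this lemma by noting that the independence of $\lbrace i_k \rbrace$ makes the conditional law of $(W_k,\zeta_k)$ given $\mathcal{F}_{k+1}^k$ collapse to the unconditional law, hence to the law given $\mathcal{F}_1^k$. Your additional bookkeeping (trivial $\zeta_k$, the explicit chain of equalities) simply spells out what the paper leaves implicit.
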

\begin{proof}
This follows from the independence of $\lbrace i_k \rbrace$.
\end{proof}

\begin{lemma}
Let $\pi_{\min} = \min_j \lbrace \innorm{AE_j}_F^2/\innorm{A}_F^2 : AE_j \neq 0 \rbrace$. Zouzias-Freris Block Coordinate Descent is $1,\pi_{\min}$-Exploratory.
\end{lemma}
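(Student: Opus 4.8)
The plan is to verify \cref{def-exploratory-col} directly with $N = 1$, mirroring the argument already used for the vector analogue in \Cref{subsection-zouzias-freris-vector-cd} but now keeping track of the block selection probabilities. First I would rewrite the defining event: since $W_0 = E_{i_0}$, the event $\{\col(A E_{i_0}) \perp (Ax_0 - b)\}$ coincides with $\{E_{i_0}^\intercal A^\intercal (Ax_0 - b) = 0\}$, so it is enough to show $\condPrb{E_{i_0}^\intercal A^\intercal (Ax_0 - b) = 0}{\mathcal{F}_1^0} \leq 1 - \pi_{\min}$ uniformly over all $x_0$ with $Ax_0 - b \neq r^*$.

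Next I would argue that the hypothesis $Ax_0 - b \neq r^*$ guarantees the existence of a block that the sampling distribution can actually reach. Decomposing the residual as $Ax_0 - b = (Ax_0 - \Prj_{\col(A)} b) + r^*$, the first summand lies in $\col(A)$ and the second in $\ker(A^\intercal)$, and the hypothesis says precisely that the $\col(A)$-part is nonzero. Since $A^\intercal r^* = 0$ and $A^\intercal$ is injective on $\col(A)$, this yields $A^\intercal(Ax_0 - b) \neq 0$; and because $[E_1 \; \cdots \; E_\epsilon]$ permutes the columns of the identity, the blocks jointly cover all coordinates, so some index $j^*$ satisfies $E_{j^*}^\intercal A^\intercal(Ax_0 - b) \neq 0$.

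I would then close with the probability estimate. The key compatibility observation is that $E_{j^*}^\intercal A^\intercal(Ax_0 - b) \neq 0$ forces $A E_{j^*} \neq 0$, so $j^*$ belongs to the index set defining $\pi_{\min}$; using that the blocks partition the columns (whence $\sum_j \norm{A E_j}_F^2 = \norm{A}_F^2$), we get $\Prb{i_0 = j^*} = \norm{A E_{j^*}}_F^2/\norm{A}_F^2 \geq \pi_{\min}$, and therefore $\condPrb{E_{i_0}^\intercal A^\intercal(Ax_0 - b) \neq 0}{\mathcal{F}_1^0} \geq \pi_{\min}$. Passing to complements and taking the supremum over admissible $x_0$ finishes the verification.

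The routine parts---identifying the orthogonality event and normalizing the sampling distribution---are immediate. The one step that needs genuine care is the passage from $Ax_0 - b \neq r^*$ to $A^\intercal(Ax_0 - b) \neq 0$: because we are treating possibly inconsistent or rank-deficient systems, the correct target is the residual $r^*$ at the least-squares solution rather than $0$, and $A^\intercal$ has a nontrivial kernel on $\mathbb{R}^n$, so one must exploit that the offending residual component sits in $\col(A)$, where $A^\intercal$ is injective, rather than naively inferring the claim from $Ax_0 \neq b$.
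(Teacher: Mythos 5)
Your proposal is correct and follows essentially the same route as the paper's proof: exhibit a block $j^*$ with $E_{j^*}^\intercal A^\intercal(Ax_0-b)\neq 0$ and note that it is selected with probability at least $\pi_{\min}$. The paper states these two steps without justification, whereas you fill in the details it leaves implicit---the orthogonal decomposition of $Ax_0-b$ into its $\col(A)$ and $\ker(A^\intercal)$ parts to get $A^\intercal(Ax_0-b)\neq 0$, the observation that $AE_{j^*}\neq 0$ so $j^*$ is covered by the minimum defining $\pi_{\min}$, and the normalization $\sum_j \norm{AE_j}_F^2 = \norm{A}_F^2$---so your write-up is a more careful rendering of the same argument.
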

\begin{proof}
If $Ax_0 - b \neq r^*$, then $\exists j \in \lbrace 1,\ldots,\epsilon \rbrace$ such that $E_j^\intercal A^\intercal (Ax_0 - b)\neq 0$. Therefore, $\Prb{ i_0 = j } \geq \pi_{\min}$. The conclusion follows.
\end{proof}

By \cref{theorem-convergence-col-finite}, we conclude as follows.

\begin{theorem} \label{theorem-zouzias-freris-block-cd}
Let $A \in \mathbb{R}^{n \times d}$ and $b \in \mathbb{R}^n$, and define $r^* = - \Prj_{\ker(A^\intercal)} b$. Let $x_0 \in \mathbb{R}^d$ and $\lbrace x_k : k \in \mathbb{N} \rbrace$ be a sequence generated by Zouzias-Freris Block Coordinate Descent. Then, either there exists a stopping time $\tau$ with finite expectation such that $Ax_{\tau} - b = r^*$; or
there exists a sequence of non-negative stopping times $\lbrace \tau_{j} : j +1 \in \mathbb{N} \rbrace$ for which $\E{ \tau_{j} } \leq j [ (\rnk{A} - 1) /\pi_{\min} + 1]$,  and there exist $\gamma \in (0,1)$ and a sequence of random variables $\lbrace \gamma_j : j+1 \in \mathbb{N}\rbrace \subset (0,\gamma]$, such that
\begin{equation}
\Prb{ \bigcap_{j=0}^\infty \left\lbrace \norm{ Ax_{\tau_j} - b  - r^*}_2^2 \leq \left( \prod_{\ell=0}^{j-1} \gamma_{\ell} \right) \norm{ Ax_0 - b - r^* }_2^2 \right\rbrace} = 1.
\end{equation}
\end{theorem}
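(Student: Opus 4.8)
The plan is to invoke \cref{theorem-convergence-col-finite} directly, since the two preceding lemmas have already discharged two of its three hypotheses. Those lemmas establish that Zouzias-Freris Block Coordinate Descent is Markovian and $1,\pi_{\min}$-Exploratory, so the only remaining hypothesis to check is the finite-set condition on $\lbrace \col(A W_k) : k+1 \in \mathbb{N} \rbrace$. At every iteration we have $W_k = E_{i_k}$ with $i_k \in \lbrace 1,\ldots,\epsilon \rbrace$, and therefore $\col(A W_k) = \col(A E_{i_k})$ is always one of the finitely many fixed subspaces $\lbrace \col(A E_j) : j = 1,\ldots,\epsilon \rbrace$. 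Hence the finite-set hypothesis holds, and all conditions of \cref{theorem-convergence-col-finite} are met with $N = 1$ and $\pi = \pi_{\min}$.

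With the hypotheses verified, I would simply read off the conclusion of \cref{theorem-convergence-col-finite} and substitute the specific constants. The general expected-stopping-time bound $\E{ \tau_{j} } \leq j [ (\rnk{A} - 1) (N /\pi) + 1]$ specializes, upon setting $N = 1$ and $\pi = \pi_{\min}$, to $\E{ \tau_{j} } \leq j [ (\rnk{A} - 1)/\pi_{\min} + 1]$, exactly as stated. The dichotomy between finite termination (a stopping time $\tau$ with $A x_\tau - b = r^*$) and the almost-sure geometric decay of $\norm{ A x_{\tau_j} - b - r^* }_2^2$ transfers verbatim, because \cref{theorem-convergence-col-finite} is already phrased in precisely these residual terms for column-action methods, with $r^* = -\Prj_{\ker(A^\intercal)} b$ defined identically.

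There is essentially no substantive obstacle at the level of the theorem itself: the analytic content was entirely absorbed into the Markovian and Exploratory lemmas and into the general convergence machinery (\cref{theorem-block-meany,theorem-nu-finite,theorem-convergence-common-finite}). The one point I would flag as deserving a moment's care is confirming that we are in the finite-set regime rather than the infinite-set regime, since misclassifying this would force an appeal to the uniformly nontrivial property (\cref{def-unif-control-expectation-col}) instead. Here the distinction is immediate: the blocks $\lbrace E_j \rbrace$ form a fixed, finite partition of the $d \times d$ identity matrix, so only finitely many column spaces $\col(A E_j)$ can ever arise, and \cref{theorem-convergence-col-finite} applies without any further moment control.
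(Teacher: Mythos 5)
Your proposal is correct and follows exactly the paper's own route: the two lemmas supply the Markovian and $1,\pi_{\min}$-Exploratory hypotheses, the finite partition $\lbrace E_j : j=1,\ldots,\epsilon\rbrace$ makes $\lbrace \col(AW_k)\rbrace \subset \lbrace \col(AE_j) : j=1,\ldots,\epsilon\rbrace$ a finite set, and the conclusion is read off from \cref{theorem-convergence-col-finite} with $N=1$, $\pi=\pi_{\min}$. Your explicit verification of the finite-set condition is a point the paper leaves implicit, but it is the same argument.
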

\subsection{Max Residual Block Coordinate Descent} \label{subsection-max-residual-block-cd}

Let $A \in \mathbb{R}^{n \times d}$, $b \in \mathbb{R}^{n}$ and $r^* = - \Prj_{\ker(A^\intercal)} b$. Let $\lbrace E_j : j=1,\ldots,\epsilon \rbrace$ be such that $\begin{bmatrix}
E_1 & \cdots & E_{\epsilon}
\end{bmatrix}$ is a column permutation of the $R^{d \times d }$ identity matrix. In this method, at iteration $k$, we select $E_{i_k}$ according to 
\begin{equation}
i_k \in \argmax_{j \in \lbrace 1,\ldots,\epsilon \rbrace } \norm{ E_j^\intercal A^\intercal (Ax_k - b) }_2,
\end{equation}
and we compute $x_{k+1} = x_k + E_{i_k} \alpha_k$ where $\alpha_k \in \argmin_\alpha \innorm{ Ax_k - b - A E_{i_k} \alpha }_2$. 

\begin{lemma}
Max Residual Block Coordinate Descent is Markovian.
\end{lemma}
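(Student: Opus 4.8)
The plan is to verify \cref{def-markovian-col} directly with the choice $M = 1$. Following the convention adopted for the other deterministic column-action methods in this section (e.g., Motzkin's and Agmon's block methods), I would take $\mathfrak{Z} = \lbrace \emptyset \rbrace$ and $\zeta_{-1} = \emptyset$, so that $\zeta_k = \emptyset$ for every $k$ and there is no nontrivial summary state to track. It then suffices to show that the conditional law of $W_k = E_{i_k}$ given $\mathcal{F}_{k+1}^k$ coincides with its conditional law given $\mathcal{F}_1^k = \sigma(\zeta_{k-1}, x_k)$.

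First I would observe that the selection rule $i_k \in \argmax_{j \in \lbrace 1,\ldots,\epsilon \rbrace} \norm{ E_j\T A\T (Ax_k - b) }_2$ is a function of $x_k$ alone, since $A$, $b$, and the fixed partition $\lbrace E_j \rbrace$ are deterministic and not random. To render this selection single-valued and measurable, I would fix a deterministic tie-breaking rule (for instance, taking the smallest maximizing index, exactly as in the vector analogue of \Cref{subsection-max-residual-vector-cd}); with such a rule we have $i_k = g(x_k)$ for a fixed Borel function $g$, and hence $W_k = E_{g(x_k)}$ takes values in the finite set $\lbrace E_1, \ldots, E_\epsilon \rbrace$. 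Because $x_k$ is $\mathcal{F}_1^k$-measurable, $W_k$ is $\mathcal{F}_1^k$-measurable, so for any measurable $\mathcal{W}$ the event $\lbrace W_k \in \mathcal{W} \rbrace$ is itself $\mathcal{F}_1^k$-measurable.

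Consequently, since $\mathcal{F}_1^k \subseteq \mathcal{F}_{k+1}^k$ and $W_k$ is already $\mathcal{F}_1^k$-measurable, both conditional probabilities collapse to the same indicator, namely $\condPrb{ W_k \in \mathcal{W}, \zeta_k \in \mathcal{Z} }{\mathcal{F}_{k+1}^k} = \1{ E_{g(x_k)} \in \mathcal{W}} \1{ \emptyset \in \mathcal{Z}} = \condPrb{ W_k \in \mathcal{W}, \zeta_k \in \mathcal{Z} }{\mathcal{F}_1^k}$, which establishes \cref{def-markovian-col} with $M = 1$. The only point requiring any care is the measurability of the $\argmax$ selection, which is dispatched entirely by the deterministic tie-breaking convention; beyond this there is no genuine obstacle, as the method retains no memory of earlier iterates or sketches and the selection depends on the history solely through the current iterate $x_k$.
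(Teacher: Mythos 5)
Your proposal is correct and follows essentially the same route as the paper, whose entire proof is the observation that the selection of $i_k$ depends only on $x_k$; your additional care about deterministic tie-breaking and measurability of the $\argmax$ simply makes explicit what the paper leaves implicit (the vector analogue in \Cref{subsection-max-residual-vector-cd} states the smallest-index convention, and it is naturally inherited here).
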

\begin{proof}
The selection of $\lbrace i_k \rbrace$ depends only on $x_k$, which implies the conclusion.
\end{proof}

\begin{lemma}
Max Residual Block Coordinate Descent is $1,1$-Exploratory.
\end{lemma}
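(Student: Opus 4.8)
The plan is to verify \cref{def-exploratory-col} directly with $N = 1$ and $\pi = 1$, i.e.\ to show that the probability in that definition is zero. Since $W_0 = E_{i_0}$ is determined once $x_0$ is fixed (the rule depends on $x_0$ only through the residual $Ax_0 - b$), the conditional probability collapses to an indicator, so it suffices to establish that $\col(A E_{i_0}) \not\perp Ax_0 - b$, equivalently that $E_{i_0}^\intercal A^\intercal (Ax_0 - b) \neq 0$, for every $x_0$ with $Ax_0 - b \neq r^*$.

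First I would reduce the claim to a statement about $A^\intercal(Ax_0 - b)$. Because $\begin{bmatrix} E_1 & \cdots & E_\epsilon \end{bmatrix}$ is a column permutation of the identity, the blocks $\lbrace E_j \rbrace$ jointly span $\mathbb{R}^d$; hence $E_j^\intercal A^\intercal(Ax_0 - b) = 0$ for all $j$ if and only if $A^\intercal(Ax_0 - b) = 0$. The selection rule $i_0 \in \argmax_{j} \norm{E_j^\intercal A^\intercal(Ax_0-b)}_2$ then forces $E_{i_0}^\intercal A^\intercal(Ax_0-b) \neq 0$ as soon as some block has a nonzero value, that is, as soon as $A^\intercal(Ax_0 - b) \neq 0$.

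The key step is therefore to show $A^\intercal(Ax_0 - b) \neq 0$ whenever $Ax_0 - b \neq r^*$. I would use $A^\intercal r^* = 0$, which holds since $r^* = -\Prj_{\ker(A^\intercal)} b \in \ker(A^\intercal)$, to write $A^\intercal(Ax_0 - b) = A^\intercal(Ax_0 - b - r^*)$. A short computation gives $Ax_0 - b - r^* = Ax_0 - \Prj_{\col(A)} b \in \col(A)$, which is nonzero by hypothesis. Since $\ker(A^\intercal) = \col(A)^\perp$ meets $\col(A)$ only at the origin, a nonzero element of $\col(A)$ cannot lie in $\ker(A^\intercal)$, so $A^\intercal(Ax_0 - b - r^*) \neq 0$, which completes the argument.

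I do not expect a genuine obstacle here; the only point requiring care is the identity $Ax_0 - b - r^* = Ax_0 - \Prj_{\col(A)} b$ and the consequent observation that the residual-minus-$r^*$ lives in $\col(A)$, which is precisely what lets $A^\intercal$ act injectively on it. With $1,1$-Exploratory in hand (and the Markovian property already established), one can invoke \cref{theorem-convergence-col-finite}, exactly mirroring the structure of \cref{subsection-max-residual-vector-cd}.
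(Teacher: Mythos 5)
Your proof is correct and takes essentially the same approach as the paper: reduce to showing some block satisfies $E_j^\intercal A^\intercal (Ax_0 - b) \neq 0$, then note the argmax rule must select such a block. The paper's own proof simply asserts the existence of that $j$, while you fill in the (correct) supporting detail that $Ax_0 - b - r^* \in \col(A)$ is nonzero and hence cannot lie in $\ker(A^\intercal)$; this is the same argument, just made explicit.
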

\begin{proof}
If $Ax_0 - b \neq r^*$, then $\exists j \in \lbrace 1,\ldots,\epsilon \rbrace$ such that $E_j^\intercal A^\intercal (Ax_0 - b)\neq 0$. The conclusion follows.
\end{proof}

By \cref{theorem-convergence-col-finite}, we conclude as follows.

\begin{theorem} \label{theorem-max-residual-block-cd}
Let $A \in \mathbb{R}^{n \times d}$ and $b \in \mathbb{R}^n$, and define $r^* = - \Prj_{\ker(A^\intercal)} b$. Let $x_0 \in \mathbb{R}^d$ and $\lbrace x_k : k \in \mathbb{N} \rbrace$ be a sequence generated by Max Residual Block Coordinate Descent. Then, either there exists a stopping time $\tau$ with finite expectation such that $Ax_{\tau} - b = r^*$; or
there exists a sequence of non-negative stopping times $\lbrace \tau_{j} : j +1 \in \mathbb{N} \rbrace$ for which $\E{ \tau_{j} } \leq j \rnk{A}$,  and there exist $\gamma \in (0,1)$ and a sequence of random variables $\lbrace \gamma_j : j+1 \in \mathbb{N}\rbrace \subset (0,\gamma]$, such that
\begin{equation}
\Prb{ \bigcap_{j=0}^\infty \left\lbrace \norm{ Ax_{\tau_j} - b  - r^*}_2^2 \leq \left( \prod_{\ell=0}^{j-1} \gamma_{\ell} \right) \norm{ Ax_0 - b - r^* }_2^2 \right\rbrace} = 1.
\end{equation}
\end{theorem}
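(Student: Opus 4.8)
The plan is to obtain \cref{theorem-max-residual-block-cd} as a direct instance of the finite-set column-action result \cref{theorem-convergence-col-finite}, so the whole task reduces to checking its three hypotheses for this particular selection rule. Two of these are already dispatched by the two preceding lemmas: the method is Markovian because $i_k$ depends only on $x_k$, and it is $1,1$-Exploratory because the $\argmax$ over $\norm{E_j^\intercal A^\intercal(Ax_0-b)}_2$ deterministically lands on a block with nonzero projected residual whenever $Ax_0 - b \neq r^*$. Thus I would begin by recalling these two facts and fixing the common-form identifications $W_k = E_{i_k}$, $y_k = Ax_k - b - r^*$, and $\Prj_k = AE_{i_k}(E_{i_k}^\intercal A^\intercal A E_{i_k})^\dagger E_{i_k}^\intercal A^\intercal$ from \cref{eqn-common-y-def,eqn-base-proj-def}.

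The only remaining hypothesis to verify is that $\lbrace \col(AW_k) : k+1 \in \mathbb{N} \rbrace$ takes values in a finite set. This is immediate here: since $i_k \in \lbrace 1,\ldots,\epsilon \rbrace$, the column space $\col(AE_{i_k})$ can equal only one of the at most $\epsilon$ fixed subspaces $\col(AE_1),\ldots,\col(AE_\epsilon)$. Hence all hypotheses of \cref{theorem-convergence-col-finite} hold with $N=1$ and $\pi=1$.

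With the hypotheses in place, the conclusion is read off directly. Substituting $N/\pi = 1$ into the generic stopping-time bound $j[(\rnk{A}-1)(N/\pi)+1]$ yields $\E{\tau_j} \leq j[(\rnk{A}-1)+1] = j\rnk{A}$, which is exactly the claimed rate, and the two-case dichotomy (finite termination at a least-squares solution versus almost-sure geometric decay of $\norm{Ax_{\tau_j}-b-r^*}_2^2$ with factors $\gamma_\ell \in (0,\gamma]$) transfers verbatim.

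There is no genuine obstacle in the convergence argument itself, since it is a plug-in to the master theorem; the only point requiring care is the exploratory lemma's use of the residual shift $r^*$. The key observation there is that $Ax_0 - b \neq r^* = -\Prj_{\ker(A^\intercal)}b$ is equivalent to $A^\intercal(Ax_0 - b) \neq 0$, because $Ax_0 - b = r^*$ holds exactly when the residual is orthogonal to $\col(A)$. This equivalence guarantees that some block satisfies $E_j^\intercal A^\intercal(Ax_0 - b) \neq 0$, so the $\argmax$ rule cannot stall, and it is also what forces $\lbrace y_k \rbrace$ to lie in $\col(A)$ so that $\rnk{A}$---rather than $d$---controls the stopping time.
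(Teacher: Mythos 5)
Your proposal is correct and follows essentially the same route as the paper: the paper likewise proves two short lemmas (Markovian, since $i_k$ depends only on $x_k$; and $1,1$-Exploratory, since $Ax_0 - b \neq r^*$ forces some $E_j^\intercal A^\intercal(Ax_0-b) \neq 0$) and then invokes \cref{theorem-convergence-col-finite} with $N/\pi = 1$ to get $\E{\tau_j} \leq j\,\rnk{A}$. Your additional remarks---the explicit finite-set check over the $\epsilon$ subspaces $\col(AE_j)$ and the equivalence $Ax_0 - b \neq r^* \iff A^\intercal(Ax_0-b) \neq 0$---are details the paper leaves implicit, and they are accurate.
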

\subsection{Max Distance Block Coordinate Descent} \label{subsection-max-distance-block-cd}

Let $A \in \mathbb{R}^{n \times d}$, $b \in \mathbb{R}^{n}$ and $r^* = - \Prj_{\ker(A^\intercal)} b$. Let $\lbrace E_j : j=1,\ldots,\epsilon \rbrace$ be such that $\begin{bmatrix}
E_1 & \cdots & E_{\epsilon}
\end{bmatrix}$ is a column permutation of the $R^{d \times d }$ identity matrix. In this method, at iteration $k$, we select $E_{i_k}$ according to 
\begin{equation}
i_k \in \argmax_{j \in \lbrace 1,\ldots,\epsilon \rbrace } \norm{ (E_j^\intercal A^\intercal A E_j)^\dagger E_j^\intercal A^\intercal (Ax_k - b) }_2,
\end{equation}
and we compute $x_{k+1} = x_k + E_{i_k} \alpha_k$ where $\alpha_k \in \argmin_\alpha \innorm{ Ax_k - b - A E_{i_k} \alpha }_2$. 

\begin{lemma}
Max Distance Block Coordinate Descent is Markovian.
\end{lemma}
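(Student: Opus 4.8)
The plan is to follow the same template used for the other deterministic block selection rules (Max Residual Block Coordinate Descent, Motzkin's Block Method, Agmon's Block Method), since Max Distance Block Coordinate Descent also selects its update block through a deterministic $\argmax$ that reads only the current iterate. First I would observe that $\mathfrak{Z} = \lbrace \emptyset \rbrace$ and $\zeta_k = \emptyset$ for this method, so the only nontrivial object to track is $W_k = E_{i_k}$. The selection index $i_k \in \argmax_{j} \norm{ (E_j^\intercal A^\intercal A E_j)^\dagger E_j^\intercal A^\intercal (Ax_k - b) }_2$ is a deterministic function of $x_k$ (together with the fixed data $A$, $b$, and the fixed partition $\lbrace E_j \rbrace$), with ties resolved by, say, smallest index.

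The key step is then to note that $W_k$ is $\mathcal{F}_1^k$-measurable: since $\mathcal{F}_1^k = \sigma(\zeta_{k-1}, x_k)$ and $W_k$ is a measurable function of $x_k$ alone, every event $\lbrace W_k \in \mathcal{W}, \zeta_k \in \mathcal{Z} \rbrace$ lies in $\mathcal{F}_1^k$. Consequently, for any measurable $\mathcal{W}$ and $\mathcal{Z} \subset \mathfrak{Z}$,
\begin{equation}
\condPrb{ W_k \in \mathcal{W}, \zeta_k \in \mathcal{Z}}{\mathcal{F}_{k+1}^{k}} = \condPrb{ W_k \in \mathcal{W}, \zeta_k \in \mathcal{Z}}{\mathcal{F}_{1}^k},
\end{equation}
which is exactly \cref{def-markovian-col} with $M = 1$. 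This verifies the Markovian property.

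There is essentially no obstacle here: the content of the lemma is merely that the block selection depends only on the most recent iterate and no deeper history, which is immediate from the defining $\argmax$. The one point worth stating carefully is that the presence of the pseudo-inverse $(E_j^\intercal A^\intercal A E_j)^\dagger$ does not disturb measurability — it is a fixed deterministic map applied to $x_k$ — so the argument is identical in structure to the Max Residual case and requires no additional hypotheses on $A$ or the partition.
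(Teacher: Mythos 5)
Your proposal is correct and follows the same approach as the paper, whose entire proof is the observation that the selection of $\lbrace i_k \rbrace$ depends only on $x_k$. Your additional details (deterministic tie-breaking, $\mathcal{F}_1^k$-measurability of $W_k$, and that the pseudo-inverse does not affect measurability) merely make explicit what the paper leaves implicit.
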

\begin{proof}
The selection of $\lbrace i_k \rbrace$ depends only on $x_k$, which implies the conclusion.
\end{proof}

\begin{lemma}
Max Distance Block Coordinate Descent is $1,1$-Exploratory.
\end{lemma}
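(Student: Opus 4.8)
The plan is to verify \Cref{def-exploratory-col} with $N = 1$ and $\pi = 1$, which amounts to showing that whenever $Ax_0 - b \neq r^*$ the selected block satisfies $E_{i_0}\T A\T(Ax_0 - b) \neq 0$ with certainty, so that the conditional probability of the perpendicularity event $\lbrace \col(A W_0) \perp (Ax_0 - b) \rbrace$ is zero and the supremum in the definition equals $0 = 1 - \pi$.

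First I would reduce the hypothesis $Ax_0 - b \neq r^*$ to the cleaner statement $A\T(Ax_0 - b) \neq 0$. Writing $r^* = -\Prj_{\nullsp{A\T}} b$ and using $\nullsp{A\T} = \col(A)^\perp$, one has $Ax_0 - b - r^* = Ax_0 - \Prj_{\col(A)} b \in \col(A)$; since $A\T r^* = 0$, it follows that $A\T(Ax_0 - b) = A\T(Ax_0 - b - r^*)$. Because $A\T$ restricted to $\col(A)$ is injective, a nonzero element $Ax_0 - b - r^* \in \col(A)$ yields $A\T(Ax_0 - b) \neq 0$. As the columns of $\begin{bmatrix} E_1 & \cdots & E_\epsilon \end{bmatrix}$ form a permutation of the identity and hence partition the coordinates of $\mathbb{R}^d$, some index $j$ then satisfies $E_j\T A\T(Ax_0 - b) \neq 0$.

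The key step, where the block case genuinely differs from its vector analogue in \Cref{subsection-max-distance-vector-cd}, is to show that the selection objective $\norm{(E_j\T A\T A E_j)^\dagger E_j\T A\T(Ax_0 - b)}_2$ is strictly positive exactly when $E_j\T A\T(Ax_0 - b) \neq 0$. Set $v = E_j\T A\T(Ax_0 - b)$ and $S = E_j\T A\T A E_j$, a symmetric positive semidefinite matrix whose kernel equals $\nullsp{A E_j}$. For any $u \in \nullsp{A E_j}$ we have $v\T u = (Ax_0 - b)\T A E_j u = 0$, so $v \perp \nullsp{S}$ and therefore $v \in \range{S}$. Since $S^\dagger$ is injective on $\range{S}$, this gives $S^\dagger v = 0$ if and only if $v = 0$, so the objective vanishes precisely when $v = 0$.

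Combining these facts, the maximum of the objective over $j \in \lbrace 1,\ldots,\epsilon \rbrace$ is strictly positive, so any maximizing index $i_0$ must satisfy $E_{i_0}\T A\T(Ax_0 - b) \neq 0$, i.e. $\col(A E_{i_0}) \not\perp (Ax_0 - b)$. Because the selection rule is deterministic given $x_0$, the event $\lbrace \col(A W_0) \perp (Ax_0 - b) \rbrace$ has conditional probability zero for every admissible $x_0$, which establishes $1,1$-Exploratory. The only genuine obstacle is the range-membership argument of the third paragraph, which guarantees that the pseudo-inverse does not spuriously annihilate a nonzero residual $v$; once that is in hand, the conclusion follows by \cref{theorem-convergence-col-infinite} exactly as in the preceding examples.
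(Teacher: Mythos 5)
Your proof of the lemma is correct and follows essentially the same route as the paper's: exhibit a block $j$ with $E_j\T A\T(Ax_0-b) \neq 0$, then argue the selection objective cannot vanish at a maximizer, so the chosen block is non-orthogonal to the residual with certainty. The paper's proof is a terse assertion of exactly this chain; your third paragraph (the range-membership argument showing $v \perp \nullsp{E_j\T A\T A E_j}$, hence $v \in \range{E_j\T A\T A E_j}$, hence the pseudo-inverse cannot annihilate a nonzero $v$) supplies the justification the paper compresses into a single ``Hence,'' and your reduction of $Ax_0 - b \neq r^*$ to $A\T(Ax_0-b)\neq 0$ likewise makes explicit what the paper leaves implicit. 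One slip in your closing sentence: the downstream convergence result for this method follows from \cref{theorem-convergence-col-finite}, not \cref{theorem-convergence-col-infinite}, since $\lbrace \col(AE_j) : j = 1,\ldots,\epsilon \rbrace$ is a finite set; invoking the infinite-set theorem would additionally require verifying the uniformly nontrivial property, which neither you nor the paper does (nor needs to) here.
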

\begin{proof}
If $Ax_0 - b \neq r^*$, then $\exists j \in \lbrace 1,\ldots,\epsilon \rbrace$ such that $E_j^\intercal A^\intercal (Ax_0 - b)\neq 0$. Hence, $(E_j^\intercal A^\intercal A E_j)^\dagger E_j^\intercal A^\intercal (Ax_0 - b)\neq 0$. The conclusion follows.
\end{proof}

By \cref{theorem-convergence-col-finite}, we conclude as follows.

\begin{theorem} \label{theorem-max-distance-block-cd}
Let $A \in \mathbb{R}^{n \times d}$ and $b \in \mathbb{R}^n$, and define $r^* = - \Prj_{\ker(A^\intercal)} b$. Let $x_0 \in \mathbb{R}^d$ and $\lbrace x_k : k \in \mathbb{N} \rbrace$ be a sequence generated by Max Distance Block Coordinate Descent. Then, either there exists a stopping time $\tau$ with finite expectation such that $Ax_{\tau} - b = r^*$; or
there exists a sequence of non-negative stopping times $\lbrace \tau_{j} : j +1 \in \mathbb{N} \rbrace$ for which $\E{ \tau_{j} } \leq j \rnk{A}$,  and there exist $\gamma \in (0,1)$ and a sequence of random variables $\lbrace \gamma_j : j+1 \in \mathbb{N}\rbrace \subset (0,\gamma]$, such that
\begin{equation}
\Prb{ \bigcap_{j=0}^\infty \left\lbrace \norm{ Ax_{\tau_j} - b  - r^*}_2^2 \leq \left( \prod_{\ell=0}^{j-1} \gamma_{\ell} \right) \norm{ Ax_0 - b - r^* }_2^2 \right\rbrace} = 1.
\end{equation}
\end{theorem}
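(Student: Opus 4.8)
The plan is to derive the theorem as an immediate application of \cref{theorem-convergence-col-finite}, after verifying each of its hypotheses using the structure already established for Max Distance Block Coordinate Descent. First I would record that this method is a column-action RBAS: the update $x_{k+1} = x_k + E_{i_k}\alpha_k$ with $\alpha_k$ the least-squares coefficient is an instance of \cref{eqn-base-col-update} with $W_k = E_{i_k}$, so the common formulation of \cref{subsection-common} applies and the associated orthogonal projection is $\Prj_k = AE_{i_k}(E_{i_k}^\intercal A^\intercal A E_{i_k})^\dagger E_{i_k}^\intercal A^\intercal$, whose column space is $\col(AE_{i_k})$.

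Next I would invoke the two preceding lemmas, which show the method is Markovian and $1,1$-Exploratory, so that we may take $N = 1$ and $\pi = 1$. The only remaining hypothesis of \cref{theorem-convergence-col-finite} is the finite-set condition, which I would verify by observing that $i_k$ ranges over the fixed index set $\lbrace 1,\ldots,\epsilon \rbrace$; hence $\col(\Prj_k) = \col(AE_{i_k})$ takes values in the finite collection $\lbrace \col(AE_j) : j = 1,\ldots,\epsilon \rbrace$ of at most $\epsilon$ subspaces of $\col(A)$.

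With every hypothesis in place, I would apply \cref{theorem-convergence-col-finite} verbatim and then specialize its conclusions. Substituting $N = 1$ and $\pi = 1$ into the expectation bound $\E{\tau_j} \leq j[(\rnk{A} - 1)(N/\pi) + 1]$ collapses it to $\E{\tau_j} \leq j\rnk{A}$, matching the stated bound, while the dichotomy---finite termination at a point with $Ax_\tau - b = r^*$, or the almost-sure geometric decay of $\norm{Ax_{\tau_j} - b - r^*}_2^2$ with factors $\gamma_\ell \in (0,\gamma]$---is exactly what the corollary delivers.

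I do not expect a substantial obstacle, since all the genuine work (the generalized block Meany inequality of \cref{theorem-block-meany}, the stopping-time expectation bound of \cref{theorem-nu-finite}, and the finite-set determinant argument of \cref{theorem-convergence-common-finite}) is discharged once in the general theorem. The one point requiring care is the Exploratory verification, which the preceding lemma supplies: whenever $Ax_0 - b \neq r^*$ the argmax selects a block making genuine progress, because $E_j^\intercal A^\intercal(Ax_0 - b) \neq 0$ for some $j$ forces $(E_j^\intercal A^\intercal A E_j)^\dagger E_j^\intercal A^\intercal(Ax_0 - b) \neq 0$, so the maximal objective is strictly positive and the chosen projection $\Prj_0$ is not orthogonal to $Ax_0 - b$.
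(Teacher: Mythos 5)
Your proposal is correct and follows essentially the same route as the paper: verify the Markovian and $1,1$-Exploratory properties (using that some block satisfies $E_j^\intercal A^\intercal(Ax_0-b)\neq 0$, hence $(E_j^\intercal A^\intercal A E_j)^\dagger E_j^\intercal A^\intercal(Ax_0-b)\neq 0$, so the argmax selection makes progress), note that $\col(AE_{i_k})$ ranges over the finite collection $\lbrace \col(AE_j) : j=1,\ldots,\epsilon\rbrace$, and apply \cref{theorem-convergence-col-finite} with $N=1$, $\pi=1$ to obtain $\E{\tau_j}\leq j\rnk{A}$. Your explicit spelling-out of the finite-set hypothesis and of why the strictly positive argmax objective forces non-orthogonality is slightly more detailed than the paper's terse lemmas, but it is the same argument.
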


The methods described in in this section, \cref{subsection-gaussian-block-cs,subsection-max-distance-block-cd} are compared on a simple statistical regression problem in \cref{figure-block-cd-comparison}.
\begin{figure}
\centering
\begin{tikzpicture}
\begin{axis}[
	width=0.9\textwidth,
	height=0.4\textwidth,
    title={A Comparison of Some Block Column-action Methods},
    xlabel={Iteration},
    ylabel={Normal Residual Norm},
    ymode=log,
    xmin=0, xmax=800,
    ymin=1e-15, ymax=1000,
    xtick={0,200,400,600,800},
    ytick={1e-12,1e-9,1e-6,1e-3,1e0, 1e3},
    ymajorgrids=true,
    grid style=dashed,
]

\addplot[
    color=blue,
    mark=star,
    ]
    coordinates {
(0, 212.2653604638135)
(20, 29.89522811042826)
(40, 19.025380664854847)
(60, 9.06596445401196)
(80, 5.602110115963464)
(100, 2.992200044237467)
(120, 1.5598488961196622)
(140, 0.9926383674901108)
(160, 0.5772428679024261)
(180, 0.27912148755209526)
(200, 0.18146532117137615)
(220, 0.09618880740470227)
(240, 0.04647189864962331)
(260, 0.027838490819820493)
(280, 0.014194097300404962)
(300, 0.00838467153697422)
(320, 0.004570706374586055)
(340, 0.0024770484252132786)
(360, 0.0014283268779302068)
(380, 0.0008627448116871664)
(400, 0.0005390354053926182)
(420, 0.00030970390563368034)
(440, 0.00018384360451614149)
(460, 0.00010973031630284661)
(480, 6.230980728716747e-5)
(500, 3.247605803324459e-5)
(520, 1.6137621804760552e-5)
(540, 1.1035279386197281e-5)
(560, 5.024937682439701e-6)
(580, 3.0804471619487905e-6)
(600, 1.8293176502663125e-6)
(620, 8.62654040313314e-7)
(640, 4.909480100596813e-7)
(660, 2.907251899888864e-7)
(680, 1.3601313783311835e-7)
(700, 8.998494859580601e-8)
(720, 4.8411235316828447e-8)
(740, 2.3970746755105625e-8)
(760, 1.8234985832199276e-8)
(780, 9.768780755787199e-9)
}; \addlegendentry{Gaussian}

\addplot[
    color=red,
    mark=triangle,
    ]
    coordinates {
(0, 212.2653604638135)
(20, 2.318121496854798)
(40, 0.1969892898655076)
(60, 0.005581542688101672)
(80, 0.00013499689566826275)
(100, 6.674529434885207e-6)
(120, 1.8706158743806844e-7)
(140, 9.457011802577963e-9)
};\addlegendentry{Max Residual}

\addplot[
    color=black,
    mark=square,
    ]
    coordinates {
(0, 212.2653604638135)
(20, 3.1737014508960493)
(40, 0.053786204352292574)
(60, 0.001960512795957955)
(80, 5.2910584189242615e-5)
(100, 1.4600869544408539e-6)
(120, 4.685507421030384e-8)
(140, 6.823719221173745e-9)
    };\addlegendentry{Max Distance}

\end{axis}
\end{tikzpicture}
\caption{A comparison of three block column-action methods on a linear regression problem where the design matrix is derived from a balanced design of 50 treatments with twenty replicates each with $49$ columns of a random Gaussian matrix appended. All methods are stopped when the residual-norm of the normal equation is less than $10^{-8}$.} \label{figure-block-cd-comparison}
\end{figure}
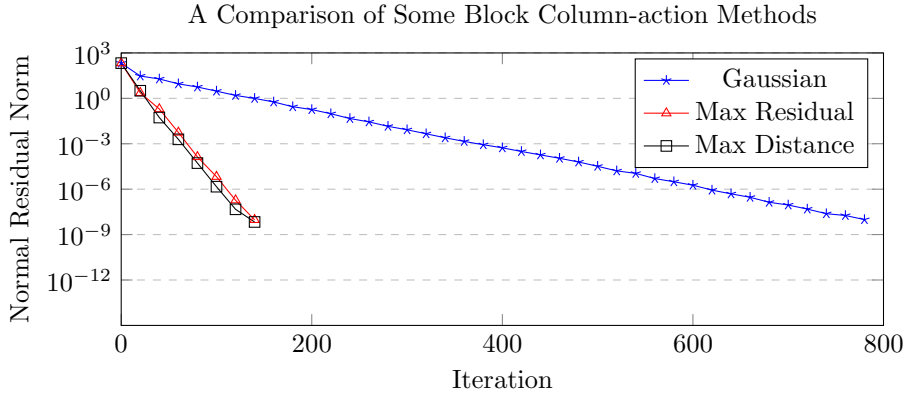

%\section*{Acknowledgments}
%We would like to acknowledge the assistance of volunteers in putting
%together this example manuscript and supplement.

\bibliographystyle{siamplain}
\bibliography{bibliography}

\begin{thebibliography}{10}

\bibitem{achlioptas2001}
{\sc D.~Achlioptas}, {\em Database-friendly random projections}, in Proceedings
  of the twentieth ACM SIGMOD-SIGACT-SIGART symposium on Principles of database
  systems, 2001, pp.~274--281.

\bibitem{agmon1954}
{\sc S.~Agmon}, {\em The relaxation method for linear inequalities}, Canadian
  Journal of Mathematics, 6 (1954), pp.~382--392.

\bibitem{ailon2009}
{\sc N.~Ailon and B.~Chazelle}, {\em The fast johnson--lindenstrauss transform
  and approximate nearest neighbors}, SIAM Journal on computing, 39 (2009),
  pp.~302--322.

\bibitem{bai2013}
{\sc Z.-Z. Bai and X.-G. Liu}, {\em On the meany inequality with applications
  to convergence analysis of several row-action iteration methods}, Numerische
  Mathematik, 124 (2013), pp.~215--236.

\bibitem{bai2018}
{\sc Z.-Z. Bai and W.-T. Wu}, {\em On greedy randomized kaczmarz method for
  solving large sparse linear systems}, SIAM Journal on Scientific Computing,
  40 (2018), pp.~A592--A606.

\bibitem{baker2006}
{\sc A.~H. Baker, J.~M. Dennis, and E.~R. Jessup}, {\em On improving linear
  solver performance: A block variant of gmres}, SIAM Journal on Scientific
  Computing, 27 (2006), pp.~1608--1626.

\bibitem{dai2015}
{\sc L.~Dai and T.~B. Sch{\"o}n}, {\em On the exponential convergence of the
  kaczmarz algorithm}, IEEE Signal Processing Letters, 22 (2015),
  pp.~1571--1574.

\bibitem{dasgupta2003}
{\sc S.~Dasgupta and A.~Gupta}, {\em An elementary proof of a theorem of
  johnson and lindenstrauss}, Random Structures \& Algorithms, 22 (2003),
  pp.~60--65.

\bibitem{denning2005}
{\sc P.~J. Denning}, {\em The locality principle}, Communications of the ACM,
  48 (2005), pp.~19--24.

\bibitem{dunford1939}
{\sc N.~Dunford}, {\em A mean ergodic theorem}, Duke Mathematical Journal, 5
  (1939), pp.~635--646.

\bibitem{golub2012}
{\sc G.~H. Golub and C.~F. Van~Loan}, {\em Matrix computations}, vol.~3, JHU
  Press, 2012.

\bibitem{gower2019}
{\sc R.~Gower, D.~Molitor, J.~Moorman, and D.~Needell}, {\em Adaptive
  sketch-and-project methods for solving linear systems}, arXiv preprint
  arXiv:1909.03604,  (2019).

\bibitem{gower2021}
{\sc R.~M. Gower, D.~Molitor, J.~Moorman, and D.~Needell}, {\em On adaptive
  sketch-and-project for solving linear systems}, SIAM Journal on Matrix
  Analysis and Applications, 42 (2021), pp.~954--989.

\bibitem{gower2015}
{\sc R.~M. Gower and P.~Richtárik}, {\em Randomized iterative methods for
  linear systems}, SIAM Journal on Matrix Analysis and Applications, 36 (2015),
  pp.~1660--1690.

\bibitem{haddock2019}
{\sc J.~Haddock and A.~Ma}, {\em Greed works: An improved analysis of sampling
  kaczmarz-motkzin}, arXiv preprint arXiv:1912.03544,  (2019).

\bibitem{kaczmarz1993}
{\sc S.~Kaczmarz}, {\em Approximate solution of systems of linear equations},
  International Journal of Control, 57 (1993), pp.~1269--1271.

\bibitem{karczmarz1937}
{\sc S.~Karczmarz}, {\em Angenaherte auflosung von systemen linearer
  glei-chungen}, Bull. Int. Acad. Pol. Sic. Let., Cl. Sci. Math. Nat.,  (1937),
  pp.~355--357.

\bibitem{martinsson2020}
{\sc P.-G. Martinsson and J.~A. Tropp}, {\em Randomized numerical linear
  algebra: Foundations and algorithms}, Acta Numerica, 29 (2020), pp.~403--572.

\bibitem{meany1969}
{\sc R.~K. Meany}, {\em A matrix inequality}, SIAM Journal on Numerical
  Analysis, 6 (1969), pp.~104--107.

\bibitem{meyn2012}
{\sc S.~P. Meyn and R.~L. Tweedie}, {\em Markov chains and stochastic
  stability}, Springer Science \& Business Media, 2012.

\bibitem{motzkin1954}
{\sc T.~S. Motzkin and I.~J. Schoenberg}, {\em The relaxation method for linear
  inequalities}, Canadian Journal of Mathematics, 6 (1954), pp.~393--404.

\bibitem{necoara2019}
{\sc I.~Necoara}, {\em Faster randomized block kaczmarz algorithms}, SIAM
  Journal on Matrix Analysis and Applications, 40 (2019), pp.~1425--1452.

\bibitem{necoara2016}
{\sc I.~Necoara and D.~Clipici}, {\em Parallel random coordinate descent method
  for composite minimization: Convergence analysis and error bounds}, SIAM
  Journal on Optimization, 26 (2016), pp.~197--226.

\bibitem{needell2014}
{\sc D.~Needell and J.~A. Tropp}, {\em Paved with good intentions: analysis of
  a randomized block kaczmarz method}, Linear Algebra and its Applications, 441
  (2014), pp.~199--221.

\bibitem{needell2015}
{\sc D.~Needell, R.~Zhao, and A.~Zouzias}, {\em Randomized block kaczmarz
  method with projection for solving least squares}, Linear Algebra and its
  Applications, 484 (2015), pp.~322--343.

\bibitem{nutini2016}
{\sc J.~Nutini, B.~Sepehry, I.~Laradji, M.~Schmidt, H.~Koepke, and A.~Virani},
  {\em Convergence rates for greedy kaczmarz algorithms, and faster randomized
  kaczmarz rules using the orthogonality graph}, arXiv preprint
  arXiv:1612.07838,  (2016).

\bibitem{PAJAMA2021adaptive}
{\sc V.~Patel, M.~Jahangoshahi, and D.~A. Maldonado}, {\em Convergence of
  adaptive, randomized, iterative linear solvers}, arXiv preprint
  arXiv:2104.04816,  (2021).

\bibitem{PAJAMA2021implicit}
{\sc V.~Patel, M.~Jahangoshahi, and D.~A. Maldonado}, {\em An implicit
  representation and iterative solution of randomly sketched linear systems},
  SIAM Journal on Matrix Analysis and Applications, 42 (2021), pp.~800--831.

\bibitem{pilanci2016}
{\sc M.~Pilanci and M.~J. Wainwright}, {\em Iterative {Hessian} sketch: {Fast}
  and accurate solution approximation for constrained least-squares}, Journal
  of Machine Learning Research, 17 (2016), pp.~1--38.

\bibitem{poussin1915}
{\sc C.~D. L.~V. Poussin}, {\em Sur l'int{\'e}grale de lebesgue}, Transactions
  of the American Mathematical Society,  (1915), pp.~435--501.

\bibitem{pritchard2022}
{\sc N.~Pritchard and V.~Patel}, {\em Residual tracking and stopping for
  iterative random sketching}, arXiv preprint arXiv:2201.05741,  (2022).

\bibitem{rebrova2021}
{\sc E.~Rebrova and D.~Needell}, {\em On block gaussian sketching for the
  kaczmarz method}, Numerical Algorithms, 86 (2021), pp.~443--473.

\bibitem{richtarik2020}
{\sc P.~Richt{\'a}rik and M.~Tak{\'a}c}, {\em Stochastic reformulations of
  linear systems: algorithms and convergence theory}, SIAM Journal on Matrix
  Analysis and Applications, 41 (2020), pp.~487--524.

\bibitem{saad2003}
{\sc Y.~Saad}, {\em Iterative methods for sparse linear systems}, vol.~82,
  siam, 2003.

\bibitem{steinerberger2021}
{\sc S.~Steinerberger}, {\em A weighted randomized kaczmarz method for solving
  linear systems}, Mathematics of Computation,  (2021).

\bibitem{steinerberger2022}
{\sc S.~Steinerberger}, {\em Approximate solutions of linear systems at a
  universal rate}, arXiv preprint arXiv:2207.03388,  (2022).

\bibitem{strohmer2009}
{\sc T.~Strohmer and R.~Vershynin}, {\em A randomized {Kaczmarz} algorithm with
  exponential convergence}, Journal of Fourier Analysis and Applications, 15
  (2009), p.~262.

\bibitem{warga1963}
{\sc J.~Warga}, {\em Minimizing certain convex functions}, Journal of the
  Society for Industrial and Applied Mathematics, 11 (1963), pp.~588--593.

\bibitem{woodruff2014}
{\sc D.~P. Woodruff}, {\em Sketching as a tool for numerical linear algebra},
  Foundations and Trends® in Theoretical Computer Science, 10 (2014),
  pp.~1--157.

\bibitem{wright2020}
{\sc S.~Wright and C.-p. Lee}, {\em Analyzing random permutations for cyclic
  coordinate descent}, Mathematics of Computation,  (2020).

\bibitem{zouzias2013}
{\sc A.~Zouzias and N.~M. Freris}, {\em Randomized extended kaczmarz for
  solving least squares}, SIAM Journal on Matrix Analysis and Applications, 34
  (2013), pp.~773--793.

\end{thebibliography}
\end{document}